\documentclass[letterpaper, 10 pt]{article}
\usepackage{style}
\usepackage{cancel}
\usepackage{blindtext}
\usepackage{pdflscape}

\usepackage{afterpage}

\usepackage{amsmath,amssymb}
\usepackage{hyperref}
\usepackage{amsthm}

\usepackage{arydshln}
\usepackage{enumitem}
\usepackage{xcolor}
\usepackage{threeparttable}
\usepackage{tikz}
\usetikzlibrary{shapes.arrows}
\usetikzlibrary{shapes}
\usetikzlibrary{decorations}
\usetikzlibrary{backgrounds}
\usetikzlibrary{calc}
\usetikzlibrary{arrows}
\usetikzlibrary{automata,positioning}
\usetikzlibrary{decorations.pathreplacing,calligraphy}
\usepackage{booktabs,multirow,makecell}
\usepackage[table]{xcolor}
\usepackage{enumitem}
\usepackage{array,xcolor,colortbl} 
\usepackage{subcaption} 
\usepackage{tabularx}
\usepackage{mathtools}
\usepackage{makecell} 
\usepackage{adjustbox}
\usepackage{rotating} 
\usepackage{afterpage}
\usepackage{cases}
\usepackage{mathtools,empheq}

\usepackage{pgfplots}
\pgfplotsset{compat=1.15}
\usepackage{adjustbox}
\usepackage{bm}
\usepackage{gincltex}
\usepackage{caption}
\usepackage{subcaption}
\usepgfplotslibrary{fillbetween}
\usepgfplotslibrary{groupplots}
\usetikzlibrary{plotmarks}
\usetikzlibrary{patterns}
\pgfplotsset{
tick label style={font=\footnotesize},
label style={font=\footnotesize},
legend style={font=\footnotesize},
}

\allowdisplaybreaks

\usepackage{multirow}
\RequirePackage{tikz}
\usepackage{pifont}
\usepackage{xcolor,colortbl}

\title{\LARGE \bf Tail Bounds for Queues with Abandonment: Constant, Moderate, Large Deviations, and
Efficient Concentration}

\author{%
{\normalsize Zedong Wang}\thanks{H. Milton Stewart School of Industrial \& Systems Engineering, Georgia Institute of Technology, Atlanta, GA 30332, USA. \texttt{\{zwang3524\}@gatech.edu}}%
    \and
    {\normalsize Siva Theja Maguluri} \thanks{H. Milton Stewart School of Industrial \& Systems Engineering, Georgia Institute of Technology, Atlanta, GA 30332, USA. \texttt{\{siva.theja\}@gatech.edu}}%
}
\date{}
\begin{document}

\maketitle
\begin{abstract}
    We study a heavily overloaded single-server queue with abandonment and derive bounds on stationary tail probabilities of the queue length. As the abandonment rate $\gamma \downarrow 0$, the centered-scaled steady-state queue length $\tilde{q}$ is known to converge in distribution to a Gaussian. However, such asymptotic limits do not quantify the pre-limit tail $\mathbb{P}(\tilde{q}>a)$ for fixed $\gamma>0$. Our goal is to obtain pre-limit bounds that are \emph{efficient} across deviation scales. For constant deviations, efficiency means Gaussian-type decay in $a$ together with a pre-limit error that vanishes as $\gamma\downarrow 0$, yielding the correct Gaussian tail in the limit. We establish such an efficient bound that is best-of-both-worlds. For larger deviations when $a$ is a function of $\gamma$, efficiency translates into exponentially tight, matching upper and lower bounds. In the moderate deviation regime, we obtain sub-Gaussian tails, while in the large deviation regime the decay becomes sub-Poisson. This deviation-scale viewpoint parallels the classical CLT-moderate-large deviation trichotomy for i.i.d.\ sums. Our bounds are obtained using a combination of Stein's method and the transform method. For constant deviations, we first prove Wasserstein-$p$ bounds between the centered-scaled queue and the limiting Gaussian via Stein's method, and then convert into tail bounds. Compared with Wasserstein-$1$ bounds that are relatively more popular in the literature, this is, to the best of our knowledge, the first work that obtains Wasserstein-$p$ bounds, which are of independent interest. The moderate and large deviation tail bounds are derived via the transform method.

    We then consider a load-balancing system of abandonment queues with heterogeneous servers, operating under the join-the-shortest-queue (JSQ) policy in the heavily overloaded regime. 
    As in the case of single-server queue, we again obtain Wasserstein-$p$ bounds w.r.t.\ a Gaussian, and efficient concentration for constant and moderate deviations. For larger deviations, our JSQ upper bounds exhibit a transition from Gaussian-type decay to sub-Weibull decay.
    All these results are obtained using Stein's method. In addition to the techniques mentioned above, a key ingredient here is establishing a state space collapse (SSC) where all queues become equal. While prior work establishes SSC via second-moment bounds of the orthogonal component of the queue length vector, we establish a stronger $p$-th moment bound that is essential for our Wasserstein-$p$ bound.

\end{abstract}

\section{Introduction}

In many service systems, e.g., microservice clusters, call centers, and ride-hailing platforms, a key defining feature is abandonment: jobs time out, customers leave, requests are canceled \cite{GarnettMandelbaumReiman2002,KooleMandelbaum2002,DaiHe2010MOOR}. In these settings, contractors usually require  hard constraints on tail probabilities of the form $\mathbb{P}(q > a)$, where $q$ is the steady-state queue length. This paper aims to provide bounds for such stationary tail probabilities for queues with abandonment. We consider two such queueing systems. The first one is a single server queue (SSQ) operating in continuous-time with memoryless arrivals, service and abandonment. The second one is a load balancing system consisting of several heterogeneous servers, where an incoming job must be routed to one of the servers upon arrival. We study this system under the join-the-shortest-queue (JSQ) policy, which dynamically assigns each arrival job to the server with currently shortest queue \cite{EschenfeldtGamarnik2018MOR,Khiyaita2012}.

To understand tail probabilities, one often adopts an asymptotic viewpoint, which is organized by the \emph{deviation scale}, i.e., the scale of $a$ in $\mathbb{P}(q > a)$. For both SSQ and JSQ under constant deviation, it is shown that in the heavily overloaded regime (when the arrival rate is larger than the service rate), the appropriately centered-scaled steady queue length $\tilde{q}$ converges to a Gaussian distribution as the abandonment rate $\gamma \to 0$ \cite{ward2003diffusion,jhunjhunwala2023jointheshortestqueueabandonmentcritically}. Note that the heavily overloaded system is still stable due to abandonment. This limit suggests that the tail probability $\mathbb{P}(\tilde{q} > a)$ should exhibit some kind of Gaussian-like decay. At the other extreme, when the deviation $a$ is large enough, the abandonment effect dominates the system dynamics, and the queue behaves like an $M/M/\infty$ queue, with abandonment playing the role of service. In this case, one expects a Poisson-like tail decay since the steady state distribution of an $M/M/\infty$ queue is Poisson, rather than Gaussian. In the middle ground, for moderate deviations, one seeks a smooth transition connecting the Gaussian and Poisson tail behaviors from the two extremes. This is similar in spirit to how classical Central Limit Theorem (CLT), moderate and large deviation principles fit together for sums of i.i.d. random variables \cite{petrov2012sums}. Figure \ref{fig: Phase Transition Diagram} schematically illustrates these tail behaviors across deviation regimes. The $x$-axis represents $\delta$ which parameterizes the deviation scale as $a_\gamma = \Theta(1/\gamma^\delta)$. 
The $y$-axis represents the tail probability. The phase transition points between the regimes will be revisited later in Section \ref{sec: main result} when we present our results. Parameter $\alpha$ in this phase transition diagram is a system parameter
relating abandonment rate to the arrival and service rates.
\begin{figure}[H]
    \centering
    \includegraphics[width=0.7\textwidth]{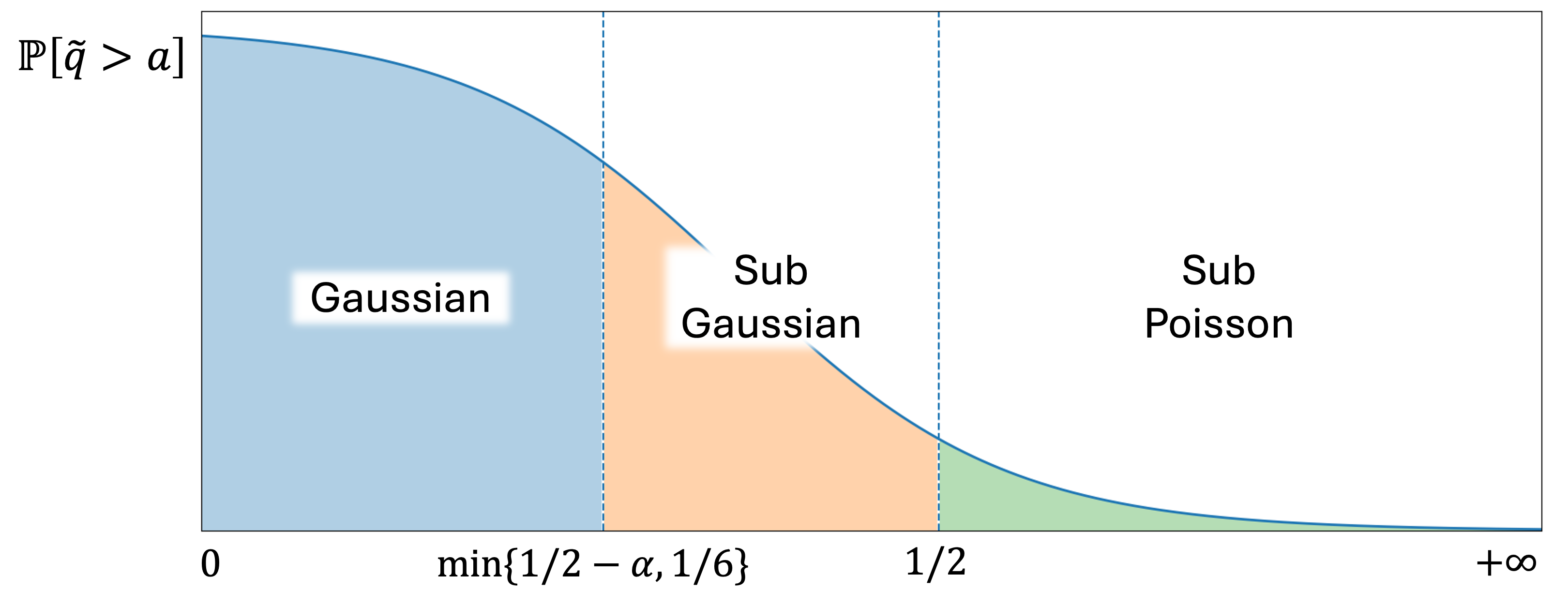}
    \caption{Phase Transition of Tail Bounds for SSQ, with $x$-axis representing $\delta$ in $a_\gamma = \Theta(1/\gamma^\delta)$.}
    \label{fig: Phase Transition Diagram}
\end{figure}


However, asymptotic results for SSQ or JSQ do not control the pre-limit tail when $\gamma$ is strictly positive. Therefore,
our goal is to derive explicit pre-limit tail bounds for both systems that are tight. Here, explicit means that the bounds do not depend on any unknown or unspecified constants. Moreover, from the aspect of limit parameter $\gamma$,
we require tightness in the sense that as $\gamma$ goes to zero, the pre-limit tail bounds directly imply different limiting tail behaviors across all deviation scales. In particular, for constant deviations, we aim to prove matching upper and lower bounds that together yield $\lim_{\gamma \downarrow 0} \frac{\mathbb{P}(\tilde{q} > a)}{\mathbb{P}(Z > a)} = 1$, where $Z$ is a standard Gaussian random variable. This implication recovers the aforementioned limit theorems. For larger deviations, we seek upper and lower tail bounds that are exponentially tight, in the sense that $\lim_{\gamma \downarrow 0} \frac{\ln \mathbb{P}(\tilde{q} > a_\gamma)}{\ln \mathbb{P}(Y > a_\gamma)} = 1$, where $Y$ is Gaussian or Poisson depending on how $a_\gamma$ scales with $\gamma$. 

Moreover, in terms of deviation level $a$, the limit heuristics points to Gaussian or Poisson behavior. Accordingly, we require pre-limit bounds to display such Gaussian-type or Poisson-type decay in $a$. We refer to such tight and explicit pre-limit tail bounds with corresponding decay as \emph{efficient concentration} inequalities. That is, these bounds are tight enough to yield the correct limiting tail behavior as $\gamma \downarrow 0$, while providing light-tailed control in $a$ for fixed $\gamma$. To the best of our knowledge, there is no such efficient concentration inequality in the queueing literature. The one exception we are aware of is \cite{braverman2024high} which considers a different system and focuses on deviations up to moderate regime. Therefore, the question we address is: \textit{Can one establish efficient concentration for both SSQ and JSQ under abandonment, and over what deviation scales?}

\paragraph{Main Contributions}
We now present our main contributions for SSQ and JSQ. For SSQ we have the following. 
\begin{itemize} 
    \item We obtain an efficient concentration inequality for centered-scaled stationary queue length $\tilde{q}$ under constant deviation $a$. 
    In particular, we show a bound of the form 
    \begin{align}
    \Phi^c(a)[1-c\sqrt{\gamma}(\ln(1/\gamma)+a^3 )] \leq \mathbb{P}(\tilde{q}>a) \leq \Phi^c(a)[1+c\sqrt{\gamma}(\ln(1/\gamma)+a^3)]
        \label{eq: main contribution, constant deviation}
    \end{align} 
    for some  constant $c$ that is explicitly presented later in the paper. Here $\Phi(\cdot)$ is the CDF of the standard Gaussian. Letting $\gamma\to 0$, this result implies the convergence $\mathbb{P}(\tilde{q}>a) \to \Phi^c(a)$. Moreover, the pre-limit tail decay in $a$ is close to that of a Gaussian. 

    \item We then consider the larger deviations, $a_{\gamma} = \Theta\left(\tfrac{1}{\gamma}\right)^\delta$. Depending on the exponent $\delta$, we have three different regimes depending on a parameter $\alpha$ which relates the abandonment rate to the arrival and service rates. First when  $\delta \in [0,\min\{1/2-\alpha,1/6\}) $ in the near-constant deviations regime, we again get efficient concentration of the form \eqref{eq: main contribution, constant deviation}. When $\delta \in [\min\{1/2-\alpha,1/6\},1/2) $, in the moderate deviations regime, we establish a sub-Gaussian tail. Finally when $\delta \in [1/2,\infty) $, in the large deviations regime,  we establish Sub-Poisson decay. 
    \item All the bounds stated above are order-wise tight, as we also establish matching lower bounds in all regimes. Thus, we achieve efficient concentration for all deviation scales. 


    \item For constant deviation, we obtain efficient concentration by proving bounds on the Wasserstein-$p$ distance between $\tilde{q}$ and the limiting Gaussian distribution for all $p > 1$. Given that most such bounds in queueing literature are limited to the setting of Wasserstein-$1$ distance, we believe that these Wasserstein-$p$ bounds are of independent interest. 
    Such bounds imply weak convergence and recover limit theorems in \cite{ward2003diffusion}, with convergence of all moments, and provide explicit convergence rate in $\mathcal{W}_p$ metric. Moreover, our Wasserstein-$p$ bounds are tight enough in terms of $p$ and $\gamma$ to imply the inequality \eqref{eq: main contribution, constant deviation}. 
    We obtain the Wasserstein-$p$ bound as follows. First, using Stein's method the Wasserstein-$p$ distance is bounded in terms of certain moment bounds of the queue length. These moment bounds are then obtained using the Lyapunov drift method, i.e., by picking appropriate Lyapunov functions and
    utilizing the zero-drift identity in the steady-state. 
    \item We obtain the moderate and large deviation bounds using the transform method. More specifically, we use Lyapunov arguments to bound the moment generating function of $\tilde{q}$, and then use Markov inequality to get the tail bounds. The lower bounds are obtained using change-of-measure arguments. 
\end{itemize}
For the JSQ system, we obtain the following results. Note that we are now studying the centered-scaled stationary queue length vector $\tilde{\mathbf{q}}$.
\begin{itemize}
    \item We establish efficient concentration for $\langle \tilde{\mathbf{q}}, \boldsymbol{\phi}\rangle$ for any direction $\boldsymbol{\phi}$ that is not orthogonal to $\boldsymbol{1}$. 
    \begin{align}
    \left|\mathbb{P}(\langle \boldsymbol{\phi},\boldsymbol{\tilde{q}}\rangle>a) - \Phi^c\left(\frac{a}{\langle\boldsymbol{\phi},\boldsymbol{1}\rangle}\right)\right| \leq c\sqrt{\gamma}\left[\ln^2(1/\gamma)+\frac{a^5}{\langle\boldsymbol{\phi},\boldsymbol{1}\rangle^5}\right]\Phi^c\left(\frac{a}{\langle\boldsymbol{\phi},\boldsymbol{1}\rangle}\right)
        \label{eq: main contribution JSQ constant}
    \end{align}
    Again letting $\gamma$ to zero, our result shows the limit $\mathbb{P}(\langle \boldsymbol{\phi},\boldsymbol{\tilde{q}}\rangle>a) \to \Phi^c\left(\frac{a}{\langle\boldsymbol{\phi},\boldsymbol{1}\rangle}\right)$. Moreover, we have pre-limit Gaussian tail for any test direction not perpendicular to $\boldsymbol{1}$. 
    \item In the moderate deviation regime when $\delta\in [0,\min\{1/4 - \alpha/2,1/10\})$, we again obtain efficient concentration of the form \eqref{eq: main contribution JSQ constant}. For 
    larger deviations when $\delta \in [\min\{1/4 - \alpha/2,1/10\},\infty)$, we provide pre-limit upper bounds yielding a smooth transition from Gaussian decay to sub-Weibull decay $\exp(-a^{1/2})$. Yet unlike SSQ, the efficient concentration holds only for moderate deviations. We believe that the phase transition and sub-Weibull tail are not tight.
    
   \item We obtain all these results by bounding the Wasserstein-$p$ distances to Gaussian. The Wasserstein-$p$ bound itself implies the weak convergence from $\tilde{\boldsymbol{q}}$ to a vector of Gaussian random variable multiplied by $\boldsymbol{1}$, which resembles the limit theorem in \cite{jhunjhunwala2023jointheshortestqueueabandonmentcritically}. 
    Compared with the SSQ case, obtaining these Wasserstein-$p$ bounds is more challenging due to the more complex queueing dynamics.
   A main challenge is establishing the state space collapse (SSC). We define $\mathbf{q}_\perp$ as the component of the queue length vector $\mathbf{q}$ orthogonal to $\boldsymbol{1}$. 
Prior work \cite{jhunjhunwala2023jointheshortestqueueabandonmentcritically} showed an SSC phenomenon that $\mathbf{q}_\perp \overset{\gamma \downarrow 0}{\longrightarrow} \boldsymbol{0}$.
This limit motivates us to first study directions $\boldsymbol{\phi}$ with $\langle \boldsymbol{\phi}, \boldsymbol{1}\rangle \neq 0$ and establish efficient concentration as in \eqref{eq: main contribution JSQ constant}.
   SSC is usually established by bounding $\mathbf{q}_\perp$ and the work \cite{jhunjhunwala2023jointheshortestqueueabandonmentcritically} studied SSC in terms of the second moment of $\mathbf{q}_\perp$. We obtain a much tighter handle on SSC by establishing bounds on the $p$-th moment of $\mathbf{q}_\perp$ for all $p > 1$, and these higher moments are an essential ingredient to get Wasserstein-$p$ bounds. 


 \end{itemize}


\paragraph{Organization of the Paper} 
In Section \ref{sec: notation and modeling}, we present the model and assumption for both SSQ and JSQ. In Section \ref{sec: summary of results}, we present a summary of our results in a table, with a brief survey of analogous results on iid sums on central limit theorem, moderate and large deviation theory. In Section \ref{sec: main result} we deliver all results on tail bounds and Wasserstein-$p$ bounds for SSQ and JSQ. 
We provide a survey of techniques used in this paper and some related methods in Section \ref{sec: survey of methods} before delving into the proof sketches in Section \ref{sec: proof sketch}. We conclude the main text with a literature review in Section \ref{sec: literature review} and pointing future research directions in Section \ref{sec: future directions}. All detailed proofs are relegated to the Appendix.

\section{Model and Assumptions} \label{sec: notation and modeling}
We introduce the mathematical models for both the SSQ and JSQ systems, as well as the details of the heavily overloaded assumptions used throughout the paper.
\subsection{Modeling and Notation}
Following settings of \cite{ward2003diffusion}, we consider a continuous-time queueing system for both SSQ and JSQ, with Poisson arrivals, exponential service times and exponential patience times. All queues are First-in-First-out with infinite capacity. For SSQ, the arrival rate is $\lambda$ and the service rate is $\mu$. Jobs are assumed to be impatient and, once their exponential patience time clock rings, they will abandon the queue, even if they are in service. Such SSQ is also known as $M/M/1+M$ or Erlang-A queue in the literature. 
We use queue length to denote the total number of customers in the queue including the one in service. 
The arrival and service process are all independent of queue length, but the abandonment process is dependent on queue length.

The above Poisson/exponential assumptions are standard in the queueing literature for studying steady-state tail behavior \cite[Section 3]{braverman2024high}, since they lead to a Markovian state descriptor and an explicit birth--death generator. At the same time, although stationary distributions admit closed-form formula under such assumption, deriving sharp concentration bounds as in \eqref{eq: main contribution JSQ constant} is still nontrivial. In particular, the birth-death structure itself is not essential to our approach. The same framework can in principle be applied to discrete-time queues with more general arrival and service distributions, given that the underlying generator can be explicitly written down (see e.g., \cite{jhunjhunwala2023jointheshortestqueueabandonmentcritically}). We use SSQ as the main model to illustrate the core ideas of the framework, and demonstrate its versatility by extending to more complex systems, with the JSQ as one such example in below. We stick to the continuous-time Markovian setting mainly for ease of presentation and for direct comparison with prior work such as \cite{ward2003diffusion,braverman2024high}.

Throughout the paper we work directly with the local dynamics described by the infinitesimal generator. The generator for SSQ $\mathcal{L}_{\mathrm{SSQ}}$ acting on functions $f:\mathbb{N}\to\mathbb{R}$ is given by
\begin{align}
(\mathcal{L}_{\mathrm{SSQ}} f)(i)
= \lambda\big(f(i{+}1)-f(i)\big)
+ (\mu + \gamma i)\,\mathbf{1}_{\{i\ge 1\}}\big(f(i{-}1)-f(i)\big), \quad i\in\mathbb{N}. \label{eq: generator of SSQ}
\end{align}

For JSQ, we study heterogeneous server, where the $i$-th server has service rate $\mu_i$. We break ties for the shortest queue by lexicographic order, i.e., we always choose the server with the smallest index among the shortest queues. So $\arg\min_{i} q_i$ also follows the lexicographic order. All jobs, including those in service are impatient with exponential patience time with rate $\gamma$. We denote the total arrival rate as $\lambda$, and total service rate as $\mu = \sum_{i=1}^n \mu_i$ with a slight abuse of notation. Similar to SSQ, 
our queue length vector denotes the total population in each queue. The dynamics of JSQ is described both in a transition rate kernel and infinitesimal generator. The transition rate kernel $\mathbf{Q}_{JSQ}(\mathbf{x}, \mathbf{x}')$ is given as follows, where we denote $\mathbf{e}_*$ as the unit vector of coordinate $\arg\min_{i} x_i$. Note that the inequalities below are coordinate-wise.
\begin{align*}
    \mathbf{Q}_{JSQ}(\mathbf{x}, \mathbf{x}') = \begin{cases}
        \lambda, & \text{if } \mathbf{x}' = \mathbf{x} + \mathbf{e}_*, \\
        \mu_i + \gamma x_i, & \text{if } \mathbf{x}' = \mathbf{x} - \mathbf{e}_i \geq \mathbf{0}, \, i \in [n], \\
        0, & \text{otherwise}.
    \end{cases}
\end{align*}
The infinitesimal generator $\mathcal{L}_{JSQ}$ for function $f:\mathbb{N}^n\to \mathbb{R}$, is given by:
\begin{align}
    \mathcal{L}_{JSQ} f(\mathbf{x}) &= \sum_{\mathbf{x}' \in \mathbb{Z}^n} \mathbf{Q}_{JSQ}(\mathbf{x}, \mathbf{x}') (f(\mathbf{x}') - f(\mathbf{x})) \notag\\
    &= \left( \lambda (f(\mathbf{x} + \mathbf{e}_*) - f(\mathbf{x})) + \sum_{i=1}^{n}(\mu_i \boldsymbol{1}_{\{ x_i > 0\}}+ \gamma x_i)(f(\mathbf{x} - \mathbf{e}_i) - f(\mathbf{x})) \right) \label{eq: generator for join shortest queue}
\end{align}
We will use $P_{t, SSQ}$ and $P_{t, JSQ}$ to denote the transition semigroup for SSQ and JSQ respectively.

A key ingredient in analyzing JSQ is to establish a state space collapse (SSC). 
Here we introduce some notation for SSC. We use $\boldsymbol{1}$ to denote the vector of all ones in $\mathbb{R}^n$, and $\mathbf{q}_\parallel$, $\mathbf{q}_\perp$ to denote the parallel and orthogonal components of the queue length vector $\mathbf{q}$ w.r.t. $\boldsymbol{1}$. Formally, we first define the line spanned by vector $\boldsymbol{1}$ and projection of queue length vector $\mathbf{q}$ onto this line,
\begin{align*}
    \mathbf{q}_\parallel := \argmin_{\mathbf{x}\in \mathcal{S}} \|\mathbf{q} - \mathbf{x}\| ,\quad \mathcal{S} := \{ \mathbf{x} \in \mathbb{R}^n: \mathbf{x} = c\boldsymbol{1}, c\in \mathbb{R}\}.
\end{align*}
Then the orthogonal component is defined with decomposition $\mathbf{q} = \mathbf{q}_\perp + \mathbf{q}_\parallel$. 

For both SSQ and JSQ, since the number of abandonments grows with the queue lengths, the expected number of abandonments is larger than the expected number of arrivals if the queue lengths are large enough. This ensures the stability of the system. Formally, the positive recurrence and existence of stationary distributions
 of both systems are shown by using the Foster-Lyapunov Theorem with $f(q)=q^2$ as the Lyapunov function in Section \ref{apx: proof for Lp norm of q_infty}. With such stability, we denote the steady-state queue length for SSQ as $q$, and the steady-state queue length vector for JSQ as $\mathbf{q} = (q_1, q_2, ..., q_n)$. By contrast, $x$ or $\mathbf{x}$ are used to denote deterministic variables.

As mentioned, an intermediate step to obtain efficient concentration is to bound the Wasserstein-$p$ distance between the law of $\tilde{q}$ (or $\tilde{q}_\Sigma$, defined below), and standard normal. The Wasserstein-$p$ distance is the optimal transport $L^p$ distance between two probability measures $\mu, \nu$ on $(\mathbb{R}, \mathcal{B}(\mathbb{R}))$, where $\mathcal{B}(\mathbb{R})$ is the Borel $\sigma$-algebra on $\mathbb{R}$.
\begin{align*}
    \mathcal{W}_p(\mu,\nu)
    &:= \left( \inf_{\pi\in\Pi(\mu,\nu)} 
    \int_{\mathbb{R}^2} |x-y|^p\,\mathrm{d}\pi(x,y) \right)^{1/p}.\\
    \Pi(\mu,\nu)
    := \bigl\{ \pi \text{ prob.\ measure on } (\mathbb{R}^2,&\mathcal{B}(\mathbb{R}^2)) :
    \pi(A\times\mathbb{R})=\mu(A),\ 
    \pi(\mathbb{R}\times A)=\nu(A),\ 
    \forall A\in\mathcal{B}(\mathbb{R}) \bigr\}.
\end{align*}

\paragraph{Notation} A list of notations is presented here for easy reference.
\begin{enumerate} [label=(\roman*)] \label{notation}
    \item  Transformation of queue length: In SSQ, $\tilde{q} := \sqrt{\gamma/\lambda}(q - (\lambda - \mu)/\gamma)$ is denoted for normalized steady-state queue length, $\hat{q} = q - (\lambda - \mu)/\gamma$ is for centered steady-state queue length. Similarly for JSQ: $\hat{q}_\Sigma:= \sum_i q_i - \frac{\lambda-\mu}{\gamma}, \tilde{q}_\Sigma:= \sqrt{\gamma/\lambda}(q_\Sigma)$, $\hat{q}_{\Sigma}, \tilde{q}_{\Sigma}$ are centered and normalized steady state random variables respectively. $\hat{\mathbf{q}}:= \mathbf{q} - \frac{\lambda-\mu}{n\gamma}\boldsymbol{1}$, $\tilde{\mathbf{q}}:= \frac{n\sqrt{\gamma}}{\sqrt{\lambda}}\hat{\mathbf{q}}$ are centered and normalized steady state random vectors. 
    \item  Norm: $\|\cdot\|_{L^p}$ is $L^p$ norm for random variables, i.e., $\|X\|_{L^p}$ $:= (\mathbb{E}|X|^p)^{1/p}$ for random variable $X$. Specifically, since we are studying steady-state queue length, $\|\cdot\|_{L^p}$ denotes $L^p$ norm for steady-state queue length random variables. 
    For vectors, $\|\cdot\|$ is Euclidean norm, i.e., $\|\mathbf{x}\| :=\sqrt{\sum_{i} x_i^2}$ for vector $\mathbf{x}$. We would also specify whether it is deterministic vector or random vector when the context is not clear.
    \item Order notation: $O(\cdot), \Omega(\cdot), \Theta(\cdot), o(\cdot), \omega(\cdot)$, and $\lesssim, \gtrsim, \asymp$ are standard order notation, with respect to $\gamma \to 0$, or $t \to 0$, as specified.
    \item Standard Normal: $Z\sim \mathcal{N}(0,1),\Phi^c(\cdot)$ is complementary cdf (ccdf) of standard normal.
    \item Restriction of operators:
Let $E:=\operatorname{supp}(\mathcal{L}(X)) \subseteq \mathbb{R}$ for a real-valued random variable $X$.
Suppose $\mathcal A$ is an operator acting on functions on $E$.
For notational convenience, we extend $\mathcal A$ to functions on $\mathbb R$ by mere restriction, i.e.,
for $f\in\mathcal C_b(\mathbb R)$, define the function on $E$ by
\[
(\mathcal A f)(x):=\big(\mathcal A(f|_E)\big)(x),\qquad x\in E,
\]
where $f|_E$ is the restriction of $f$ to $E$. Since $X\in E$ a.s., the composition $(\mathcal A f)(X)$ is well-defined as a random variable. Throughout, we will use this convention.

\end{enumerate}

\subsection{Assumptions}
We collect here the standing assumptions used throughout the paper. We assume the arrival rate $\lambda>0$, the service rate $\mu>0$ for SSQ
(or service rates $\mu_i>0$ for $i=1,\dots,n$ in JSQ) and abandonment rate $\gamma>0$. Recall in JSQ, we denote the total service rate as $\mu:=\sum_{i=1}^n\mu_i$.

\begin{assumption}[Heavily‑overloaded scaling]\label{ass:heavy_overload}
There exist fixed constants $C>1$ and $\alpha\in[0,1/2)$ such that for all sufficiently small $\gamma>0$ the system operates in the heavily overloaded regime:
\[
\frac{\lambda}{\mu}-1 \;\ge\; C\Big(\frac{\gamma}{\mu}\Big)^{\alpha}.
\]
In particular $\lambda/\mu>1$ for all $\gamma$ in the range considered.
\end{assumption}
 Assumption \ref{ass:heavy_overload} is scale-invariant with respect to each parameter $\lambda,\mu,\gamma$ since it depends only on ratios. Note that we are singling out the heavily overloaded regime studied in the paper \cite{jhunjhunwala2023jointheshortestqueueabandonmentcritically} which leads to Gaussian limit. The restriction \(\alpha\in [0,1/2)\) excludes the other regimes for other diffusion limits. Moreover, the asymptotic results in this paper focus on $\gamma \downarrow 0$, so our pre-limit bounds in below will assume $\gamma$ small enough.  Where useful we write statements as holding for all sufficiently small \(\gamma>0\) (equivalently for all \(0<\gamma\le\bar{\gamma}\)).

\section{Summary and Analogy of Results} \label{sec: summary of results}
In this section, we summarize our main results on tail bounds for both SSQ and JSQ in Table \ref{tab:tail-bounds-for-all}. For a better understanding of these results, we also provide an analogy to the classical CLT, moderate and large deviation theory for i.i.d. sums. On the CLT side,  we treat two i.i.d.\ cases with mean \(0\) and variance \(1\): (i) sub-exponential summands, which give Cramér-type moderate deviations with unspecified constants \cite{Cramer1938,petrov2012sums,BahadurRangaRao1960}; and (ii) bounded summands, for which computable pre-limit upper bounds are available \cite{austern2022efficient}. For notation, we let \(S_n=\sum_{i=1}^n X_i\), \(Z_n:=S_n/\sqrt{n}\), and recall \(\Phi^c(x):=\mathbb{P}(Z\ge x)\) for \(Z\sim\mathcal{N}(0,1)\).
 Previous results in the CLT literature are centered around non-uniform Berry-Esseen bounds, which quantify tail probability differences between $Z_n$ and the standard normal in terms of both sample size $n$ and deviation $a$. Our notion of efficient concentration originates from such bound
 that captures the dependency on both parameters. Within the scope of CLT, efficient concentration is used to imply relative ratio convergence $\mathbb{P}(Z_n>a)/\Phi^c(a)\to 1$ 
  or the moderate deviation principle, i.e., $\lim_{n\to\infty} \frac{1}{a_n^2} \ln \mathbb{P}(Z_n) = -\frac{1}{2}$ with some range of $a_n$. Beyond that range, large deviation theory provides tail bounds for the remaining deviation scales. 
  (see Table \ref{tab:tail-bounds-for-all} for more details and the CLT Survey in Section \ref{sec: literature review} for more discussion).

Turning to our queueing systems, analogous to CLT, we establish efficient concentration bounds for all deviation regimes for the SSQ, and for near-constant and moderate deviations for JSQ. For the remaining larger deviations in JSQ, we provide pre-limit upper bounds.
 The alignment between queueing systems and their CLT counterparts, as well as the summary of our results are schematically illustrated
  in Table \ref{tab:tail-bounds-for-all}.  
 It's important to emphasize, however, that such alignment is just an analogy instead of direct connection. We are studying a sequence of steady states of Markov Chains indexed by $\gamma$ instead of summation of i.i.d. random variables indexed by $n$.



In Table \ref{tab:tail-bounds-for-all}, in order to match the deviation range across models, we relate the deviations by $a_\gamma=\Theta(\gamma^{-\delta})
\longleftrightarrow
a_n=\Theta(n^{\delta})$,
with $\gamma$ the abandonment rate for SSQ/JSQ, and $n$ the sample size for CLT. This relation $1/\gamma \sim n$ is due to the fact that the variance of $q$ is of order $1/\gamma$ for SSQ, while the variance of $S_n$ is of order $n$ for CLT. 
Each column of the table corresponds to a deviation regime (indexed by \(\delta\)). 
The row \emph{Pre-limit} gives bounds that hold at finite \(\gamma\) or \(n\). The row \emph{Limit} provides the asymptotic tail behavior as \(\gamma\to0\) (SSQ/JSQ) or \(n\to\infty\) (CLT). References for each entry are included in the table.
We separate regimes by the relative growth of the deviation and the limit parameter (controlled by \(\delta\)):
\begin{enumerate}
    \item Constant deviation: \(a_\gamma\) (or \(a_n\)) is fixed, while $\gamma$ (or $n$) is small (or big) enough but fixed for the 'Pre-limit' result, and $\gamma\to0$ (or $n\to\infty$) for the 'Limit' result. This regime covers the classical Central Limit Theorem and the limit results for SSQ and JSQ;
    \item Moderate deviations: \(a_\gamma\to\infty\) with \(\gamma\to0\) (or \(a_n\to\infty\) with \(n\to\infty\)), with rate $\delta$ slower than that in  large-deviation scaling.
    \item Large deviations: \(a_\gamma\) (or \(a_n\)) grows faster than in the moderate regime. This includes \(a_\gamma\to\infty\) (or \(a_n\to\infty\)) at fixed \(\gamma\) (or fixed \(n\)).
\end{enumerate}

Since we provide pre-limit bounds for the queueing system in the next section, here we focus on the limiting tail statements in each deviation regime and their analogy with CLT. 

For SSQ, the limiting behavior exhibits the same three-phase progression familiar from CLT. First is the precise ratio convergence of the tail probability to the Gaussian tail, $\mathbb{P}(\tilde{q}>a_\gamma)/\Phi^c(a_\gamma) \to 1$. Next is the moderate deviation principle, where the logarithm of the tail probability scales as $-\frac{1}{2}a_\gamma^2$. Finally, in the large deviation regime, the exponent of the tail probability scales as
 $a_\gamma \ln a_\gamma$, reflecting a Poisson-type large deviation behavior.

\begin{landscape}
\begin{table}[t]
    \centering
    \setlength{\tabcolsep}{6pt}
    \renewcommand{\arraystretch}{1.25}
\begin{subtable}{\linewidth}
    \centering
    \renewcommand{\arraystretch}{1.25}
    {\normalfont
    \begin{tabular*}{\linewidth}{@{\extracolsep{\fill}}|l|l|l|l|l|@{}}
        \hline
         & \multicolumn{1}{c|}{Constant Deviation}
            & \multicolumn{2}{c|}{Moderate Deviation}
            & \multicolumn{1}{c|}{Large Deviation} \\
        \hline
        $a_n=\Theta(n^{\delta}),\ \delta\in$ &
        \multicolumn{1}{c|}{\(\displaystyle \{0\}\)} &
        \multicolumn{2}{c|}{\(\displaystyle (0,1/2)\)} &
        \multicolumn{1}{c|}{\([1/2,+\infty)\)} \\
        \hline

        Pre-limit Upper \cite{austern2022efficient}
        & \multicolumn{1}{c|}{
            $ \Phi^c(a_n) + \mathcal{O}(\frac{1}{\sqrt{n}})\exp(-\Omega(a_n^2))$ 
        }
        & \multicolumn{2}{c|}{%
                $[1+\exp\bigl(\mathcal{O}(1+a_n^{3}/\sqrt{n})\bigr)]\,\Phi^{c}(a_n)$ }
        & 
        \multicolumn{1}{c|}{
            $\displaystyle
                \exp\!\Bigl\{-\Omega(a_n^2)\Bigr\}$} \\
        \hline

        Pre-limit Upper 
        & \multicolumn{3}{c|}{%
                $[1+O\!\bigl(\tfrac{a_n+1}{\sqrt{n}}\bigr)]\,
                 \exp\bigl(\mathcal{O}(1+a_n^{3}/\sqrt{n})\bigr)\,\Phi^{c}(a_n)$ (No Explicit Constant)\cite{petrov2012sums}}
        & 
        \multicolumn{1}{c|}{
            $\displaystyle
                \exp\!\Bigl\{-n\,\Lambda^{*}\!\bigl(\tfrac{a_n}{\sqrt{n}}\bigr)\Bigr\}$ \cite{Cramer1938}} \\
        \hline

        Pre-limit Lower 
        & \multicolumn{3}{c|}{%
                $[1-\mathcal{O}\!\bigl(\tfrac{a_n+1}{\sqrt{n}}\bigr)]\,
                 \exp\bigl(\mathcal{O}(1+a_n^{3}/\sqrt{n})\bigr)\,\Phi^{c}(a_n)$ (No Explicit Constant)\cite{petrov2012sums}}
        & \multicolumn{1}{c|}{$ \Omega(\frac{1}{\sqrt{n}}\exp\!\Bigl\{-n\,\Lambda^{*}\!\bigl(\tfrac{a_n}{\sqrt{n}}\bigr)\Bigr\})$ \cite{BahadurRangaRao1960}}     \\
        \hline

        Limit Theorem \cite{dembo2009large}
        & 
            \multicolumn{2}{@{}>{\centering\arraybackslash}m{70mm}@{}|}{%
                $\displaystyle \frac{\mathbb{P}(Z_n>a_n)}{\Phi^{c}(a_n)} \to 1$}
        & 
            \multicolumn{1}{c|}{%
                $\displaystyle -\frac{2}{a_n^{2}}\ln \mathbb{P}(Z_n>a_n) \to 1$}
        & 
            \multicolumn{1}{c|}{
                $\displaystyle-\,\frac{\ln \mathbb{P}(Z_n>a_n)}
                             {\,n\,\Lambda^{*}\!\bigl(a_n/\sqrt{n}\bigr)} \to 1$}
        \\
        \hline

        $a_n=\Theta(n^{\delta}),\ \delta\in$ &
        \multicolumn{2}{c|}{\(\displaystyle [0\;,\; 1/6)\)} &
        \multicolumn{1}{c|}{\(\displaystyle [1/6\;,\; 1/2)\)} &
        \multicolumn{1}{c|}{\([1/2\;,\;+\infty)\)} \\
        \hline
    \end{tabular*}}
    \caption{Tail Bounds on $\mathbb{P}(Z_n>a_n)$ for CLT.}
    \label{tab:clt-tail}
\end{subtable}

\begin{subtable}{\linewidth}
    \centering
    \setlength{\tabcolsep}{6pt}
    \renewcommand{\arraystretch}{1.25}
    {\normalfont
    \begin{tabular*}{\linewidth}{@{\extracolsep{\fill}}|c|c|c|c|c|c|@{}}
        \hline
        $a_\gamma=\Theta(1/\gamma^{\delta}),\delta \in$ &
            $\{0\}$ &
            \multicolumn{2}{c|}{
                \(\displaystyle (0,\,1/2-\alpha) \)
            }&
            $[1/2-\alpha,\,1/2)$ &
            $[1/2,\,+\infty)$ \\
        \hline

        Pre-limit Upper \ref{thm: SSQ Tail bound}
            & $[1+\mathcal{O}(\frac{\ln(1/\gamma)+a_\gamma^3}{1/\sqrt{\gamma}})] \Phi^c(a_\gamma)$
        & \multicolumn{2}{c|}{
                $[1+ \mathcal{O}(a_\gamma^3\sqrt{\gamma})\exp(\mathcal{O}(a_\gamma^3\sqrt{\gamma}))]
                \Phi^c(a_\gamma)$}
            & $\mathcal{O}(\exp(-a_\gamma^{2}/2 + O(a_\gamma^3\sqrt{\gamma})))$
            & $\exp(-\Omega(\frac{a_{\gamma}}{\sqrt{\gamma}}\ln (a_\gamma\sqrt{\gamma})))$ \\
        \hline

        Pre-limit Lower \ref{thm: SSQ Tail bound} 
        & $[1-\mathcal{O}(\frac{\ln(1/\gamma)+a_\gamma^3}{1/\sqrt{\gamma}})] \Phi^c(a_\gamma)$
        & \multicolumn{3}{c|}{$[1- \mathcal{O}(a_\gamma^3\sqrt{\gamma})\exp(\mathcal{O}(a_\gamma^3\sqrt{\gamma}))]\Phi^c(a_\gamma) \lor \Omega(\exp(-a_\gamma^{2}/2 - \mathcal{O}(a_\gamma^3\sqrt{\gamma})))$  }
        & $\exp(-\mathcal{O}(\frac{a_{\gamma}}{\sqrt{\gamma}}\ln (a_\gamma\sqrt{\gamma})))$ \\
        \hline

        \makecell[c]{Limit Theorem\\ Corollary \ref{cor: Deviation Principle for M/M/1+M}}
                & \multicolumn{2}{c|}{
                        \(\displaystyle \frac{\mathbb{P}(\tilde{q}>a_\gamma)}{\Phi^c(a_\gamma)} \to 1\)} 
                & \multicolumn{2}{c|}{
                \(\displaystyle -\frac{2\ln \mathbb{P}(\tilde{q}>a_\gamma)}{a_\gamma^2} \to 1\)}
            & $\displaystyle
                -\frac{\frac{\sqrt{\gamma}}{\sqrt{\lambda}}\ln \mathbb{P}(\tilde{q}> a_\gamma)}
                            {a_\gamma \ln\!\bigl(1+a_\gamma \tfrac{\sqrt{\gamma}}{\sqrt{\lambda}}\bigr)} \to 1$ \\
        \hline
        $a_\gamma=\Theta(1/\gamma^{\delta}), \delta \in$ &
        \multicolumn{2}{@{}>{\centering\arraybackslash}m{70mm}@{}|}{ 
                \( \displaystyle \; [0\;,\;\min\{1/2-\alpha, 1/6\}) \)
        }&
        \multicolumn{2}{c|}{
                \(\displaystyle [\min\{1/2-\alpha, 1/6\}\;,\;1/2)\)
        } &
        $[1/2 \;,\;+\infty)$ \\
        \hline
    \end{tabular*}}
    \caption{Tail Bounds on $\mathbb{P}(\tilde{q}>a_\gamma)$ for SSQ, when $\lambda/\mu - 1 \ge C(\gamma/\mu)^\alpha, \alpha\in[0,1/2)$.}
\end{subtable}

\begin{subtable}{\linewidth}
    \centering
    \setlength{\tabcolsep}{6pt}
    \renewcommand{\arraystretch}{1.25}
    {\normalfont
    \begin{tabular*}{\linewidth}{@{\extracolsep{\fill}}|c|c|c|c|c|@{}}
        \hline
        $a_\gamma=\Theta(1/\gamma^{\delta}),\ \delta\in$ &
            $\{0\}$ &
            \multicolumn{2}{c|}{\(\displaystyle (0,\min\{1/4-\alpha/2,\,1/6\})\)} &
            $[\min\{1/4-\alpha/2,\,1/6\}),\,+\infty)$ \\
        \hline

        Pre-limit Upper \ref{thm: JSQ Tail bound}
        & $[1+\mathcal{O}\!\big(\tfrac{\ln^2(1/\gamma) + a_\gamma^5/\langle\boldsymbol{\phi},\boldsymbol{1}\rangle^5}{1/\sqrt{\gamma}}\big)]\Phi^c(\frac{a_\gamma}{\langle \boldsymbol{\phi},\boldsymbol{1}\rangle})$
        & \multicolumn{2}{c!{\vrule width 0.5pt}}{%
                $ [1+ \mathcal{O}(a_\gamma^5\sqrt{\gamma})\exp(O(a_\gamma^5\sqrt{\gamma}))]\,\Phi^c(\frac{a_\gamma}{\langle \boldsymbol{\phi},\boldsymbol{1}\rangle})$}
        & \multicolumn{1}{c|}{%
                $\displaystyle O\left(\exp\bigl(-\Omega(a_\gamma^{\min\{1/(4\delta)+1/2,\,2\}})\bigr)\right)$} \\
        \hline

        Pre-limit Lower \ref{thm: JSQ Tail bound} 
        & \multicolumn{2}{c!{\vrule width 0.5pt}}{%
                $ [1- \mathcal{O}(a_\gamma^5\sqrt{\gamma})
                \exp(O(a_\gamma^5\sqrt{\gamma}))]\,\Phi^c(\frac{a_\gamma}{\langle \boldsymbol{\phi},\boldsymbol{1}\rangle})$}
        & \multicolumn{2}{c|}{Open} 
        \\
        \hline
        Limit Theorem \ref{thm: JSQ Tail bound}
        & \multicolumn{2}{c!{\vrule width 0.5pt}}{%
                $\displaystyle
                \frac{\mathbb{P}(\langle \boldsymbol{\phi},\tilde{\mathbf{q}}\rangle>a_\gamma)}
                         {\mathbb{P}(\langle \boldsymbol{\phi}, Z\!\cdot\!\boldsymbol{1}\rangle>a_\gamma)}
                \to 1$}
        & \multicolumn{2}{c|}{Open} \\
        \hline
        $a_\gamma=\Theta(1/\gamma^{\delta}),\ \delta\in$ &
        \multicolumn{2}{@{}>{\centering\arraybackslash}m{55mm}@{}|}{\(\displaystyle [0\;,\;\min\{1/4-\alpha/2,1/10\})\)} &
        \multicolumn{2}{c|}{$[\min\{1/4-\alpha/2,1/10\})\;,\; +\infty)$} \\
        \hline
    \end{tabular*}}
    \caption{Tail Bounds on $\mathbb{P}(\langle \boldsymbol{\phi},\tilde{\mathbf{q}}\rangle>a_\gamma), \langle \boldsymbol{\phi}, \boldsymbol{1}\rangle \ne 0$ for JSQ, Same $\alpha$ as above.}
    \label{tab:jsq-tail}
\end{subtable}

    \caption{ Pre-limit and limit tail bounds comparison for all regimes: CLT,  SSQ, and JSQ.}
    \label{tab:tail-bounds-for-all}
\end{table}

\end{landscape}
\paragraph{Ratio Convergence}
We first note that the convergence of relative ratio is strictly stronger than the moderate deviation principle. Whenever $\mathbb{P}(\tilde{q}>a_\gamma)/\Phi^c(a_\gamma) \to 1$ holds along a sequence of $a_\gamma$ with $\sup_{\gamma>0} a_\gamma > 0$, the moderate deviation principle $\frac{-2}{a_\gamma^2} \ln \mathbb{P}(\tilde{q}>a_\gamma) \to 1$ holds automatically. This is because $\Phi^c(a_\gamma)$ has exponent $-\frac{1}{2}a_\gamma^2$ bounded away from $0$ in the limit. Thus, it is the strongest form of limiting Gaussian approximation and one naturally expects it to hold over the widest deviation range. For SSQ, we show this ratio convergence holds for $a_\gamma = o(1/\gamma^{\min\{1/2-\alpha,1/6\}})$. The appearance of the $1/6$ transition is in similar spirit to CLT transition. Indeed for CLT, a third-order expansion of the characteristic function \cite[Section XVI, Theorem 1]{feller1991introduction} leads to an additional $a_n^3/\sqrt{n}$ term in the Cramér-type moderate deviation tail for CLT. Consequently, this correction vanishes in the limit only if $a_n = o(n^{1/6})$, which is precisely the critical scaling for the transition between ratio convergence and moderate deviation principle in CLT, see the CLT subtable for both pre-limit bounds and limit results in Table \ref{tab:tail-bounds-for-all}.

Our SSQ pre-limit bounds display an analogous correction, with an exponent term of $a_\gamma^3\sqrt{\gamma}$, which leads to the same $1/6$ transition for SSQ. Notably, our pre-limit bounds are obtained via Wasserstein-$p$ bounds instead of characteristic function expansion. The matching $1/6$ threshold is therefore consistent with the heuristic that both approaches ultimately rely on the moment conditions of the underlying distribution. In other words, moment control in the Wasserstein-$p$ bounds is similar to moment control in the characteristic function expansion. In contrast, the $1/2-\alpha$ transition is more specific to our queueing systems. In particular, \cite{ward2003diffusion} shows that $\tilde{q}$ converges to a Gaussian as $\gamma \to 0$ only when $\alpha < 1/2$. Thus, a Gaussian tail ratio of the form $\mathbb{P}(\tilde{q}>a_\gamma)/\Phi^c(a_\gamma) \to 1$ can only hold when $\alpha < 1/2$, which explains the presence of the $1/2-\alpha$ transition.

\paragraph{Larger Deviation}
For the moderate deviation regimes, both the CLT and SSQ exhibit a quadratic (Gaussian) exponent, with the leading term in $\ln \mathbb{P}(\tilde{q}>a_\gamma)$ being $-\frac{1}{2}a_\gamma^2$. Moreover, the boundary between moderate and large deviation regimes occurs when the deviation level is the intrinsic "system scale" of the problem, i.e., $a_\gamma = \Theta(1/\sqrt{\gamma})$ for SSQ, mirroring the CLT threshold of $a_n = \Theta(\sqrt{n})$.
At this scale, the quadratic approximation underlying the moderate deviation principle breaks down. Technically, in CLT this quadratic behavior comes from a cumulant expansion of the summand log-MGF, while for SSQ our Lyapunov drift method yields an analogous expansion. In the large deviation regime, both rate functions arise from optimizing the exponential tilting principle, with SSQ producing a Poisson-type rate according to the abandonment dynamics, and CLT producing the Fenchel-Legendre transform of the log-MGF of the summands. 

We summarize the limit results for all deviation regimes for SSQ in the following proposition.
\begin{proposition}\label{cor: Deviation Principle for M/M/1+M}
     We assume the heavily overloaded regime \ref{ass:heavy_overload} with $\gamma\leq \gamma_0$ (see \eqref{eq: gamma assumption for SSQ W-p}). We denote $\tilde{q}:=\frac{\sqrt{\gamma}}{\sqrt{\lambda}}(q-\frac{\lambda-\mu}{\gamma})$. Then
     , with $ a_\gamma = D_{\delta}/\gamma^\delta$ for appropriate choice of constants $D_{\delta}$,
     we have,
    \begin{align*}
    &\lim_{\gamma \to 0} \mathbb{P}(\tilde{q} > a_{\gamma})/\Phi^c(a_\gamma) = 1, && \textnormal{for } 0\leq\delta< \min\{ 1/2 -\alpha, 1/6\}, \\
    &\lim_{\gamma \to 0} -\frac{2}{a_\gamma^2} \ln \mathbb{P} (\tilde{q} > a_\gamma) = 1, && \textnormal{for } 0\leq\delta\leq1/2,\\
    &\lim_{\gamma \to 0} -\frac{\sqrt{\gamma}}{\sqrt{\lambda}a_\gamma \log(1+a_\gamma \sqrt{\gamma}/\sqrt{\lambda})} \ln \mathbb{P}(\tilde{q} > a_\gamma) = 1, && \textnormal{for } \delta>1/2.
    \end{align*}
\end{proposition}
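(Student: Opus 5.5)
This proposition is a corollary of the pre-limit bounds in Theorem \ref{thm: SSQ Tail bound}, as summarized in Table \ref{tab:tail-bounds-for-all}. The plan is to substitute $a_\gamma = D_\delta/\gamma^\delta$ into the matching upper and lower bounds in each regime and let $\gamma\downarrow 0$. The choice of $D_\delta$ serves only to place $a_\gamma$ inside the range where the relevant pre-limit bound is stated; for $\delta=1/2$, for instance, $D_\delta$ must be small enough that $a_\gamma\sqrt{\gamma}$ is below the threshold of the moderate-deviation bound.

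\emph{Ratio convergence} ($0\le\delta<\min\{1/2-\alpha,1/6\}$). Take the constant and near-constant deviation bounds:
\[
\Phi^c(a_\gamma)\bigl[1-c\sqrt{\gamma}(\ln(1/\gamma)+a_\gamma^3)\bigr]\le \mathbb{P}(\tilde q>a_\gamma)\le \Phi^c(a_\gamma)\bigl[1+c\sqrt{\gamma}(\ln(1/\gamma)+a_\gamma^3)\bigr].
\]
Alternatively take the form with $\mathcal{O}(a_\gamma^3\sqrt\gamma)\exp(\mathcal{O}(a_\gamma^3\sqrt\gamma))$. Since $a_\gamma^3\sqrt\gamma = D_\delta^3\gamma^{1/2-3\delta}\to0$ for $\delta<1/6$ and $\sqrt\gamma\ln(1/\gamma)\to 0$, both sides divided by $\Phi^c(a_\gamma)$ tend to $1$. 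The restriction $\delta<1/2-\alpha$ enters only because the theorem's hypothesis for this form of the bound requires it. I would check that the constant $c$ is uniform in $a$ over this range, which is exactly what the Wasserstein-$p$ route with $p$ optimized in $a$ is designed to give.

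\emph{Moderate deviation principle} ($0\le\delta\le 1/2$). For $\delta<\min\{1/2-\alpha,1/6\}$ this follows from the ratio result, because $\ln\Phi^c(a)=-a^2/2-\ln a+O(1)$ for $a$ bounded away from $0$. For $\delta=0$ this needs $D_0$ fixed, and then one also uses $-2\ln\Phi^c(D_0)/D_0^2$. Here I expect the delicate point: the limit equals $1$ only asymptotically in $a$. So at $\delta=0$ the statement should be read with $D_0$ large, or the second claim should be applied for $\delta>0$. I would state this caveat. For the remaining $\delta$, the upper bound gives
\[
\ln\mathbb{P}\le -a_\gamma^2/2+O(a_\gamma^3\sqrt\gamma)+O(1),
\]
and the lower bound gives $\ln\mathbb{P}\ge -a_\gamma^2/2-O(a_\gamma^3\sqrt\gamma)-O(1)$. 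Dividing by $a_\gamma^2/2$, the error is $O(a_\gamma\sqrt\gamma)+O(a_\gamma^{-2})$. For $\delta<1/2$ this tends to $0$. At $\delta=1/2$ it is $O(D_{1/2})$, and the constants are not exact, so the limit is $1$ only up to $O(D_{1/2})$. I would handle this by letting $D_{1/2}\downarrow0$ in a diagonal sense, or by invoking the sharper MGF bound from the transform-method proof, whose exponent has the form $-\frac{\lambda}{\gamma}h(a\sqrt{\gamma/\lambda})$ with $h(x)=(1+x)\ln(1+x)-x=x^2/2+O(x^3)$.

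\emph{Large deviations} ($\delta>1/2$). Here $x_\gamma:=a_\gamma\sqrt{\gamma/\lambda}\to\infty$. I would use the explicit transform-method bounds rather than the $\Omega/\mathcal{O}$ table entries, since the limit needs exact constants. Markov's inequality applied to the MGF, optimized over the tilt, gives $\ln\mathbb{P}\le-\frac{\lambda}{\gamma}h(x_\gamma)+o(\cdot)$. The change-of-measure lower bound gives the matching quantity. Since $h(x)\sim x\ln(1+x)$ as $x\to\infty$, we have
\[
\frac{\lambda}{\gamma}h(x_\gamma)\sim \frac{\sqrt\lambda}{\sqrt\gamma}\,a_\gamma\ln\bigl(1+a_\gamma\sqrt\gamma/\sqrt\lambda\bigr),
\]
which is exactly the normalization in the statement. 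The main obstacle is verifying that the lower-order terms in the drift/MGF estimate (from $\mu$ and the centering $(\lambda-\mu)/\gamma$) are $o(x\ln x\cdot\lambda/\gamma)$, so that the leading constant is exactly $1$ rather than only order-wise.
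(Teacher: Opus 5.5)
Your route matches the paper's: ratio convergence comes from the Wasserstein-based bounds of Theorem~\ref{thm: SSQ Tail bound}(a)--(b) combined with the Mills ratio. The moderate and large deviation limits come from the transform-method MGF upper bound and the change-of-measure lower bound, followed by order analysis in $a=a_\gamma\sqrt{\lambda/\gamma}$ and $b=\lambda/\gamma$.

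Your endpoint caveats are correct, but they should be stated more strongly: the second claim is false at both $\delta=0$ and $\delta=1/2$ for every choice of $D_\delta>0$.
\begin{itemize}
\item At $\delta=0$, the limit is $-2\ln\Phi^c(D_0)/D_0^2\ge 1+2\ln 2/D_0^2>1$, using $\Phi^c(x)\le \tfrac12 e^{-x^2/2}$.
\item At $\delta=1/2$, fix $x=D_{1/2}/\sqrt{\lambda}$. The two transform bounds pin down $\ln\mathbb{P}(\tilde q>a_\gamma)=-\tfrac{\lambda}{\gamma}h(x)+O(\gamma^{-1/2})$. Hence the limit is $2h(x)/x^2$, which is strictly less than $1$ because $h''(x)=1/(1+x)<1$.
\end{itemize}
So invoking the sharper MGF bound does not rescue $\delta=1/2$; it is exactly what proves the claim fails there. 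Letting $D_{1/2}\downarrow 0$ along a diagonal changes the statement rather than proving it. You should simply restrict the second claim to $0<\delta<1/2$, which is all the paper's appendix proof actually treats.

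One minor correction: in the change-of-measure bound the $\triangle$-term enters as $-\triangle\ln\bigl(1+(a+\triangle/2)/b\bigr)\approx -8a_\gamma$. The displayed $L_{\lambda,\mu}$ has a sign slip there. Your lower-bound error is therefore $O(a_\gamma)$ rather than $O(1)$, which still vanishes after dividing by $a_\gamma^2/2$.
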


For JSQ, we also establish Gaussian tail ratio convergence in the constant-deviation regime, recovering the previous limit behavior. Moreover, we extend this ratio limit to the near-constant range of $o(1/\gamma^{\min\{1/4-\alpha/2, 1/10\}})$. Compared with SSQ, this range is narrower due to the heavy-tailed effect of the perpendicular component of the queue length vector (see Section \ref{sec: SSC for JSQ} for more discussion). 


Finally, we stress that all of the above limiting statements follow directly from our pre-limit tail bounds by letting $\gamma \to 0$. These implications are summarized in the corresponding rows of Table \ref{tab:tail-bounds-for-all}. We now turn to the main pre-limit theorems for SSQ and JSQ in the next section.

\section{Main Result} \label{sec: main result}
In this section, we present the theorems for Wasserstein-$p$ distance and tail bounds for SSQ with abandonment and for the load balancing system (JSQ). The proof sketch is deferred to later sections.


\subsection{Tail Bound for Single Server Queue with Abandonment}\label{sec: Tail Bound for M/M/1+M}
As illustrated in Table \ref{tab:tail-bounds-for-all} and in Figure \ref{fig: Phase Transition Diagram}, the tail bounds for SSQ will exhibit a three-stage behavior, transitioning from Gaussian decay in constant deviation, to sub-Gaussian decay in moderate deviation, and finally to sub-Poisson decay in large deviation. As we will see in this section, this "near-constant is Gaussian, far-from-constant is Poisson" philosophy will appear not only in the tail bounds themselves, but also in our Wasserstein-$p$ bounds and in the key ingredients driving them (see Theorem \ref{thm: Gaussian Wasserstein-$p$ upper bound for M/M/1+M} and the proof discussion in Section \ref{sec: proof sketch for Wasserstein-$p$ bounds, SSQ}).

\begin{theorem}\label{thm: SSQ Tail bound}
     Under the heavily overloaded Assumption \ref{ass:heavy_overload} with $\gamma\leq \gamma_0$ (see \eqref{eq: gamma assumption for SSQ W-p}), the following inequalities hold,
    \begin{enumerate}[label=(\alph*), ref=\thetheorem~(\alph*)]
        \item \label{thm: SSQ Tail bound, first regime}
        For constant deviation $a>0$ and $\gamma\leq (\gamma_a \land \gamma_0)$ (see \eqref{eq: gamma_a}), there exists $D_{1,\lambda,\mu}'$ independent of $\gamma$ and $a$ (see \eqref{eq: Tail bound, first regime}), such that
        \begin{align*}
            \left|\,\mathbb{P}(\tilde{q} > a) - \mathbb{P}(Z > a)\,\right| 
            \leq D_{1,\lambda,\mu}'\,\sqrt{\gamma}\,\big[\ln(1/\sqrt{\gamma}) + a^2/2\big]\,e^{-a^2/2}
        \end{align*}

        \item \label{thm: SSQ Tail bound, second regime}
        For $D_{Tail,2,l} \leq a_{\gamma} \leq D_{Tail,2,u}/\gamma^{\delta}$, $\delta \in (0,1/2-\alpha)$ (see \eqref{eq: near constant deviation setup}), we have
        \begin{align*}
            \left|\mathbb{P}(\tilde{q} > a_{\gamma}) - \mathbb{P}(Z > a_{\gamma})\right| 
            &\leq \frac{e D_{1,\lambda,\mu}}{\sqrt{2\pi}}\, \sqrt{\gamma}\, \left[\ln(1/\sqrt{\gamma}) + \frac{a_{\gamma}^2}{2}\right] e^{-a_{\gamma}^2/2 \left(1 - \frac{D_{1,\lambda,\mu}\sqrt{\gamma}[\ln(1/\sqrt{\gamma}) + a_{\gamma}^2/2]}{a_{\gamma}}\right)} 
            + \sqrt{\gamma}\, e^{-a_{\gamma}^2/2}
            \\
            \mathbb{P}(\tilde{q} > a_{\gamma}) &\geq \frac{2}{3}\exp\left\{
                -\frac{a_{\gamma}^2}{2}
                + \triangle \ln\left(1+\frac{\frac{\sqrt{\lambda}}{\sqrt{\gamma}} a_{\gamma}+\triangle/2}{\lambda/\gamma}\right)
                + \frac{\triangle^2}{8\lambda/\gamma}
                - \frac{\left(\frac{\sqrt{\lambda}}{\sqrt{\gamma}} a_{\gamma}+\triangle/2\right)^3}{3(\lambda/\gamma)^2}
            \right\} =: L_{\lambda,\mu}(a_{\gamma},\gamma),
        \end{align*}
        where $\triangle:= 8\sqrt{\frac{\sqrt{\lambda}}{\sqrt{\gamma}} a_{\gamma} + \lambda/\gamma}$.
        
        \item \label{thm: SSQ Tail bound, third regime}
        For $D_{Tail,3,l}/\gamma^{1/2-\alpha} \leq a_{\gamma} \leq D_{Tail,3,u}/\sqrt{\gamma}$ (see \eqref{eq: sub-Gaussian deviation setup}),
        \begin{align*}
            L_{\lambda,\mu}(a_{\gamma},\gamma) \leq \mathbb{P}(\tilde{q} > a_{\gamma})
            \leq \min\Bigg\{&\; \mathbb{P}(Z > a_{\gamma}) + \frac{e}{2\sqrt{\pi}}\, a_{\gamma}\, e^{-\frac{a_{\gamma}^2}{2}\big(1-\sqrt{1/2}\big)^2} 
            + \exp\!\left(-\frac{a_{\gamma}^2}{2e^2 D_{2,\lambda,\mu}}\right), \\
            &\; A_{\lambda,\mu}\, \exp\!\left(-\frac{a_{\gamma}^2}{2\lambda/\gamma} + \frac{a_{\gamma}^3}{2(\lambda/\gamma)^2}\right)
            \Bigg\}
        \end{align*}
        with $D_{2,\lambda,\mu}$ as in \eqref{eq: Wasserstein-$p$ bound, second regime}, $L_{\lambda,\mu}(a_{\gamma},\gamma)$ as above, and $A_{\lambda,\mu}$ as in \eqref{eq: MGF for q_infty - lambda/mu/gamma, Poisson style}.

        \item \label{thm: SSQ Tail bound, fourth regime}
        For $a_{\gamma} \geq D_{Tail,4,l}/\sqrt{\gamma}$ (see \eqref{eq: large deviation setup}),
        \begin{align*}
            \mathbb{P}(\tilde{q} > a_{\gamma})
            &\leq
            \min\Bigg\{\,
            \mathbb{P}(Z>a_{\gamma}) + \frac{e}{2\sqrt{2\pi}}\,a_{\gamma}\,e^{-a_{\gamma}^2/8}
            + \exp\!\Big(-\frac{\sqrt{\lambda}}{2eD_{3,\lambda,\mu}}\cdot\frac{a_{\gamma}}{\sqrt{\gamma}}\log(1+a_{\gamma}\sqrt{\gamma})\Big),\\
            &\qquad\qquad\qquad\qquad\;
            A_{\lambda,\mu}\exp\!\Big(\frac{\sqrt{\lambda}}{\sqrt{\gamma}}a_{\gamma}
            - \Big(\frac{\lambda}{\gamma}+\frac{a_{\gamma}\sqrt{\lambda}}{\sqrt{\gamma}}\Big)
              \log\Big(1+\frac{a_{\gamma}}{\sqrt{\lambda/\gamma}}\Big)\Big)
            \Bigg\},\\[6pt]
            \mathbb{P}(\tilde{q} > a_{\gamma})
            &\geq \frac{2}{3}\exp\Big\{ 
            -\Big(\frac{\sqrt{\lambda}}{\sqrt{\gamma}} a_{\gamma}+ \triangle+ \frac{\lambda}{\gamma}\Big)
            \log\Big(1+\frac{\frac{\sqrt{\lambda}}{\sqrt{\gamma}} a_{\gamma}+\triangle/2}{\lambda/\gamma}\Big) + \frac{\sqrt{\lambda}}{\sqrt{\gamma}} a_{\gamma} + \frac{\triangle}{2}
            \Big\}.
        \end{align*}
        with $D_{3,\lambda,\mu}$ as in \eqref{eq: Wasserstein-$p$ bound, third regime}, $\triangle$ as in \ref{thm: SSQ Tail bound, second regime}, and $A_{\lambda,\mu}$ as in \eqref{eq: MGF for q_infty - lambda/mu/gamma, Poisson style}.
    \end{enumerate}
\end{theorem}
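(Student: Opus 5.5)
Three ingredients will be combined, one regime at a time: (i) the Wasserstein-$p$ bounds of Theorem~\ref{thm: Gaussian Wasserstein-$p$ upper bound for M/M/1+M}, which come in three regimes according to the size of $p$ (constants $D_{1,\lambda,\mu}$, $D_{2,\lambda,\mu}$, $D_{3,\lambda,\mu}$); (ii) the Poisson-style MGF bound \eqref{eq: MGF for q_infty - lambda/mu/gamma, Poisson style} for $q-(\lambda-\mu)/\gamma$, with constant $A_{\lambda,\mu}$; (iii) a change-of-measure lower bound.

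\emph{Upper bounds from $\mathcal{W}_p$.} Take an optimal coupling $(\tilde q,Z)$ in $\mathcal W_p$ and fix $\epsilon>0$. Then
\[
\mathbb P(\tilde q>a)\le \mathbb P(Z>a-\epsilon)+\mathbb P(|\tilde q-Z|>\epsilon)\le \mathbb P(Z>a-\epsilon)+\bigl(\mathcal W_p/\epsilon\bigr)^p ,
\]
and symmetrically $\mathbb P(\tilde q>a)\ge \mathbb P(Z>a+\epsilon)-(\mathcal W_p/\epsilon)^p$. The Gaussian part is handled by the mean value estimate $|\Phi^c(a\pm\epsilon)-\Phi^c(a)|\le \epsilon\,\varphi(a-\epsilon)\le \tfrac{\epsilon}{\sqrt{2\pi}}e^{-a^2/2+a\epsilon}$.

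In regimes (a)--(b), the first regime of the $\mathcal W_p$ theorem should give $\mathcal W_p\le D_{1,\lambda,\mu}\sqrt{\gamma}\,p$ up to a logarithm. Choose $p\asymp \ln(1/\sqrt\gamma)+a^2/2$ and $\epsilon=e\,\mathcal W_p$. Then $(\mathcal W_p/\epsilon)^p=e^{-p}\le \sqrt\gamma\, e^{-a^2/2}$, which is exactly the additive term $\sqrt{\gamma}e^{-a_\gamma^2/2}$ in (b). The factor $e^{a\epsilon}$ produces the displayed exponent $-\tfrac{a^2}{2}\bigl(1-\tfrac{D_{1}\sqrt\gamma[\cdots]}{a}\bigr)$. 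For constant $a$ and $\gamma\le\gamma_a$, we have $a\epsilon\le 1$, so this collapses to the bound in (a) with constant $D'_{1,\lambda,\mu}$.

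In regimes (c) and (d) we must take $p\gtrsim a^2$, which lands in the second or third $\mathcal W_p$ regime. There the bounds are sub-Gaussian or sub-Poisson, of the form $\mathcal W_p\lesssim \sqrt{D_2 p}$ or $\mathcal W_p\lesssim D_3\,p/\log(\cdot)$. Take $\epsilon=a/2$ (or $(1-\sqrt{1/2})a$) and optimize $p$. This yields the terms $e^{-a^2/(2e^2D_2)}$ and $\exp(-\tfrac{\sqrt\lambda}{2eD_3}\tfrac{a}{\sqrt\gamma}\log(1+a\sqrt\gamma))$, while the Gaussian shift contributes $a e^{-a^2(1-\sqrt{1/2})^2/2}$.

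\emph{Second upper bound from the MGF.} The second term in each min comes from Chernoff's bound: $\mathbb P(\hat q>x)\le A_{\lambda,\mu}\exp\bigl(\tfrac{\lambda}{\gamma}(e^\theta-1-\theta)-\theta x\bigr)$. Optimizing at $\theta=\log(1+x\gamma/\lambda)$ with $x=a\sqrt{\lambda/\gamma}$ gives the Poisson rate in (d). Expanding $(1+u)\log(1+u)-u\ge u^2/2-u^3/6$ (slightly weakened) gives the cubic-corrected Gaussian form in (c).

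\emph{Lower bounds.} The plan is a change of measure on the explicit birth--death stationary law $\pi(i)\propto \prod_{j\le i}\lambda/(\mu+\gamma j)$. Compare it with a Poisson$(\lambda/\gamma)$ law shifted by $(\lambda-\mu)/\gamma$; equivalently, tilt toward the level $x+\triangle/2$, where $x=\sqrt{\lambda/\gamma}\,a$. Bound $\mathbb P(\hat q>x)\ge \mathbb P(x<\hat q\le x+\triangle)$. On this window, lower-bound the likelihood ratio by its minimum, and control the tilted probability of the window from below by Chebyshev. The tilted variance is at most $x+\lambda/\gamma$, so a window of half-width $\triangle/2=4\sqrt{x+\lambda/\gamma}$ has tilted mass at least $2/3$. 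The log-likelihood ratio at the window endpoints then gives the expression in (d). Taylor-expanding $\log(1+u)$ to third order gives $L_{\lambda,\mu}$ in (b) and (c).

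\emph{Main obstacle.} The hardest step is the bookkeeping in the $\mathcal W_p$ conversion: choosing $p$ and $\epsilon$ as functions of $a$ and $\gamma$ so that $p$ stays within the validity range of the relevant $\mathcal W_p$ regime, which is what forces the thresholds $D_{Tail,\cdot}$. The constants must also be made to match \eqref{eq: Tail bound, first regime} exactly.
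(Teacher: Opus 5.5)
Your proposal follows essentially the same route as the paper. The first upper bound in each regime comes from the $\mathcal{W}_p$-to-tail conversion with $\epsilon=(1-\rho)a=e\,\mathcal{W}_p$, taking $p=a^2/2+\ln(1/\sqrt{\gamma})$ in (a)--(b), $p\propto a^2$ in (c) and $p\propto \frac{a}{\sqrt{\gamma}}\log(1+a\sqrt{\gamma})$ in (d); the second upper bound is Chernoff with the Poisson-style MGF bound; and the lower bounds use exponential tilting to level $x+\triangle/2$ plus Chebyshev on a window of width $\triangle$.

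Only minor bookkeeping differs, and none of it changes the argument:
\begin{itemize}
\item Tracked carefully, the exponents in (b) and (c) carry $2eD_{1,\lambda,\mu}$ and $D_{2,\lambda,\mu}^2$, not the displayed $D_{1,\lambda,\mu}$ and $D_{2,\lambda,\mu}$. The paper's own proof has the same slip, so your claim that $e^{a\epsilon}$ ``produces the displayed exponent'' inherits it.
\item In the tilting step the variance is at most $\lambda/\gamma+x+\triangle/2$, not $x+\lambda/\gamma$.
\item The tilted mean of $\hat q$ lies in $[x+\triangle/2,\,x+\triangle/2+A_{\lambda,\mu}]$. This shift is why the paper needs $\gamma\le\lambda/A_{\lambda,\mu}^2$.
\end{itemize}
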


\paragraph{Constant Deviation} First we consider the case when the deviation $a$ is a constant. Theorem \ref{thm: SSQ Tail bound, first regime} provides an upper bound on  the absolute error between the tail probability $\mathbb{P}(\tilde{q} > a)$ and the tail of the standard Gaussian. This immediately implies an upper bound on the queue length tail  $\mathbb{P}(\tilde{q} > a)$ of the form, 
\begin{align}
\mathbb{P}(\tilde{q}>a)\leq [1+\mathcal{O}(\sqrt{\gamma}(\ln(1/\gamma)+a^3)]\Phi^c(a) . \label{eq: efficient concentration SSQ, for writing}
\end{align}
Note that one can obtain this from Theorem \ref{thm: SSQ Tail bound, first regime}
using the Mills ratio \cite[Section~3.1.2]{gasull2014approximating}, 
\begin{align}
 \frac{105}{91+110a} \leq \sqrt{2\pi}e^{a^2/2}\Phi^c(a) \leq \frac{44}{35+28a}. \label{eq: Mills Ratio}
\end{align}
For $\gamma$ small enough, we have that the upper bound has a Gaussian tail with exact exponent $-a^2/2$. Moreover, as $\gamma\to 0$, the bound in Theorem \ref{thm: SSQ Tail bound, first regime}  implies that $|\mathbb{P}(\tilde{q} > a) - \Phi^c(a)| \to 0$. Thus, this result establishes efficient concentration. We note that a similar efficient concentration result can be obtained for the lower tail $\mathbb{P}(\tilde{q} < -a)$, and thus the two-sided tail $\mathbb{P}(|\tilde{q}| > a)$, by a similar argument for Inequality \eqref{eq: Tail bound from Wasserstein-$p$ distance, lower tail}. We choose to present the one-sided tail for simplicity.

Even though the pre-exponent factor $a^3$ in the tail decay is probably not tight, the exponential factor $-a^2/2$  is tight due to the discrete nature of 
 $\tilde{q}$.   To see this, note that  $\tilde{q}$ is lattice distributed on $\{\sqrt{\gamma/\lambda} \cdot k, k \in \mathbb{Z}\}$, and consider the value of $\mathbb{P}(\tilde{q} > a)$ between two adjacent lattice points. There exists a point $a$ in this lattice such that,
\begin{align*}
    \left|\mathbb{P}(\tilde{q} > a) - \mathbb{P}(Z > a)\right| &\geq \frac{1}{2} \mathbb{P}(Z \in (a - \frac{\sqrt{\gamma}}{\sqrt{\lambda}} , a)) \geq \frac{1}{2\sqrt{2\pi}}\cdot \frac{\sqrt{\gamma}}{\sqrt{\lambda}} \exp(-a^2/2). 
\end{align*}
This argument also suggests that  the rate of convergence $\sqrt{\gamma}\ln(1/\gamma)$ 
is sub-optimal at most by a logarithmic factor. We obtain all these results by proving a
Wasserstein-$p$ distance bound between $\tilde{q}$ and standard Gaussian.

We emphasize that the tail bound in Theorem \ref{thm: SSQ Tail bound, first regime} is \textit{non-uniform} in deviation $a$.
By contrast, a bound on the Kolmogorov distance between  $\tilde{q}$ and a standard Gaussian would provide a \textit{uniform} upper bound on $\left|\mathbb{P}(\tilde{q} > a) - \mathbb{P}(Z > a)\right|$ that does not decays with $a$ and is thus looser for large $a$.

Note that up to this point we focus on the constant-deviation regime, where $a$ is fixed while $\gamma$ is taken sufficiently small, and we derive explicit finite-$\gamma$ error bounds. So the efficient concentration result \eqref{eq: efficient concentration SSQ, for writing} holds only for $a$ up to some threshold $a_{max}$. Our next target is to understand, in the other extreme, the pre-limit tail for the classical large deviation principle, i.e., when $a$ grows to infinity while $\gamma$ is fixed. Such large deviation results fall under the regime of Theorem \ref{thm: SSQ Tail bound, fourth regime} and the Gaussian tail no longer holds. Instead, we see the tail decays in sub-Poisson style.


\paragraph{Large Deviation} 
Theorem \ref{thm: SSQ Tail bound, fourth regime} presents upper and lower bounds in the large deviation regime when 
$a_\gamma = \Theta(1/\gamma^{\delta})$ with
$\delta > 1/2$, including the case when $a_\gamma \to \infty$ as $\gamma$ is fixed. 
We present two upper bounds on the tail $\mathbb{P}(\tilde{q}>a_\gamma)$ that are obtained via different methods. One is via Wasserstein-$p$ distance bound while the other is via transform method (see Section \ref{sec: proof sketch for tail bounds, SSQ}). Both upper bounds have the same order and are matched by a lower bound that is order wise tight. We include both to offer a more explicit and readily computable upper bound, and to demonstrate the power of Wasserstein-$p$ distance bounds in deriving tail bounds.
In essence, we have an exponent that is sub-Poisson decay in $a_\gamma$, as
\begin{align*}
    \mathbb{P}(\tilde{q}>a_\gamma) =\exp\left(-\Theta \big(\frac{a_{\gamma}}{\sqrt{\gamma}}\ln (a_\gamma\sqrt{\gamma})\big) \right)
\end{align*}

As mentioned in Introduction, in large deviation, abandonment dominates service completions, so the system behaves like an $M/M/\infty$ queue with rates $\lambda$ and $\gamma$, yielding the sub-Poisson tail mirroring its Poisson($\lambda/\gamma$) stationary distribution.

Given the behaviors of tail in two extreme regimes, which include $\gamma\to 0$ with $a$ constant and $a\to \infty$ with $\gamma$ constant, we next bridge these two regimes by allowing both $a_\gamma$ and $\gamma$ to vary simultaneously in the middle ground. In particular, the moderate deviation regime exhibits sub-Gaussian tail behavior with the exponent smoothly transitions from Gaussian to sub-Poisson as the deviation $a_\gamma$ grows.


\paragraph{Moderate Deviation} Now we consider the setting when the deviation $a_{\gamma}$ grows as the parameter $\gamma$ vanishes, i.e., we pick $a_{\gamma} = \Theta(1/\gamma^\delta)$ for some $\delta>0$. 
Theorem~\ref{thm: SSQ Tail bound, second regime} continues to provide an absolute error bound between $\mathbb{P}(\tilde{q} > a_{\gamma})$ and $\Phi^c(a_{\gamma})$, in line with Theorem~\ref{thm: SSQ Tail bound, first regime} when $ \delta\in(0,1/2-\alpha)$. 
In particular, we still have a Gaussian tail in the pre-limit even though the exponent is now not a simple $-x^2/2$ but also has dependence on $\gamma$. 

Unlike the constant deviation case, if we consider the limit as $\gamma \to 0$, note that the tail of the limiting Gaussian also goes to zero since the deviation $a_{\gamma}$ depends on $\gamma$. So, here it is more appropriate to think of convergence in terms of $  \lim_{\gamma \to 0} \frac{\mathbb{P}(\tilde{q} > a_{\gamma})}{\mathbb{P}(Z > a_{\gamma})} {\longrightarrow}1$ instead of $\lim_{\gamma \to 0}\left|\,\mathbb{P}(\tilde{q} > a_{\gamma}) - \mathbb{P}(Z > a_{\gamma})\,\right|$. From Theorem~\ref{thm: SSQ Tail bound, second regime} and using the Mills ratio for the tail of standard Gaussian, we have  that 
\begin{align}
    \left|\, \frac{\mathbb{P}(\tilde{q} > a_{\gamma})}{\mathbb{P}(Z > a_{\gamma})} - 1 \right| \leq \mathcal{O}(a_\gamma^3\sqrt{\gamma})\exp(\mathcal{O}(a_\gamma^3\sqrt{\gamma}))\label{eq: efficient concentration for SSQ, second regime, for writing}
\end{align}

Thus, we have that as $\gamma\to0$, the tail bound of scaled queue lengths converges to that of Gaussian in the sense $  \lim_{\gamma \to 0} \frac{\mathbb{P}(\tilde{q} > a_{\gamma})}{\mathbb{P}(Z > a_{\gamma})} {\longrightarrow}1$ if $\delta\in[0,\min \{1/2 - \alpha, 1/6\})$.
Note that we need $\delta <1/6$ due to the exponent term $a^3$. The right hand side of \eqref{eq: efficient concentration for SSQ, second regime, for writing} has $\mathcal{O}(a_\gamma^3\sqrt{\gamma})$ which is of order $\mathcal{O}({\gamma}^{\frac{1}{2}-3\delta})$, and it goes to zero only when $\delta<1/6$.
Thus, we develop efficient concentration in the regime $\delta\in[0,\min \{1/2 - \alpha, 1/6\})$. 
Also note that within this regime, the rate of convergence to the limiting Gaussian is now much slower than $\sqrt{\gamma}\ln(1/\gamma) $ that we saw in the constant deviation regime. 
Methodologically, these results 
are again obtained via Wasserstein-$p$ distance bound as in the constant deviation regime.

Meanwhile, in the regime when $\delta>1/6$, the bound \eqref{eq: efficient concentration for SSQ, second regime, for writing} does not provide a nontrivial lower bound. So, in Theorem \ref{thm: SSQ Tail bound, second regime}, we present a lower bound of the form 
\begin{align*}
\mathbb{P}(\tilde{q} >a_\gamma)\geq \Omega\left(\exp (-a_\gamma^2/2-\mathcal{O}(a_\gamma^3\sqrt{\gamma}))\right).
\end{align*}
This lower bound is applicable for the entire regime $\delta \in (0,1/2)$, and is obtained using the transform method to complement the Wasserstein-$p$ distance method.






Continuing further, for $\delta\in[1/2-\alpha,1/2]$, 
Theorem~\ref{thm: SSQ Tail bound, third regime} provides sub-Gaussian upper and lower bounds. Note that while one has Gaussian-type tails here, unlike the previous regimes, we do not have a limit theorem of ratio converging to $1$. Again we obtain two upper bounds that are both of the same order, similar to large deviation regime, and close it with a lower bound that is exponentially tight. 

The whole moderate regime serves as a smooth transition in the exponent of tail from Gaussian $a^2$ for constant deviation to sub-Poisson $\frac{a}{\sqrt{\gamma}}\log(1+a\sqrt{\gamma})$ for large deviation. This transition is smooth in the sense that when $\delta=1/2$, the results from both the moderate and large deviations are applicable and they have the same scaling. 



We have now covered all regimes of deviation $a_\gamma$ as a function of $\gamma$ and have depicted the full picture of tail bounds for SSQ in the heavily overloaded regime as in Figure \ref{fig: Phase Transition Diagram}. We conclude this section by emphasizing the above pre-limit tail bounds in Theorem \ref{thm: SSQ Tail bound} can directly imply the limit theorems in Proposition \ref{cor: Deviation Principle for M/M/1+M}.

\subsection{Wasserstein-$p$ Distance for SSQ} \label{sec: Wasserstein-$p$ Distance for M/M/1+M}
In above section, we presented tail bounds for SSQ in heavily overloaded regime. The key ingredient to achieve efficient concentration in Theorem \ref{thm: SSQ Tail bound, first regime} is via bounding Wasserstein-$p$ distance. Such connection will be established formally in Section \ref{sec: survey of methods} and with more details in Section \ref{sec: proof sketch for tail bounds, SSQ}. In this section, we first present our results for Wasserstein-$p$ distance bounds for SSQ.

\begin{theorem} \label{thm: Gaussian Wasserstein-$p$ upper bound for M/M/1+M}
    Consider SSQ in heavily overloaded regime \ref{ass:heavy_overload} and $\gamma\leq \gamma_0$, \eqref{eq: gamma assumption for SSQ W-p}, the Wasserstein-$p$ distance, between law $\mathcal{L}(\tilde{q})$ and standard normal distribution $\mathcal{L}(Z):=\mathcal{N}(0,1)$ satisfies:
\begin{subnumcases}{\mathcal{W}_p(\mathcal{L}(\tilde{q}), \mathcal{L}(Z)) \leq}
    D_{1,\lambda,\mu} \cdot p\sqrt{\gamma} & $p \in [1, 1/\gamma^{1-2\alpha-2\epsilon}]$ \label{eq: Stein Wasserstein-$p$ upper p's function}\\
    D_{2,\lambda,\mu} \cdot \sqrt{p} & $p \in (1/\gamma^{1-2\alpha}, D_{4,\lambda,\mu}/\gamma)$ \label{eq: Triangle Wasserstein-$p$ upper p's function, midlle}\\
    D_{3,\lambda,\mu} \cdot \frac{p\sqrt{ \gamma}}{\log(1+\gamma p/\lambda)} & $p \in [D_{4,\lambda,\mu}/\gamma, \infty)$ \label{eq: Triangle Wasserstein-$p$ upper p's function}
\end{subnumcases}
\begin{subnumcases}{\mathcal{W}_p(\mathcal{L}(\tilde{q}), \mathcal{L}(Z)) \geq}
    C'_{7, \lambda, \mu} \cdot \gamma \exp\left(-\frac{\lambda/\mu - \ln(\lambda/\mu) -1 }{p\gamma/\mu}\right) & $p \in [1, D_{5,\lambda,\mu}
    /\gamma)$ \label{eq: Optimal Coupling Wasserstein-$p$ lower bound p's function}\\
    D_{4,\lambda,\mu}' \cdot \frac{p\sqrt{\gamma}}{\log(1+\gamma p/\lambda)} & $p \in [D_{5,\lambda,\mu}/\gamma, \infty)$\label{eq: Triangle Wasserstein-$p$ lower bound p's function}
\end{subnumcases}

Where $\epsilon \in (0,1/2-\alpha)$ is any constants. All other constants involved are defined explicitly in \eqref{eq: constants for SSQ Wasserstein-$p$ bounds}, \eqref{eq: Wasserstein-$p$ bound, first regime}, \eqref{eq: Wasserstein-$p$ bound, second regime}, \eqref{eq: Wasserstein-$p$ bound, third regime} and \eqref{eq: D_4, lambda, mu SSQ}.
\end{theorem}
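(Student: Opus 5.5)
Three of the five bounds come from elementary moment comparisons, one from Stein's method, and one from a coupling lower bound. Throughout I write $\hat q = q-(\lambda-\mu)/\gamma$ and $\tilde q=\sqrt{\gamma/\lambda}\,\hat q$.

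\emph{Upper bound for small $p$.} For \eqref{eq: Stein Wasserstein-$p$ upper p's function} I use a Stein-type argument tailored to $\mathcal{W}_p$. Away from the boundary $\{q=0\}$, the generator \eqref{eq: generator of SSQ} acting on a smooth $f$ of $\tilde q$ expands as
\[
\mathcal{L}_{\mathrm{SSQ}} f(\tilde q)\approx \tfrac{\gamma}{\lambda}\big(f''(\tilde q)-\tilde q f'(\tilde q)\big)\cdot(\text{const}),
\]
since the drift $\lambda-\mu-\gamma q$ equals $-\gamma\hat q$ and the second-order coefficient is $(\lambda+\mu+\gamma q)/2\approx\lambda$. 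This is, up to a factor of $\gamma$, the Ornstein--Uhlenbeck generator. Stationarity gives $\mathbb{E}[\mathcal{L}_{\mathrm{SSQ}} f(\tilde q)]=0$.

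I then follow the Ledoux--Nourdin--Peccati route through a Stein kernel, or alternatively the Bonis/Fathi OU-semigroup interpolation. The plan is to bound
\[
\mathcal{W}_p(\tilde q,Z)\lesssim \|\tau(\tilde q)-1\|_{L^p}+\text{(Taylor remainders in }L^p\text{)},
\]
where $\tau$ is an explicit Stein kernel for the birth--death chain, $\tau(\tilde q)\propto(\lambda+\mu\mathbf 1_{q\ge1}+\gamma q)/(2\lambda)$ plus a lattice correction. This reduces to controlling $\sqrt{\gamma}\|\tilde q\|_{L^p}$, the lattice step $\sqrt{\gamma/\lambda}$ times $p$-dependent constants, and the boundary mass $\mathbb{P}(q=0)$.

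The needed moment bound is $\|\tilde q\|_{L^p}\lesssim\sqrt{p}$. I would obtain it by the drift method with $V(x)=\hat x^{2k}$ for integer $k$, expanding the zero-drift identity and inducting on $k$. The boundary term $\mathbb{P}(q=0)$ is exponentially small: by a Chernoff bound from the MGF it is at most $\exp(-c(\lambda/\mu-1)^2/\gamma)$. Under Assumption \ref{ass:heavy_overload} this exceeds $(\lambda-\mu)/\gamma\gtrsim\gamma^{\alpha-1}$ in the exponent, so it is negligible precisely when $p\le\gamma^{-(1-2\alpha-2\epsilon)}$. That explains the range restriction. The main obstacle is getting the linear-in-$p$ constant $p\sqrt\gamma$ (rather than $p^{3/2}$): the Stein/semigroup bound must be arranged so that the kernel error scales as $\sqrt\gamma\,\|\tilde q\|_{L^p}\sqrt p\sim p\sqrt\gamma$.

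\emph{Upper bounds for larger $p$.} In the middle range I use the triangle inequality: $\mathcal{W}_p\le\|\tilde q\|_{L^p}+\|Z\|_{L^p}$ with the trivial product coupling. Here $\|Z\|_{L^p}\lesssim\sqrt p$. The sub-Gaussian MGF bound \eqref{eq: MGF for q_infty - lambda/mu/gamma, Poisson style} gives $\|\tilde q\|_{L^p}\lesssim\sqrt p$ for $p\lesssim 1/\gamma$, which yields \eqref{eq: Triangle Wasserstein-$p$ upper p's function, midlle}.

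For $p\ge D_{4,\lambda,\mu}/\gamma$, the Poisson-type MGF (comparison with $M/M/\infty$, $\mathrm{Poisson}(\lambda/\gamma)$) gives
\[
\|q\|_{L^p}\lesssim \frac{p}{\log(1+\gamma p/\lambda)}.
\]
Scaling by $\sqrt{\gamma/\lambda}$ gives the first term of \eqref{eq: Triangle Wasserstein-$p$ upper p's function}, which dominates $\sqrt p$ in this range.

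\emph{Lower bounds.} For \eqref{eq: Triangle Wasserstein-$p$ lower bound p's function}, I use the reverse triangle inequality $\mathcal{W}_p\ge\|\tilde q\|_{L^p}-\|Z\|_{L^p}$. The transform-method lower bound of Theorem \ref{thm: SSQ Tail bound, fourth regime} shows that $\|\tilde q\|_{L^p}$ is of order $p\sqrt\gamma/\log(1+\gamma p/\lambda)$, which beats $\sqrt p$ once $p\gtrsim 1/\gamma$.

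For \eqref{eq: Optimal Coupling Wasserstein-$p$ lower bound p's function}, I exploit that $\tilde q$ has an atom at $-\sqrt{\gamma/\lambda}(\lambda-\mu)/\gamma$. Under any coupling, the mass $\mathbb{P}(q=0)$ must be transported to Gaussian mass, and a neighborhood of width $\sim\gamma$ carries Gaussian probability at most $O(\gamma)\cdot\text{density}$. This gives
\[
\mathcal{W}_p^p\gtrsim \gamma^p\,\mathbb{P}(q=0),
\]
hence $\mathcal{W}_p\gtrsim\gamma\,\mathbb{P}(q=0)^{1/p}$. Finally, $\mathbb{P}(q=0)$ is computed exactly from the birth--death product formula via Stirling's formula: it is $\approx\exp(-(\lambda/\mu-\ln(\lambda/\mu)-1)\mu/\gamma)$. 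Taking the $1/p$ power yields the stated exponent.
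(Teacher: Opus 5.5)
Your lower bound for $p<D_{5,\lambda,\mu}/\gamma$ relies on a step that fails.

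The atom of $\tilde q$ sits at $x_0:=-(\lambda-\mu)/\sqrt{\lambda\gamma}$. Let $\phi$ be the standard normal density; the Gaussian mass within distance $\gamma$ of $x_0$ is about $2\gamma\phi(x_0)$. With $r=\lambda/\mu$ we have $x_0^2/2=\frac{\mu}{\gamma}\frac{(r-1)^2}{2r}$, and your Chernoff bound gives $\mathbb{P}(q=0)\le e^{-\frac{\mu}{\gamma}(r-1-\ln r)}$. Hence
\[
\frac{\mathbb{P}(q=0)}{\gamma\,\phi(x_0)}\le \frac{\sqrt{2\pi}}{\gamma}\exp\Big(-\frac{\mu}{\gamma}\Big[r-1-\ln r-\frac{(r-1)^2}{2r}\Big]\Big),
\]
and the bracket is strictly positive for $r>1$. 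For fixed overload ($\alpha=0$) the right side tends to $0$. So the atom is tiny compared with the Gaussian mass right next to it, and a ``mass in a width-$\gamma$ window'' argument forces no displacement at all.

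The real obstruction is one-sided: $\tilde q\ge x_0$ almost surely, while $\mathcal N(0,1)$ puts mass $\Phi(x_0)\gg\mathbb{P}(q=0)$ to the left of $x_0$. The paper exploits this through the quantile coupling, which is optimal in one dimension. For $u<c\,\mathbb{P}(q=0)$ the quantile of $\tilde q$ equals $x_0$, and the \emph{upper} bound on $\mathbb{P}(q=0)$ plus a Gaussian quantile inequality place $\Phi^{-1}(u)$ at least order $\sqrt\gamma$ to the left of $x_0$. This gives $\mathcal W_p\gtrsim\sqrt\gamma\,\mathbb{P}(q=0)^{1/p}$; the lower bound $\mathbb{P}(q=0)\gtrsim\sqrt\gamma\,e^{-\frac{\mu}{\gamma}(r-1-\ln r)}$ then yields $\gamma e^{-\frac{\mu}{p\gamma}(r-1-\ln r)}$.

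A shortcut: every coupling has $|\tilde q-Z|\ge(x_0-Z)_+$, so $\mathcal W_p\ge\|(x_0-Z)_+\|_{L^p}\gtrsim|x_0|^{-1}\Phi(x_0)^{1/p}$. Here $|x_0|^{-1}\ge\sqrt{\gamma/\lambda}$, $\Phi(x_0)\gtrsim\sqrt\gamma\,e^{-x_0^2/2}$, and $x_0^2/2\le\frac{\mu}{\gamma}(r-1-\ln r)$.

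Note also that the displacement scale must be $\sqrt\gamma$, not $\gamma$. Since $\mathbb{P}(q=0)\asymp\sqrt\gamma\,e^{-\frac{\mu}{\gamma}(r-1-\ln r)}$, your $\gamma\,\mathbb{P}(q=0)^{1/p}$ only gives $\gamma^{1+1/(2p)}e^{-\frac{\mu}{p\gamma}(r-1-\ln r)}$. That falls short of the stated bound for bounded $p$.

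The small-$p$ upper bound is where the real work lies, and your sketch leaves it open, as you acknowledge. The Ledoux--Nourdin--Peccati route is unavailable as stated, because a lattice law has no Stein kernel: taking $\varphi$ vanishing on the lattice with prescribed $\varphi'$ there forces $\tau\equiv0$ on the support, which is impossible. The ``lattice correction'' you postpone is exactly the infinite tail of Kramers--Moyal coefficients $a_k$, $k\ge3$, of the scaled generator.

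The paper handles this with the Bonis-type OU-flow bound (your ``alternative''), cutting the time integral at $t_0=-\tfrac12\log(1-p\gamma/\lambda)$. The part $t<t_0$ costs $g_0(t_0,p)\lesssim\|Z\|_{L^p}\sqrt{p\gamma}\lesssim p\sqrt\gamma$. For $t\ge t_0$, the bound $\|h_k(Z)\|_{L^p}\le p^{k/2}\sqrt{k!}$ makes the $k\ge3$ series summable, and everything reduces to
\[
\mathcal W_p\lesssim p\sqrt\gamma+\gamma^{-1/2}\mathbb{P}(q=0)^{1/p}+\gamma\sqrt p\,\|\hat q\|_{L^p},\qquad p\le\lambda/(2\gamma).
\]
This explicit balancing is what yields the linear-in-$p$ rate; without it the first upper bound is not established.

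A minor slip: the Chernoff exponent $\frac{\mu}{\gamma}(r-1-\ln r)$ is always below $(\lambda-\mu)/\gamma$, and in the worst case it is only of order $\gamma^{2\alpha-1}$. It is this $\gamma^{2\alpha-1}$ that, after the $1/p$-th power and the factor $\gamma^{-1/2}$, forces $p\le\gamma^{-(1-2\alpha-2\epsilon)}$.

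Your remaining steps match the paper in substance: triangle-inequality upper bounds for larger $p$, and the reverse triangle inequality with a change-of-measure lower bound on $\|\tilde q\|_{L^p}$. For the largest range, Poisson domination via $M/M/\infty$ is a slightly simpler route to the $p/\log(1+\gamma p/\lambda)$ moment bound.
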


For fixed $p$, upper bound \eqref{eq: Stein Wasserstein-$p$ upper p's function} shows that $\lim_{\gamma\to 0} W_p(\mathcal{L}(\tilde{q}), \mathcal{L}(Z)) =0$. From \cite[Theorem~6.9]{Villani2009}, convergence in Wasserstein-$p$ metric implies convergence of distributions and convergence up to $p$-th moments. Thus it recovers the heavily overloaded weak convergence result \cite{ward2003diffusion}. It also implies convergence of moments up to order $p$. Moreover, the bound \eqref{eq: Stein Wasserstein-$p$ upper p's function} yields explicit pre-limit convergence rate $\mathcal{O}(\sqrt{\gamma})$ for $p$ fixed in Wasserstein-$p$ metric. Therefore, it implies convergence rate for the more commonly used metrics such as Wasserstein-$1$ distance in queueing literature (see Section \ref{sec: literature review} for survey of queueing literature). 

We highlight that when $p$ is fixed and $\gamma$ small enough, upper bound \eqref{eq: Stein Wasserstein-$p$ upper p's function} is of order $\mathcal{O}(p\sqrt{\gamma})$. This exact dependency $\mathcal{O}(p\sqrt{\gamma})$ on both $(p, \gamma)$ is crucial to achieve Theorem \ref{thm: SSQ Tail bound, first regime} and \ref{thm: SSQ Tail bound, second regime}. In order to achieve the Gaussian-decay in efficient concentration, one will need $\|\tilde{q}-Z\|_{L^p}$ to scale as $\mathcal{O}(\sqrt{p})$. In the first regime of $p$, the bound \eqref{eq: Stein Wasserstein-$p$ upper p's function} provides such $\mathcal{O}(\sqrt{p})$ since $p< 1/\gamma$.
 Meanwhile, to ensure a convergence rate to Gaussian in efficient concentration whose dominating order is $\mathcal{O}(\sqrt{\gamma})$ (ignoring the logarithmic factor), one needs such $\mathcal{O}(\sqrt{\gamma})$ dependency on the limit parameter. Thus, the order $\mathcal{O}(p\sqrt{\gamma})$ provides the right balance between $p$ and $\gamma$ for $p$ up to $1/\gamma^{1/2- \alpha}$, which is the regime of $p$ we will use to achieve efficient concentration in Theorem \ref{thm: SSQ Tail bound, first regime} and \ref{thm: SSQ Tail bound, second regime}.
Similarly, in CLT literature, a dependency $\mathcal{O}(p/\sqrt{n})$ in Wasserstein-$p$ bounds is established in \cite{fang2022wasserstein,austern2022efficient} for sums of $n$ weakly dependent random variables, which is also used to achieve efficient concentration results. A more straightforward connection between this joint dependency on $(p, \gamma)$ and efficient concentration will be provided in Section \ref{sec: survey of methods}.

Meanwhile, the Wasserstein-$p$ bounds translate directly into moment growth bounds for $\tilde{q}$. In the regime $p\lesssim 1/\gamma$, our Wasserstein-$p$ bound is $\mathcal{O}(\sqrt{p})$. Combining this with the triangle inequality and $\|Z\|_{L^p} = \mathcal{O}(\sqrt{p})$, we have $\|\tilde{q}\|_{L^p}$ scales as $\mathcal{O}(\sqrt{p})$. This $\sqrt{p}$ growth of $L^p$ norm is exactly what yields the sub-Gaussian exponents in Theorem \ref{thm: SSQ Tail bound, second regime} and \ref{thm: SSQ Tail bound, third regime}. For large $p$, Wasserstein-$p$ bound scales as $\mathcal{O}(p/\log(p))$, suggesting that $\|\tilde{q}\|_{L^p} = \mathcal{O}(p/\log(p))$. This is consistent with the Poisson-like tail behavior in the large deviation regime of Theorem \ref{thm: SSQ Tail bound, fourth regime}. In the proof (see Section \ref{sec: proof sketch for tail bounds, SSQ}), we will choose $p$ as a function of deviation $a$, so these two distinct dependencies on $p$ translate into a three stage behavior, i.e., Gaussian for near constant deviation, sub-Gaussian in the moderate range, and sub-Poisson in the large deviation regime. This aligns with Figure \ref{fig: Phase Transition Diagram} and the philosophy "near-constant is Gaussian, far-from-constant is Poisson".



As stated above, different dependency on $(p,\gamma)$ in different ranges of $p$ is crucial to achieve tight tail bounds in different deviation regimes.
Consequently, we use different techniques here to obtain upper and lower bounds for different $p$ ranges. Upper bound \eqref{eq: Stein Wasserstein-$p$ upper p's function} is derived from Stein's method. \eqref{eq: Triangle Wasserstein-$p$ upper p's function, midlle}, \eqref{eq: Triangle Wasserstein-$p$ upper p's function} and \eqref{eq: Triangle Wasserstein-$p$ lower bound p's function} are obtained via triangle inequality combined with moment control. Lower bound \eqref{eq: Optimal Coupling Wasserstein-$p$ lower bound p's function} is from quantile coupling. 
Note that we achieve the lower bounds in the entire range of $p$ by two different techniques, which allows us to examine the tightness of upper bounds in all regimes.
See Section \ref{sec: proof sketch for Wasserstein-$p$ bounds, SSQ} for more details on the proof techniques and their strengths and limitations.



\subsection{Tail Bounds for Load Balancing with Abandonment: Join-The-Shortest-Queue} \label{sec: Tail bounds for JSQ}
In this section, we study pre-limit tail bounds for the load balancing system under JSQ policy, recalling that its dynamic is described in \eqref{eq: generator for join shortest queue}. We first present the tail bounds for the stationary distribution of the normalized queue length vector. According to the Cramér-Wold Theorem \cite{samanta1989non}, convergence in distribution of random vector $\tilde{\mathbf{q}} \in \mathbb{R}^n$ is equivalent to the convergence of all linear combinations of their components, i.e., $\langle \tilde{\mathbf{q}}, \boldsymbol{\phi} \rangle$ for all unit-length $\boldsymbol{\phi}$. Thus, we study pre-limit tail bounds for the projection of the $\tilde{\mathbf{q}}$ onto an arbitrary test direction $\boldsymbol{\phi} \in \mathbb{R}^n$ with unit norm. Similar to SSQ, we summarize the tail bounds for different regimes of deviation $a$.

\begin{theorem} [Tail Bounds for Load Balancing Policy] \label{thm: JSQ Tail bound} 
Consider a test direction $\boldsymbol{\phi}\in\mathbb{R}^n$ with unit norm $\| \boldsymbol{\phi} \| = 1$. Under the heavily overloaded regime \ref{ass:heavy_overload} with $\gamma\leq\gamma_2$ as defined in \eqref{eq: gamma assumption for JSQ tail bound}, the following concentration inequalities hold for the normalized queue length vector $\tilde{\mathbf{q}}:=n\sqrt{\gamma}/\sqrt{\lambda}(\mathbf{q}-\frac{\lambda-\mu}{n\gamma}\boldsymbol{1})$. We distinguish two cases depending on the alignment of $\boldsymbol{\phi}$ with the all-ones vector $\boldsymbol{1}$. When $\langle \boldsymbol{\phi},\boldsymbol{1}\rangle \neq 0$, we have the following results.
    \begin{enumerate}[label=(\alph*), ref=\thetheorem~(\alph*)]
        \item \label{thm: JSQ Tail bound, first regime}
        For fixed deviation $a$ and $\gamma \leq \gamma_a$ (see \eqref{eq: gamma_a definition JSQ first regime}), there exists a constant $B_{1,\lambda,\mu,n}$ (see \eqref{eq: B1, B2 for JSQ tail bound}), independent of $\gamma$ and $a$, such that:
        \begin{align*}
            \left| \mathbb{P}\big( \langle \tilde{\mathbf{q}}, \boldsymbol{\phi} \rangle > a \big) 
            - \mathbb{P}\big( Z \cdot \langle \boldsymbol{\phi}, \boldsymbol{1} \rangle > a \big) \right|
            &\leq \frac{e^3 B_{1,\lambda,\mu,n}}{\sqrt{2\pi} |\langle \boldsymbol{\phi}, \boldsymbol{1} \rangle|} \cdot \sqrt{\gamma} \left[ \left( \frac{a^2}{2 \langle \boldsymbol{\phi}, \boldsymbol{1} \rangle^2} + \ln \frac{1}{\sqrt{\gamma}} \right)^2 + \sqrt{2\pi} \right] \\
            &\qquad \cdot \exp\left( -\frac{a^2}{2 \langle \boldsymbol{\phi}, \boldsymbol{1} \rangle^2} \right)
        \end{align*}
        \item \label{thm: JSQ Tail bound, second regime}
        For deviations $2 \leq a_\gamma \leq E_{Tail,1,u}/\gamma^\delta$, with $\delta \leq \min\{1/4-\alpha/2,\,1/6\}$ (see \eqref{eq: a's regime for MDP JSQ}), the following bound holds:
        \begin{align*}
            &\left|\,\mathbb{P}\big(\langle \tilde{\mathbf{q}},\boldsymbol{\phi}\rangle > a_\gamma\big) 
            - \mathbb{P}\big(Z\cdot\langle\boldsymbol{\phi},\boldsymbol{1}\rangle > a_\gamma\big)\,\right| \\
            &\quad\leq \frac{e B_{1,\lambda,\mu,n}}{\sqrt{2\pi}\,|\langle\boldsymbol{\phi},\boldsymbol{1}\rangle|}\cdot \sqrt{\gamma}\,
            \left[\left(\frac{a_\gamma^2}{2\langle\boldsymbol{\phi},\boldsymbol{1}\rangle^2} + \ln\frac{1}{\sqrt{\gamma}}\right)^2 + \sqrt{2\pi}\right] \\
            &\qquad\qquad \cdot \exp\left\{
                -\frac{a_\gamma^2}{2\langle\boldsymbol{\phi},\boldsymbol{1}\rangle^2}
                \left[1 - e B_{1,\lambda,\mu,n}\left(
                    \frac{a_\gamma^4}{2\langle\boldsymbol{\phi},\boldsymbol{1}\rangle^4}\sqrt{\gamma}
                    + \frac{2\sqrt{\gamma}\ln^2(1/\sqrt{\gamma})}{a_\gamma}
                \right)\right]
            \right\}
        \end{align*}
        \item \label{thm: JSQ Tail bound, third regime}
        For deviation $a_\gamma = 1/\gamma^\delta$, $\delta \in [\min\{1/4-\alpha/2,\,1/6\},\,\infty)$, there exists a constant $c$ (see \eqref{eq: constant c in subweibull regime, JSQ}) such that:
        \begin{align*}
            \mathbb{P}\big(\langle \tilde{\mathbf{q}},\boldsymbol{\phi}\rangle > a_\gamma\big)
             \leq  \mathbb{P}\big(Z\cdot\langle\boldsymbol{\phi},\boldsymbol{1}\rangle > a_\gamma\big) +
           \frac{e}{2\sqrt{2\pi}}\,a_\gamma\,\exp\left(-\frac{a_\gamma^2}{8}\right)
            + \exp\left(-c\,a_\gamma^{\min\{1/(4\delta)+1/2,\,2\}}\right)
        \end{align*}
        \end{enumerate}
        Alternatively, if $\langle \boldsymbol{\phi},\boldsymbol{1}\rangle = 0$, then the following holds. \begin{enumerate}[label=(\alph*), ref=\thetheorem~(\alph*),start=4 ]
            \item \label{thm: JSQ Tail bound, orthogonal case} For $a_\gamma \geq E_{Tail, 2, u} \cdot \sqrt{\gamma}$ (see \eqref{eq: a's regime for worst case direction JSQ}), there exists constant $G_{\lambda,\mu,n}$ defined in \eqref{eq: tail bound for worst case direction}, such that:
            \begin{align*}
                \mathbb{P}(\langle \tilde{\mathbf{q}},\boldsymbol{\phi}\rangle > a_\gamma) &\leq \exp\left( -G_{\lambda,\mu,n} a_\gamma^{1/2}/\gamma^{1/4}\right)
            \end{align*}
        \end{enumerate}
\end{theorem}
\noindent 

\paragraph{Constant $\&$ Moderate Deviation} First let's consider constant and moderate deviation regimes in Theorem \ref{thm: JSQ Tail bound, first regime} and \ref{thm: JSQ Tail bound, second regime}. The upper bounds in this part compare the tail probability of $\tilde{\mathbf{q}}$ and that of a Gaussian random variable times all-one vector $Z \cdot \langle \boldsymbol{\phi}, \boldsymbol{1} \rangle$. It is directly implied by the bounds and Mills ratio \eqref{eq: Mills Ratio}that
\begin{align*}
    \mathbb{P}(\langle \tilde{\mathbf{q}}, \boldsymbol{\phi} \rangle > a_\gamma) \leq
    [1+ \mathcal{O}(a_\gamma^5\sqrt{\gamma})\exp(\mathcal{O}(a_\gamma^5\sqrt{\gamma}))]\,\Phi^c(\frac{a_\gamma}{\langle \boldsymbol{\phi},\boldsymbol{1}\rangle}).
\end{align*}
Such Gaussian decay w.r.t $a$ and rate of convergence w.r.t $\gamma$ in Theorem \ref{thm: JSQ Tail bound, first regime} establish the efficient concentration inequality in \eqref{eq: main contribution JSQ constant}. 
For the absolute difference in \ref{thm: JSQ Tail bound, first regime}, similar to SSQ, due to lattice distribution of $\tilde{\mathbf{q}}$, the Gaussian exponent $\exp(-a^2/(2\langle \boldsymbol{\phi},\boldsymbol{1}\rangle^2))$ is necessary and tight in terms of deviation $a$. In terms of $\gamma$, the decaying $\sqrt{\gamma}\ln^2(1/\gamma)$ gives convergence rate towards a single Gaussian times $\boldsymbol{1}$. We note that the convergence rate is between $\tilde{\mathbf{q}}$ and $Z\cdot \boldsymbol{1}$ and thus it provides error bound for state-space collapse (SSC) phenomenon under JSQ policy. This SSC phenomenon indicates that under limit, as $\gamma \to 0$, all components of $\tilde{\mathbf{q}}$ are approximately equal.

Now we consider the moderate deviation regime where $a_\gamma$ grows with vanishing $\gamma$, i.e., we pick $a_{\gamma} = \Theta(1/\gamma^\delta)$ for some $\delta>0$. The SSC and the efficient concentration inequality can be extended from \ref{thm: JSQ Tail bound, first regime} to moderate deviation \ref{thm: JSQ Tail bound, second regime}, with $\delta$ restricted to $[0, \min\{1/4 - \alpha/2, 1/10\})$. Similar to SSQ, we derive relative ratio here,
\begin{align*}
    \left|\, \frac{\mathbb{P}(\langle \tilde{\mathbf{q}} , \boldsymbol{\phi} \rangle
    > a_{\gamma})}{\mathbb{P}(Z \cdot \langle \boldsymbol{\phi}, \boldsymbol{1} \rangle
    > a_{\gamma})} - 1 \right| \leq \mathcal{O}(a_\gamma^5\sqrt{\gamma})\exp(\mathcal{O}(a_\gamma^5\sqrt{\gamma})).
\end{align*}
Taking limit as $\gamma \to 0$, we have that the tail probability of $\tilde{\mathbf{q}}$ converges to that of $Z \cdot \langle \boldsymbol{\phi}, \boldsymbol{1} \rangle$ for all deviations $a_\gamma = \Theta(1/\gamma^\delta)$ with $\delta$ in the above range. This establishes the deviation principle for JSQ (see limit results for JSQ in Table \ref{tab:tail-bounds-for-all}).
At boundary, we need $\delta\leq1/10$ because of the pre-exponent term in \ref{thm: JSQ Tail bound, second regime}, which requires $a_\gamma^5 \sqrt{\gamma} = o(1)$ to ensure the pre-exponent does not diverge. 



\paragraph{Large Deviation}
In the large-deviation regime, SSC has weaker effect on tails. To see this, recall the definition of $\mathbf{q}_\perp$ and $\mathbf{q}_\parallel$ as the perpendicular and parallel components of $\mathbf{q}$ with respect to $\boldsymbol{1}$.
 In large deviation, $\mathbf{q}_\perp$ contributes a heavy (sub-Weibull) tail, i.e., \ $\exp(-c\,a^{1/2}/\gamma^{1/4})$, whereas $\mathbf{q}_\parallel$ remains sub-Poisson decay. The sub-Weibull term dominates and masks the Poisson decay. Therefore, unlike SSQ, the exponent in large deviation gradually transitions from sub-Gaussian to sub-Weibull. Theorem \ref{thm: JSQ Tail bound, third regime} depicts this smooth transition,
 \begin{align*}
    \mathbb{P}(\langle \tilde{\mathbf{q}},\boldsymbol{\phi}\rangle > a_\gamma) \leq \exp\!\Big(-\Omega\!\Big(a_\gamma^{\min\!\big\{\tfrac{1}{4\delta}+\tfrac{1}{2},\,2\big\}}\Big)\Big),
 \end{align*}
 for $\delta \in [\min\{1/4 - \alpha/2, 1/6\}, \infty)$.
 
 This discrepancy between parallel and perpendicular components stems from the form of SSC we obtain for JSQ (Theorem~\ref{thm: SSC JSQ}). In particular, Theorem~\ref{thm: SSC JSQ} controls $[\mathbb{E}\|\mathbf{q}_\perp\|^{p}]^{1/p}$ only at order $\mathcal{O}(p^{2})$, in contrast with the $\mathcal{O}(p/\log p)$ behavior along the parallel component $\|\mathbf{q}_\parallel\|$. Another feature for large deviation, compared with first two regimes, is that we currently lack order-wise tight lower bounds, due to the lack of matching lower bounds on $\mathbb{E}\|\mathbf{q}_\perp\|^{p}$. Additionally, for the phase transition in the upper bound, compared with SSQ, the phase transition for exponent from sub-Gaussian to sub-Weibull, occurs earlier for JSQ, at $\delta= \tfrac{1}{4}-\tfrac{\alpha}{2}$ (vs.\ $\delta=\tfrac{1}{2}-\alpha$ for SSQ). 
 We believe that the sub-Weibull tail and the phase transition are not tight and leave them as open questions. Nevertheless, Theorem~\ref{thm: JSQ Tail bound} is the first to provide pre-limit tail bounds for JSQ in the heavily overloaded regime with all ranges of deviations.


According to the discrepancy between $\mathbf{q}_\perp$ and $\mathbf{q}_\parallel$, there is a worst-/best-case phenomenon regarding the test vector $\boldsymbol{\phi}$ in large deviation. The more this test vector $\boldsymbol{\phi}$ points towards $\boldsymbol{1}$, the tail behaves more like sub-Poisson. Conversely, the closer $\boldsymbol{\phi}$ is to being orthogonal to $\boldsymbol{1}$, the tail behaves more like sub-Weibull. Particularly when $\boldsymbol{\phi} \perp \boldsymbol{1}$, the tail bound  is purely sub-Weibull, as shown in Theorem \ref{thm: JSQ Tail bound, orthogonal case}, which holds for any deviation scale $a_\gamma$ given $\gamma$ a small quantity. 
Meanwhile, the sub-Weibull part of bound in Theorem \ref{thm: JSQ Tail bound, third regime}, i.e., $\exp(-\Omega(a_\gamma^{\min\{1/(4\delta)+1/2,2\}}))$, is invariant of $\boldsymbol{\phi}$, and is overly conservative when $\boldsymbol{\phi}$ is close to $\boldsymbol{1}$.
One of our efforts here is to improve the conservative exponent when $\boldsymbol{\phi}$ is mildly deviating from $\boldsymbol{1}$, which exhibits sub-Poisson tail similar to SSQ. 

\begin{proposition}[Refinement for Large Deviation]
    \label{pro: refinement JSQ}
    Under the assumption \ref{ass:heavy_overload}, with $\gamma\leq \gamma_1$ defined in \eqref{eq: gamma assumption for JSQ W-p}, and same notations as in Theorem \ref{thm: JSQ Tail bound}. Let $\boldsymbol{\phi}\in\mathbb{R}^n$ with $\|\boldsymbol{\phi}\|=1$ and $\boldsymbol{\phi}\geq 0$ coordinate-wise. Then, for any $a \geq e^2n\sqrt{\lambda}/\sqrt{\gamma}$, we have
\begin{align*}
    \mathbb{P}(\langle \tilde{\mathbf{q}},\boldsymbol{\phi}\rangle > a) \leq \exp\left( - \left(\frac{1}{\langle \boldsymbol{\phi}, \boldsymbol{1}\rangle} \frac{\sqrt{\lambda}}{2n\sqrt{\gamma}}a + \frac{\lambda-\mu}{n\gamma} \right) \log\left( \frac{1}{\langle \boldsymbol{\phi}, \boldsymbol{1}\rangle} \frac{\sqrt{\gamma}}{n\sqrt{\lambda}}a + \frac{\lambda-\mu}{n\sqrt{\lambda\gamma}}  \right) \right).
\end{align*}
\end{proposition}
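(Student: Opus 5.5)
The plan is to reduce the directional tail to a tail of the total queue length $q_\Sigma=\sum_i q_i$, and then control $q_\Sigma$ by a Poisson-type (transform/Chernoff) bound. The relevant dynamics are those of $M/M/\infty$: the total population gains jobs at rate $\lambda$ and loses jobs at rate $\sum_i(\mu_i\mathbf{1}_{\{q_i>0\}}+\gamma q_i)\ge \gamma q_\Sigma$.

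\textbf{Step 1 (reduction).} Since $\boldsymbol{\phi}\ge 0$ and $\mathbf{q}\ge 0$, we have $\langle \mathbf{q},\boldsymbol{\phi}\rangle\le \langle\boldsymbol{\phi},\boldsymbol{1}\rangle\max_i q_i\le \langle\boldsymbol{\phi},\boldsymbol{1}\rangle\, q_\Sigma$. Unwinding the normalization $\tilde{\mathbf{q}}=\frac{n\sqrt{\gamma}}{\sqrt{\lambda}}(\mathbf{q}-\frac{\lambda-\mu}{n\gamma}\boldsymbol{1})$, the event $\{\langle\tilde{\mathbf{q}},\boldsymbol{\phi}\rangle>a\}$ is then contained in $\{q_\Sigma > y_a\}$. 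Here $y_a$ is of the form $\frac{1}{\langle\boldsymbol{\phi},\boldsymbol{1}\rangle}\frac{\sqrt{\lambda}}{n\sqrt{\gamma}}a+\frac{\lambda-\mu}{n\gamma}$, possibly after discarding a nonnegative term or halving the first term to make room for later absorption. This is where the factor $1/\langle\boldsymbol{\phi},\boldsymbol{1}\rangle$ and the centering $\frac{\lambda-\mu}{n\gamma}$ in the statement come from. No state space collapse is needed, since we only use the crude bound $\max_i q_i\le q_\Sigma$.

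\textbf{Step 2 (MGF of $q_\Sigma$).} Apply the generator \eqref{eq: generator for join shortest queue} to $f(\mathbf{x})=e^{\theta\sum_i x_i}$ with $\theta>0$. The routing policy disappears, because every arrival increases the sum by one:
\[
\mathcal{L}_{JSQ}f(\mathbf{x})=f(\mathbf{x})\Big[\lambda(e^\theta-1)+\textstyle\sum_i(\mu_i\mathbf{1}_{\{x_i>0\}}+\gamma x_i)(e^{-\theta}-1)\Big].
\]
Dropping the nonpositive service term gives $\mathcal{L}_{JSQ}f(\mathbf{x})\le f(\mathbf{x})[\lambda(e^\theta-1)-\gamma(1-e^{-\theta})x_\Sigma]$. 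Using the stationary zero-drift identity $\mathbb{E}[\mathcal{L}_{JSQ}f(\mathbf{q})]=0$, we obtain $\gamma(1-e^{-\theta})\mathbb{E}[q_\Sigma e^{\theta q_\Sigma}]\le \lambda(e^\theta-1)\mathbb{E}[e^{\theta q_\Sigma}]$, i.e.\ $\frac{d}{d\theta}\ln\mathbb{E}e^{\theta q_\Sigma}\le \frac{\lambda}{\gamma}e^{\theta}$. Integrating from $0$ yields $\mathbb{E}e^{\theta q_\Sigma}\le \exp(\frac{\lambda}{\gamma}(e^\theta-1))$, the Poisson$(\lambda/\gamma)$ MGF, mirroring the SSQ transform argument behind \eqref{eq: MGF for q_infty - lambda/mu/gamma, Poisson style}. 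Finiteness of $\mathbb{E}e^{\theta q_\Sigma}$, needed to justify the zero-drift identity, follows from a truncation argument or from stochastic domination by the stationary $M/M/\infty$ queue, obtained by coupling the two total populations. Alternatively, that domination gives this step outright.

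\textbf{Step 3 (Chernoff and cleanup).} Markov's inequality with the optimal $\theta=\ln(y\gamma/\lambda)$ gives
\[
\mathbb{P}(q_\Sigma>y)\le \exp\!\big(-y\ln\tfrac{y\gamma}{e\lambda}\big)=\exp\!\big(-y\ln\tfrac{y\gamma}{\lambda}+y\big).
\]
The remaining task, which I expect to be the main obstacle (bookkeeping rather than conceptual), is to massage $y\ln(y\gamma/(e\lambda))$ into the stated form. There are two pieces. First, the condition $a\ge e^2 n\sqrt{\lambda}/\sqrt{\gamma}$ ensures $y\gamma/\lambda\ge e^2$, so $\ln(y\gamma/(e\lambda))\ge\frac12\ln(y\gamma/\lambda)$. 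Second, the halving is placed on the $a$-term, and the argument of the logarithm is rewritten in the paper's scaling $\frac{\sqrt{\gamma}}{n\sqrt{\lambda}}\frac{a}{\langle\boldsymbol{\phi},\boldsymbol{1}\rangle}+\frac{\lambda-\mu}{n\sqrt{\lambda\gamma}}$. This rewriting uses monotonicity of the logarithm together with $\gamma\le\gamma_1$ small (so that $\sqrt{\gamma/\lambda}\le1$), which allows the constant in the log to be replaced by a larger one. I would first verify the exact inequality chain among these logarithm arguments, adjusting where the factor $1/2$ is absorbed if the paper's normalization differs.
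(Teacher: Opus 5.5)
Your Steps 1--2 are correct. They reach exactly the estimate the paper has just before its last step. With $y=\frac{1}{\langle\boldsymbol{\phi},\boldsymbol{1}\rangle}\frac{\sqrt{\lambda}}{n\sqrt{\gamma}}a+\frac{\lambda-\mu}{n\gamma}$, both give $\mathbb{P}(\langle\tilde{\mathbf{q}},\boldsymbol{\phi}\rangle>a)\le\exp\big(-y\log\frac{y\gamma}{\lambda}+y-\frac{\lambda}{\gamma}\big)$ whenever $y\gamma/\lambda\ge 1$. The paper gets there through a coordinatewise comparison with independent $M/M/\infty$ queues plus the convexity bound $\sum_i(e^{t\phi_i/\langle\boldsymbol{\phi},\boldsymbol{1}\rangle}-1)\le e^t-1$. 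Your reduction $\langle\mathbf{q},\boldsymbol{\phi}\rangle\le\langle\boldsymbol{\phi},\boldsymbol{1}\rangle q_\Sigma$ needs only domination of the total population, which is what the paper's coupling section actually proves.

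The gap is Step 3, and it is not bookkeeping. First, $a\ge e^2n\sqrt{\lambda}/\sqrt{\gamma}$ only gives $y\gamma/\lambda\ge e^2/\langle\boldsymbol{\phi},\boldsymbol{1}\rangle$. For nonnegative unit $\boldsymbol{\phi}$ one has $1\le\langle\boldsymbol{\phi},\boldsymbol{1}\rangle\le\sqrt{n}$, so $\log(y\gamma/\lambda)\ge 2$ is guaranteed only for coordinate directions; for $\boldsymbol{\phi}=\boldsymbol{1}/\sqrt{n}$ and large $n$ the quantity can even fall below $1$. Second, and decisively, $y\gamma/\lambda=\frac{1}{\langle\boldsymbol{\phi},\boldsymbol{1}\rangle}\frac{\sqrt{\gamma}}{n\sqrt{\lambda}}a+\frac{\lambda-\mu}{n\lambda}$, whereas the statement has $\frac{\lambda-\mu}{n\sqrt{\lambda\gamma}}$ inside the logarithm. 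For $\gamma<\lambda$ that is the larger quantity, so the substitution strengthens the claimed bound; monotonicity of $\log$ runs the other way.

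This cannot be repaired, because the displayed inequality is false for small $\gamma$. Take $\boldsymbol{\phi}=\boldsymbol{1}/\sqrt{n}$, fixed $\lambda=3\mu$ (compatible with the heavy-overload assumption), and $a=e^2n\sqrt{\lambda/\gamma}$. The event becomes $\{q_\Sigma>K/\gamma\}$ with $K=e^2\sqrt{n}\,\lambda+\lambda-\mu$. The paper uses that $q_\Sigma$ stochastically dominates the pooled $M/M/1+M$ queue. That queue satisfies $\pi(k)\ge e^{-\lambda/\gamma}\big(\frac{\lambda}{\mu+\gamma k}\big)^k$. Hence the left side is at least $e^{-C_0/\gamma}$, with $C_0$ depending only on $(\lambda,\mu,n)$. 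The right side, however, equals
\[
\exp\Big(-\frac{1}{\gamma}\Big(\frac{e^2\lambda}{2\sqrt{n}}+\frac{\lambda-\mu}{n}\Big)\log\Big(\frac{e^2}{\sqrt{n}}+\frac{\lambda-\mu}{n\sqrt{\lambda\gamma}}\Big)\Big)=e^{-\Theta(\gamma^{-1}\log(1/\gamma))}.
\]
This is smaller than $e^{-C_0/\gamma}$ once $\gamma$ is small, and the hypothesis $\gamma\le\gamma_1$ permits such $\gamma$. The paper's own proof breaks at the same point: its final step (d), justified only by ``the assumption of this deviation regime'', is precisely this substitution.

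What your argument does prove is a corrected version. Keep the $-\lambda/\gamma$ term from the exact Chernoff exponent; then $\log(y\gamma/\lambda)\ge 2$ together with $\frac{\lambda-\mu}{n\gamma}\le\frac{\lambda}{\gamma}$ lets you place the factor $\frac12$ on the $a$-term alone. The result, valid for $a\ge e^2n\langle\boldsymbol{\phi},\boldsymbol{1}\rangle\sqrt{\lambda/\gamma}$, is
\[
\mathbb{P}(\langle\tilde{\mathbf{q}},\boldsymbol{\phi}\rangle>a)\le\exp\Big(-\Big(\frac{1}{\langle\boldsymbol{\phi},\boldsymbol{1}\rangle}\frac{\sqrt{\lambda}}{2n\sqrt{\gamma}}a+\frac{\lambda-\mu}{n\gamma}\Big)\log\Big(\frac{1}{\langle\boldsymbol{\phi},\boldsymbol{1}\rangle}\frac{\sqrt{\gamma}}{n\sqrt{\lambda}}a+\frac{\lambda-\mu}{n\lambda}\Big)\Big).
\]
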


We finally point out similar arguments can be applied to deviation $a_\gamma <0$, and the tail bound is similar to the positive deviation case. We omit the details here for brevity.

\subsection{State Space Collapse for JSQ}\label{sec: SSC for JSQ}
In this section, we discuss the State Space Collapse (SSC) phenomenon for JSQ model. We bound the $p$-th moment of the perpendicular component $\mathbf{q}_\perp$ of the queue length vector $\mathbf{q}$ towards $\boldsymbol{1}$ independently of $\gamma$. This bound gives key ingredient for bounding Wasserstein-$p$ distance, and thus for tail bounds and formalization of SSC. Recall that SSC refers to the phenomenon that all queues are approximately equal to each other in the limit, and that $\mathbf{q}_\perp$ and $\mathbf{q}_\parallel$ are the perpendicular and parallel components of $\mathbf{q}$ with respect to $\boldsymbol{1}$. 
The following theorem is the $p$-th moment bound of $\|\mathbf{q}_\perp\|$.
\begin{theorem} [State Space Collapse for JSQ]\label{thm: SSC JSQ}
    For JSQ under in heavily overloaded condition \ref{ass:heavy_overload}, the following inequality holds:
    \begin{align*}
        \mathbb{E}[\|\mathbf{q}_{\perp}\|^{p}]^{1/p} &\leq \max\{ E_{n,\lambda,\mu}^{(1)} p^2, E_{n,\lambda,\mu}^{(2)} p \} \leq E_{n,\lambda,\mu} p^2 \end{align*}
    Where $\mathbf{q}_{\perp} = \mathbf{q} - \frac{1}{n}(\sum_{i=1}^n q_{i})\boldsymbol{1}$.
    $E_{n,\lambda,\mu}^{(i)}, i=1,2$ and $E_{n,\lambda,\mu}$ are constants independent of $p,\gamma$, their closed forms are given in \eqref{eq: p-moment for q_perp, state space collapse}.
\end{theorem}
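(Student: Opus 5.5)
We bound the $p$-th moment of $\|\mathbf{q}_\perp\|$ with a drift argument in the steady state. The test function is $V(\mathbf{x})=\|\mathbf{x}_\perp\|^{p}$ (or a smoothed version $(\|\mathbf{x}_\perp\|^2+1)^{p/2}$ to avoid non-differentiability at $0$). We use the zero-drift identity $\mathbb{E}[\mathcal{L}_{JSQ}V(\mathbf{q})]=0$. Finiteness of $\mathbb{E}\|\mathbf{q}\|^p$, which legitimizes this identity, follows from the Foster--Lyapunov stability argument: the abandonment term $\gamma x_i$ gives polynomial moments of all orders for fixed $\gamma$.

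The first step is a drift computation for $\mathcal{L}_{JSQ}V$. Each transition changes $\mathbf{x}$ by $\pm\mathbf{e}_j$, hence changes $\mathbf{x}_\perp$ by $\pm(\mathbf{e}_j-\frac1n\boldsymbol 1)$, a vector of norm at most $1$. A Taylor expansion of $t\mapsto(\|\mathbf{x}_\perp\|^2+t)^{p/2}$ gives
\[
\mathcal{L}_{JSQ}V(\mathbf{x})\le \tfrac p2 \|\mathbf{x}_\perp\|^{p-2}\,\mathcal{L}_{JSQ}\|\mathbf{x}_\perp\|^2 + C p^2 \big(\|\mathbf{x}_\perp\|+1\big)^{p-2},
\]
where the second-order remainder is controlled because each jump has bounded size and the total jump rate is $\lambda+\mu+\gamma\sum_i x_i$. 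The drift of $\|\mathbf{x}_\perp\|^2$ splits into three parts:
\begin{itemize}
\item[(i)] The arrival term $2\lambda\langle \mathbf{x}_\perp,\mathbf{e}_*\rangle=2\lambda(x_{\min}-\bar x)$, where $\bar x$ is the average. Since $x_{\min}-\bar x\le -\|\mathbf{x}_\perp\|/\sqrt{n(n-1)}$, this gives a strictly negative drift of order $-\lambda\|\mathbf{x}_\perp\|/n^{3/2}$, independent of $\gamma$.
\item[(ii)] The service term $-2\sum_i\mu_i 1_{\{x_i>0\}}\langle\mathbf{x}_\perp,\mathbf{e}_i\rangle$. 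It is bounded by $2\mu\|\mathbf{x}_\perp\|$ but would spoil the sign, so we use heavy overload: the JSQ negative drift constant $\lambda/n^{3/2}$ must beat the heterogeneity term. To be safe we take the sharper form $\sum_i \mu_i(x_i-\bar x)\le \max_i\mu_i\cdot\sum_i(x_i-\bar x)^+$ and compare it with the routing term $\lambda(\bar x-x_{\min})$. Using $\lambda>\mu$ together with $\sum_i(x_i-\bar x)^+\le n(\bar x-x_{\min})$ handles this for suitable constants; if it does not, we fall back on a weighted norm.
\item[(iii)] The abandonment term $-2\gamma\sum_i x_i\langle \mathbf{x}_\perp,\mathbf{e}_i\rangle=-2\gamma\|\mathbf{x}_\perp\|^2\le0$, which helps.
\end{itemize}
Beyond these, there are bounded second-order terms and the boundary indicator corrections, which are $O(1)$. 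The outcome we aim for is $\mathcal{L}\|\mathbf{x}_\perp\|^2\le -\kappa\|\mathbf{x}_\perp\|+K+K\gamma\|\mathbf{x}\|_1$.

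The main obstacle is the remainder $\gamma\sum_i x_i$. The abandonment jump rate is large, of order $\gamma\cdot\frac{\lambda-\mu}{\gamma}=O(1)$ near the centre, but grows with $\|\mathbf{x}_\parallel\|$. It enters the second-order term as $\gamma\|\mathbf{x}\|_1\|\mathbf{x}_\perp\|^{p-2}$. We would handle it by Hölder's inequality in steady state, combined with the already-established moment control of the parallel component: $\gamma\,\mathbb{E}[\sum_i q_i]\approx \lambda-\mu$, and its $L^p$ norms are controlled at order $p$ by the SSQ-type argument on $q_\Sigma$, which is a birth--death-like quantity since total arrivals and abandonments do not depend on routing. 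Taking expectations and using zero drift then gives
\[
\kappa\,\tfrac p2\,\mathbb{E}\|\mathbf{q}_\perp\|^{p-1}\le C p^2\,\mathbb{E}(\|\mathbf{q}_\perp\|+1)^{p-2}+C p\,\|\gamma q_\Sigma\|_{L^p}\,\mathbb{E}[\|\mathbf{q}_\perp\|^{p-1}]^{(p-2)/(p-1)}.
\]
Applying Hölder/Jensen with $m_p:=\mathbb{E}[\|\mathbf{q}_\perp\|^{p-1}]^{1/(p-1)}$ yields $m_p\lesssim p+p\,\|\gamma q_\Sigma\|_{L^p}$. Since $\|\gamma q_\Sigma\|_{L^p}\le (\lambda-\mu)+\sqrt{\gamma\lambda}\,\|\tilde q_\Sigma\|_{L^p}=O(1+p)$, this produces the two regimes $E^{(2)}p$ and $E^{(1)}p^2$ in the statement. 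Taking the maximum, and noting that $p\le p^2$ for $p\ge1$, gives the final bound $E_{n,\lambda,\mu}p^2$. The constant tracking, in particular making $\kappa$ independent of $\gamma$, is routine once the sign in (ii) is secured.
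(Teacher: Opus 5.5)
Your plan is the paper's argument in slightly different packaging. Both use a polynomial Lyapunov function in $\|\mathbf q_\perp\|$, jumps of $\mathbf q_\perp$ of norm at most one, the routing drift $2\lambda(x_{\min}-\bar x)\le -\frac{2\lambda}{\sqrt{n(n-1)}}\|\mathbf x_\perp\|$, H\"older against $\|\gamma q_\Sigma\|_{L^p}=O(p)$, and a root estimate. The paper uses $\|\mathbf q_\perp\|^{p+1}$, splits on $\|\mathbf q_\perp\|\gtrless L(\mathbf q)$ and applies Fujiwara's bound. Multiplying the drift of $\|\mathbf x_\perp\|^2$ by $\tfrac p2\|\mathbf x_\perp\|^{p-2}$, as you do, is equivalent and a bit cleaner.

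\textbf{Step (ii) does not close as written, and everything else rests on it.} The service contribution is $-2\sum_i\mu_i\mathbf{1}_{\{x_i>0\}}(x_i-\bar x)$, so you need a \emph{lower} bound on $\sum_i\mu_i(x_i-\bar x)$; your inequality bounds it from above.

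Even turned around, $\sum_i\mu_i(\bar x-x_i)\le\max_i\mu_i\sum_i(\bar x-x_i)^+\le(n-1)\max_i\mu_i\,(\bar x-x_{\min})$ beats $\lambda(\bar x-x_{\min})$ only if $\lambda>(n-1)\max_i\mu_i$. Heavy overload does not give this when one server carries most of the capacity and $n\ge 3$.

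The right bound is elementary. Write $x_i-\bar x=(x_i-x_{\min})-(\bar x-x_{\min})$ and note that idle servers have $x_i=x_{\min}=0$. Then
\[
-2\sum_i\mu_i\mathbf{1}_{\{x_i>0\}}(x_i-\bar x)\le 2\Big(\mu-n\min_i\mu_i\Big)(\bar x-x_{\min}),
\]
so the linear part of $\mathcal{L}_{JSQ}\|\mathbf x_\perp\|^2$ is at most
\[
-2\Big(\lambda-\mu+n\min_i\mu_i\Big)(\bar x-x_{\min})\le-\frac{2(\lambda-\mu+n\min_i\mu_i)}{\sqrt{n(n-1)}}\|\mathbf x_\perp\|.
\]
Keep the $n\min_i\mu_i$ term. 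Using $\lambda>\mu$ alone gives a constant proportional to $\lambda-\mu$, and Assumption~\ref{ass:heavy_overload} only guarantees $\lambda-\mu\ge C\mu^{1-\alpha}\gamma^{\alpha}$. For $\alpha>0$ your $\kappa$ would then depend on $\gamma$. No weighted norm is needed.

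Your caution about this term is warranted. The paper's expansion of $\mathcal{L}_{JSQ}\|\mathbf q_\perp\|^2$ omits $-2\langle\boldsymbol{\mu},\mathbf q_\perp\rangle$ and keeps only the idle correction; that term vanishes only when all $\mu_i=\mu/n$. The paper's $\zeta=\lambda/\sqrt{n(n-1)}$ is therefore justified only for homogeneous servers. For example, with $n=2$ and $x_2>x_1>0$, the gap $x_2-x_1$ drifts at $-(\lambda-\mu)-2\mu_2-\gamma(x_2-x_1)$, not $-\lambda$. With the bound above, $\zeta$ becomes $(\lambda-\mu+n\min_i\mu_i)/\sqrt{n(n-1)}$, and both arguments go through with that constant.

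Two smaller points, neither of which affects the $O(p^2)$ conclusion. Your second-order remainder carries the jump rate $\lambda+\mu+\gamma q_\Sigma$. Your H\"older step bounds the $(p-1)$-th moment, so you need to reindex at the end.
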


Intuitively, load balancing (JSQ) tries to eliminate discrepancy among different queues by routing the arrival towards the shortest queue. Meanwhile, abandonment tends to force the larger queues to decrease more than the smaller ones. These forces drive all queues, from both increment and decrement sides, to approximately equalize. 
Theorem \ref{thm: SSC JSQ} upper bounds the $p$-th moment of $\mathbf{q}_{\perp}$ independently of $\gamma$, while Theorem \ref{thm: Wasserstein-$p$ distance JSQ} implies $p$-th moment of $\sum_i q_{i}$ is of order $1/\gamma^{p/2}$. Thus the perpendicular component $\mathbf{q}_\perp$ becomes negligible as $\gamma\leq\gamma_1$ small enough. Theorem \ref{thm: SSC JSQ} therefore indicates that all components of $\mathbf{q}$ are approximately equal to each other, as $\gamma\leq \gamma_1$ small enough. This formalizes the heuristics from force analysis and manifests the SSC phenomenon even when one of the balancing force, abandonment rate is small.

\paragraph{Relation to Literature} Theorem \ref{thm: SSC JSQ} achieves, to the best of our knowledge, the first uniform $L^p$ norm bound on $\|\mathbf{q}_\perp\|$ for continuous time JSQ model with abandonment. A closely related result is in \cite[Theorem~3.2]{jhunjhunwala2023jointheshortestqueueabandonmentcritically}, where the authors establish second moment bound for $\|\mathbf{q}_\perp\|$ in a discrete-time setting. While the settings are not directly comparable, our theorem strengthens the moment control in the continuous-time model by upgrading from $L^2$ norm to general $L^p$ norm with explicit $p$-dependency. However, notice that this does not imply exponential bound, i.e., $\mathbb{E}[\exp(\theta\|\mathbf{q}_{\perp}\|)]$ is uniformly bounded for some $\theta>0$ independent of $\gamma$. This is because the Taylor series of exponential, $\mathbb{E}[\exp(\theta\|\mathbf{q}_{\perp}\|)]$ to moment,  $\sum_{p=1}^{\infty} \frac{\theta^p}{p!} \|\mathbf{q}_{\perp}\|_{L^p}^p$ diverges due to $p^2$ dependency. By contrast, \cite{hurtado2020transform} establishes such exponential bound for classical JSQ model without abandonment. However, their arguments involve Hajek's geometric tail for hitting time \cite{hajek1982hitting}, which requires negative drift when $\|\mathbf{q}_{\perp}\|$ is beyond some compact set, as well as almost surely bounded jump size. Due to technical challenges from the abandonment, we cannot identify compact set for $\|\mathbf{q}_\perp\|$ outside which the drift of $\|\mathbf{q}_{\perp}\|$ is always negative and thus their arguments do not apply.



\subsection{Wasserstein-$p$ Distance for JSQ} \label{sec: Wasserstein-$p$ distance for JSQ}
As a key stepping stone to tail bounds in Theorem \ref{thm: JSQ Tail bound}, we next present bounds on the Wasserstein-$p$ between laws of two scalar-valued random variables. Recalling that $\tilde{\mathbf{q}} := \frac{n\sqrt{\gamma}}{\sqrt{\lambda}} (\mathbf{q} - \frac{\lambda-\mu}{n\gamma}\boldsymbol{1})$, our goal is to establish the following bound
\begin{align*}
    \mathcal{W}_p\left (\mathcal{L} \big( \langle \tilde{\mathbf{q}}, \boldsymbol{\phi}\rangle\big), \mathcal{L}\big(\langle \boldsymbol{\phi}, Z\cdot\boldsymbol{1} \rangle\big)\right ), \quad \forall \boldsymbol{\phi} \in \mathbb{R}^n, \|\boldsymbol{\phi}\|=1.
\end{align*}
The reduction from random vector to scalar random variable is from Cramér-Wold Theorem which equalizes convergence of a random vector $\tilde{\mathbf{q}}$ to convergence of all linear combinations of their components. 
Thus convergence in above metric will imply convergence in distribution of $\tilde{\mathbf{q}}$ to $Z\cdot \boldsymbol{1}$. The all-one vector structure of the limit further manifests the SSC phenomenon.

We first study the Wasserstein-$p$ distance between two random variables: $\tilde{q}_{\Sigma}:= \sqrt{\gamma/\lambda}(\sum_i q_{i} - \frac{\lambda-\mu}{\gamma})$ and standard normal $Z\sim \mathcal{N}(0,1)$. Similar to SSQ, we present the dominating terms in the upper and lower bounds for $p$ in different ranges. 

\begin{theorem}
    \label{thm: Wasserstein-$p$ distance JSQ}
    \textnormal{(Wasserstein-$p$ distance for JSQ)} In the heavily overloaded regime \ref{ass:heavy_overload}, with $\gamma\leq\gamma_1$ \eqref{eq: gamma assumption for JSQ W-p}, the Wasserstein-$p$ distance between law $\mathcal{L}(\tilde{q}_{\Sigma})$ and standard normal distribution $\mathcal{L}(Z):=\mathcal{N}(0,1)$ is bounded in:
    \begin{subnumcases}{\mathcal{W}_p(\mathcal{L}(\tilde{q}_{\Sigma}), \mathcal{L}(Z)) \leq E'_{\lambda,\mu,n} \cdot}
     p\sqrt{\gamma}, & if $p \in (1, 1/\gamma^{\delta}]$ \label{eq: Stein Wasserstein-$p$ JSQ p's func}\\
     \sqrt{p} , & if $p \in [1/\gamma^{1/2-\alpha}, 1/\gamma)$ \label{eq: Stein Wasserstein-$p$ JSQ p's func, mid}\\
     \frac{p\sqrt{\gamma}}{\log(1+\frac{n\gamma}{\lambda}p)}, & if $p \in [1/\gamma, \infty)$ \label{eq: Stein Wasserstein-$p$ JSQ p's func, large}
    \end{subnumcases}
    \begin{subnumcases}{\mathcal{W}_p(\mathcal{L}(\tilde{q}_{\Sigma}), \mathcal{L}(Z)) \geq}
        C_{OT,\lambda,\mu}' \cdot \sqrt{\gamma} \exp\left(-\frac{\lambda }{p\gamma}\right), & $p \in [1, D_{5,\lambda,\mu}/\gamma]$\label{eq: OT Wasserstein-$p$ lower bound p's function, JSQ}\\
        D_{4,\lambda,\mu}' \cdot \frac{p\sqrt{\gamma}}{\log(1+\gamma p/\lambda)} & $p \in [D_{5,\lambda,\mu}/\gamma, \infty)$\label{eq: Triangle Wasserstein-$p$ lower bound p's function, JSQ}
    \end{subnumcases}
    Where $\delta \in(0,1/2-\alpha)$ is any small constant, and all constants independent of $p,\gamma$, with closed forms in \eqref{eq: D_4, lambda, mu SSQ}, \eqref{eq: constants for Wasserstein-$p$ JSQ}.
\end{theorem}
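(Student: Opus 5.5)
We propose to prove Theorem \ref{thm: Wasserstein-$p$ distance JSQ} by reducing it to the SSQ analysis of Theorem \ref{thm: Gaussian Wasserstein-$p$ upper bound for M/M/1+M}. The reduction uses the total queue length $q_\Sigma:=\sum_i q_i$, with the state space collapse bound of Theorem \ref{thm: SSC JSQ} controlling the error terms.

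The starting point is to apply the generator \eqref{eq: generator for join shortest queue} to functions $f(\mathbf{x})=g(\sum_i x_i)$. Because every arrival increases the sum by one, we get
\[
\mathcal{L}_{JSQ}f(\mathbf{x})=\lambda\big(g(s+1)-g(s)\big)+\big(\mu+\gamma s\big)\big(g(s-1)-g(s)\big)-\sum_i \mu_i\mathbf{1}_{\{x_i=0\}}\big(g(s-1)-g(s)\big),
\]
where $s=\sum_i x_i$. This is exactly the SSQ generator \eqref{eq: generator of SSQ} with total service rate $\mu$, plus a correction carried by the idleness indicators $\mathbf{1}_{\{x_i=0\}}$.

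\textbf{Step 1 (Stein's method, small $p$).} For the upper bound \eqref{eq: Stein Wasserstein-$p$ JSQ p's func}, we repeat the Stein argument used for \eqref{eq: Stein Wasserstein-$p$ upper p's function}. We Taylor-expand the generator applied to $g(\sqrt{\gamma/\lambda}(s-(\lambda-\mu)/\gamma))$, with $g$ the Stein solution for the $\mathcal{W}_p$ test functions. Using the zero-drift identity $\mathbb{E}[\mathcal{L}_{JSQ}f(\mathbf{q})]=0$, the distance $\mathcal{W}_p$ is bounded by two contributions:
\begin{itemize}[label={}, leftmargin=0pt]
\item the same moment terms as in SSQ, which are of order $p\sqrt{\gamma}$ given $\|\tilde q_\Sigma\|_{L^p}=O(\sqrt p)$;
\item the new idleness term, which is of order $\sqrt{\gamma}\,\|g'\|\,\mathbb{P}(\min_i q_i=0)$ in $L^p$.
\end{itemize}

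The key point is that the event $\{q_i=0\}$ forces $\|\mathbf{q}_\perp\|\gtrsim q_\Sigma/n$. Since $q_\Sigma$ concentrates around $(\lambda-\mu)/\gamma\gtrsim \gamma^{\alpha-1}$, we bound this event by
\[
\mathbb{P}\big(\|\mathbf{q}_\perp\|\ge c\gamma^{\alpha-1}\big)+\mathbb{P}\big(\hat q_\Sigma\le -c\gamma^{\alpha-1}\big).
\]
The first probability is handled by Markov's inequality with the $p^2$ moment bound of Theorem \ref{thm: SSC JSQ}. The second is handled by the Gaussian-type moment bound on $\tilde q_\Sigma$. Choosing the Markov exponent large makes both pieces polynomially small in $\gamma$, so they are negligible against $p\sqrt{\gamma}$. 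This is only possible for $p\le 1/\gamma^{\delta}$ with $\delta<1/2-\alpha$, which explains the range in \eqref{eq: Stein Wasserstein-$p$ JSQ p's func}.

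\textbf{Step 2 (moment control, mid and large $p$).} For \eqref{eq: Stein Wasserstein-$p$ JSQ p's func, mid} and \eqref{eq: Stein Wasserstein-$p$ JSQ p's func, large}, we use the triangle inequality
\[
\mathcal{W}_p\le \|\tilde q_\Sigma\|_{L^p}+\|Z\|_{L^p}.
\]
It then suffices to bound $\|\tilde q_\Sigma\|_{L^p}$ through the MGF via the transform method, as in \eqref{eq: MGF for q_infty - lambda/mu/gamma, Poisson style}. We take $f=e^{\theta s}$. Dropping the idleness correction is favorable for the upper tail, since that term only removes departures, whose contribution has sign $e^{-\theta}-1<0$. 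We bound it crudely by $\mu$ and compare with an $M/M/\infty$-type dominating drift. This gives a sub-Gaussian MGF for $\theta\lesssim 1$, which yields the $O(\sqrt p)$ bound, and a Poisson MGF for large $\theta$, which yields the $p/\log p$ growth.

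\textbf{Step 3 (lower bounds).} The lower bounds need no new dynamics. For \eqref{eq: Triangle Wasserstein-$p$ lower bound p's function, JSQ} we apply the reverse triangle inequality, using the Poisson-like lower bound on the moments of $q_\Sigma$ obtained by comparison with the arrival-plus-abandonment chain. For \eqref{eq: OT Wasserstein-$p$ lower bound p's function, JSQ} we use quantile coupling. Since $\tilde q_\Sigma$ is lattice with span $\sqrt{\gamma/\lambda}$ and has a boundary atom at $\tilde q_\Sigma=-\sqrt{\gamma/\lambda}(\lambda-\mu)/\gamma$, the optimal coupling pays at least order $\sqrt{\gamma}$ on a set of Gaussian mass $\exp(-\lambda/(p\gamma))$-like, after taking $p$-th roots.

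\textbf{Main obstacle.} We expect the main obstacle to be the idleness term in Step 1. The SSC bound only grows like $p^2$, so the Markov-inequality estimate must be balanced carefully to keep the error below $p\sqrt{\gamma}$ uniformly over $p\le\gamma^{-\delta}$. If a single Markov step fails, we would first try a refinement: a drift argument on $\mathbf{1}_{\{q_i=0\}}$ using the test function $f=x_i$ together with Hölder's inequality.
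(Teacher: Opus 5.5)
\textbf{There is a genuine gap.} The step that fails is the upper-tail moment bound in Step 2. Step 1 relies on the same estimate, through the input $\|\tilde q_\Sigma\|_{L^p}=O(\sqrt p)$.

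With $f=e^{\theta s}$ and $M(\theta)=\mathbb{E}[e^{\theta q_\Sigma}]$, stationarity gives $\gamma M'(\theta)=(\lambda e^{\theta}-\mu)M(\theta)+\mathbb{E}\bigl[\sum_i\mu_i\mathbf{1}_{\{q_i=0\}}e^{\theta q_\Sigma}\bigr]$. The idleness term is nonnegative: wasted service makes the total larger, which is why $q_\Sigma$ dominates the pooled SSQ. So dropping it gives only a \emph{lower} bound on $M$, and your sign argument is reversed.

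Bounding the idleness term by $\mu M(\theta)$ gives $\gamma M'\le\lambda e^{\theta}M$. This is the $M/M/\infty$ comparison, centred at $\lambda/\gamma$ rather than $(\lambda-\mu)/\gamma$. You only get $\mathbb{E}[e^{\theta\hat q_\Sigma}]\le\exp\{\tfrac{\lambda}{\gamma}(e^{\theta}-1-\theta)+\tfrac{\mu}{\gamma}\theta\}$. Hence $\|\hat q_\Sigma\|_{L^p}$ carries an additive bias $\mu/\gamma$, and $\|\tilde q_\Sigma\|_{L^p}$ carries $\mu/\sqrt{\lambda\gamma}=\Theta(\gamma^{-1/2})$.

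This bias is harmless for $p\ge 1/\gamma$. It destroys the $\sqrt p$ bound for $p<1/\gamma$, though. In the Stein reduction, the term $\gamma\sqrt p\,\|\hat q_\Sigma\|_{L^p}$ then contains $\mu\sqrt p$, which does not even vanish as $\gamma\downarrow 0$.

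SSC cannot rescue the weighted term $\mathbb{E}[\mathbf{1}_{\{q_i=0\}}e^{\theta q_\Sigma}]$ either. On $\{q_i=0\}$ you only know $q_\Sigma\le n\|\mathbf q_\perp\|$, and $p^2$ moment growth gives no exponential moments.

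The missing idea is the per-server test function $\sum_i e^{\theta q_i}$:
\begin{itemize}
\item since $e^{\theta q_i}=1$ on $\{q_i=0\}$, the idleness contribution becomes the \emph{unweighted} quantity $(1-e^{-\theta})\sum_i\mu_i\mathbb{P}(q_i=0)$;
\item the JSQ arrival term is controlled by $e^{\theta q_{\min}}\le\frac1n\sum_i e^{\theta q_i}$;
\item Jensen's inequality $e^{\theta q_\Sigma}\le\frac1n\sum_i e^{n\theta q_i}$ brings you back to $q_\Sigma$ with the correct centring.
\end{itemize}
This is exactly Proposition~\ref{pro: Moment Bounds on hat q for JSQ}, which you could simply have invoked.

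\textbf{Step 1 bookkeeping.} There are two further corrections. First, Step 1 misstates the size of the idleness term. $\mathcal W_p$ for $p>1$ is not an integral probability metric, so there is no Stein solution to Taylor-expand. The relevant tool is Proposition~\ref{pro: Wasserstein-$p$ bound without exchangeability final version}, with $T_2$ and $\mathcal{A}=\gamma^{-1}\mathcal{L}_{JSQ}$. There the idleness term enters as
$\|a_1^{\mathcal A,T_2}(\mathbf q)+\tilde q_\Sigma\|_{L^p}=\frac{1}{\sqrt{\lambda\gamma}}\|\sum_i\mu_i\mathbf{1}_{\{q_i=0\}}\|_{L^p}\lesssim\gamma^{-1/2}\,\mathbb{P}(\min_i q_i=0)^{1/p}$,
not as $\sqrt\gamma\,\mathbb{P}(\cdot)$.

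So you need $\mathbb{P}(\min_i q_i=0)^{1/p}\lesssim p\gamma$ uniformly in $p\le\gamma^{-\delta}$. Polynomial smallness in $\gamma$ is not enough; you need a bound of the form $\exp(-\gamma^{-c})$ with $c>\delta$. Your union bound does deliver this once the Markov exponent is optimised:
\begin{itemize}
\item Theorem~\ref{thm: SSC JSQ} gives $\mathbb{P}(\|\mathbf q_\perp\|\ge x)\le\exp(-\tfrac{2}{e}\sqrt{x/E_{n,\lambda,\mu}})$, which is $\exp(-c\gamma^{-(1-\alpha)/2})$ for $x\asymp\gamma^{\alpha-1}$, and $\delta<\tfrac12-\alpha\le\tfrac{1-\alpha}{2}$;
\item the piece $\mathbb{P}(\hat q_\Sigma\le-\tfrac{\lambda-\mu}{2\gamma})\le\exp(-c\gamma^{2\alpha-1})$ follows from the Laplace transform of $q_\Sigma$, where dropping the idleness term \emph{is} legitimate.
\end{itemize}
With this repair, the SSC route is a valid alternative to the paper's drift argument with $\sum_i e^{-\sqrt\gamma\theta q_i}$ (Proposition~\ref{pro: zero probability sum for JSQ}), and its exponent is at least as good.

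\textbf{Step 3 comparisons.} Second, Step 3 mixes up the comparison directions. The arrival-plus-abandonment ($M/M/\infty$) chain dominates $q_\Sigma$ from above. It therefore yields moment \emph{upper} bounds, and it gives the atom bound $\mathbb{P}(q_\Sigma=0)\ge e^{-\lambda/\gamma}$ that the quantile argument needs. The moment lower bound for the reverse triangle inequality requires the opposite coupling, $q_\Sigma\ge_{\mathrm{st}}q_{SSQ}$ with a pooled server, combined with Proposition~\ref{pro: Lp norm of q_infty}. That same coupling also supplies the upper bound on $\mathbb{P}(q_\Sigma=0)$ used in the quantile estimate.
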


For $p$ fixed, we use upper bound in the first regime above \eqref{eq: Stein Wasserstein-$p$ JSQ p's func} combined with SSC Theorem \ref{thm: SSC JSQ}. When $\gamma\to 0$, we can analyze the order of the Wasserstein-$p$ bound as follows.
\begin{align*}
    \mathcal{W}_p&\left (\mathcal{L} \big( \langle \tilde{\mathbf{q}}, \boldsymbol{\phi}\rangle\big), \mathcal{L}\big(\langle \boldsymbol{\phi}, Z\cdot\boldsymbol{1} \rangle\big)\right ) \overset{(a)}{\leq} |\langle \boldsymbol{\phi}, \boldsymbol{1}\rangle|\cdot \mathcal{W}_p(\mathcal{L}(\tilde{q}_{\Sigma}), \mathcal{L}(Z)) + \frac{n\sqrt{\gamma}}{\sqrt{\lambda}}\|\langle  \mathbf{q}_{\perp}, \boldsymbol{\phi}\rangle\|_{L^p}\\
    &\overset{(b)}{\leq} \sqrt{n}\|\boldsymbol{\phi}\| \mathcal{W}_p(\mathcal{L}(\tilde{q}_{\Sigma}), \mathcal{L}(Z)) + \|\boldsymbol{\phi}\|\cdot\frac{n\sqrt{\gamma}}{\sqrt{\lambda}}\mathbb{E}[\|\mathbf{q}_{\perp}\|^{p}]^{1/p} \leq \mathcal{O}(\sqrt{\gamma})
\end{align*}
$(a)$ follows from triangle inequality and the decomposition $\mathbf{q} = \mathbf{q}_{\parallel} + \mathbf{q}_{\perp}$. $(b)$ follows from Cauchy-Schwarz and the final $\mathcal{O}(\sqrt{\gamma})$ bound holds from order analysis on \eqref{eq: Stein Wasserstein-$p$ JSQ p's func} and SSC \ref{thm: SSC JSQ}. Thus we have convergence in Wasserstein metric as $\gamma\to 0$. From Cramér-Wold theorem, we have convergence in distribution: $\frac{n\sqrt{\gamma}}{\sqrt{\lambda}} (\mathbf{q} - \frac{\lambda-\mu}{n\gamma}\boldsymbol{1}) \xrightarrow{d} Z\cdot \boldsymbol{1}$ as $\gamma\to 0$. 
This limit result coincides with the diffusion-limit result in \cite{jhunjhunwala2023jointheshortestqueueabandonmentcritically}. However, our work studies a continuous-time Markov process, while \cite{jhunjhunwala2023jointheshortestqueueabandonmentcritically} consider a discrete-time model. Here, the pre-limit rate of convergence towards $Z\cdot \boldsymbol{1}$ is also $\mathcal{O}(\sqrt{\gamma})$, same as SSQ. We also emphasize that the joint order $\mathcal{O}(p\sqrt{\gamma})$ is crucial for establishing tail bounds in Theorem \ref{thm: JSQ Tail bound}, whose rationale is similar to SSQ case (see discussion after Theorem \ref{thm: Gaussian Wasserstein-$p$ upper bound for M/M/1+M}).

Similar to SSQ, to achieve a tight Wasserstein-$p$ bound in terms of both $p$ and $\gamma$, we use different approaches for different ranges of $p$. In particular,
we use a mixture of Stein's method, triangle inequality and quantile coupling (see Section \ref{sec: proof sketch}). Consequently, the upper bounds exhibit similar forms as in SSQ, i.e., $\mathcal{O}(p\sqrt{\gamma})$ for small $p$, $\mathcal{O}(\sqrt{p})$ for moderate $p$, and $\mathcal{O}(\frac{p\sqrt{\gamma}}{\log(1+p\gamma)})$ for large $p$. Yet their transition occur halfway earlier than SSQ, at $p=1/\gamma^{1/2-\alpha}$ and $p=1/\gamma$ respectively, instead of $p=1/\gamma^{1-\alpha}$ and $p=1/\gamma$ in SSQ. This early shift is due to an exponential decay on a key ingredient for Wasserstein-$p$ bound, i.e.,
 $\sum_i \mathbb{P}(q_{i}=0)$ (see discussion after Proposition \ref{pro: zero probability sum for JSQ}), whereas in SSQ, the decay is Gaussian (see Lemma \ref{lem: tight bound on P(q_infty = 0)}).
  Such shift ultimately incurs the earlier phase transition in tail bounds in Theorem \ref{thm: JSQ Tail bound} due to the connection between Wasserstein-$p$ distance and tail bounds (see Lemma \ref{lemma: tail bound via wasserstein}). Consequently, we can see a phase transition of tail exponent from sub-Gaussian to sub-Weibull at $\delta = 1/4 - \alpha/2$ in Theorem \ref{thm: JSQ Tail bound}, which is earlier than the phase transition at $\delta=1/2-\alpha$ for SSQ.

The lower bound of Wasserstein-$p$ bounds in Theorem \ref{thm: Wasserstein-$p$ distance JSQ} shares similar form as in SSQ. Note that \eqref{eq: OT Wasserstein-$p$ lower bound p's function, JSQ} has weaker exponential term than that in SSQ \eqref{eq: Optimal Coupling Wasserstein-$p$ lower bound p's function}, which is again due to the exponential decay bound on $\sum_i \mathbb{P}(q_{i}=0)$. Yet it provides impossibility result for improving upper bound \eqref{eq: Stein Wasserstein-$p$ JSQ p's func} or \eqref{eq: Stein Wasserstein-$p$ JSQ p's func, mid} for small $p$ range. Meanwhile, the lower bound for large $p$ range \eqref{eq: Triangle Wasserstein-$p$ lower bound p's function, JSQ} matches the order of upper bound \eqref{eq: Stein Wasserstein-$p$ JSQ p's func, large} to show its tightness.

\section{Our Approach} \label{sec: survey of methods}
Before the proof sketch, we first present preliminary techniques that are referenced in later sections when proving our main results. We will elaborate on our choice of techniques while surveying existing methods for concentration bounds. Specifically, we will discuss their strengths and limitations when bounding tail probability $\mathbb{P}(\tilde{q} > a)$ and highlight the rationale behind our choice in the context of queueing systems with abandonment. For simplicity, our survey focuses on techniques highly related to our work in this section, and we defer a more comprehensive review of related literature to Section \ref{sec: literature review}.

\subsection{Drift Method and Transform Method} Once a moment bound or a moment generating function (MGF) bound is available for the target random variable, the Markov inequality provides a series of fundamental concentration inequalities \cite{boucheron2003concentration}. Specifically, for the random variable $\tilde{q}$ with existing $p$-th moment and MGF, the Markov inequality yields:
\begin{align*}
    \mathbb{P}(|\tilde{q}| > a) &\leq \frac{\mathbb{E}[|\tilde{q}|^p]}{a^p},  \quad \mathbb{P}(|\tilde{q}| > a) \leq e^{-\theta a} \mathbb{E}[e^{\theta |\tilde{q}|}].
\end{align*}
In queueing systems, Lyapunov drift method \cite{EryilmazSrikant2012QSys} and transform method \cite{hurtado2020transform} systematically study moment and MGF bounds for steady-state $\tilde{q}$ in various stochastic systems. Yet Markov inequality with fixed order $p$ or fixed MGF parameter $\theta$ only yields polynomial or sub-exponential decay in deviation $a$, insufficient for efficient concentration like the form \eqref{eq: main contribution, constant deviation}. 

Optimizing $\theta$ in transform method according to deviation $a$ can achieve a sub-Gaussian upper bound of the form $\mathbb{P}(\tilde{q} > a)\leq O(1)e^{-a^2/2(1+o(1))}$ (see Theorem \ref{thm: SSQ Tail bound, third regime}). Paired with a change-of-measure-based lower bound \cite{theodosopoulos2005reversionchernoffbound}, this sub-Gaussian bound captures the Gaussian-type exponent $-\frac{a^2}{2} (1\pm o(1))$ in both upper and lower bounds. Taking the limit $\gamma \to 0$, the upper and lower bounds converge simultaneously to an exact exponent $-a^2/2$. We use this exponential tightness of transform method, to achieve efficient concentration in moderate and large deviation regimes (see Theorem \ref{thm: SSQ Tail bound, third regime} and \ref{thm: SSQ Tail bound, fourth regime}).
Consequently, for SSQ, our pre-limit bounds from transform method imply limit results with forms $\lim \frac{\ln \mathbb{P}(\tilde{q} > a)}{ \ln \mathbb{P}(Y > a)} = 1$ (see the second and third limit with different $Y$ in Corollary \ref{cor: Deviation Principle for M/M/1+M}).

However, using the transform method together with Markov inequality fails to recover the exact Gaussian tail $\Phi^c(a)$. The resulting bound typically misses some pre-exponential factor. In other words, pre-limit bounds obtained via this route cannot show $\lim_{\gamma \to 0} \mathbb{P}(\tilde{q} > a) = \Phi^c(a)$. This issue arises from the nature of Markov inequality.
We use an example with a standard normal $Z\sim \mathcal{N}(0,1)$ to illustrate. Given the $p$-th moment and MGF for $Z$ are $2^{p/2}\Gamma((p+1)/2)/\sqrt{\pi}$ and $e^{\theta^2/2}$, optimizing $p$ or $\theta$ leads to
\begin{align*}
    \mathbb{P}(Z > a) &\leq \inf_{p>0} \frac{2^{p/2}\Gamma((p+1)/2)}{\sqrt{\pi} a^p} \leq \frac{1}{\sqrt{2}} \exp(-a^2/2 + \frac{1}{6a^2}), \text{with } p = a^2 - 1; \\
    \mathbb{P}(Z > a) &\leq \inf_{\theta>0} e^{-\theta a + \theta^2/2} = e^{-a^2/2}, \text{with } \theta = a,
\end{align*}
which inherently miss the pre-exponential factor $1/a$ from Mills ratio \eqref{eq: Mills Ratio}. Meanwhile, from transform method for queueing systems, one starts from pre-limit MGF bounds that typically yields pointwise convergence of MGF, e.g., $\mathbb{E}[e^{\theta \tilde{q}}] \overset{\gamma \to 0}{\longrightarrow} e^{\theta^2/2}$ as in \cite{jhunjhunwala2023jointheshortestqueueabandonmentcritically}. An interchange of limit and $\inf_{\theta>0}$ (if justified) would then lead to the same right hand sides as for the standard normal case above. Thus we face the same issue of missing pre-exponential factor $1/a$. Modulo this missing pre-exponential factor in upper bound, a lower bound via change of measure \cite{theodosopoulos2005reversionchernoffbound} will only deviate further from limit tail probability $\Phi^c(a)$. This mismatch between the pre-limit bounds and limit tail appears in \cite[Theorem~5]{jhunjhunwala2023exponential}, where the authors use Markov inequality with transform method for single-server queue in heavy-traffic. Their upper bound has a pre-exponent error $a$, compared with the exponential limit tail. Thus unlike efficient concentration \eqref{eq: main contribution, constant deviation}, their pre-limit bound cannot recover the limit tail probability directly.

\subsection{Concentration Bounds via Wasserstein-$p$ Distance} 

An alternate approach for showing $\lim_{\gamma \to 0} \mathbb{P}(\tilde{q} > a) = \Phi^c(a)$
is to target at upper bounds in the form $\mathbb{P}(\tilde{q} > a) \leq \Phi^c(a) + o(1)$ directly. Kolmogorov distance $\sup_{a \in \mathbb{R}} |\mathbb{P}(\tilde{q} \leq a) - \mathbb{P}(Z \leq a)|$, well studied for distributional approximation \cite{ross2011fundamental, eichelsbacher2023kolmogorov}, can provide this convergence in terms of limit parameter $\gamma$ towards the Gaussian limit tail. However, it is loose since it does not reflect the decay in $a$. Due to the lattice structure of $\tilde{q}$ (see Section \ref{sec: Tail Bound for M/M/1+M}), there exists an $a$ such that $|\mathbb{P}(\tilde{q} > a) - \mathbb{P}(Z > a)| \gtrsim \exp(-a^2/2)$. It suggests at best a sub-Gaussian decay in $a$ for the upper bound and Kolmogorov distance cannot capture this decay. An appealing approach to achieve the sub-Gaussian decay in $a$ is introduced by recent works in \cite[Appendix B]{austern2022efficient} and \cite[Theorem~2]{fang2022wasserstein}. They connect tail bounds with Wasserstein-$p$ distances, building upon development of Wasserstein-$p$ distance in \cite{bonis2020steins,ledoux2015stein}. The connection is made by adding and subtracting an intermediate Gaussian variable, which is coupled with $\tilde{q}$ via Wasserstein-$p$ distance. Specifically, for any $\rho \in [0,1)$ and $a>0$, we have
\begin{align} 
    \mathbb{P}(\tilde{q} > a) &= \mathbb{P}\left( \tilde{q} - Z + Z > a \right) \leq \mathbb{P}\left( \tilde{q} - Z \geq (1 - \rho) a \right) + \mathbb{P}\left( Z \geq \rho a \right)  \notag\\
    &=\Phi^c(\rho a) + \mathbb{P}\left( \tilde{q} - Z \geq (1 - \rho) a \right) \overset{(a)}{\leq}\Phi^c(\rho a) + \frac{\mathbb{E}[|\tilde{q}-Z|^p]}{((1-\rho) a)^p} \notag\\
    &\overset{(b)}{\leq} \Phi^c(a) + (1-\rho)a \cdot \phi(\rho a ) + [(1-\rho)a]^{-p}\mathcal{W}_p^p(\mathcal{L}(\tilde{q}), \mathcal{L}(Z)). \notag
\end{align}
Where inequality $(a)$ is from Markov inequality with moment bound. Inequality
$(b)$ follows from Taylor Expansion $\Phi^c(\rho a) \leq \Phi^c(a) + (a - \rho a) \sup_{\tilde{x} \in [\rho a, a]} \phi(\tilde{x})$. Note that we choose a coupling $(\tilde{q},Z)$ such that the $L^p$ distance between $\tilde{q}$ and $Z$ is within error $\epsilon$ to the Wasserstein-$p$ distance $\mathcal{W}_p(\mathcal{L}(\tilde{q}), \mathcal{L}(Z))$. Such error $\epsilon$ is arbitrarily small since Wasserstein-$p$ distance is defined as infimum over all couplings. Thus we let $\epsilon \downarrow 0$ to obtain the last line. Meanwhile, similar argument shows the lower bound
\begin{align} 
    \mathbb{P}(Z > a) &\leq \mathbb{P}(\tilde{q} > a) + \mathbb{P}(|\tilde{q} - Z| \geq (1 - \rho) a) + \mathbb{P}(a \leq Z \leq(2-\rho)a) \notag\\
    &\leq \mathbb{P}(\tilde{q} > a) + [(1-\rho)a]^{-p}\mathcal{W}_p^p(\mathcal{L}(\tilde{q}), \mathcal{L}(Z)) + (1-\rho)a \cdot \phi( a) \notag
\end{align}
Together, we have the following lemma connecting tail bounds with Wasserstein-$p$ distance in absolute difference.
\begin{lemma} \label{lemma: tail bound via wasserstein}
For any $\rho \in [0,1)$, $a > 0$, under the same notation as above, we have
\begin{align}
    \left|\mathbb{P}(\tilde{q} > a) - \mathbb{P}(Z > a)\right| \leq (1-\rho)a \cdot \phi(\rho a ) + [(1-\rho)a]^{-p}\mathcal{W}_p^p(\mathcal{L}(\tilde{q}), \mathcal{L}(Z)) \label{eq: Tail bound from Wasserstein-$p$ distance}
\end{align}
\end{lemma}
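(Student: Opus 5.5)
The plan is to prove the two one-sided inequalities separately and combine them, exactly along the lines sketched before the statement. Fix $\rho\in[0,1)$, $a>0$ and $p\ge 1$. First, for any $\epsilon>0$ we choose a coupling $(\tilde{q},Z)$ on a common probability space whose marginals are $\mathcal{L}(\tilde{q})$ and $\mathcal{N}(0,1)$ and with $\mathbb{E}|\tilde{q}-Z|^p\le \mathcal{W}_p^p(\mathcal{L}(\tilde{q}),\mathcal{L}(Z))+\epsilon$. Such a coupling exists by the definition of $\mathcal{W}_p$ as an infimum; in fact an optimal one exists, for instance the quantile coupling on $\mathbb{R}$. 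The left-hand side of \eqref{eq: Tail bound from Wasserstein-$p$ distance} depends only on the marginals, so we may compute all probabilities under this coupling and let $\epsilon\downarrow 0$ at the end.

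\emph{Upper bound.} On $\{\tilde{q}>a\}$, either $Z>\rho a$ or $\tilde{q}-Z\ge(1-\rho)a$. Hence
\begin{align*}
\mathbb{P}(\tilde{q}>a)\le \Phi^c(\rho a)+\mathbb{P}(|\tilde{q}-Z|\ge(1-\rho)a).
\end{align*}
The second term is handled by Markov's inequality applied to $|\tilde{q}-Z|^p$, which gives at most $[(1-\rho)a]^{-p}(\mathcal{W}_p^p+\epsilon)$. For the first term we write $\Phi^c(\rho a)-\Phi^c(a)=\int_{\rho a}^a\phi(x)\,dx$. Since $\phi$ is decreasing on $[0,\infty)$ and $\rho a\ge 0$, this integral is at most $(1-\rho)a\,\phi(\rho a)$. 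Subtracting $\Phi^c(a)$ therefore gives the desired bound on $\mathbb{P}(\tilde{q}>a)-\mathbb{P}(Z>a)$.

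\emph{Lower bound.} Here the plan is to bound $\mathbb{P}(Z>a)$ by a shifted event. One clean route: if $Z>(2-\rho)a$ then either $\tilde{q}>a$ or $Z-\tilde{q}\ge(1-\rho)a$. Hence
\begin{align*}
\mathbb{P}(Z>a)\le \mathbb{P}(\tilde{q}>a)+\mathbb{P}(|\tilde{q}-Z|\ge(1-\rho)a)+\mathbb{P}(a<Z\le(2-\rho)a).
\end{align*}
The last term equals $\int_a^{(2-\rho)a}\phi$. By monotonicity it is at most $(1-\rho)a\,\phi(a)\le(1-\rho)a\,\phi(\rho a)$. The middle term is again bounded by Markov's inequality. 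Rearranging gives the bound on $\mathbb{P}(Z>a)-\mathbb{P}(\tilde{q}>a)$.

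Taking the maximum of the two one-sided bounds and letting $\epsilon\to0$ yields \eqref{eq: Tail bound from Wasserstein-$p$ distance}. The only points needing care are the coupling/infimum step, handled by the $\epsilon$-argument or the existence of an optimal coupling, and the monotonicity of $\phi$ on $[\rho a,(2-\rho)a]\subset[0,\infty)$, which uses $\rho\ge0$. I do not expect a genuine obstacle; the argument is elementary.
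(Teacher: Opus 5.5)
Your proposal is correct and follows the paper's argument. Both use a (near-)optimal $\mathcal{W}_p$ coupling with $\epsilon\downarrow 0$, split $\{\tilde q>a\}$ according to whether $Z>\rho a$ (and $\{Z>a\}$ according to whether $Z>(2-\rho)a$), apply Markov's inequality to $|\tilde q-Z|^p$, and use monotonicity of $\phi$ on $[0,\infty)$ to bound the Gaussian strip terms, with $(1-\rho)a\,\phi(a)\le(1-\rho)a\,\phi(\rho a)$ closing the lower-tail side.
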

\noindent 
Empirically from Markov inequality, for tail bounds on $\mathbb{P}(Z>a)$, optimizing over $p$ with moment $\mathbb{E}[|Z|^p]$ leads to tighter pre-exponential factor compared with optimizing over $\theta$ with MGF $\mathbb{E}[e^{\theta Z}]$ (see our illustrative example in Section 5.1). Thus raising to power $p$ instead of exponential form is preferred here, leading to the Wasserstein-$p$ approach instead of Wasserstein-Orlicz approach.

We note that a similar argument would provide a lower tail probability bound $\mathbb{P}(\tilde{q} < -a)$ with the same form of \eqref{eq: Tail bound from Wasserstein-$p$ distance} as below. Thus we can obtain efficient concentration for both upper and lower tails. We focus on upper tail in the whole paper for simplicity, and the same results hold for lower tail.
\begin{align}
    \left|\mathbb{P}(\tilde{q} < -a) - \mathbb{P}(Z < -a)\right| \leq (1-\rho)a \cdot \phi(\rho a ) + [(1-\rho)a]^{-p}\mathcal{W}_p^p(\mathcal{L}(\tilde{q}), \mathcal{L}(Z)) \label{eq: Tail bound from Wasserstein-$p$ distance, lower tail}
\end{align}

Now the bulk of efforts is to achieve tight Wasserstein-$p$ bounds between $\tilde{q}$ and $Z$ for any $p > 1$, and optimizing over the choice $(\rho, p)$ based on the results of $\mathcal{W}_p(\mathcal{L}(\tilde{q}), \mathcal{L}(Z))$. For example, as we show in Section \ref{sec: main result}, once we establish $\mathcal{W}_p(\mathcal{L}(\tilde{q}), \mathcal{L}(Z)) \lesssim p\sqrt{\gamma}$, Lemma \ref{lemma: tail bound via wasserstein} with optimal choice of $(\rho, p)$ leads to efficient concentration in the form \eqref{eq: main contribution, constant deviation}. Notice that the more popular Wasserstein-1 distance in the queueing literature \cite{hurtado2022load,braverman2017steinMPhn} is insufficient since it only yields polynomial decay in $a$ via the above lemma,
\begin{align}
    \left|\mathbb{P}(\tilde{q} > a) - \mathbb{P}(Z > a)\right| \leq (1-\rho)a \cdot \phi(\rho a ) + \frac{\mathcal{W}_1(\mathcal{L}(\tilde{q}), \mathcal{L}(Z))}{(1-\rho)a} \label{eq: Tail bound from Wasserstein-1 distance}.
\end{align}
We thus develop Wasserstein-$p$ bounds with sharp dependence on $(p,\gamma)$ for all $p > 1$. However, for SSQ, the scale $\mathcal{W}_p(\mathcal{L}(\tilde{q}), \mathcal{L}(Z)) \lesssim p\sqrt{\gamma}$ only holds up to moderate deviation regimes. Therefore, efficient concentration is only obtained up to moderate deviation scales (see Theorem \ref{thm: SSQ Tail bound, first regime}, \ref{thm: SSQ Tail bound, second regime}). So we blend with transform method for larger deviation regimes to reveal tail behaviors under all scales (see Theorem \ref{thm: SSQ Tail bound, third regime} and \ref{thm: SSQ Tail bound, fourth regime}).

\subsection{Wasserstein-$p$ Distance via Stein's Method}
 A key method for bounding Wasserstein-$p$ distance between laws of random variable $X$, and $Z \sim \mathcal{N}(0,1)$ is a variant of Stein's method via dynamic formulation \cite{ambrosio2005gradient,bonis2020steins}. Specifically, this variant upper bounds Wasserstein-$p$ distance by what we refer to as a
\textit{generator comparison} below. Before we state the result, we first introduce some notations and assumptions with explanations.  

\begin{assumption}[Stein identity] \label{assumption: Stein identity}
    Given a real-valued random variable $X$ with support $E$, there exists a linear operator $\mathcal{A}$ such that $f|_ E \in \mathcal{D}(\mathcal{A})$ for all test function $f \in \mathcal{C}_b(\mathbb{R})$, $\mathcal{A}f(X)$ is integrable, and
    $\mathbb{E}[\mathcal{A}f(X)] = 0$.
\end{assumption}
We note that operator $\mathcal{A}$ typically has domain $\mathcal{D}(\mathcal{A})$ that is a subset of $\mathbb{R}^E$. Thus we adopt the convention of notation that $\mathcal{A}f(x) := (\mathcal{A}f|_E)(x)$ for $x \in E$, where $f|_E$ is the restriction of $f$ on $E$ (see Section \ref{sec: notation and modeling} for more details).
Consider a CTMC $(X_t)_{t \geq 0}$ with stationary distribution $\mathcal{L}(X)$ and we denote its infinitesimal generator as $\mathcal{A}$. Then we have $\mathbb{E}[\mathcal{A}f(X)] = 0$ for all smooth test functions $f$ since $X$ is stationary, satisfying Assumption \ref{assumption: Stein identity}. This assumption characterizes $X$ via the operator $\mathcal{A}$, and we will compare it with that of $Z$. 
 We next introduce the function class $\mathcal{F}$ that we will work with for the subsequent assumptions.
\begin{definition}[Test functions] \label{assumption: function class}
    We define function class $\mathcal{F} := \{f \in \mathcal{C}^{\infty}(\mathbb{R}): \exists D,m > 0 \text{ s.t. } |f^{(k)}(x)| \leq D^k \sqrt{k!} (1 + |x|)^m, \forall k \geq 1, x \in \mathbb{R}\}$.
\end{definition}
In short, $\mathcal{F}$ is a class of smooth functions with derivatives that are bounded by the growth rate $\mathcal{O}(k^{k/2})$ times a polynomial. We now impose the expansion condition on operator $\mathcal{A}$, using $\mathcal{F}$ as the test function class.
\begin{assumption}[Kramers-Moyal expansion] \label{assumption: Kramers-Moyal expansion}
    Under Assumption \ref{assumption: Stein identity}, we further assume that there exists a family of functions $a^{\mathcal{A}}_k(x)$, such that $\mathcal{A}f(x) = \sum_{k=1}^{\infty} a^{\mathcal{A}}_k(x) f^{(k)}(x)$ for all $x \in \operatorname{supp}(\mathcal{L}(X))$ and all $f \in \mathcal{F}$.
\end{assumption}
This expansion is commonly referred to as the Kramers-Moyal expansion in the statistical physics literature \cite{kramers1940brownian}. From the analytic viewpoint, such expansions are closely related to the theory of pseudo-differential operators \cite{hormander2007analysis}. From the probabilistic viewpoint, they can be interpreted as a generalized Taylor expansion for generators of Markov processes. Before turning to this probabilistic interpretation, we first identify the coefficient functions $a^{\mathcal{A}}_k(\cdot)$ appearing in the Kramers-Moyal expansion of $\mathcal{A}$.
\begin{lemma}[Identity for Kramers-Moyal coefficients] \label{lemma: identity for Kramers-Moyal coefficients}
    Suppose the Kramers-Moyal expansion in Assumption \ref{assumption: Kramers-Moyal expansion} holds. Given any $k \geq 1$ and any $y \in \operatorname{supp}(\mathcal{L}(X))$, 
     we have $a^{\mathcal{A}}_k(y) = \frac{1}{k!}(\mathcal{A}(x - y)^k) \Big|_{x = y}$.
\end{lemma}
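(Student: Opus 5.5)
The plan is to test the Kramers--Moyal expansion against a polynomial test function centered at the point $y$. Fix $k\ge 1$ and $y\in\operatorname{supp}(\mathcal{L}(X))$, and set $g_{k,y}(x):=(x-y)^k$.

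\textbf{Step 1: $g_{k,y}$ is an admissible test function.} We check that $g_{k,y}\in\mathcal{F}$ from Definition~\ref{assumption: function class}.
\begin{itemize}
\item For $j\le k$ we have $g_{k,y}^{(j)}(x)=\frac{k!}{(k-j)!}(x-y)^{k-j}$.
\item For $j>k$ every derivative vanishes.
\item Since $|x-y|^{k-j}\le (1+|y|)^{k}(1+|x|)^{k}$, we may take $m=k$.
\item We then choose $D$ large enough (depending on $k$ and $y$) that $\frac{k!}{(k-j)!}(1+|y|)^k\le D^j\sqrt{j!}$ for $1\le j\le k$. This is possible because only finitely many $j$ are involved.
\end{itemize}
So the growth condition holds for all $j\ge1$.

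\textbf{Step 2: evaluate the expansion at $x=y$.} Assumption~\ref{assumption: Kramers-Moyal expansion} gives, for every $x$ in the support,
\[
\mathcal{A}g_{k,y}(x)=\sum_{j=1}^{k} a^{\mathcal{A}}_j(x)\,\frac{k!}{(k-j)!}(x-y)^{k-j}.
\]
This is a finite sum, so no convergence issues arise. We take $x=y$, which is allowed because $y$ lies in the support. Every term with $j<k$ contains a positive power of $(x-y)$ and vanishes, leaving only the $j=k$ term:
\[
\mathcal{A}g_{k,y}(y)=k!\,a^{\mathcal{A}}_k(y).
\]
Dividing by $k!$ gives the claimed identity.

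\textbf{Main obstacle: domain and restriction conventions.} The operator $\mathcal{A}$ is defined on restrictions of $\mathcal{C}_b(\mathbb{R})$ functions, while $g_{k,y}$ is unbounded.
\begin{itemize}
\item The expansion in Assumption~\ref{assumption: Kramers-Moyal expansion} is stated for all $f\in\mathcal{F}$, so we apply it to $g_{k,y}$ directly, with $\mathcal{A}g_{k,y}:=\mathcal{A}(g_{k,y}|_E)$ as in the paper's notational convention. The expression $\mathcal{A}(x-y)^k|_{x=y}$ is understood in this sense.
\item If one insists on $f\in\mathcal{C}_b$, a fallback is to modify $g_{k,y}$ outside a large ball around $y$ so that it becomes bounded.
\item For the CTMC generators of interest, such as $\mathcal{L}_{\mathrm{SSQ}}$, $\mathcal{A}f(y)$ depends only on the values of $f$ at finitely many points. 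The modification therefore does not change $\mathcal{A}f(y)$, provided the ball contains all jump targets from $y$.
\end{itemize}
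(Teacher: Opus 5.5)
Your proposal is correct and follows essentially the same route as the paper: apply the Kramers--Moyal expansion to the test function $(x-y)^k$ and evaluate at $x=y$, where only the $j=k$ term survives and contributes $k!\,a^{\mathcal{A}}_k(y)$. Your explicit verification that $(x-y)^k\in\mathcal{F}$ fills in a step the paper only asserts. The domain/restriction discussion is a harmless extra, since Assumption~\ref{assumption: Kramers-Moyal expansion} already posits the expansion for every $f\in\mathcal{F}$.
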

We emphasize that $y$ is a fixed point in the support of $\mathcal{L}(X)$, and
the action of $\mathcal{A}$ should be understood as acting on the function $(x - y)^k$ with $x$ as variable. We then evaluate the result function at $x=y$. This yields a mapping from $\operatorname{supp}(\mathcal{L}(X))$ to $\mathbb{R}$ for each $k \geq 1$, which is exactly $a^{\mathcal{A}}_k(\cdot)$. We now prove the above identity as follows.
\begin{proof}
    Given any $k \geq 1$ and any $y \in \operatorname{supp}(\mathcal{L}(X))$, it can be shown that the function $(x - y)^k$ belongs to $\mathcal{F}$. Thus, we apply the Kramers-Moyal expansion of $\mathcal{A}$ to the function $(x - y)^k$ and evaluate the result at $x=y$, we have
    \begin{align*}
        (\mathcal{A}(x - y)^k)\Big|_{x = y} \overset{(a)}{=} \sum_{j=1}^{\infty} a^{\mathcal{A}}_j(y) \frac{d^j}{dx^j}(x - y)^k\Big|_{x = y} \overset{(b)}{=} a^{\mathcal{A}}_k(y) k!.
    \end{align*}
    Here the equality (a) holds because of the definition of Kramers-Moyal expansion in Assumption \ref{assumption: Kramers-Moyal expansion}. The equality (b) holds because of the following case-by-case analysis,
\begin{align*}
    \frac{d^j}{dx^j}(x - y)^k\Big|_{x = y} = \begin{cases}
        0, & j \neq k, \\
        k!, & j = k, 
    \end{cases}
\end{align*}
\end{proof}
Alternatively, we can identify $a^{\mathcal{A}}_k(x)$ via an analogy of extracting moments from characteristic functions. Consider applying $\mathcal{A}$ to the function $e^{i\theta x}$ for $\theta \in \mathbb{R}$ and $i$ as the imaginary unit, with $x$ as variable. Since $e^{i\theta x} \in \mathcal{F}$, from Assumption \ref{assumption: Kramers-Moyal expansion}, we have $\mathcal{A}e^{i\theta x} = \sum_{k=1}^{\infty} a^{\mathcal{A}}_k(x) (i\theta)^k e^{i\theta x}$. Thus we can obtain $a^{\mathcal{A}}_k(x)$ by the following formula,
\begin{align*}
    a^{\mathcal{A}}_k(x) = \frac{1}{i^k k!} \frac{d^k}{d\theta^k} \left(e^{-i\theta x} \mathcal{A}e^{i\theta x}\right) \Big|_{\theta = 0}.
\end{align*}
We now turn to the probabilistic interpretation of the 
 Kramers-Moyal expansion via examples from Markovian processes. First consider a CTMC $(X_t)_{t \geq 0}$ with stationary distribution $\mathcal{L}(X)$ and generator $\mathcal{A}$. For any smooth test function $f$, suppose Kramers-Moyal expansion holds for $\mathcal{A}$, then we have
\begin{align}
    \mathcal{A}f(y) &:= \lim_{t \to 0} \frac{1}{t} \mathbb{E}[f(X_t) - f(X_0) \mid X_0 = y] \notag\\
    &\overset{(a)}{=} \sum_{k=1}^{\infty} a^{\mathcal{A}}_k(y) f^{(k)}(y) \notag\\
    &\overset{(b)}{=} \sum_{k=1}^{\infty} \frac{1}{k!}(\mathcal{A}(x - y)^k) \Big|_{x = y} f^{(k)}(y) \notag\\
    &\overset{(c)}{=} \lim_{t \to 0} \frac{1}{t} \sum_{k=1}^{\infty} \frac{1}{k!} \mathbb{E}[(X_t - X_0)^k \mid X_0 = y] f^{(k)}(y), \label{eq: Taylor expansion requirement, assumption (b) example}
\end{align}
where equality (a) follows from the Kramers-Moyal expansion in Assumption \ref{assumption: Kramers-Moyal expansion}, equality (b) follows from Lemma \ref{lemma: identity for Kramers-Moyal coefficients}. Equality (c) is from applying the definition of generator $\mathcal{A}$ to the function $(x-y)^k$ for each $k \geq 1$.
Therefore, Assumption \ref{assumption: Kramers-Moyal expansion} 
 should be interpreted as requiring that the generator $\mathcal{A}$ admit an infinite Taylor expansion, together with justification of the interchange of all limit and summation. 
 
 We next present two representative examples of Markovian processes satisfying Assumption \ref{assumption: Kramers-Moyal expansion}. First, consider OU process $dX_t = -X_t dt + \sqrt{2} dB_t$ with $\mathcal{A}$ becomes the OU generator $\mathcal{L}_{OU}f(x) = -x f'(x) + f''(x)$. To verify the Kramers-Moyal expansion, fix a small time $s > 0$, we have $X_s \overset{d.}{=} e^{-s} X_0 + \sqrt{1 - e^{-2s}} Z = X_0 - (s+o(s))X_0 + (\sqrt{2s} +o(s))Z$, thus
\begin{align*}
    \mathbb{E}[X_s - X_0 \mid X_0 = x] &= -x s + o(s), \\
    \mathbb{E}[(X_s - X_0)^2 \mid X_0 = x] &= 2s + o(s), \\
    \mathbb{E}[(X_s - X_0)^k \mid X_0 = x] &= o(s), \quad k \geq 3.
\end{align*}
Therefore, taking limit $s \to 0$, we have $\mathcal{L}_{OU}f(x) = \lim_{s \to 0} \frac{1}{s} \sum_{k=1}^{\infty} \frac{1}{k!} \mathbb{E}[(X_s - X_0)^k \mid X_0 = x] f^{(k)}(x)$. This is precisely the Kramers-Moyal expansion for $\mathcal{L}_{OU}$. In particular, only the first two coefficients are nonzero. 

The second example is a Poisson counting process with rate $\lambda > 0$. Its generator $\mathcal{A}$ is given by $\mathcal{A}f(x) = \lambda (f(x+1) - f(x))$. If $f \in \mathcal{F}$, then direct Taylor expansion at $x$ gives
\begin{align*}
    \mathcal{A}f(x) &= \lambda (f(x+1) - f(x)) \overset{(a)}{=} \lambda \sum_{k=1}^{\infty} \frac{1}{k!} f^{(k)}(x). 
\end{align*}
Equality (a) holds when $f$ is in function class $\mathcal{F}$. Thus the Kramers-Moyal expansion holds for $\mathcal{A}$ with $a_k^{\mathcal{A}}(x) = \lambda/k!$ for all $k \geq 1$. These coefficients are consistent with the identity of $a_k^{\mathcal{A}}(x)$ in Lemma \ref{lemma: identity for Kramers-Moyal coefficients} and Equation \ref{eq: Taylor expansion requirement, assumption (b) example}, since we have $\mathbb{E}[(X_s - X_0)^k \mid X_0 = x] = \lambda s + o(s)$ for all $k \geq 1$. Together with the OU example, this shows the two possible types of Kramers-Moyal expansions for continuous-time Markov processes, namely, a second-order expansion in the diffusion case and an infinite-order expansion in the jump case. By Pawula's theorem \cite{pawula2003generalizations}, these are the only two possibilities. In other words, a continuous-time Markov process either has an infinite-order Kramers-Moyal expansion, or a finite-order expansion with order 2.

We now impose the last assumption on integrability of the expansion coefficients to proceed.


\begin{assumption} [Integrability of Kramers-Moyal coefficients] \label{assumption: integrability of Kramers-Moyal coefficients}
    Under Assumption \ref{assumption: Stein identity} and \ref{assumption: Kramers-Moyal expansion}
    , we assume for all constant $D > 0$ that the inequality holds $\mathbb{E}[\sum_{k=1}^{\infty} D^k\sqrt{k!}|a^{\mathcal{A}}_k(X)|] < \infty$.
\end{assumption}
Having explained all assumptions above, we are now ready to introduce the key proposition on Wasserstein-$p$ bounds via generator comparison. 
\begin{proposition}[Wasserstein-$p$ bound via generator comparison] \label{pro: Wasserstein-$p$ bound without exchangeability}
    Let $X$ be a real-valued random variable satisfying $\mathbb{E}[|X|^p] < \infty$ for some $p > 1$. We assume Assumptions \ref{assumption: Stein identity}, \ref{assumption: Kramers-Moyal expansion} and \ref{assumption: integrability of Kramers-Moyal coefficients}
     hold for $X$ with operator $\mathcal{A}$. Then for any $t_0 > 0$, we have the following upper bound for Wasserstein-$p$ distance between $\mathcal{L}(X)$ and $\mathcal{L}(Z)$, 
    \begin{align}
        W_p(\mathcal{L}(X), \mathcal{L}(Z)) &\leq g_0(t_0,p) + \sum_{k=1}^{\infty} g_k(t_0,p) \mathbb{E}[|a_{k}^{\mathcal{A}}(X) - a_{k}^{\mathcal{L}_{OU}}(X)|^p]^{1/p} \notag\\
        &= g_0(t_0,p) + g_1(t_0,p) \mathbb{E}[| a_{1}^{\mathcal{A}}(X) + X|^p]^{1/p} \notag\\
        &+ g_2(t_0,p) \mathbb{E}[|a_{2}^{\mathcal{A}}(X) - 1|^p]^{1/p} + \sum_{k=3}^{\infty} g_k(t_0,p) \mathbb{E}[|a_{k}^{\mathcal{A}}(X)|^p]^{1/p}, \label{eq:Final Stein Wasserstein-$p$ bound}
    \end{align}
    where $Z\sim N(0,1)$ and the coefficients $g_k(t_0,p)$ for $k\ge 0$
are continuous in $(t_0,p)$ and are given explicitly in \eqref{eq: W-p bound final}.
\end{proposition}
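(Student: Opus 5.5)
The plan is to follow the dynamic (interpolation) formulation of Stein's method in the spirit of \cite{bonis2020steins,ledoux2015stein}. Let $(P_t^{OU})_{t\ge0}$ denote the Ornstein--Uhlenbeck semigroup with generator $\mathcal{L}_{OU}f=-xf'+f''$. Let $\nu_t$ be the law of $X_t:=e^{-t}X+\sqrt{1-e^{-2t}}Z'$, with $Z'$ independent of $X$, so that $\nu_0=\mathcal{L}(X)$ and $\nu_\infty=\mathcal{N}(0,1)$. We split the path at $t_0$.
\begin{itemize}
\item On $[t_0,\infty)$ we use that $\nu_t$ is explicit.
\item On $[0,t_0]$ we use the triangle inequality $W_p(\nu_0,\nu_{t_0})\le \|X-X_{t_0}\|_{L^p}\le (1-e^{-t_0})\|X\|_{L^p}+\sqrt{1-e^{-2t_0}}\|Z\|_{L^p}$.
\end{itemize}
Both pieces are continuous in $(t_0,p)$ and depend only on $p$ and $t_0$ (once $\|X\|_{L^p}$ is absorbed, or further controlled through the first coefficient). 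They make up $g_0(t_0,p)$.

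The main term is $W_p(\nu_{t_0},\nu_\infty)$. By the Benamou--Brenier/Otto calculus \cite{ambrosio2005gradient}, we will use the bound $W_p(\nu_{t_0},\nu_\infty)\le\int_{t_0}^\infty \|v_t\|_{L^p(\nu_t)}\,dt$. Here $v_t=-\nabla\log(\rho_t/\varphi)$ is the velocity field, with $\rho_t$ the density of $\nu_t$ and $\varphi$ the Gaussian density. Following Bonis, we write $v_t$ through the score as a conditional expectation. For a test function $h$, we compute $\mathbb{E}[h'(X_t)v_t(X_t)]$ by differentiating $\mathbb{E}[h(X_t)]=\mathbb{E}[P^{OU}_t h(X)]$. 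We then insert the Stein identity $\mathbb{E}[\mathcal{A}P^{OU}_th(X)]=0$ from Assumption~\ref{assumption: Stein identity}, which is legitimate since $P^{OU}_th\in\mathcal{F}$ for $h$ smooth with polynomial growth. This gives
\begin{align*}
\frac{d}{dt}\mathbb{E}[h(X_t)]=\mathbb{E}[(\mathcal{L}_{OU}-\mathcal{A})P^{OU}_th(X)]=\sum_{k\ge1}\mathbb{E}\big[(a^{\mathcal{L}_{OU}}_k(X)-a^{\mathcal{A}}_k(X))\,(P^{OU}_th)^{(k)}(X)\big],
\end{align*}
where the expansion is Assumption~\ref{assumption: Kramers-Moyal expansion}. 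Next we use the commutation $(P^{OU}_th)^{(k)}(x)=e^{-kt}\mathbb{E}[h^{(k)}(e^{-t}x+\sqrt{1-e^{-2t}}Z')]$ and Gaussian integration by parts to move $k-1$ derivatives onto Hermite weights: $h^{(k)}\to h'\cdot H_{k-1}(Z')/(1-e^{-2t})^{(k-1)/2}$. This identifies
\begin{align*}
v_t(X_t)=\sum_k e^{-kt}(1-e^{-2t})^{-(k-1)/2}\,\mathbb{E}\big[(a_k^{\mathcal{A}}-a_k^{\mathcal{L}_{OU}})(X)\,H_{k-1}(Z')\mid X_t\big],
\end{align*}
with $H_j$ the Hermite polynomials. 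Conditional Jensen and independence then give
\begin{align*}
\|v_t\|_{L^p}\le\sum_k e^{-kt}(1-e^{-2t})^{-(k-1)/2}\|H_{k-1}(Z)\|_{L^p}\,\|a_k^{\mathcal{A}}-a_k^{\mathcal{L}_{OU}}\|_{L^p}.
\end{align*}
Integrating over $t\in[t_0,\infty)$ produces the explicit $g_k(t_0,p)=\|H_{k-1}(Z)\|_{L^p}\int_{t_0}^\infty e^{-kt}(1-e^{-2t})^{-(k-1)/2}dt$. Since $a_1^{\mathcal{L}_{OU}}=-x$, $a_2^{\mathcal{L}_{OU}}=1$, and $a_k^{\mathcal{L}_{OU}}=0$ for $k\ge3$, the expanded form \eqref{eq:Final Stein Wasserstein-$p$ bound} follows.

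The main obstacle is justification rather than algebra. Three things must be checked.
\begin{itemize}
\item The series over $k$ must be interchangeable with the expectation and the $t$-integral. Here $\|H_{k-1}(Z)\|_{L^p}\lesssim C_p^k\sqrt{(k-1)!}$, and this is exactly why $\mathcal{F}$ and Assumption~\ref{assumption: integrability of Kramers-Moyal coefficients} are phrased with the $D^k\sqrt{k!}$ weights. Hypercontractivity bounds of the form $\|H_{k-1}(Z)\|_{L^p}\le (p-1)^{(k-1)/2}\sqrt{(k-1)!}$ should supply this, and the cutoff $t_0>0$ keeps $(1-e^{-2t})^{-(k-1)/2}$ bounded.
\item The velocity representation must be valid. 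This requires $\nu_t$ to have a smooth positive density for $t>0$, which holds automatically since $\nu_t$ is a Gaussian convolution.
\item The $W_p$-length bound for the curve $(\nu_t)$ must hold on $[t_0,\infty)$. This uses $\mathbb{E}|X|^p<\infty$ and convergence $\nu_t\to\nu_\infty$ in $W_p$.
\end{itemize}
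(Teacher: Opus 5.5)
Your treatment of the main term over $[t_0,\infty)$ is the paper's argument, repackaged slightly. The paper starts from the score representation $\rho_t(F_t)=\mathbb{E}[e^{-t}X-\frac{e^{-2t}}{\sqrt{1-e^{-2t}}}Z\mid F_t]$. It then adds $\tau_t=\sum_{k\ge1}e^{-kt}(1-e^{-2t})^{-(k-1)/2}h_{k-1}(Z)\,a_k^{\mathcal{A}}(X)$, and shows $\mathbb{E}[\tau_t\mid F_t]=0$ using exactly the $\mathcal{A}$-part of your computation: the Stein identity applied to $P_tG$, commutation, and Hermite integration by parts. Your identification of $v_t$ from $\frac{d}{dt}\mathbb{E}[h(X_t)]=\mathbb{E}[(\mathcal{L}_{OU}-\mathcal{A})P^{OU}_th(X)]$ merges these two steps. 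It gives the same conditional-expectation formula up to an overall sign, which does not affect the norm. One correction: the Stein identity (Assumption~\ref{assumption: Stein identity}) is only available for $f\in\mathcal{C}_b(\mathbb{R})$, so you cannot apply it to $h$ of polynomial growth. Take $h$ to be a bounded antiderivative of some $g\in C_c^\infty(\mathbb{R})$, as the paper does. Since only $h'$ needs to range over a dense class, this still identifies $v_t$.

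The real gap is in the short-time piece. As written, your $g_0$ contains $(1-e^{-t_0})\|X\|_{L^p}$, so it is not a function of $(t_0,p)$ alone. Controlling it ``through the first coefficient'' does not work: $\|a_1^{\mathcal{A}}(X)\|_{L^p}$ is not among the right-hand-side terms, and the Stein identity only controls expectations.

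What works is absorbing it into the left side. By the triangle inequality, $\|X\|_{L^p}\le W_p(\mathcal{L}(X),\mathcal{L}(Z))+\|Z\|_{L^p}$. Hence $W_p\le(1-e^{-t_0})W_p+\bigl(1-e^{-t_0}+\sqrt{1-e^{-2t_0}}\bigr)\|Z\|_{L^p}+\int_{t_0}^\infty\|v_t\|_{L^p(\nu_t)}\,dt$. Since $W_p<\infty$ (because $\mathbb{E}|X|^p<\infty$), you can rearrange. The price is a factor $e^{t_0}$ on every coefficient. So your displayed $g_k$ are too small by $e^{t_0}$ (your $g_1=e^{-t_0}$ becomes $1$), and $g_0$ becomes $(e^{t_0}-1+\sqrt{e^{2t_0}-1})\|Z\|_{L^p}$.

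The paper performs the same absorption with a slightly sharper coupling. It keeps the single integral $\int_0^\infty\|\rho_t(F_t)\|_{L^p}\,dt$ and bounds the $[0,t_0]$ part using the raw score representation and conditional Jensen. It writes $X=(X-G)+G$, with $G$ a Gaussian optimally coupled to $X$ and independent of $Z$, so that $e^{-t}G-\frac{e^{-2t}}{\sqrt{1-e^{-2t}}}Z\sim\mathcal{N}\bigl(0,\frac{e^{-2t}}{1-e^{-2t}}\bigr)$. This gives $(1-e^{-t_0})W_p+\|Z\|_{L^p}\bigl(\frac{\pi}{2}-\sin^{-1}(e^{-t_0})\bigr)$, hence $g_0=e^{t_0}\|Z\|_{L^p}\bigl(\frac{\pi}{2}-\sin^{-1}(e^{-t_0})\bigr)$. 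This is marginally smaller than your $g_0$; both behave like $\sqrt{2t_0}\,\|Z\|_{L^p}$ as $t_0\downarrow0$.

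With the absorption step made explicit, your argument proves the proposition with exactly the paper's $g_k$ for $k\ge1$ and a slightly larger $g_0$.
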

Note that when applying the above proposition, we will construct the operator $\mathcal{A}$ such that $a_{k}^{\mathcal{A}}(X)$ is affine with respect to $X$ for all $k \geq 1$. Thus $\mathbb{E}[|a_{k}^{\mathcal{A}}(X)|^p]$ is finite as long as $\mathbb{E}[|X|^p] < \infty$. Recall from Lemma \ref{lemma: identity for Kramers-Moyal coefficients} that $a_{k}^{\mathcal{A}}(y)$ are characterized by $\tfrac{1}{k!}(\mathcal{A}(x - y)^k)(y)$. Thus $a_{k}^{\mathcal{A}}(X)$ is a random variable. We use $a_{k,X}^{\mathcal{L}_{OU}}(X)$ to denote the corresponding coefficients for the generator of OU process in the Taylor expansion. 

We next parse each term in the upper bound \eqref{eq:Final Stein Wasserstein-$p$ bound}.
The first term $g_0(t_0,p)$ is a residual term, increasing in $t_0$ and satisfies $g_0(t_0,p) \to 0$ as $t_0 \to 0$. It is introduced to offset the computational issue in $g_k(t_0,p)$ for $k \geq 3$, since $g_k(t_0,p)$ is decreasing in $t_0$ and satisfies $\lim_{t_0 \to 0} g_k(t_0,p) = \infty$ (see the definition in \eqref{eq: W-p bound final}). We will choose $t_0$ to balance the decay in $g_0(t_0,p)$ and growth in $g_k(t_0,p)$ for $k \geq 3$ when applying this proposition. 

The remaining three terms quantify the difference between the generator of \(X\) and that of the standard normal target \(Z\). Indeed, from Assumption \ref{assumption: Kramers-Moyal expansion}, the operators $\mathcal{A}$ and $\mathcal{L}_{OU}$ can be represented symbolically as $\mathcal{A} = \sum_{k=1}^{\infty} a_{k}^{\mathcal{A}}(x) \frac{d^k}{dx^k}$ and $\mathcal{L}_{OU} = \sum_{k=1}^{\infty} a_{k}^{\mathcal{L}_{OU}}(x) \frac{d^k}{dx^k}$ in terms of differential operator basis. Thus these terms compare the coefficients of the different derivative orders in the Kramers-Moyal expansions of \(\mathcal A\) and \(\mathcal L_{\mathrm{OU}}\). Since the coefficients of \(\mathcal L_{\mathrm{OU}}\) are determined by the drift and diffusion structure of the OU process, Proposition~\ref{pro: Wasserstein-$p$ bound without exchangeability} may be viewed as a Wasserstein-\(p\) diffusion approximation result.

Equivalently, from Lemma \ref{lemma: identity for Kramers-Moyal coefficients}, the generator comparison can be viewed as applying $\mathcal{A} - \mathcal{L}_{OU}$ to the centered monomials $(\cdot - X)^k$ and evaluating at $X$, then taking $L^p$ norm. In this way, the centered monomials form a convenient and tractable family of test functions to compare the generators.

In the canonical Stein's method, one compares generators by acting their difference on the solution of the Stein equation. One then expands this action into powers of derivatives through the Taylor expansion, (or Kramers-Moyal expansion) whose coefficients are determined by the action of the generator difference on the centered monomials\cite{barbour1990stein,gurvich2014diffusion}. 
Therefore, our generator comparison can be viewed as a variant of Stein's method. It quantifies Wasserstein-\(p\) distance by applying the difference of generators directly to the centered monomials, without the intermediate step of controlling the solution to Stein equation. 

\begin{remark}
Although Proposition~\ref{pro: Wasserstein-$p$ bound without exchangeability} is stated for \(p>1\), it can be extended to the case \(p=1\). Indeed, by continuity of the Wasserstein-\(p\) distance in \(p\), letting \(p\downarrow 1\) yields the Wasserstein-\(1\) distance on the left-hand side. Moreover, by the definition of \(g_k(t_0,p)\) in \eqref{eq: W-p bound final}, the right-hand side also converges by continuity in \(p\). Hence the proposition remains valid for \(p=1\), and may be viewed as an auxiliary Wasserstein-\(1\) estimate in the spirit of canonical Stein's method \cite[Theorem 3.7]{ross2011fundamental}, albeit with different constants.
\end{remark}
For JSQ we will work with a generalized version of the above proposition (see Section \ref{sec: proof for Wasserstein-$p$ bound without exchangeability}).
 This version allows a transformation of the queue vector $\tilde{\boldsymbol{q}}$
into a one dimensional projection $\tilde{q}_\Sigma$ before applying the generator comparison. The definition of $\tilde{q}_\Sigma$ is given in Section \ref{sec: Wasserstein-$p$ distance for JSQ}.

Having outlined the rationale of our choice of techniques, and explained the key Proposition \ref{pro: Wasserstein-$p$ bound without exchangeability} with its connections to tail bounds in Lemma \ref{lemma: tail bound via wasserstein}, we are now ready to sketch how to apply this machinery to our queueing system with abandonment.

\section{Proof Sketch} \label{sec: proof sketch}

With discussion on methods in Section \ref{sec: survey of methods} and main results in Section \ref{sec: main result}, we now present proof sketches for our main results. Given that Wasserstein-$p$ distance is a key stepping stone. For the sake of coherence, we first sketch proof for the ingredients, i.e., Wasserstein-$p$ distance for SSQ, and State Space Collapse for JSQ, before tail bounds, reversing their order from Section \ref{sec: main result}. We will highlight the main steps and discuss our intermediate lemmas and propositions.

\subsection{Proof Sketch for Theorem \ref{thm: Gaussian Wasserstein-$p$ upper bound for M/M/1+M}: Wasserstein-$p$ Distance} \label{sec: proof sketch for Wasserstein-$p$ bounds, SSQ}
As illustrated in Section \ref{sec: Wasserstein-$p$ Distance for M/M/1+M}, different techniques are applied to yield different bounds for Wasserstein-$p$ distance. Thus we separate the proof sketch according to different techniques.
\subsubsection{Stein's Method}
We use the variant of Stein's method, as summarized in Proposition \ref{pro: Wasserstein-$p$ bound without exchangeability}, to upper bound the Wasserstein-$p$ distance. We first summarize the main steps of proving this proposition (see Appendix \ref{sec: proof for Wasserstein-$p$ bound without exchangeability} for proof) here.

We start from the dynamic formulation of Wasserstein-$p$ distance \cite{ambrosio2005gradient,bonis2020steins} between two distributions $\mathcal{L}(X), \mathcal{L}(Z)$. Unlike the definition of Wasserstein-$p$ distance via couplings (see Section \ref{sec: notation and modeling}), which takes the infimum over all couplings $\mathcal{L}(X), \mathcal{L}(Z)$. The dynamic formulation takes the infimum over all probability flows (represented by a velocity field) connecting the two target distributions (see Lemma \ref{lemma: Benamou Brenier for W-p} for a formal statement). Its objective function is the time-integral of $L^p$ norm of this velocity field along the process. Its constraints are boundary conditions that the process starts from $\mathcal{L}(X)$ and ends at $\mathcal{L}(Z)$, and that the time-evolution of the distribution is governed by the velocity field. Thus, in order to bound Wasserstein-$p$ between $\tilde{q}$ and $Z$, we consider an Ornstein-Uhlenbeck (OU) process starting from $\tilde{q}$ at time 0, converging to $Z$ as time goes to infinity, as a feasible flow connecting the two distributions.
Using this choice of process, we obtain Proposition \ref{pro: Wasserstein-$p$ bound without exchangeability} by bounding the $L^p$ norm of the velocity field via the difference between the operators $\mathcal{A}$ and $\mathcal{L}_{OU}$.

Having explained the key Proposition \ref{pro: Wasserstein-$p$ bound without exchangeability}, we next utilize this proposition when $p\leq \lambda/(2\gamma)$ to achieve \eqref{eq: Stein Wasserstein-$p$ upper p's function}. The proof consists of the following three steps:
\begin{enumerate}
    \item Apply the generator comparison with tightest parameters to reduce the Wasserstein-$p$ distance bound to two ingredients: the probability of an empty queue and the $L^p$ norm of the normalized queue length.
    \item Bound the probability of an empty queue via recursive equations and Laplace's Method
    \item Bound the $L^p$ norm of normalized queue length via transform method and change of measure.
\end{enumerate}

\paragraph{1. Generator Comparison and Reduction}
Recall the generator comparison result in Proposition \ref{pro: Wasserstein-$p$ bound without exchangeability}, we apply it to the SSQ model to bound Wasserstein-$p$ distance. To achieve tightest generator comparison, we carefully choose the parameters in Proposition \ref{pro: Wasserstein-$p$ bound without exchangeability} to optimize. Specifically, we choose $t_0$ as a function of $(p, \gamma)$, and we let $\mathcal{A}$ be scaled infinitesimal generator of the SSQ process, i.e., $\mathcal{A} = \frac{1}{s}\mathcal{L}_{SSQ}$ with $s = 1/\gamma$ and $\mathcal{L}_{SSQ}$ in \eqref{eq: generator of SSQ}.


With these careful choices of parameters, we apply Proposition \ref{pro: Wasserstein-$p$ bound without exchangeability} to the SSQ model and
reduce the $\mathcal{W}_p$ bound to bounding two key quantities: 1) $\mathbb{P}(q = 0)$, the probability of empty queue at equilibrium, and 2) $\| \tilde{q}\|_{L^p}$, the $L^p$ norm of normalized queue length. The following lemma summarizes this reduction, 
\begin{lemma} \label{lem: Wasserstein-$p$ bound SSQ reduction}
    \textnormal{(Wasserstein-$p$ Bound Reduction for SSQ)}
    With the same notations above, for $p \leq \lambda/(2\gamma)$: 
    \begin{align*}
        W_p(\mathcal{L}(\tilde{q}), \mathcal{L}(Z))&\leq C_{10, \lambda, \mu} \cdot \big(p\sqrt{\gamma} + \frac{1}{\sqrt{\gamma}} \|\mathbf{1}_{q=0} \|_{L^p} +\gamma\sqrt{p} \| q - \frac{\lambda-\mu}{\gamma}\|_{L^p} \big)
    \end{align*}
    Here $C_{10, \lambda,\mu}$ is constant independent of $\gamma$ and $p$, defined in \eqref{eq: C_10, lambda, mu}.
\end{lemma}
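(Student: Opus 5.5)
We apply Proposition~\ref{pro: Wasserstein-$p$ bound without exchangeability} to $X=\tilde q$ with $\mathcal{A}$ the pushforward of $\frac{1}{s}\mathcal{L}_{\mathrm{SSQ}}$, $s=1/\gamma$, to the lattice $\{\sqrt{\gamma/\lambda}(k-(\lambda-\mu)/\gamma)\}$. Write $h:=\sqrt{\gamma/\lambda}$ for the lattice step and $x=h\,\hat q$.

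\textbf{Step 1: the coefficients.} Using Lemma~\ref{lemma: identity for Kramers-Moyal coefficients}, or directly Taylor-expanding $f(x\pm h)-f(x)$, we get
\[
a_k^{\mathcal{A}}(x)=\frac{h^k}{k!\,\gamma}\Big[\lambda+(-1)^k(\mu+\gamma q)\mathbf{1}_{\{q\ge1\}}\Big],\qquad q=\frac{\lambda-\mu}{\gamma}+\frac{x}{h}.
\]
Substituting $\mu+\gamma q=\lambda+\gamma\hat q$ on $\{q\ge1\}$, and accounting for the missing term $(\mu+\gamma q)$ on $\{q=0\}$, which has value $\mu$ there, gives
\begin{align*}
a_1^{\mathcal{A}}(X)+X&=\frac{h}{\gamma}\,\mu\,\mathbf{1}_{\{q=0\}}, &
a_2^{\mathcal{A}}(X)-1&=\frac{h^2}{2\gamma}\big(\gamma\hat q-\mu\mathbf{1}_{\{q=0\}}\big)+\Big(\frac{h^2\lambda}{\gamma}-1\Big).
\end{align*}
Since $h^2\lambda/\gamma=1$, the second expression reduces to $\frac{\gamma}{2\lambda}\hat q$ plus an indicator term. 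For $k\ge3$, $|a_k^{\mathcal{A}}|\le \frac{h^k}{k!\gamma}\big(2\lambda+\gamma|\hat q|\big)$ on $\{q\ge1\}$, with a similar bound on $\{q=0\}$. Hence each $a_k^{\mathcal{A}}$ is affine in $\hat q$ plus a multiple of $\mathbf{1}_{q=0}$, and Assumptions~\ref{assumption: Stein identity}, \ref{assumption: Kramers-Moyal expansion} and~\ref{assumption: integrability of Kramers-Moyal coefficients} follow from the stationarity of $q$ and finiteness of its moments.

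\textbf{Step 2: taking $L^p$ norms.} Minkowski's inequality gives
\begin{align*}
\|a_1^{\mathcal{A}}+X\|_{L^p}&\lesssim \gamma^{-1/2}\|\mathbf{1}_{q=0}\|_{L^p}, &
\|a_2^{\mathcal{A}}-1\|_{L^p}&\lesssim \gamma\|\hat q\|_{L^p}+\|\mathbf{1}_{q=0}\|_{L^p},
\end{align*}
and, for $k\ge3$,
\[
\|a_k^{\mathcal{A}}\|_{L^p}\lesssim \frac{h^k}{k!\gamma}\big(1+\gamma\|\hat q\|_{L^p}+\|\mathbf{1}_{q=0}\|_{L^p}\big),
\]
where the first-order factor $h^k/\gamma$ is of size $\gamma^{k/2-1}$.

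\textbf{Step 3: choosing $t_0$ and summing.} Plugging these into \eqref{eq:Final Stein Wasserstein-$p$ bound} requires the explicit $g_k(t_0,p)$ from \eqref{eq: W-p bound final}. I expect $g_1,g_2$ to be bounded by $O(1)$ and $O(\sqrt p)$ respectively. This would yield the term $\gamma\sqrt p\,\|\hat q\|_{L^p}$, while the $\mathbf{1}_{q=0}$ terms are absorbed into $\gamma^{-1/2}\|\mathbf{1}_{q=0}\|_{L^p}$. I also expect $g_k$ to scale like $\sqrt{k!}\,(C\sqrt p)^{k}\,t_0^{-(k-2)/2}$ (roughly), and $g_0(t_0,p)\lesssim \sqrt{p\,t_0}$.

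The $k\ge3$ series then behaves like
\[
\gamma^{-1}\sum_{k\ge 3}\frac{(C\sqrt{p\gamma})^{k}}{\sqrt{k!}}\,t_0^{-(k-2)/2}.
\]
Choosing $t_0\asymp p\gamma$ makes the ratio $\sqrt{p\gamma/t_0}$ of consecutive terms of order one, with geometric decay supplied by the $1/\sqrt{k!}$ factor. The sum is then dominated by $k=3$, which is of order $\gamma^{-1}(p\gamma)^{3/2}(p\gamma)^{-1/2}=p$. That is too large, so I would instead take $t_0\asymp p\gamma^{\beta}$ and balance it against $g_0$. The target is for both $g_0$ and the $k=3$ term to be $O(p\sqrt\gamma)$. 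The condition $p\le\lambda/(2\gamma)$ is exactly what keeps $p\gamma$ bounded, so that the tail of the series converges geometrically with uniform constants.

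\textbf{Main obstacle.} The delicate part is this balancing of $t_0$. It needs the exact form of $g_k$, the $1/\sqrt{k!}$ cancellation between $g_k$ and the $h^k/k!$ in $a_k^{\mathcal{A}}$, and convergence of the full series. My first attempt would be to set $t_0$ so that $g_0\asymp p\sqrt\gamma$ and then verify that $\sum_{k\ge3}$ is bounded by a constant multiple of its first term. This constant, together with those from Step 2, is collected into $C_{10,\lambda,\mu}$.
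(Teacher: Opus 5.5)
\textbf{Gap 1: Step 3 uses the wrong size for $g_k$, which leads you to reject the correct $t_0$.} Your Steps 1 and 2 agree with the paper: the coefficients $a_k$, the identities $a_1+X=\frac{\mu}{\sqrt{\lambda\gamma}}\mathbf{1}_{\{q=0\}}$ and $a_2-1=\frac{\gamma}{2\lambda}\hat q-\frac{\mu}{2\lambda}\mathbf{1}_{\{q=0\}}$, and the bounds $g_1\le\sqrt2$, $g_2\lesssim\sqrt p$. Step 3, however, is the real content of the lemma, and you leave it as a set of expectations. From \eqref{eq: W-p bound final}, $g_k(t_0,p)=e^{t_0}\|h_{k-1}(Z)\|_{L^p}\int_{t_0}^{\infty}e^{-kt}(1-e^{-2t})^{-(k-1)/2}\,dt$, with $\|h_{k-1}(Z)\|_{L^p}\le p^{(k-1)/2}\sqrt{(k-1)!}$. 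For small $t_0$ the time integral is of order $t_0^{-(k-3)/2}$ when $k\ge4$, and of order $\log(1/t_0)$ when $k=3$. Hence $g_k\asymp\sqrt{(k-1)!}\,p^{(k-1)/2}t_0^{-(k-3)/2}$, and your guess overestimates $g_k$ by a factor of order $\sqrt{kp/t_0}$, which is $\gamma^{-1/2}$ at $t_0\asymp p\gamma$.

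With the correct exponents, $t_0\asymp p\gamma$ is exactly the right choice, though you discarded it as producing an order-$p$ term. It makes the $k$-th term of size $p\sqrt\gamma/\sqrt{k\,k!}$, which is summable, while $g_0\lesssim p\sqrt\gamma$. Under your own heuristic no balancing can succeed. With $t_0=p\gamma^{\beta}$, your $k=3$ term is $p\gamma^{(1-\beta)/2}$, which needs $\beta\le0$, while $g_0\asymp p\gamma^{\beta/2}$ needs $\beta\ge1$. Your fallback, choosing $t_0$ so that $g_0\asymp p\sqrt\gamma$, is the same $t_0\asymp p\gamma$ you had just ruled out. The paper takes $t_0=-\frac12\log(1-p\gamma/\lambda)$. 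Then $e^{t_0}\le\sqrt2$ on $p\le\lambda/(2\gamma)$, and after the substitution $x=e^{-t}$ every integral runs over $[0,\sqrt{1-p\gamma/\lambda}]$ and can be bounded explicitly.

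\textbf{Gap 2: you lose the parity cancellation in the odd coefficients.} Your uniform bound $|a_k|\le\frac{h^k}{k!\gamma}(2\lambda+\gamma|\hat q|)$ for all $k\ge3$ discards it. Since $(\mu+\gamma q)\mathbf{1}_{\{q\ge1\}}=\lambda+\gamma\hat q-\mu\mathbf{1}_{\{q=0\}}$, for odd $k$ one has $a_k=\frac{h^k}{k!}\big(-\hat q+\frac{\mu}{\gamma}\mathbf{1}_{\{q=0\}}\big)$, with no $\lambda/\gamma$ constant. Only the even $k\ge4$ carry $2\lambda/\gamma$.

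This matters because $g_3$ carries the factor $\log(1/t_0)$. If you keep the constant at $k=3$, you get a contribution of order $\sqrt\gamma\,\|h_2(Z)\|_{L^p}\log(1/t_0)$. Since $g_0\lesssim p\sqrt\gamma$ forces $t_0\lesssim p\gamma$, for fixed $p$ this is of order $\sqrt\gamma\log(1/\gamma)$. No constant $C_{10,\lambda,\mu}$ independent of $\gamma$ can absorb that into $p\sqrt\gamma$.

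In the paper, the $k=3$ logarithm is bounded by $\log(4\lambda/(p\gamma))\le 2\sqrt{4\lambda/(p\gamma)}$. Up to constants depending on $\lambda,\mu$, it multiplies only $p\gamma^{3/2}\|\hat q\|_{L^p}$ and $p\sqrt\gamma\,\|\mathbf{1}_{\{q=0\}}\|_{L^p}$. These become $\gamma\sqrt p\,\|\hat q\|_{L^p}$, which is where the $\sqrt p$ in the last term of the lemma comes from, and $\sqrt p\,\|\mathbf{1}_{\{q=0\}}\|_{L^p}\lesssim\gamma^{-1/2}\|\mathbf{1}_{\{q=0\}}\|_{L^p}$. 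The even terms have no logarithm. There, the $\sqrt{(k-1)!}$ from the Hermite moments against the $1/k!$ in $a_k$ makes the sum $O(p\sqrt\gamma)$.
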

Now, in order to bound Wasserstein-$p$ distance, it suffices to bound $\mathbb{P}(q = 0)$ and $\| \hat{q}\|_{L^p} = \|q - \frac{\lambda-\mu}{\gamma}\|_{L^p}$.


\paragraph{2. Probability of Empty Queue} 
To compute $\mathbb{P}(q = 0)$, we apply the generator of the SSQ to the indicator function $\mathbf{1}_{\{q = x\}}$ for $x \in \mathbb{N}$, which yields a recursive formula for its calculation.
  Using this recursive formula, we provide tight upper and lower bounds by the interplay of Riemann integral and summation, in the spirit of Laplace's method.
\begin{lemma}\label{lem: tight bound on P(q_infty = 0)}
    \textnormal{(Tight Gaussian Tail Bound on $\mathbb{P}(q = 0)$)}
    For SSQ, under assumption \ref{ass:heavy_overload}, the probability of empty queue at equilibrium $\mathbb{P}(q = 0)$ is bounded in
    \begin{align*}
        \mathbb{P}(q = 0) &\leq C_{\lambda,\mu}' \sqrt{\gamma/\mu} \exp(-\frac{1}{\gamma/\mu} (\lambda/\mu - 1 - \ln (\lambda/\mu))) \\
        \mathbb{P}(q = 0) &\geq D_{\lambda,\mu}' \sqrt{\gamma/\mu} \exp(-\frac{1}{\gamma/\mu} (\lambda/\mu - 1 - \ln (\lambda/\mu))) 
    \end{align*}
    Where $C_{\lambda,\mu}'$ and $D_{\lambda,\mu}'$ are constants $C_{\lambda,\mu}' =  \lambda/\mu(e\vee \frac{(1+C)(2+C)}{C^2+C-1}), D_{\lambda,\mu}' = \frac{\sqrt{\mu}}{(2\sqrt{\mu} + \sqrt{2\pi e^2 \lambda})}$
\end{lemma}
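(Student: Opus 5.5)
The plan is to use the explicit birth--death structure of the SSQ chain. Applying $\mathcal{L}_{\mathrm{SSQ}}$ to $\mathbf{1}_{\{q\le x\}}$ and using $\mathbb{E}[\mathcal{L}_{\mathrm{SSQ}}f(q)]=0$ gives the cut equations $\lambda\,\pi(x)=(\mu+\gamma(x+1))\,\pi(x+1)$. This yields the closed form
\[
\pi(k)=\pi(0)\prod_{j=1}^{k}\frac{\lambda}{\mu+\gamma j},
\qquad
\mathbb{P}(q=0)=\Big(\sum_{k\ge 0}\prod_{j=1}^{k}\frac{\lambda}{\mu+\gamma j}\Big)^{-1}.
\]
So the task reduces to two-sided bounds on the partition sum $S:=\sum_k e^{h(k)}$, where $h(k):=\sum_{j=1}^k\ln\big(\lambda/(\mu+\gamma j)\big)$. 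The function $h$ increases while $\mu+\gamma j<\lambda$, i.e.\ up to $k^*\approx(\lambda-\mu)/\gamma$, and decreases afterwards. This is a Laplace-method situation. The maximal value $h(k^*)$ should be $\frac{\mu}{\gamma}\big(\lambda/\mu-1-\ln(\lambda/\mu)\big)$ up to $O(1)$ corrections, and the width of the peak should be $\sqrt{\lambda/\gamma}$. Then $S\asymp\sqrt{1/\gamma}\,e^{h(k^*)}$ and $\mathbb{P}(q=0)=1/S$, which gives the stated $\sqrt{\gamma/\mu}\exp(-\cdot)$ form.

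Concretely, I would compare the sum defining $h(k)$ with the Riemann integral $\int_0^k\ln\big(\lambda/(\mu+\gamma y)\big)\,dy$. Because the integrand is monotone decreasing in $y$, sum and integral sandwich each other with an error of at most one boundary term of size $|\ln(\lambda/\mu)|$. The integral is explicit. Writing $u=(\mu+\gamma y)/\lambda$, it equals
\[
\frac{\lambda}{\gamma}\big[\,u-u\ln u\,\big]_{\mu/\lambda}^{(\mu+\gamma k)/\lambda}.
\]
At $k=k^*$ this equals exactly $\frac{\lambda}{\gamma}\big(1-\frac{\mu}{\lambda}+\frac{\mu}{\lambda}\ln\frac{\mu}{\lambda}\big)=\frac{\mu}{\gamma}\big(\lambda/\mu-1-\ln(\lambda/\mu)\big)$. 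Around $k^*$, the second derivative of the integral in $k$ is $-\gamma/\lambda$ at the peak. I would get a global concavity bound of the form $h(k)\le h(k^*)-c\,\gamma(k-k^*)^2/\lambda$ for $|k-k^*|\lesssim\lambda/\gamma$, plus a linear decay estimate beyond that range.

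For the upper bound on $\mathbb{P}(q=0)$ (equivalently a lower bound on $S$), I would keep only the terms with $|k-k^*|\le\sqrt{\lambda/\gamma}$. On that window $h(k)\ge h(k^*)-O(1)$, and the window contains about $2\sqrt{\lambda/\gamma}$ integers. Assumption \ref{ass:heavy_overload} is needed here: it gives $k^*\gtrsim C\gamma^{\alpha-1}\gg\sqrt{1/\gamma}$, so the window lies inside $\mathbb{N}$. It also lets $(1+C)(2+C)/(C^2+C-1)$-type constants absorb the boundary and rounding errors, which is presumably where $C'_{\lambda,\mu}$ comes from. For the lower bound on $\mathbb{P}(q=0)$ (an upper bound on $S$), I would bound $e^{h(k)}$ by a Gaussian envelope $e^{h(k^*)}e^{-\gamma(k-k^*)^2/(2\lambda)}$ near the peak and sum via $\sum_k e^{-\gamma(k-k^*)^2/(2\lambda)}\le 1+\sqrt{2\pi\lambda/\gamma}$. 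That is consistent with the shape of $D'_{\lambda,\mu}$. Far to the right, I would use the geometric decay of the ratios $\lambda/(\mu+\gamma j)$.

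The main obstacle is the envelope on the right tail, $k>k^*$. There the increments $\ln(\lambda/(\mu+\gamma j))$ become only logarithmically negative, so the quadratic bound degrades. I would handle this by splitting at $k=2k^*$ or at $\mu+\gamma k=e\lambda$. Beyond that point each ratio is at most $1/e$, so the tail is geometric and dominated by its first term. In between, the concavity of $u\mapsto u-u\ln u$ together with $\ln(1+x)\le x$ gives the required quadratic decay. Tracking the constants so that they match the stated $C'_{\lambda,\mu}$ and $D'_{\lambda,\mu}$ is routine.
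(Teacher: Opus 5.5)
Your proposal is correct and is essentially the paper's argument. The paper likewise uses the product-form stationary law and a Riemann sum--integral comparison of $\sum_j\ln(\mu+\gamma j)$. To upper-bound the normalizing sum it uses a Gaussian envelope of variance $e^2\lambda/\gamma$, valid up to $\mu+\gamma k=e^2\lambda$, plus linear decay beyond that point; to lower-bound it, it uses a window of about $1/\sqrt{\gamma}$ terms at the peak.

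The only difference is bookkeeping. The paper's window is one-sided, to the right of the peak, and it is handled with a convexity tangent-line bound; that computation is where the $C$-dependent factor in $C'_{\lambda,\mu}$ comes from. As a result, the paper never needs a symmetric window to fit inside $\mathbb{N}$, which is the only thing you use the Assumption for.
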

Intuitively, for small $\gamma$, the centered-scaled queue length $\tilde{q}$ is approximately Gaussian. However, queue length is supported only on non-negative integers, hence the probability mass at $0$ accounts for Gaussian lower tail, i.e., $\mathbb{P}(q = 0) = \mathbb{P}(\tilde{q}=-\frac{\lambda - \mu}{\sqrt{\lambda \gamma}})\approx \Phi(-\frac{\lambda - \mu}{\sqrt{\lambda \gamma}})$. Given that $\lambda/\mu-1-\ln(\lambda/\mu)\ge \tfrac{(\lambda/\mu-1)^2}{2\lambda/\mu}$ for $\lambda\ge\mu$, the exponent $\exp(\frac{1}{\gamma/\mu}(\lambda/\mu-1-\ln(\lambda/\mu)))$ indeed indicates Gaussian tail with precise $-(\frac{\lambda - \mu}{\sqrt{\lambda \gamma}})^2/2$ exponent.

\paragraph{3. $L^p$ Norm of Normalized Queue Length} For $\| \hat{q}\|_{L^p}$, we apply the transform method \cite{hurtado2020transform} with two exponential Lyapunov functions, which are $\exp(\pm \theta (q -\frac{\lambda-\mu}{\gamma}))$,$\theta>0$, to upper and lower bound the MGF of queue length. An integration-by-parts identity then connects the $L^p$ norm with MGF via concentration inequality, and provide an upper bound for $L^p$ norm. For lower bound, we use change of measure to tilt the distribution and then apply concentration inequality in the inverse way to bound the $L^p$ norm from below. The result is summarized as follows.
\begin{proposition} \label{pro: Lp norm of q_infty}
    \textnormal{($L^p$ norm of queue length)} For SSQ, under assumption \ref{ass:heavy_overload}, the $L^p$ norm of the centered queue length is bounded in the below interval
    \begin{align}
        \| \hat{q} \|_{L^p} &\leq \min\{(C_{1,\lambda,\mu}\sqrt{\frac{1}{\gamma}} \sqrt{p} + C_{2,\lambda,\mu} p),  (C_{3,\lambda,\mu} \frac{p}{\log(1+\gamma p/\lambda)})\} \label{eq: Sub-Poisson upper bound on L-p norm}\\
        \| \hat{q} \|_{L^p}&\geq C_{4,\lambda,\mu} \exp\left( -C_{5,\lambda,\mu} \frac{1}{ \gamma p} \right) \frac{p}{\log(1+\gamma p/\lambda)} \label{eq: Sub-Poisson lower bound on L-p norm}
    \end{align}
    All the above constants are only dependent on $\lambda, \mu, C$ and $\alpha$. Their closed forms are given in \eqref{eq: constants for L-p norm}
\end{proposition}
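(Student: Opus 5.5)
The plan has three parts: two-sided bounds on the moment generating function of $\hat q$ via exponential Lyapunov functions, conversion of those MGF bounds into $L^p$ upper bounds by integrating tails, and a change-of-measure argument for the lower bound. Throughout, the generator \eqref{eq: generator of SSQ} is used with the zero-drift identity $\mathbb{E}[\mathcal{L}_{SSQ} f(q)]=0$. This identity is legitimate for exponential $f$ because the stationary law is explicit and has Poisson-type (super-exponential) tails.

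\textbf{MGF bounds.} Take $f(x)=e^{\theta \hat x}$ with $\hat x = x-(\lambda-\mu)/\gamma$. Then
\[
\mathcal{L}_{SSQ}f(x)=e^{\theta\hat x}\bigl[\lambda(e^{\theta}-1)+(\mu+\gamma x)(e^{-\theta}-1)\bigr]+\mu\mathbf 1_{x=0}(1-e^{-\theta})e^{\theta\hat 0}.
\]
Writing $\mu+\gamma x=\lambda+\gamma\hat x$ turns the bracket into $\lambda(e^\theta+e^{-\theta}-2)+\gamma\hat x(e^{-\theta}-1)$. Setting the stationary expectation to zero gives the differential inequality
\[
\gamma(1-e^{-\theta})\,M'(\theta)\le \lambda(e^\theta+e^{-\theta}-2)M(\theta)+\mu(1-e^{-\theta})e^{-\theta(\lambda-\mu)/\gamma}\mathbb{P}(q=0)
\]
for $M(\theta)=\mathbb{E}e^{\theta\hat q}$. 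The boundary term is tiny by Lemma~\ref{lem: tight bound on P(q_infty = 0)}, and for $\theta>0$ it carries the extra factor $e^{-\theta(\lambda-\mu)/\gamma}$. Integrating from $0$ yields
\[
\log M(\theta)\lesssim \frac{\lambda}{\gamma}\bigl(e^\theta-1-\theta\bigr)\quad\text{up to constants},
\]
which is the Poisson$(\lambda/\gamma)$-type MGF bound (this is presumably \eqref{eq: MGF for q_infty - lambda/mu/gamma, Poisson style}). The same computation with $e^{-\theta\hat x}$ controls the lower tail. Here the boundary term is not small in the exponent, so $\theta$ must be restricted to $\theta\lesssim(\lambda-\mu)/\lambda$. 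For larger $\theta$ I will simply use $\hat q\ge -(\lambda-\mu)/\gamma$, which gives the linear-in-$p$ term $C_{2}p$.

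\textbf{Upper bound on $\|\hat q\|_{L^p}$.} Use $\mathbb{E}|\hat q|^p=\int_0^\infty p t^{p-1}\mathbb{P}(|\hat q|>t)\,dt$ together with the Chernoff bounds $\mathbb{P}(\hat q>t)\le \exp\bigl(-\sup_\theta[\theta t-\tfrac{\lambda}{\gamma}(e^\theta-1-\theta)]\bigr)$. This supremum is a Bennett-type rate $\tfrac{\lambda}{\gamma}h(\tfrac{\gamma t}{\lambda})$ with $h(u)=(1+u)\log(1+u)-u$. Since $h(u)\ge u^2/(2(1+u/3))$, the tail is sub-Gaussian with variance proxy $\lambda/\gamma$ plus sub-exponential, and integrating gives $C_1\sqrt{p/\gamma}+C_2p$. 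Alternatively, using $h(u)\gtrsim u\log(1+u)$ for large $u$ and integrating gives the Poisson-moment bound $C_3\,p/\log(1+\gamma p/\lambda)$. Both integrations are standard; I would reproduce the known Poisson moment estimate $\|\mathrm{Poi}(m)-m\|_{L^p}\asymp p/\log(1+p/m)$.

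\textbf{Lower bound on $\|\hat q\|_{L^p}$.} Use $\|\hat q\|_{L^p}\ge t\,\mathbb{P}(\hat q>t)^{1/p}$ with $t\asymp p/\log(1+\gamma p/\lambda)$. The tail lower bound $\mathbb{P}(\hat q>t)$ comes from change of measure: tilt by $e^{\theta^*\hat q}$ with $\theta^*$ chosen so that the tilted mean sits slightly above $t$, and control the tilted variance via $M''/M$. Paley–Zygmund or Chebyshev under the tilted law then gives $\mathbb{P}(\hat q>t)\ge \tfrac{2}{3}e^{-\theta^* (t+\Delta)}M(\theta^*)$. This step requires a matching \emph{lower} MGF bound, which comes from the same identity with reversed inequality; there the boundary term is nonnegative and can be dropped.

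Taking the $p$-th root, the tilting costs $e^{-O(\text{rate}/p)}$. For $t\sim p/\log$ the rate is $\asymp p$, which gives a constant factor. In the regime $p\ll 1/\gamma$, however, the Gaussian centering cost produces the $\exp(-C_5/(\gamma p))$ factor, and I expect to absorb the remaining constants into it. The main obstacle is this lower bound: making the tilted-variance control uniform in $\theta^*$ over all $p$ and tracking the $\triangle$-type corrections explicitly, while keeping the boundary term at $q=0$ harmless under the tilt.
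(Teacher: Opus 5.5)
Your proposal is correct and follows essentially the paper's route: the same zero-drift ODE for $\mathbb{E}e^{\theta\hat q}$, solved with an integrating factor to give $e^{\frac{\lambda}{\gamma}(e^\theta-1-\theta)}\le \mathbb{E}e^{\theta\hat q}\le A_{\lambda,\mu}e^{\frac{\lambda}{\gamma}(e^\theta-1-\theta)}$; conversion to the sub-exponential and sub-Poisson $L^p$ bounds (the paper gets the sub-Poisson one from $|x|^p\le (p/(e\theta))^p e^{\theta|x|}$ at $\theta=W(\gamma p/\lambda)$ rather than by integrating Bennett tails, a cosmetic difference); and an exponential tilt plus Chebyshev/Cantelli lower bound at $t\asymp p/\log(1+\gamma p/\lambda)$, whose cost of order $\lambda/\gamma$ when $p\ll 1/\gamma$ is exactly what produces the $\exp(-C_{5,\lambda,\mu}/(\gamma p))$ factor. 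One correction: in $\mathbb{E}[\mathcal{L}_{SSQ}e^{-\theta\hat q}]=0$ the boundary term $-\mu(e^\theta-1)e^{\theta(\lambda-\mu)/\gamma}\mathbb{P}(q=0)$ has a favorable sign and can simply be dropped, giving $\mathbb{E}e^{-\theta\hat q}\le \exp\bigl(\frac{\lambda}{\gamma}(e^{-\theta}-1+\theta)\bigr)$ for all $\theta\ge 0$ with no restriction on $\theta$; also, the linear term $C_{2,\lambda,\mu}p$ comes from the sub-exponential part of the upper tail, not from the support constraint $\hat q\ge-(\lambda-\mu)/\gamma$.
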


\noindent The two terms inside the minimization for \eqref{eq: Sub-Poisson upper bound on L-p norm} behave differently depending on the size of $p$. For large $p$, the first term grows roughly linearly in $p$, while the second grows as $p/\log(p)$. For small $p$, the first term is of order $1/\sqrt{\gamma}$, whereas the second term scales like the $1/\gamma$ and becomes much larger.
Thus one cannot dominate the other across all regimes of $p$. After scaling by $\tilde{q}=\frac{\sqrt{\gamma}}{\sqrt{\lambda}}\hat{q}$, the first term behaves as $\mathcal{O}(\sqrt{p})$ when $p$ is small, matching the $L^p$ norm of Gaussian distribution. The second term behaves as $\mathcal{O}(\frac{p}{\log(1+p)})$ when $p$ is large, matching the $L^p$ norm of sub-Poisson. Consequently, this mixture of bounds also exhibits the “small deviation $\leftrightarrow$ Gaussian” and “large deviation $\leftrightarrow$ Poisson” intuition in Figure \ref{fig: Phase Transition Diagram}.

This mixture of bounds, combined with the Gaussian tail of Lemma \ref{lem: tight bound on P(q_infty = 0)}, suffice to achieve Wasserstein-$p$ distance results via the above reduced problem \ref{lem: Wasserstein-$p$ bound SSQ reduction}

\subsubsection{Triangle Inequality}
Apart from the Stein's method, triangle inequality also enables us to reduce upper and lower bound of Wasserstein-$p$ distance to upper and lower bound of $L^p$ norm. Thus the ingredient is gained from Proposition \ref{pro: Lp norm of q_infty} directly. Consequently, these triangle inequalities will yield tight bound for large $p$:
\begin{align*}
    W_p(\mathcal{L} (\tilde{q}), \mathcal{L}(Z)) &\leq \| \tilde{q}\|_{L^p} + \|Z\|_{L^p}  \quad W_p(\mathcal{L}(\tilde{q}), \mathcal{L}(Z)) \geq |\| \tilde{q}\|_{L^p} - \|Z\|_{L^p}| 
\end{align*}
\subsubsection{Quantile Coupling}
While the above triangle inequality provides tight bound for large $p$, it gives a trivial lower bound for small $p$.
Quantile coupling gives a lower bound suitable for small $p$, reducing Wasserstein-$p$ distance to bounding $\mathbb{P}(q = 0)$ again.
\begin{proposition}
    \textnormal{(Quantile Coupling Lower Bound for SSQ)} \label{pro: Quantile Coupling Lower Bound for SSQ}
    With the same notations in this section, for all $p > 1$:
\begin{align*}
    W_p(\mathcal{L}(\tilde{q}), \mathcal{L}(Z)) &= \left[ \int_{0}^{1} \left| F_{\mathcal{L}(\tilde{q})}^{-1}(u) - F_{\mathcal{L}(Z)}^{-1}(u) \right|^p du \right]^{1/p} \geq C_{quantile, \lambda, \mu} \cdot \sqrt{\gamma} \cdot\mathbb{P}(q= 0)^{1/p},
\end{align*} 
where $C_{quantile, \lambda, \mu}$ is constant independent of $\gamma$ and $p$, defined in \eqref{eq: Gaussian Wasserstein-$p$ Lower bound, P_0}.
\end{proposition}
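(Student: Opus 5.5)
The plan is to use the quantile (monotone) representation of $W_p$ on the real line, and to read off the bound from the atom of $\tilde q$ at its minimal value. The law of $\tilde q$ sits on the lattice $\{\sqrt{\gamma/\lambda}(k-(\lambda-\mu)/\gamma):k\in\mathbb{N}\}$, whose smallest point is $x_0:=-\frac{\lambda-\mu}{\sqrt{\lambda\gamma}}$. Write $P_0:=\mathbb{P}(q=0)$. Then $F^{-1}_{\mathcal{L}(\tilde q)}(u)=x_0$ for all $u\in(0,P_0]$, and $F^{-1}_{\mathcal{L}(\tilde q)}(u)\ge x_0+\sqrt{\gamma/\lambda}$ for $u\in(P_0,P_0+\mathbb{P}(q=1)]$. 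The first equality in the statement is the classical fact that on $\mathbb{R}$ the comonotone coupling is optimal for convex costs $|x-y|^p$, $p\ge1$. I will take that as standard (Villani).

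For the lower bound, restrict the integral to $u\in(0,P_0]$. On this interval,
\begin{align*}
W_p^p\ge\int_0^{P_0}|x_0-\Phi^{-1}(u)|^p\,du .
\end{align*}
The Gaussian quantile $\Phi^{-1}(u)$ sweeps continuously across $x_0$ as $u$ passes through $\Phi(x_0)$. It therefore stays within distance $\sqrt{\gamma}$ of $x_0$ only on a set of $u$ of Lebesgue measure about $2\sqrt{\gamma}\,\phi(x_0)$. This is tiny, because $\phi(x_0)\approx e^{-(\lambda-\mu)^2/(2\lambda\gamma)}$.

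The comparison with $P_0$ needs care. By Lemma \ref{lem: tight bound on P(q_infty = 0)}, $P_0\asymp\sqrt{\gamma}\,e^{-I/\gamma}$ with $I=\mu(\lambda/\mu-1-\ln(\lambda/\mu))$. Since $I\le(\lambda-\mu)^2/(2\mu)$, this need not dominate $\phi(x_0)$. So I will not argue by measure alone. Instead, split into two cases depending on whether $\Phi(x_0)$ is at least a constant fraction of $P_0$.

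\textbf{Case 1: $\Phi(x_0)\le P_0/2$.} For $u\in(2\Phi(x_0)\vee\Phi(x_0+\sqrt{\gamma}),P_0]$ we have $\Phi^{-1}(u)\ge x_0+\sqrt{\gamma}$, giving distance at least $\sqrt{\gamma}$. The only issue is showing this interval still has length $\gtrsim P_0$. Here $\Phi(x_0+\sqrt\gamma)\le e^{|x_0|\sqrt\gamma}\Phi(x_0)$ by log-concavity, and $|x_0|\sqrt{\gamma}=(\lambda-\mu)/\sqrt\lambda$ is $O(1)$.

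\textbf{Case 2: $\Phi(x_0)>P_0/2$.} Use the next lattice atom instead. For $u$ with $\Phi^{-1}(u)\le x_0-\sqrt{\gamma}$, i.e. $u\le\Phi(x_0-\sqrt\gamma)\ge e^{-c}\Phi(x_0)\gtrsim P_0$, the gap is again at least $\sqrt\gamma$. Alternatively, one can use the interval just above $P_0$, where $F^{-1}_{\mathcal{L}(\tilde q)}\ge x_0+\sqrt{\gamma/\lambda}$ while $\Phi^{-1}$ lies near $x_0$. Taking the better of the two sub-intervals gives a set of measure at least $cP_0$ on which the quantile gap is at least $c'\sqrt{\gamma}$.

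Combining the cases yields $W_p^p\ge c\,P_0\,(c'\sqrt\gamma)^p$. Hence $W_p\ge c'\sqrt\gamma\,c^{1/p}P_0^{1/p}\ge C_{quantile,\lambda,\mu}\sqrt{\gamma}\,P_0^{1/p}$, with $c^{1/p}\ge\min(c,1)$ absorbed into the constant.

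The main obstacle is the case analysis. It has to produce a set of $u$ of measure proportional to $P_0$, not merely to $\Phi(x_0)$, on which the two quantile functions are separated by order $\sqrt\gamma$. This requires the Gaussian log-concavity shift bounds with $|x_0|\sqrt\gamma=O(1)$. It also requires knowing that $\mathbb{P}(q=1)=\frac{\lambda}{\mu+\gamma}P_0$, which follows from the SSQ birth–death balance, so that the second atom has mass comparable to $P_0$.
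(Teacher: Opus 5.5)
Your argument is correct up to two constant-level repairs, and it follows a genuinely different route from the paper's.

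\textbf{How the paper argues.} The paper does no case split. It restricts to $u\in(0,cP_0)$ with $c=[2C'_{\lambda,\mu}e^{2\sqrt2}\sqrt\lambda]^{-1}$. It then inserts the upper bound on $P_0$ from Lemma~\ref{lem: tight bound on P(q_infty = 0)} into Inglot's explicit upper bound on $\Phi^{-1}(u)$, which is valid for $u<0.1$ and is why $\gamma\le\gamma_0$ is needed. Finally it uses $\lambda/\mu-1-\ln(\lambda/\mu)\ge(\lambda/\mu-1)^2/(2\lambda/\mu)$, i.e.\ $I/\gamma\ge x_0^2/2$, to conclude $\Phi^{-1}(u)\le x_0-1/\sqrt{A+B}$ with $1/\sqrt{A+B}\asymp\sqrt\gamma$.

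That last inequality settles the comparison you left open. The Gaussian exponent at $x_0$ is $(\lambda-\mu)^2/(2\lambda\gamma)$, and $I\ge(\lambda-\mu)^2/(2\lambda)$; your bound $I\le(\lambda-\mu)^2/(2\mu)$ points the wrong way for this purpose. Hence $P_0=O(\Phi(x_0))$, so your Case~2 is the generic situation, and it is exactly the one the paper treats.

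\textbf{What your route buys.} Your case split makes that comparison unnecessary. Your proof therefore uses neither the upper bound on $P_0$, nor Inglot's inequality, nor any smallness of $\gamma$ beyond boundedness. It produces a constant depending only on $(\lambda-\mu)/\sqrt\lambda$. This is not the paper's $C_{quantile,\lambda,\mu}$, but it is equally independent of $\gamma$ and $p$. The paper's route instead gives an explicit constant and explains why only the region below $x_0$ matters. The second-atom alternative in Case~2, and with it the need for $\mathbb{P}(q=1)$, can simply be dropped.

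\textbf{The repairs.}

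First, concavity of $\ln\Phi$ gives the tangent-line bound $\Phi(x_0+h)\le e^{h\phi(x_0)/\Phi(x_0)}\Phi(x_0)$. The inverse Mills ratio $\phi(x_0)/\Phi(x_0)$ is strictly larger than $|x_0|$, though at most $|x_0|+1$. So the correct factor is $K:=e^{\sqrt\gamma(|x_0|+1)}$, not $e^{|x_0|\sqrt\gamma}$. Likewise $\Phi(x_0-\sqrt\gamma)\ge e^{-\sqrt\gamma(|x_0|+\sqrt\gamma+1)}\Phi(x_0)$.

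Second, since $K\ge e^{(\lambda-\mu)/\sqrt\lambda}$, we have $K>2$ whenever $(\lambda-\mu)/\sqrt\lambda>\ln 2$. In that case the threshold $\Phi(x_0)\le P_0/2$ does not guarantee that the Case~1 interval is nonempty. Split instead at $\Phi(x_0)\le P_0/(2K)$.
\begin{itemize}
\item Case~1 then gives the interval $(P_0/2,P_0]$; the term $2\Phi(x_0)$ is not needed.
\item Case~2 gives $(0,e^{-c}P_0/(2K)]$ with $c=\sqrt\gamma(|x_0|+\sqrt\gamma+1)$.
\end{itemize}
Each interval carries a quantile gap of at least $\sqrt\gamma$, which yields $W_p\ge \frac{e^{-c}}{2K}\sqrt\gamma\,P_0^{1/p}$.
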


\noindent Here the quantile functions $F_{\mathcal{L}(\tilde{q})}^{-1}, F_{\mathcal{L}(Z)}^{-1}$ are the inverse of cumulative distribution functions for $\tilde{q}$ and $Z$ respectively: $F_{\mathcal{L}(\tilde{q})}^{-1}(u) = \inf\{x \in \mathbb{R}: \mathbb{P}(\tilde{q} \leq x) \geq u\}$. It is shown in \cite[Theorem 8.1]{MR520959} that coupling $(\tilde{q}, Z)$ via quantile functions, i.e., $(\tilde{q}, Z) = (F_{\mathcal{L}(\tilde{q})}^{-1}(U), F_{\mathcal{L}(Z)}^{-1}(U))$ with $U \sim \text{Uniform}(0,1)$, achieves the optimal Wasserstein-$p$ distance between 1-dimensional distributions. The above proposition thus utilizes this coupling to reduce lower bound of Wasserstein-$p$ distance to idle time lower bound, which is already achieved in Lemma \ref{lem: tight bound on P(q_infty = 0)}.

\subsubsection{Result of Different Techniques}
In the above derivations, we use Stein's method, triangle inequality and quantile coupling together to achieve complementary Wasserstein-$p$ bound. We discuss and compare the results in the following. Particularly, we focus on the joint dependency on $p$ and $\gamma$ for these bounds. This joint dependency is of interest because when developing larger deviation theorem for $\mathbb{P}(\tilde{q} > a_\gamma)$, we will choose $p$ as a function of $a_\gamma$ and thus of $\gamma$. We summarize the dominating terms from different bounding techniques in different regimes of $p$ in Table \ref{tab:Wasserstein-$p$ p's regimes}. For illustration purposes, we dissociate $(p,\gamma)$ using $p$ as the independent variable and $\gamma$ fixed. Thus the different regimes of $p$ are represented via $p = \Theta(1/\gamma^\delta)$ with different $\delta$. Although  each cell in Table \ref{tab:Wasserstein-$p$ p's regimes} can be expressed completely via $\gamma$ by plugging in $p = \Theta(1/\gamma^\delta)$, we still keep $p$ in the expressions to build up intuition. 
The highlighted cells in Stein's method and the triangle inequality stress the minima of upper bounds between different bounding techniques. Meanwhile, the highlighted cell in lower bound stresses the order-wise tight lower bound comparing to upper bound.
\begin{table}[h]
    \centering
    \begin{tabular}{lccc} 
      \toprule
      $p=\Theta(1/\gamma^\delta)$ & Upper (Stein) & Upper (Triangle inequality)  &   Lower bound \\
      \midrule
    $\delta\in[0,1-2\alpha]$ & \cellcolor{green!20}$O(p\sqrt{\gamma})$ & $O(p\sqrt{\gamma}/\gamma^{(1/2 - \delta/2 )})$ & $\Omega(\exp(-1/\gamma^{1-2\alpha})) \text{(Quantile)}$ \\
    $\delta\in[1-2\alpha,1)$ & $O\left(\frac{1}{\sqrt{\gamma}}\exp\left(-\frac{1}{p\gamma^{1-2\alpha}}\right)\right)$ & \cellcolor{green!20}$O(p\sqrt{\gamma}/\gamma^{(1/2 - \delta/2 )})$ & $\Omega\left(\gamma^{1/2+1/(2p)}\right)\text{(Quantile)}$ \\
    $\delta\in[1,\infty)$ & -
     & \cellcolor{green!20}$O\left(\frac{p\sqrt{\gamma}}{\log(1+p\gamma/\lambda)}\right)$ & \cellcolor{green!20}$\Omega\left(\frac{p\sqrt{\gamma}}{\log(1+p\gamma/\lambda)}\right) \text{(Triangle inequality)}$ \\
      \bottomrule
    \end{tabular}
    \caption{Wasserstein-$p$ distance's dependency on $p$ and $\gamma$}
    \label{tab:Wasserstein-$p$ p's regimes}
\end{table}



As a tradeoff, Stein's method yields weaker p's dependency than triangle inequality, but it promotes the dependency on $\gamma$ and thus achieves rate of convergence. This is because when deriving Stein's method for Wasserstein-$p$ distance in Proposition \ref{pro: Wasserstein-$p$ bound without exchangeability}, we utilize OU process and its velocity field for a feasible candidate. This process is designed mostly for quadratic cost and thus for Wasserstein-2 distance \cite{jordan1998variational}. However, this candidate is not tailored to the cost $|x-y|^p$ for all $p\ge 1$. Since large $p$ emphasizes rare large displacements, the OU candidate is sharp near constant $p$ but conservative for large $p$. 



\subsection{Proof Sketch for Theorem \ref{thm: SSQ Tail bound}: Tail Bounds} \label{sec: proof sketch for tail bounds, SSQ}
In this section, we sketch the proof for tail bounds in Theorem \ref{thm: SSQ Tail bound} and elaborate how results on the reduced problem of Wasserstein-$p$ bound in Lemma \ref{lem: tight bound on P(q_infty = 0)} and Proposition \ref{pro: Lp norm of q_infty} impact the final tail bounds. 


\subsubsection{Concentration Argument with Wasserstein-$p$, First and Second Regimes}

In Lemma \ref{lemma: tail bound via wasserstein}, we connect tail bounds with Wasserstein-$p$ distance. We apply the dominating terms of Wasserstein-$p$ bounds in Theorem \ref{thm: Gaussian Wasserstein-$p$ upper bound for M/M/1+M} for different range of $p$ , with different choices of $\rho$ and $p$. We achieve the various tail bounds in Theorem \ref{thm: SSQ Tail bound}. Specifically, recalling that $a=\Theta(1/\gamma^\delta)$, we choose $\rho:= 1 - e\mathcal{W}_p/a$, and $p$ based on different $\delta$ as
\begin{align*}
    p:=\begin{cases}
        a^2/2 + \ln (1/\sqrt{\gamma}), & \text{for } \delta < 1/2 - \alpha, \\
        (1/\gamma)^{1/2 - \alpha}/(2e^2 D_{2,\lambda,\mu}^2) , & \text{for } \delta \in [1/2 - \alpha, 1/2], \\
        \sqrt{\lambda}/(2eD_{3,\lambda,\mu}) \cdot (a/\sqrt{\gamma})\log(1+a\sqrt{\gamma}), & \text{for } \delta > 1/2
    \end{cases}
\end{align*}
to optimize the tail bounds in different regimes. The cross-over of different $\delta$ comes from the cross-over of different dominating terms in Wasserstein-$p$ bounds, which is implied by Gaussian tail of $\mathbb{P}(q = 0)$ in Lemma \ref{lem: tight bound on P(q_infty = 0)}. These cross-overs then determine the phase transitions in Theorem \ref{thm: SSQ Tail bound}. And the exponent of tail bounds mainly comes from the values of $p$ and thus from different $L^p$ norms in Proposition \ref{pro: Lp norm of q_infty}. The choice of $p$ when $ a < (1/\gamma)^{1/2 - \alpha}$ will yield efficient concentration for first and second regimes in Theorem \ref{thm: SSQ Tail bound}. The choices of $p$ for other regimes gives computable upper bounds in Theorem \ref{thm: SSQ Tail bound}, yet they have loose lower bounds. We thus supplement them with transform method.

\subsubsection{Transform Method for Third and Fourth Regimes}
We exploit exponential Lyapunov function to sharply upper and lower bound the MGF of $\hat{q}$, and use Markov inequality for upper bound. Meanwhile, tilting the measure with exponential function, in the spirit of inverse Chernoff bound \cite{theodosopoulos2005reversionchernoffbound}, gives lower bound. As mentioned in Section \ref{sec: survey of methods}, the transform method is mainly used for deriving efficient concentration with limit forms $\lim_{\gamma \downarrow 0} \frac{\ln \mathbb{P}(\tilde{q} > a_\gamma)}{\ln \mathbb{P}(Z > a_\gamma)}$. For the third and fourth regimes, the upper bounds from transform method supplement the Wasserstein-$p$ argument, while the lower bounds indeed provide matching exponent. 

\subsection{Proof Sketch for JSQ} In this section, we sketch the proof towards tail bounds for JSQ model, from State Space Collapse Theorem \ref{thm: SSC JSQ} to Wasserstein-$p$ distance Theorem \ref{thm: Wasserstein-$p$ distance JSQ}. Compared with SSQ, the main technical challenge for JSQ model is to handle its multi-dimensional state space. This challenge is addressed via State Space Collapse (SSC) phenomenon. Once SSC is achieved, we reduce the multi-dimensional problem to an one-dimensional problem on the total queue length, similar to SSQ model. We then apply the techniques in SSQ model to reduce Wasserstein-$p$ distance bounds to similar reduced problems, which are further bounded via Lyapunov drift method. 

\subsubsection{State Space Collapse} Our approach to prove SSC is via properly-chosen Lyapunov function and the spirit of Hajek's geometric tail for hitting time\cite{EryilmazSrikant2012QSys,hajek1982hitting} for the perpendicular component $\mathbf{q}_{\perp}$. Hajek's geometric tail requires two conditions. First, boundedness for a single jump, i.e., $|f(\mathbf{x}_\perp) - f(\mathbf{x}_\perp')| \leq D$, for $D>0$ and $\mathbf{x}, \mathbf{x}'$ s.t. $\mathbf{Q}_{JSQ}(\mathbf{x},\mathbf{x}')>0$, which is verified in the JSQ model. Second, a negative drift when $\|\mathbf{x}_{\perp}\|$ is beyond some compact set, i.e., $\mathcal{L}_{JSQ} f(\mathbf{x}_{\perp}) \leq -\xi\boldsymbol{1}_{\{\mathbf{x}_{\perp}\in \mathbb{R}^n\setminus B\}}$, for $\xi >0, f: \mathbb{R}^n\to \mathbb{R}^+, B$ compact. For this condition, we choose $\|\mathbf{q}_\perp\|$ as the Lyapunov function $f(\cdot)$. But instead of searching for a compact set depending only on $\|\mathbf{q}_\perp\|$, we identify the negative drift set for $\mathcal{L}_{JSQ}\|\mathbf{q}_\perp\|$ with dependence on both $\|\mathbf{q}_\perp\|$ and $q_\Sigma$, (see \eqref{eq: gen first order diff two control})
\begin{align*}
    \mathcal{L}_{JSQ} \|\mathbf{q}_\perp\| &\leq -\xi, \text{ if } \|\mathbf{q}_\perp\| > L(q_\Sigma)
\end{align*}
with quantity $\xi>0$ that involves $\lambda,\mu,\gamma,n$, and function $L:\mathbb{R}\to\mathbb{R}$ that is affine. Such set is identified via boundedness of single jump and drift analysis of $\|\mathbf{q}_\perp\|^2$. Notice that $\mathbf{q}_\perp \perp \mathbf{q}_\parallel := (q_\Sigma/n)\boldsymbol{1}$, the above set's dependence is dissociated into two orthogonal components. Thus the dependence on $q_\Sigma$ is inevitable.
We exploit the boundedness for finite $p$-th moment of this additional dependence $L(q_{\Sigma})$, which is suitable for applying Lyapunov drift method. We then choose polynomial Lyapunov function $\|\mathbf{q}_\perp\|^{p+1}$, apply drift method and achieve a $p$-th order polynomial inequality to bound $\mathbb{E}[\|\mathbf{q}_\perp\|^p]$. The root estimation of this polynomial finally complete the $p$-th moment bound for $\|\mathbf{q}_\perp\|$. 

\subsubsection{Wasserstein-$p$ Distance Reduction} 
Similar to SSQ, upper and lower bounds on $\mathcal{W}_p(\mathcal{L}(\tilde{q}_{\Sigma})$, $\mathcal{L}(Z))$ are derived via Stein's method, triangle inequality and quantile coupling. For Stein's method, we start with a general version of Proposition \ref{pro: Wasserstein-$p$ bound without exchangeability} (see Proposition \ref{pro: Wasserstein-$p$ bound without exchangeability final version}). This general version allows us to transform $\tilde{\boldsymbol{q}}$ into $\tilde{q}_\Sigma$ and bound Wasserstein-$p$ distance between $\tilde{q}_\Sigma$ and $Z$ via a modified generator comparison. We then apply this general version of Stein's method to the JSQ model,
 and choose $\mathcal{A}$ as the scaled generator of JSQ model with $t_0 = -\frac{1}{2} \ln(1 - p\gamma/\lambda)$. Wasserstein-$p$ distance is then reduced to bounding 1) probability mass at zero $\sum_i \mathbb{P}(\mathbf{q}_{i}=0)$ and 2) moment bounds on $\hat{q}_\Sigma$. Meanwhile, triangle inequality and quantile coupling are applied to introduce an extra ingredient for bounding Wasserstein-$p$ distance, i.e.,  $\mathbb{P}(\sum_i \mathbf{q}_{i}=0)$.

\paragraph{1. Probability Mass at Zero}
For $\sum_i \mathbb{P}(\mathbf{q}_{i}=0)$, due to lack of time-reversibility from the routing mechanism, we do not adapt detailed balance proof. We instead choose Lyapunov function $\sum_i\exp(-\sqrt{\gamma}\theta q_i)$ and apply Lyapunov drift method to bound $\sum_i \mathbb{P}(\mathbf{q}_{i}=0)$. Meanwhile, for $\mathbb{P}(\sum_i \mathbf{q}_{i}=0)$ we apply coupling argument. Our bounds are summarized as follows.
\begin{proposition}\textnormal{(Probability Mass at Zero)}
    \label{pro: zero probability sum for JSQ}
    Under heavily overloaded assumption \ref{ass:heavy_overload}, the probability mass at zero for JSQ model is bounded as
    \begin{align*}
        \sum_{i=1}^{n} \mathbb{P}(\mathbf{q}_{i} = 0) &\leq \exp\left( -\frac{C}{2n\lambda} \cdot\frac{\lambda-\mu}{\sqrt{\gamma}} \right) \\
        \exp\left( -\frac{\lambda}{\gamma} \right) \leq \mathbb{P}(\sum_{i=1}^{n} \mathbf{q}_{i} = 0) &\leq C_{\lambda,\mu}' \sqrt{\gamma/\mu} \exp\left( -\frac{1}{\gamma/\mu} (\lambda/\mu - 1 - \ln (\lambda/\mu)) \right) 
    \end{align*}
    $C_{\lambda,\mu}'$ is the same as in Lemma \ref{lem: tight bound on P(q_infty = 0)}, $C$ is from the heavily overloaded assumption \ref{ass:heavy_overload}. 
\end{proposition}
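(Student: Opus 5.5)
Three bounds are needed, and each uses a different tool.

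\textbf{Upper bound on $\sum_i \mathbb{P}(q_i=0)$.} I would follow the hint in the sketch and apply the zero-drift identity $\mathbb{E}[\mathcal{L}_{JSQ} V(\mathbf{q})]=0$ to $V(\mathbf{x})=\sum_i e^{-\theta\sqrt{\gamma}x_i}$, with $\theta>0$ chosen later. Write $\eta:=\theta\sqrt{\gamma}$. Then
\begin{align*}
\mathcal{L}_{JSQ}V(\mathbf{x}) = \lambda e^{-\eta x_*}(e^{-\eta}-1) + \sum_i (\mu_i\mathbf{1}_{\{x_i>0\}}+\gamma x_i)e^{-\eta x_i}(e^{\eta}-1).
\end{align*}
The arrival term is the key one. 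Because the arrival goes to the minimal coordinate, $e^{-\eta x_*}=\max_i e^{-\eta x_i}\ge \frac1n V(\mathbf{x})$. Using $\mu_i\mathbf{1}_{\{x_i>0\}} = \mu_i-\mu_i\mathbf{1}_{\{x_i=0\}}$ and taking expectations gives an inequality of the form
\[
\Big[\tfrac{\lambda}{n}(1-e^{-\eta}) - \max_i\mu_i(e^{\eta}-1)\Big]\mathbb{E}V \le \gamma(e^\eta-1)\,\mathbb{E}\Big[\sum_i q_ie^{-\eta q_i}\Big] - (e^\eta-1)\sum_i\mu_i\mathbb{P}(q_i=0).
\]
This direct form is a little crude, because $\max_i \mu_i$ can exceed $\lambda/n$. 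The cleaner route is to notice that $\sum_i\mu_i\mathbb{P}(q_i=0)$ appears with a favorable sign. I would instead argue that $\sum_i\mathbb{P}(q_i=0)\le \mathbb{E}V\cdot(\text{const})$, and also that $\mathbb{P}(q_i=0)\le\mathbb{P}(q_*=0)\le \mathbb{E}[e^{-\eta q_*}]$. So the real target is to bound $\mathbb{E}[e^{-\eta q_*}]$, or even $\mathbb{E}[e^{-\eta q_\Sigma/n}]$ after combining with SSC.

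I would pick $\eta$ of order $\sqrt{\gamma}\cdot\frac{\lambda-\mu}{\lambda\sqrt{\gamma}}$-type constants. The aim is for the exponent to be $-\eta\cdot\frac{\lambda-\mu}{2n\gamma}$, which gives $\exp(-\frac{C}{2n\lambda}\frac{\lambda-\mu}{\sqrt\gamma})$ via Assumption~\ref{ass:heavy_overload}.

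Concretely, I would compute the drift of $e^{-\eta(x_\Sigma - (\lambda-\mu)/\gamma)}$ per coordinate. Below level $\frac{\lambda-\mu}{2n\gamma}$ in a coordinate, arrivals to the minimum exceed departures by roughly $\frac{\lambda-\mu}{2}$-order amounts. This yields a negative drift for $V$ on the set where $\min_i x_i$ is small, which in turn gives
\[
\mathbb{E}\big[e^{-\eta q_*}\big] \le e^{-\eta(\lambda-\mu)/(2n\gamma)}\cdot O(1).
\]
This is the main obstacle. The routing creates the asymmetry (only the minimum receives arrivals), and the abandonment term $\gamma x_i e^{-\eta x_i}$ has to be controlled by $\sup_x x e^{-\eta x}\le 1/(e\eta)$, which contributes $\gamma/\eta=O(\sqrt\gamma)$. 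Choosing $\theta$ of order $C(\lambda-\mu)/(\lambda\sqrt\gamma)$-free constants absorbs the $O(1)$ prefactor.

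\textbf{Bounds on $\mathbb{P}(\sum_i q_i=0)$.} For the upper bound I would couple with the SSQ. The total $q_\Sigma$ jumps up at rate $\lambda$ and down at rate $\sum_i\mu_i\mathbf{1}_{\{q_i>0\}}+\gamma q_\Sigma\le \mu+\gamma q_\Sigma$. A monotone coupling therefore makes $q_\Sigma$ stochastically dominate the $M/M/1+M$ queue with rates $(\lambda,\mu,\gamma)$. Hence $\mathbb{P}(q_\Sigma=0)\le\mathbb{P}(q^{SSQ}=0)$, and Lemma~\ref{lem: tight bound on P(q_infty = 0)} gives the stated upper bound.

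For the lower bound, the same argument in the other direction applies: the down-rate is at most $\gamma q_\Sigma$ plus the service term. Instead, I would compare with the $M/M/\infty$ queue with arrival rate $\lambda$ and per-customer rate $\gamma$, whose down-rate $\gamma q_\Sigma$ is at most that of $q_\Sigma$. So $q_\Sigma$ is stochastically dominated by Poisson$(\lambda/\gamma)$, which gives $\mathbb{P}(q_\Sigma=0)\ge e^{-\lambda/\gamma}$. Both monotone couplings are standard for birth–death-type comparisons once the down-rates are ordered pointwise in the total. The only care needed is that $q_\Sigma$ alone is not Markov; I would handle this by coupling the full vector process with the one-dimensional chain and checking that the rate ordering holds on every state.
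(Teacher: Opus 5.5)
Your two coupling bounds on $\mathbb{P}(\sum_i q_i=0)$ are the paper's argument and are correct. The total queue stochastically dominates the pooled $M/M/1+M$ queue, which gives the upper bound, and it is dominated by the $M/M/\infty$ queue with Poisson$(\lambda/\gamma)$ law, which gives the lower bound.

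The gap is in the bound on $\sum_i\mathbb{P}(q_i=0)$. You describe a negative drift for $V$ but never write a drift inequality that yields exponential decay. The one concrete estimate you commit to, $\gamma x_i e^{-\eta x_i}\le \gamma/(e\eta)$, rules that decay out. Set $L:=\frac{\lambda-\mu}{2n\gamma}$. The estimate leaves an additive error of size $n\gamma/(e\eta)$ that carries no factor $e^{-\eta L}$. So the zero-drift identity for $V$, with the service term bounded by $\mu e^{-\eta x_*}$, only yields $(\lambda e^{-\eta}-\mu)\,\mathbb{E}[e^{-\eta q_*}]\le n\gamma/(e\eta)$. That bound is at best polynomially small in $\gamma$ (order $\gamma^{1/2-\alpha}$ for $\eta\asymp\sqrt{\gamma}$), not $O(1)\cdot e^{-\eta L}$. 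The detour through $\mathbb{E}[e^{-\eta q_\Sigma/n}]$ does not help either. It needs an exponential moment of $\|\mathbf{q}_\perp\|$, and the state space collapse theorem only gives moments growing like $p^2$.

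The paper never bounds $q_i e^{-\eta q_i}$ pointwise. With $\eta=\sqrt{\gamma}\theta$ and $N(\theta)=\sum_i\mathbb{E}[e^{-\sqrt{\gamma}\theta q_i}]$, the abandonment term is exactly $-\sqrt{\gamma}N'(\theta)$. Heterogeneity is handled by writing $N_i=N_*-D_i$ with $D_i\ge 0$ and using $\lambda e^{-\sqrt{\gamma}\theta}\ge\mu$. This gives $\sqrt{\gamma}N'\le\frac{1}{n}(\mu-\lambda e^{-\sqrt{\gamma}\theta})N$, and Gronwall from $\theta=0$ with $\theta=C/\lambda$ yields the exponent.

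Your drift idea can also be repaired by splitting the abandonment sum at $L$:
\begin{itemize}
\item A coordinate with $x_i\le L$ contributes at most $\gamma L e^{-\eta x_*}$.
\item For $x_i>L\ge 1/\eta$, $x e^{-\eta x}$ is decreasing on $[1/\eta,\infty)$, so $\gamma x_i e^{-\eta x_i}\le \gamma L e^{-\eta L}$.
\end{itemize}
Bound the service term by $\sum_i\mu_i\mathbf{1}_{\{x_i>0\}}e^{-\eta x_i}\le\mu e^{-\eta x_*}$, comparing against the minimum rather than against $V/n$ or $\max_i\mu_i$. Using $n\gamma L=\frac{\lambda-\mu}{2}$, and assuming $\lambda\eta\le\frac{\lambda-\mu}{4}$ and $\eta L\ge 1$ (both hold for small $\gamma$ under the assumption), you get
\[
\mathcal{L}_{JSQ}V(\mathbf{x})\le (e^{\eta}-1)\,\tfrac{\lambda-\mu}{4}\bigl(2e^{-\eta L}-e^{-\eta x_*}\bigr).
\]
Hence $\sum_i\mathbb{P}(q_i=0)\le n\,\mathbb{E}[e^{-\eta q_*}]\le 2n e^{-\eta L}$.

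Taking $\eta=C\sqrt{\gamma}/\lambda$ reproduces the stated exponent up to the factor $2n$. The paper's Gronwall step starts from $N(0)=n$, so it also carries a factor $n$ that the statement omits. Your larger choice $\eta=\frac{\lambda-\mu}{4\lambda}$ gives $2n\exp\bigl(-\frac{(\lambda-\mu)^2}{8n\lambda\gamma}\bigr)$. For small $\gamma$ this lies below the stated bound, prefactor included.
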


We achieve exponential tail for $\sum_i \mathbb{P}(\mathbf{q}_{i}=0)$ w.r.t $1/\sqrt{\gamma}$. Compared with Gaussian tail in SSQ, this exponential tail will lead to the early happening of phase transition in tail bound, see Theorem \ref{thm: JSQ Tail bound, second regime}. For $\mathbb{P}(\sum_i \mathbf{q}_{i}=0)$, coupling argument is applied to compare JSQ model with either SSQ(i.e., $M/M/1+M$) or $M/M/\infty$ queue, leading to upper and lower bounds correspondingly. This upper bound differs from lower bound by only logarithmic term in the exponent.

\paragraph{2. Moment Bounds on $\hat{q}_\Sigma$} 
For moment bounds on $\hat{q}_\Sigma$, we use drift method with Lyapunov function $\sum_i\exp(-\theta q_i)$, $\sum_i\exp(\theta q_i)$ with $\theta>0$, combined with Proposition \ref{pro: zero probability sum for JSQ} to achieve sharp bound on the two-sided MGF of $\hat{q}_\Sigma$, i.e., $\mathbb{E}[\exp(\theta |\hat{q}_\Sigma|)]$. From MGF towards $L^p$ norm, we use arguments including integration-by-parts for upper bound, or coupling for lower bound. 
\begin{proposition}\textnormal{(Moment Bounds on $\hat{q}_{\Sigma}$)}
    \label{pro: Moment Bounds on hat q for JSQ}
    Under the assumption \ref{ass:heavy_overload}, with $\hat{q}_{\Sigma}:= \sum_i q_i - \frac{\lambda-\mu}{\gamma}$, we have the moment bounds for any $p\geq 1, p\in \mathbb{R}$:
    \begin{align*}
        \mathbb{E}[|\hat{q}_{\Sigma}|^p]^{1/p} &\leq \max\{C_{\lambda,\mu,n}'' \cdot\frac{\sqrt{p}}{\sqrt{\gamma}} + C_{\lambda,\mu,n}'\cdot p, \: n(A_{\lambda,\mu,n} +\frac{1}{n}) \frac{p}{\log(1+\frac{n\gamma}{\lambda}p)}\} \\
        \mathbb{E}[|\hat{q}_{\Sigma}|^p]^{1/p} &\geq  C_{4,\lambda,\mu} \exp(-8(15+A_{\lambda,\mu})\frac{\lambda}{\gamma p}) \frac{p}{\log(1+\frac{p\gamma}{\lambda})}
    \end{align*}
    Constants $C_{\lambda,\mu,n}'$, $C_{\lambda,\mu,n}''$ and $A_{\lambda,\mu,n}$ are defined in \eqref{eq: subexpo L-p norm JSQ} and \eqref{eq: MGF upper bound JSQ}, respectively. 
\end{proposition}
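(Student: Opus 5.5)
Writing $q_\Sigma:=\sum_i q_i$, I would obtain the upper bound by the transform method and the lower bound by a coupling plus change of measure, mirroring Proposition \ref{pro: Lp norm of q_infty} for SSQ.

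\emph{Step 1 (exponential Lyapunov functions).} For $\theta>0$ apply $\mathcal{L}_{JSQ}$ to $V_+(\mathbf{x})=\sum_i e^{\theta x_i}$ and $V_-(\mathbf{x})=\sum_i e^{-\theta x_i}$. Since arrivals go to the shortest queue, the arrival term is $\lambda e^{\theta x_{\min}}(e^{\theta}-1)$, which is at most $\frac{\lambda}{n}(e^\theta-1)\sum_i e^{\theta x_i}$; this is the useful direction for the $+$ side. The departure terms give $\sum_i(\mu_i\mathbf{1}_{x_i>0}+\gamma x_i)e^{\theta x_i}(e^{-\theta}-1)$. Setting $\mathbb{E}[\mathcal{L}V_\pm(\mathbf{q})]=0$ gives an identity in which the boundary terms $\mu_i\mathbf{1}_{q_i=0}$ are controlled by Proposition \ref{pro: zero probability sum for JSQ}. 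The $\gamma q_i e^{\theta q_i}$ term is $\gamma\,\partial_\theta$ of the MGF, so we obtain a differential inequality in $\theta$ for $M_+(\theta)=\mathbb{E}\sum_i e^{\theta(q_i-c)}$ with $c=\frac{\lambda-\mu}{n\gamma}$. Integrating it from $0$, exactly as in the SSQ transform argument, should yield two bounds. First, a sub-Gaussian MGF bound $\mathbb{E}e^{\theta\hat q_\Sigma}\le \exp(C''\theta^2/\gamma)$ for $\theta$ up to a constant. Second, a Poisson-type bound $\mathbb{E}e^{\theta\hat q_\Sigma}\le A\exp(\frac{\lambda}{n\gamma}(e^{n\theta}-1-n\theta))$ valid for all $\theta$; this comes from comparing with $n$ $M/M/\infty$-like queues. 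To pass from individual coordinates to the sum, I would use $e^{\theta q_\Sigma/n}\le \frac1n\sum_i e^{\theta q_i}$ (Jensen). This costs the factor $n$ in the $\log(1+\frac{n\gamma}{\lambda}p)$ term. The lower tail of $\hat q_\Sigma$ is handled the same way with $V_-$.

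\emph{Step 2 (MGF to $L^p$).} Use $\mathbb{E}|\hat q_\Sigma|^p=\int_0^\infty p t^{p-1}\mathbb{P}(|\hat q_\Sigma|>t)\,dt$ and Chernoff bounds with $\theta$ optimized in $t$. The sub-Gaussian/sub-exponential part gives $C''\sqrt{p/\gamma}+C'p$ via standard Orlicz-type moment growth. The Poisson part gives tails $\exp(-t\log(1+\frac{n\gamma t}{\lambda})/n)$, hence moments of order $p/\log(1+\frac{n\gamma}{\lambda}p)$. Taking the better of the two bounds gives the stated max.

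\emph{Step 3 (lower bound).} The fastest route is a coupling: the total queue $q_\Sigma$ stochastically dominates the SSQ $M/M/1+M$ with rates $(\lambda,\mu,\gamma)$. The total departure rate is at most $\mu+\gamma q_\Sigma$, because servers may idle, and this is the same coupling used for $\mathbb{P}(q_\Sigma=0)$ in Proposition \ref{pro: zero probability sum for JSQ}. So $\mathbb{E}[(\hat q_\Sigma)_+^p]\ge\mathbb{E}[(\hat q)_+^p]$ for the SSQ. We then invoke the lower bound \eqref{eq: Sub-Poisson lower bound on L-p norm}, whose proof controls the upper tail via change of measure; if needed, re-run that tilting argument on the positive part. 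The constant $8(15+A_{\lambda,\mu})\lambda$ in the exponent suggests exactly this reuse of the SSQ constants.

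The main obstacle is Step 1. Under JSQ the arrival term involves only $e^{\theta x_{\min}}$, which is favorable for $V_+$ but unfavorable for $V_-$, where arrivals help decrease $V_-$ only through the minimum coordinate. For the lower-tail MGF I would bound $e^{-\theta x_{\min}}\ge \frac1n\sum_i e^{-\theta x_i}$, which has the correct sign. The boundary term $\sum_i\mathbb{P}(q_i=0)$ must be absorbed using its $\exp(-c/\sqrt\gamma)$ decay, which restricts $\theta\lesssim\sqrt\gamma$ in the Gaussian regime; this is why the constant $C'$ multiplying $p$ appears.
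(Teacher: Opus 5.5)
\textbf{Verdict: genuine gap in Step~1.} Your overall architecture matches the paper's. It uses the test functions $\sum_i e^{\pm\theta q_i}$, integrates the resulting inequality in $\theta$ Gronwall-style, and controls the upper-side boundary term with Proposition \ref{pro: zero probability sum for JSQ}. It then applies Jensen's inequality $e^{\theta q_\Sigma}\le\frac1n\sum_i e^{n\theta q_i}$, performs the sub-exponential and sub-Poisson conversions, and gets the lower bound from the pooled $M/M/1+M$ coupling plus the tilting argument. The problem is that Step~1 ignores that the service rates $\mu_i$ are heterogeneous.

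Write $M_i=\mathbb{E}e^{\theta q_i}$, $M=\sum_iM_i$, $M_*=\mathbb{E}e^{\theta q_{\min}}$ and $P_0=\sum_i\mu_i\mathbb{P}(q_i=0)$. Stationarity of $\sum_i e^{\theta q_i}$ gives
\[
\gamma M'(\theta)=\lambda e^{\theta}M_*(\theta)-\sum_i\mu_iM_i(\theta)+P_0 .
\]
You propose replacing $M_*$ by $M/n$ first. After that, the service term $\sum_i\mu_iM_i$ is not a function of $M$. Closing the inequality would require $\sum_i\mu_iM_i\ge\frac{\mu}{n}M$, which fails whenever slower servers carry longer queues. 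The only generic substitute is $\mu_{\min}M$. That leaves a term $\frac{\mu-n\mu_{\min}}{\gamma}\theta$ in the exponent after centering at $\frac{\lambda-\mu}{\gamma}$. This is a shift of order $1/\gamma$, and it destroys the $\sqrt{p/\gamma}$ branch. Your lower-side bound $e^{-\theta x_{\min}}\ge\frac1n\sum_ie^{-\theta x_i}$ has the same defect, since it leaves $\sum_i\mu_i\,\mathbb{E}e^{-\theta q_i}$ unmatched.

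\textbf{The fix.} Use the slack in the arrival term before discarding it. Set $D_i=M_i-M_*\ge0$, so that $M=nM_*+\sum_iD_i$. Then
\[
\lambda e^{\theta}M_*-\sum_i\mu_iM_i=\frac{\lambda e^{\theta}-\mu}{n}\Bigl(M-\sum_iD_i\Bigr)-\sum_i\mu_iD_i\le\frac{\lambda e^{\theta}-\mu}{n}\,M,
\]
because $\lambda e^\theta\ge\lambda>\mu$. On the lower side, the analogous decomposition uses $\mathbb{E}e^{-\theta q_i}\le\mathbb{E}e^{-\theta q_{\min}}$. It needs $\lambda e^{-\theta}\ge\mu$, so it only covers $\theta\le\ln(\lambda/\mu)$. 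Larger $\theta$ must be handled separately; the paper uses monotonicity of $\theta\mapsto\mathbb{E}\sum_ie^{-\theta q_i}$.

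\textbf{A second error.} Your claim that absorbing the boundary term restricts $\theta\lesssim\sqrt\gamma$ is wrong. If it were true it would be fatal, because the sub-Poisson branch needs the MGF bound for $\theta$ growing with $p$. After integration, the boundary term contributes at most a factor $1+\frac{P_0}{\lambda-\mu}$ for every $\theta\ge0$. This follows from $\lambda(e^x-1)-\mu x\ge(\lambda-\mu)x$. Proposition \ref{pro: zero probability sum for JSQ} together with Assumption \ref{ass:heavy_overload} keeps $\frac{P_0}{\lambda-\mu}$ bounded as $\gamma\downarrow0$. On the lower side the term enters as $-P_0$ and is simply dropped. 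The $C'p$ term comes from using $e^{n\theta}-1-n\theta\le\frac{(n\theta)^2}{2}e^{n}$ only on $\theta\in[0,1]$, not from any $\sqrt\gamma$ restriction.
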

We justify the choice of Lyapunov functions $\sum_i\exp(-\sqrt{\gamma}\theta q_i)$, $\sum_i\exp(\theta q_i)$, and $\sum_i\exp(-\theta q_i)$ as follows. The drift analysis for JSQ with abandonment yields heterogeneous Ordinary Differential Equations (ODEs). We select Lyapunov functions that make the heterogeneous terms negligible, which requires the main terms to dominate in magnitude. This guides our first choice of exponential Lyapunov functions with $\sqrt{\gamma}$ scaling in the exponent. Additionally, to handle heterogeneous servers with different service rates $\mu_i$, we use additive forms such as $\sum_i\exp(\theta q_i)$ rather than $\exp(\theta \sum_i q_i)$ to decouple the dynamics across servers.
 With the exponential Lyapunov functions above, 
 we achieve homogeneous differential inequalities from the heterogeneous ODE. Finally we translate inequalities back to bounds on the expectation of Lyapunov functions in the spirit of Gronwall's inequality \cite{gronwall1919note}.

\paragraph{Results} With connection to tail bounds as in Lemma \ref{eq: Tail bound from Wasserstein-$p$ distance}, the joint dependency on $p$ and $\gamma$ is of focus. Here we summarize the order analysis from above methods. We conclude the minima and maxima for upper and lower bounds respectively, labeling the method used to derive the bounds.
\begin{table}[h]
    \centering
    \begin{tabular}{lll} 
        \toprule
        $p=O(1/\gamma^\delta)$ & Upper bound & Lower bound \\
        \midrule
        $\delta\in[0, 1/2- \alpha)$ & $O(p\sqrt{\gamma})$ (Stein's Method) & $\Omega(\exp(-1/\gamma))$ (Quantile Coupling) \\
        $\delta\in[1/2-\alpha, 1)$ & $O(\sqrt{p})$ (Triangle Inequality) & $\Omega(\sqrt{\gamma}\exp(-1/(p\gamma)))$ (Quantile Coupling)\\
        $\delta \in [1,\infty]$ & $O\left(\frac{p\sqrt{\gamma}}{\log(1+p\gamma)}\right)$ (Triangle Inequality) & $\Omega\left(\frac{p\sqrt{\gamma}}{\log(1+p\gamma/\lambda)}\right)$(Triangle Inequality)\\
        \bottomrule
    \end{tabular}
    \caption{Wasserstein-$p$ distance's dependency on $p$ and $\gamma$}
    \label{tab:Wasserstein-$p$ p's regimes, JSQ total queue length}
\end{table}

\subsubsection{Tail Bounds} 
We build upon the $\mathcal{W}_p$ bounds on $\mathcal{W}_p (\mathcal{L}(\langle \tilde{\mathbf{q}}, \boldsymbol{\phi}\rangle), \mathcal{L}(Z\cdot\langle\boldsymbol{\phi},\boldsymbol{1}\rangle))$ to connect to tail probability as in \eqref{eq: Tail bound from Wasserstein-$p$ distance}. The $\mathcal{W}_p$ bound is
achieved via both Theorem \ref{thm: Wasserstein-$p$ distance JSQ} and SSC Theorem \ref{thm: SSC JSQ}. We then apply Lemma \ref{lemma: tail bound via wasserstein} with proper choice of $p, \rho$ to fully characterize the tail behavior in different regimes of deviation $a$ (see Theorem \ref{thm: JSQ Tail bound, first regime}, \ref{thm: JSQ Tail bound, second regime} and \ref{thm: JSQ Tail bound, third regime}).  
 Meanwhile, we specify the orthogonal case $\langle\boldsymbol{\phi},\boldsymbol{1}\rangle = 0$ separately. We use SSC Theorem \ref{thm: SSC JSQ} to transform $L^p$ bound on $\|\mathbf{q}_{\perp}\|$ into tail probability of $\langle \tilde{\mathbf{q}}, \boldsymbol{\phi}\rangle$ (see Theorem \ref{thm: JSQ Tail bound, orthogonal case}). The refinement for large deviation when $\boldsymbol{\phi}\geq 0$ is achieved via coupling argument. Notice that with the lack of matching lower bound for $\|\mathbf{q}_{\perp}\|$, we are not able to apply transform method as in SSQ to achieve a matching lower bound. Thus cannot achieve efficient concentration inequalities for large deviation and leave it as open.


\section{Literature Review} \label{sec: literature review}
This section reviews related literature in steady-state diffusion approximations for queueing systems and tail bounds. For the former, we focus on works related to pre-limit analysis and quantitative convergence rates. For tail bounds, we focus on works related to efficient concentration and different limit results, e.g., deviation principles. To avoid redundancy with survey of methods in Section \ref{sec: survey of methods}, this review surveys different approaches for concentration bounds than those in Section \ref{sec: survey of methods}. Additionally, we survey classical results in CLT that are closely related to our summary of results in Section \ref{sec: summary of results}.



\paragraph{Stein's Method in Queueing Theory}
Asymptotic analysis of queueing systems often relies on diffusion approximations, with early works tracing back to Kingman's heavy-traffic limit~\cite{Kingman1961}.
A standard route to asymptotic results is via process-level diffusion approximations, which yield functional limit theorems for the queue-length or workload processes by showing weak convergence in path space (e.g., Skorokhod space of càdlàg functions) \cite{whitt2002stochastic}. 
Separately, variable-level diffusion approximations for steady-state queueing performance metrics are often obtained by showing a weak convergence towards the stationary distribution of the diffusion limit. The machinery of this route
includes Lyapunov-drift and transform methods \cite{hurtado2020transform}. These approaches, however, generally do not yield pre-limit tail bounds.




On the other hand, Stein's method has been a focus to solve a fundamental question in probability: How to quantify the distance between two distributions in a computable way? Different distributional distances have been developed for Gaussian and beyond \cite{chen2001non}, with deployment to different models \cite{haque2026quantifyingnormalityconvergencerate, wang2026steadystatebehaviorconstantstepsizestochastic}. And different techniques have been proposed to construct the Stein's identity, including size-bias coupling \cite{arratia2018sizebias}, zero-bias coupling \cite{arratia2018sizebias} and exchangeable pairs \cite {ross2011fundamental}. 
However, all above techniques are limited in queueing systems. Specifically, the coupling methods often require a proper probabilistic construction, e.g., integration-by-part identity for Stein Kernel \cite{chatterjee2010newapproachstrongembeddings}, or direct assumption on the existence of Stein Kernel \cite{ledoux2015stein}, or summation of random variables for zero-bias coupling \cite{ross2014fundamental, Bhattacharjee_2016strongembeddings}. In our single-server queue, such construction is not available. Exchangeable pairs are natural for Markovian systems, but they impose a time-reversibility condition that is not satisfied by the JSQ system.



To utilize Stein's method for queueing systems, one typically chooses a generator comparison framework, which was initially proposed by Barbour \cite{barbour1990stein} and further introduced to queueing theory by Gurvich \cite{gurvich2014diffusion}. 
 Across Erlang-A/Erlang-C and $M/Ph/n{+}M$ systems, Braverman et al. adopt such framework and yield quantitative diffusion error bounds in metrics such as Wasserstein-1 and Kolmogorov distance\cite{braverman2017steinMPhn}. They also achieve pre-limit tail bounds for Erlang-C model \cite[Theorem~4.1]{braverman2017steinthesis}. Subsequent work of Gaunt et al. applies Stein's method to the single-server $M/G/1$ and $G/G/1$ queues in heavy traffic, obtaining exponential approximations in Wasserstein-1 distance \cite{gaunt2020stein}. For routing dynamics, Hurtado-Lange et al. show a rate of convergence for
  many-server haevy-traffic load-balancing system \cite{hurtado2022load}. 
  Meanwhile, \cite{besanccon2018stein} develops a functional Stein's method yielding quantitative path-wise convergence rates of $M/M/1$ and $M/M/\infty$ queues. They extend the quantitative Wasserstein-1 bound to stochastic differential equations driven by Poisson measures, applicable to a wide class of continuous-time Markov chains \cite{besanccon2024diffusive}.
More recently, Braverman et al. utilize higher-order information of the generator to achieve refined diffusion limits for Erlang-C model \cite{braverman2024high}. Concurrently, Chatterjee et al. \cite{chatterjee2026higherorderapproximationssojourntimes} develop a Stein method for the $M/G/1$ queue that controls a Zolotarev-type distance of order $p$. While Zolotarev-$p$ distance is linked to Wasserstein-$p$ distance via functional inequalities (see, e.g.,\cite{bołbotowski2025sharpinequalitieszolotarevwasserstein} and \cite[Theorem~3.1]{Rio2009}), it requires the two distributions to match on the first $\lceil p-1 \rceil$ moments for finiteness. For queueing systems with abandonment, since the first moment already deviates from 0 (the Gaussian mean), the Zolotarev-$p$ distance becomes infinite and inapplicable.  
 To the best of our knowledge, all existing Stein-based steady-state results for queues are formulated in Wasserstein-1 (or Zolotarev-$p$-type) distances. In contrast, a key ingredient of our efficient concentration results is a quantitative convergence rate in genuine Wasserstein-$p$ distance, $p>1$, for the stationary queue length.

\paragraph{Survey for Tail Bounds} In Section \ref{sec: survey of methods}, we survey tools including drift method, transform method, Wasserstein-$p$ method. Highly related to Wasserstein-$p$ method, quantile coupling inequalities \cite{krishnapur2020one} also provide a way to attain efficient concentration. For CLT, Komlós-Major-Tusnády construction \cite{KMTstrongembedding} and strong embeddings \cite{chatterjee2010newapproachstrongembeddings} yield couplings between the sum of i.i.d. random variables and Gaussian w.r.t Orlicz norm \cite{Rio2009}. Yet the above methods mostly focus on i.i.d. or weakly dependent random variables, and attain at best sub-exponential tails \cite{krishnapur2020one}, unclear to achieve the Gaussian tail decay as in \eqref{eq: main contribution, constant deviation} for queueing systems.

An approach that tackles the loose tail decay from Kolmogorov distance is via finer control of the solution to the Poisson equation associated with the diffusion limit. Specifically, one studies the Poisson equation $\mathcal{L}f = \boldsymbol{1}_{\{x \leq a\}} - \mathbb{P}(Z \leq a)$ that yields non-uniform Kolmogorov distance bounds under expectation \cite{chen2010normal}. The diffusion generator $\mathcal{L}$ is chosen via the diffusion limit, e.g., $\mathcal{L}_{OU}$ for Ornstein-Uhlenbeck process. Austern et al. in \cite[Theorem~1]{austern2022efficient} utilized a variant of this framework involving zero-bias coupling to achieve efficient concentration inequality in CLT. For queueing theory, Braverman et al. \cite{braverman2017steinthesis,braverman2024high} obtained bounds on the derivatives of the solution $f$ to this Poisson equation. They combined the gradient bounds with iteratively applying generator comparison, drift method and the Poisson equation itself to achieve efficient concentration under quality-and-efficiency-driven regime \cite[Theorem~4.1]{braverman2017steinthesis}. They extended the framework to a family of refined diffusion approximations using higher-order information of the generator \cite{braverman2024high}. Yet their results focus on Erlang-C model without abandonment, and only cover moderate deviations. We instead seek a full characterization under all deviation scales for queues with abandonment.

Another classical tool for efficient concentration-type bound is via Edgeworth Expansions \cite{blanchet2007uniform, ABATE1994223}, which also stems from CLT. Characteristic-function-based expansions yield convergence rate for finite sample sizes $n$ \cite[Ch.~16]{feller1991introduction}, leading to Berry-Esseen bound or non-uniform Berry-Esseen bound \cite{bikelis1966estimates, nagaev1965some}. But the tail error only has cubic decay in $a$ as the underlying argument relies only on third moment information. Moment Generating Function (MGF)-based versions of Edgeworth Expansion \cite[Ch.~8]{dembo2009large,BahadurRangaRao1960, petrov2012sums} utilize MGF information of summand to provide asymptotic expansions for tail probabilities. Appealingly, they imply the efficient concentration as shown in Cramér-type moderate-deviation theorems \cite{Cramer1938, petrov2012sums}. Yet these asymptotic expansions carry unspecified constants (see Section \ref{sec: summary of results}), unsuitable for explicit pre-limit bounds. Additionally, MGF is unknown in JSQ model, making Edgeworth Expansion inapplicable.

\paragraph{CLT survey} In Section \ref{sec: summary of results}, we treat two types of results in CLT, distinguished by their conditions on summands: sub-exponential and bounded. For sub-exponential summands, Cramér-type pre-limit bounds \cite[Chapter.~8, Theorem~2]{petrov2012sums} yield an efficient concentration-type bound for deviations \(a_n=o(n^{1/2})\). Thus, they imply relative ratio convergence for \(a_n=o(n^{1/6})\) and a moderate deviation principle (MDP) for \(a_n=o(n^{1/2})\). But they have unspecified constants in the exponents for pre-limit bound. For bounded summands, \cite[Theorems.~1-2]{austern2022efficient} provide pre-limit upper bounds: an efficient concentration upper bound at constant deviation \(a_n=\Theta(1)\), extended to \(a_n=o(n^{1/2})\), with all constants computable. Beyond the above \(a_n=o(n^{1/2})\) moderate deviation, Cramér \cite{Cramer1938} and Bahadur-Ranga Rao \cite{BahadurRangaRao1960} give matching upper/lower large-deviation bounds for sub-exponential summands, again with unspecified constants. By contrast, \cite{austern2022efficient} and classical Hoeffding-type bounds \cite{boucheron2003concentration} provide explicit upper bounds for bounded summands.

\section{Future Directions} \label{sec: future directions}

There are several possible avenues to pursue with regard to the results, methodology, and extending to other stochastic systems or limit approximations.
\begin{enumerate}
    \item  As noted in the main text, an open problem is to obtain matching lower bounds for tail probabilities in the JSQ system and to determine whether the phase transition in the tail exponent is sharp. These questions are tied to the strength of state-space collapse (SSC): does a strong form of SSC hold, and can we prove order-wise lower bounds for the $p$-th moment of $\mathbf{q}_\perp$? Another key factor is the tightness of the idle-time probability $\sum_i \mathbb{P}(q_i=0)$; whether it exhibits Gaussian decay will determine the location of phase transition.
    \item One main tool for efficient concentration is bounding the Wasserstein-$p$ distance via Stein's method. In the SSQ case, however, the generator-coupling approach is not sharp for large $p$ compared with a triangle-inequality argument. This motivates refined Stein bounds for $\mathcal{W}_p$. Extending the current results from Wasserstein-$p$ to Wasserstein-Orlicz distances is a natural next step. 
    \item It is important to explore stochastic systems without assuming exponential distributions for inter-arrival, service, or abandonment times. One possible extension is to examine discrete-time models with various distributions for these times, as discussed in \cite{hurtado2020transform}. The frameworks of Stein's and Lyapunov drift methods can be adapted by considering generator $\mathcal{A}$ in Proposition \ref{pro: Wasserstein-$p$ bound without exchangeability} as an one-step dynamics of the discrete-time Markov chain. Another extension could involve generalizing to non-Markovian systems, such as $G/G/n+G$ queues under JSQ routing. This would necessitate enlarging the state space by adding residual-life variables, resulting in piecewise-deterministic Markov processes. The generator $\mathcal{A}$ in this context would be obtained via Palm-inversion formula as in \cite{braverman2025diffusionapproximationerrorqueueing}, where the boundary condition is more complex than the Markovian case.
    \item Additionally, it is crucial to investigate other relevant regimes, such as classical heavy traffic, where the steady-state limits are often non-Gaussian reflected diffusions. Some examples include the Halfin-Whitt regime without abandonment \cite{braverman2020steady} and Critically Loaded regime with abandonment
    \cite{jhunjhunwala2023jointheshortestqueueabandonmentcritically}.
     A significant challenge is the lack of a Stein framework on Wasserstein-$p$ error bounds for these non-Gaussian limits. Developing such a framework would enable the derivation of efficient concentration inequalities in different heavy-traffic regimes.
\end{enumerate}

\bibliographystyle{plain}
\bibliography{references}

\newpage
\appendix

\section{Proof for Proposition \ref{pro: Wasserstein-$p$ bound without exchangeability}: Stein's Method for Wasserstein-$p$ bound} \label{sec: proof for Wasserstein-$p$ bound without exchangeability}

We provide the proof for a generalized version of Proposition \ref{pro: Wasserstein-$p$ bound without exchangeability} in this section. We first restate the assumptions and the proposition in a more general form, and then provide the proof. This general version introduces a transformation $T$ that first normalizes the random vector of interest, and then compares the distribution of $T(X)$ with a standard normal distribution. The original version of Proposition \ref{pro: Wasserstein-$p$ bound without exchangeability} is a special case where $T$ is the identity mapping. When applying to our SSQ system, we will choose $T$ to be $T(x) = \frac{\sqrt{\gamma}}{\sqrt{\lambda}}(x - \lambda/\gamma)$, such that $T(q) = \tilde{q}$ is the centered-scaled queue length. For JSQ, we will choose $T(x) = \sqrt{\gamma/\lambda}(\sum_i x_{i} - \frac{\lambda-\mu}{\gamma})$ such that $T(\mathbf{q}) = \tilde{q}_{\Sigma}$ is the centered-scaled total queue length. We start with the following assumptions.


\begin{assumption}[Stein identity] \label{assumption: Stein identity final version}
    Given a random vector $X\in \mathbb{R}^n$ and a measurable function $T: \mathbb{R}^n \to \mathbb{R}$. Let $E$ be the support of $\mathcal{L}(T(X))$. We assume that there exists a linear operator $\mathcal{A}$ such that for any test function $f \in \mathcal{C}_b(\mathbb{R})$, $\mathcal{A}(f \circ T)(X)$ is integrable, and
    $\mathbb{E}[\mathcal{A}(f \circ T)(X)] = 0$.
\end{assumption}

We assume the operator $\mathcal{A}$ admits a Kramers-Moyal expansion as follows, recalling that $\mathcal{F}$ is the function class defined in \eqref{assumption: function class}.
\begin{assumption}[Kramers-Moyal expansion] \label{assumption: Kramers-Moyal expansion final version}
    Under Assumption \ref{assumption: Stein identity final version}, we further assume that there exists a family of functions $a^{\mathcal{A}, T}_k(\cdot)$, such that $\mathcal{A}(f \circ T)(x) = \sum_{k=1}^{\infty} a^{\mathcal{A}, T}_k(x) f^{(k)}(T(x))$ for all $x \in \operatorname{supp}(\mathcal{L}(X))$ and all $f \in \mathcal{F}$.
\end{assumption}

We characterize the coefficients $a^{\mathcal{A}, T}_k(\cdot)$ in the following lemma, which is a direct consequence of applying Kramers-Moyal expansion to the test function $f(x) = (x - T(y))^k$ for $k \geq 1$ and $y \in \operatorname{supp}(\mathcal{L}(X))$.
\begin{lemma}[Identity for Kramers-Moyal coefficients] \label{lemma: identity for Kramers-Moyal coefficients final version}
    Suppose the Kramers-Moyal expansion in Assumption \ref{assumption: Kramers-Moyal expansion final version} holds. Given any $k \geq 1$ and any $y \in \operatorname{supp}(\mathcal{L}(X))$, 
     we have $a^{\mathcal{A}, T}_k(y) = \frac{1}{k!}(\mathcal{A}(T(x) - T(y))^k) \big|_{x=y}$.
\end{lemma}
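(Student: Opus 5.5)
The plan is to mirror the proof of Lemma \ref{lemma: identity for Kramers-Moyal coefficients}, with the scalar $y$ replaced by $T(y)$. Fix $k\ge 1$ and a point $y\in\operatorname{supp}(\mathcal{L}(X))$, and set $c:=T(y)\in\mathbb{R}$, a constant. Consider the test function $f_c(u):=(u-c)^k$ on $\mathbb{R}$.

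The first step is to check that $f_c\in\mathcal{F}$. For $j\le k$ we have $f_c^{(j)}(u)=\frac{k!}{(k-j)!}(u-c)^{k-j}$, and $f_c^{(j)}\equiv 0$ for $j>k$. These derivatives are bounded by $D^j\sqrt{j!}\,(1+|u|)^{m}$ once we take $m=k$ and $D$ large enough in terms of $k$ and $|c|$. Only finitely many $j$ are involved, so this is immediate. Next, observe that $f_c\circ T$ is exactly the map $x\mapsto (T(x)-T(y))^k$. This is the function that $\mathcal{A}$ acts on in the statement, so the expression $\mathcal{A}(T(x)-T(y))^k$ is to be read as $\mathcal{A}(f_c\circ T)$, with $x$ the variable.

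The second step is to apply Assumption \ref{assumption: Kramers-Moyal expansion final version} to $f_c$. This gives, for every $x\in\operatorname{supp}(\mathcal{L}(X))$,
\begin{align*}
\mathcal{A}(f_c\circ T)(x)=\sum_{j=1}^{\infty}a^{\mathcal{A},T}_j(x)\,f_c^{(j)}(T(x)).
\end{align*}
We then evaluate this at $x=y$, which is admissible because $y$ lies in the support. At that point $f_c^{(j)}(T(y))=f_c^{(j)}(c)$. This equals $0$ for $j\neq k$, because for $j<k$ a positive power of $(c-c)$ survives and for $j>k$ the derivative vanishes identically. For $j=k$ it equals $k!$. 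The series therefore collapses to the single term $k!\,a^{\mathcal{A},T}_k(y)$. Dividing by $k!$ yields the claimed identity.

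The only delicate point is bookkeeping rather than analysis. The function $f_c$ depends on $y$, but $y$ is held fixed before $\mathcal{A}$ is applied. The operator therefore acts on a genuine element of $\mathcal{F}$ composed with $T$, and only afterward is the resulting function evaluated at $x=y$. Once this order of operations is kept straight, no convergence or interchange issue arises, because the expansion is assumed pointwise on the support and, at the evaluation point, all but one term are identically zero.
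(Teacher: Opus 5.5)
Your proposal is correct and follows essentially the paper's argument: apply the Kramers--Moyal expansion to the test function $u\mapsto (u-T(y))^k$, evaluate at $x=y$, and use that its $j$-th derivative at $u=T(y)$ equals $k!$ for $j=k$ and $0$ otherwise. Your explicit check that this polynomial lies in $\mathcal{F}$ supplies a detail the paper only asserts.
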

The final assumption is on the integrability of Kramers-Moyal coefficients.
\begin{assumption} [Integrability of Kramers-Moyal coefficients] \label{assumption: integrability of Kramers-Moyal coefficients final version}
    Under Assumption \ref{assumption: Stein identity final version} and \ref{assumption: Kramers-Moyal expansion final version}
    , we assume for all constant $D > 0$ that the inequality holds $\mathbb{E}[\sum_{k=1}^{\infty} D^k\sqrt{k!} |a^{\mathcal{A}, T}_k(X)|] < \infty$.
\end{assumption}

We are now ready to present the proposition that bounds the Wasserstein-$p$ distance between $\mathcal{L}(T(X))$ and $\mathcal{L}(Z)$ via generator comparison. 

\begin{proposition}[Wasserstein-$p$ bound via generator comparison] \label{pro: Wasserstein-$p$ bound without exchangeability final version}
    Let $X\in \mathbb{R}^d$ be a random vector, and $T: \mathbb{R}^d \to \mathbb{R}$ be a measurable function. Suppose $T(X)$ satisfies $\mathbb{E}[|T(X)|^p] < \infty$ for some $p > 1$. We assume Assumptions \ref{assumption: Stein identity final version}, \ref{assumption: Kramers-Moyal expansion final version} and \ref{assumption: integrability of Kramers-Moyal coefficients final version}
     hold for $X$ with operator $\mathcal{A}$ and mapping $T$. Then for any $t_0 > 0$, we have the following upper bound for Wasserstein-$p$ distance between $\mathcal{L}(T(X))$ and $\mathcal{L}(Z')$, 
    \begin{align}
        W_p(\mathcal{L}(T(X)), \mathcal{L}(Z')) &\leq g_0(t_0,p) + \sum_{k=1}^{\infty} g_k(t_0,p) \mathbb{E}[|a_{k}^{\mathcal{A}, T}(X) - a_{k}^{\mathcal{L}_{OU}, T}(X)|^p]^{1/p} \notag\\
        &= g_0(t_0,p) + g_1(t_0,p) \mathbb{E}[| a_{1}^{\mathcal{A}, T}(X) + T(X)|^p]^{1/p} \notag\\
        &+ g_2(t_0,p) \mathbb{E}[|a_{2}^{\mathcal{A}, T}(X) - 1|^p]^{1/p} + \sum_{k=3}^{\infty} g_k(t_0,p) \mathbb{E}[|a_{k}^{\mathcal{A}, T}(X)|^p]^{1/p}, \label{eq:Final Stein Wasserstein-$p$ bound final version}
    \end{align}
    where $Z'\sim N(0,1)$ and the coefficients $g_k(t_0,p)$ for $k\ge 0$
are continuous in $(t_0,p)$ and are given explicitly in \eqref{eq: W-p bound final}.
\end{proposition}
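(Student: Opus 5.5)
We plan to follow the dynamic (Benamou--Brenier type) route sketched in Section~\ref{sec: proof sketch}. Set $Y:=T(X)$, let $\nu_0=\mathcal{L}(Y)$, and let $\nu_t:=\mathcal{L}(e^{-t}Y+\sqrt{1-e^{-2t}}Z)$ be the law of the OU process started from $\nu_0$, with $Z$ independent of $X$. The path $(\nu_t)_{t\ge0}$ interpolates between $\mathcal{L}(T(X))$ and $\mathcal{L}(Z')$. It solves the continuity equation $\partial_t\nu_t+\partial_x(\nu_t v_t)=0$ with velocity field
\[
v_t(x)=-x-\partial_x\log\nu_t(x)=-\frac{\rho_t'(x)+x\rho_t(x)}{\rho_t(x)},\qquad t>0,
\]
where $\rho_t$ is the density of $\nu_t$. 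The density is smooth for $t>0$ because of the Gaussian convolution. The dynamic formulation then gives $W_p(\nu_0,\nu_\infty)\le\int_0^\infty\|v_t\|_{L^p(\nu_t)}dt$.

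We split the time integral at $t_0$. On $[0,t_0]$ we do not use the velocity representation. Instead we bound $W_p(\nu_0,\nu_{t_0})$ by the synchronous coupling:
\[
\|Y-(e^{-t_0}Y+\sqrt{1-e^{-2t_0}}Z)\|_{L^p}\le(1-e^{-t_0})\|Y\|_{L^p}+\sqrt{1-e^{-2t_0}}\|Z\|_{L^p}.
\]
This produces the residual $g_0(t_0,p)$, which tends to $0$ as $t_0\to0$. If $\|Y\|_{L^p}$ should not appear in $g_0$, we would absorb it using $\|Y\|_{L^p}\le\|a_1^{\mathcal{A},T}+Y\|_{L^p}+\|a_1^{\mathcal{A},T}\|_{L^p}$, or move it into the $k=1$ term. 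On $[t_0,\infty)$ we need an $L^p$ estimate of $v_t$.

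The key identity is a Stein-type representation of $v_t$. For a test function $h$, Gaussian integration by parts gives
\[
\int h\,v_t\,d\nu_t=\mathbb{E}\bigl[h(Y_t)(-Y_t)\bigr]+\mathbb{E}\bigl[h'(Y_t)\bigr]=\mathbb{E}\bigl[(\mathcal{L}_{OU}\tilde h)(Y_t)\bigr]
\]
for a suitable antiderivative structure. More concretely, we write $\mathbb{E}[\mathcal{L}_{OU}f(Y_t)]$ with $f_t(y):=\mathbb{E}_Z[h(e^{-t}y+\sqrt{1-e^{-2t}}Z)]$, which is smooth and lies in $\mathcal{F}$ for $t>0$. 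Its derivatives obey $|f_t^{(k)}|\lesssim e^{-kt}(1-e^{-2t})^{-(k-1)/2}\sqrt{(k-1)!}\,\|h\|$ via Hermite bounds; this is where the $\sqrt{k!}$ growth in Definition~\ref{assumption: function class} enters. We then subtract the Stein identity $\mathbb{E}[\mathcal{A}(f_t\circ T)(X)]=0$ from Assumption~\ref{assumption: Stein identity final version}, extended from $\mathcal{C}_b$ to $\mathcal{F}$ by truncation and dominated convergence using Assumption~\ref{assumption: integrability of Kramers-Moyal coefficients final version}. Expanding both generators by Assumption~\ref{assumption: Kramers-Moyal expansion final version}, whose coefficients are identified by Lemma~\ref{lemma: identity for Kramers-Moyal coefficients final version}, we obtain
\[
\int h\,v_t\,d\nu_t=\sum_{k\ge1}\mathbb{E}\Bigl[\bigl(a_k^{\mathcal{L}_{OU},T}(X)-a_k^{\mathcal{A},T}(X)\bigr)\,f_t^{(k)}(T(X))\Bigr],
\]
with $a_1^{\mathcal{L}_{OU},T}=-T$, $a_2^{\mathcal{L}_{OU},T}=1$ and $a_k^{\mathcal{L}_{OU},T}=0$ for $k\ge3$. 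The sum is exchanged with the expectation using Assumption~\ref{assumption: integrability of Kramers-Moyal coefficients final version}.

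We write $f_t^{(k)}(T(X))=e^{-kt}\mathbb{E}[h^{(k)}\text{-type}]$ as a conditional expectation given $X$ of $h(Y_t)$ times a Hermite polynomial $H_{k-1}(Z)$, scaled by $e^{-kt}(1-e^{-2t})^{-(k-1)/2}$. Taking the supremum over $h$ with $\|h\|_{L^{p'}(\nu_t)}\le1$ and applying H\"older's inequality plus conditional Jensen gives
\[
\|v_t\|_{L^p(\nu_t)}\le\sum_k\frac{e^{-kt}\|H_{k-1}(Z)\|_{L^{p'}}}{(1-e^{-2t})^{(k-1)/2}}\,\|a_k^{\mathcal{A},T}-a_k^{\mathcal{L}_{OU},T}\|_{L^p}.
\]
Integrating in $t$ over $[t_0,\infty)$ defines $g_k(t_0,p)$. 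For $k=1,2$ this integral is finite even at $t_0=0$. For $k\ge3$ it blows up like $t_0^{-(k-3)/2}$ (logarithmically at $k=3$), which matches the stated monotonicity.

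The main obstacle is the rigorous justification of the dynamic formulation for general $p$ and the duality step identifying $\|v_t\|_{L^p(\nu_t)}$, given that $\nu_0$ may be lattice. We plan to handle this via the smoothing at $t\ge t_0>0$ together with the triangle inequality $W_p(\nu_0,\nu_\infty)\le W_p(\nu_0,\nu_{t_0})+W_p(\nu_{t_0},\nu_\infty)$. A secondary difficulty is checking that the Hermite $L^{p'}$ norms grow like $\sqrt{k!}$ times a constant, so that $\sum_k g_k$ converges under Assumption~\ref{assumption: integrability of Kramers-Moyal coefficients final version}.
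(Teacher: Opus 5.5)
Your treatment of $[t_0,\infty)$ is sound. It is the paper's argument in dual form: pairing $v_t$ with $h\in C_c^\infty$, applying the Stein identity to $P_t\tilde h\circ T$ with $\tilde h'=h$, expanding, and writing $f_t^{(k)}$ through Hermite polynomials reproduces the zero identity $\mathbb{E}[\tau_t\mid F_t]=0$ of Lemma~\ref{claim: bonis lemma 4} combined with the score representation. Since $P_t\tilde h$ is already bounded when $h\in C_c^\infty$, no truncation is needed.

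\textbf{The gap is the residual on $[0,t_0]$.} Your synchronous coupling gives $(1-e^{-t_0})\|Y\|_{L^p}+\sqrt{1-e^{-2t_0}}\,\|Z\|_{L^p}$. This depends on $Y=T(X)$, whereas the statement requires $g_0$ to depend on $(t_0,p)$ alone. Neither proposed fix works:
\begin{itemize}
\item $\|a_1^{\mathcal{A},T}(X)\|_{L^p}$ and $\|Y\|_{L^p}$ do not appear on the right-hand side; only $\|a_1^{\mathcal{A},T}(X)+Y\|_{L^p}$ does.
\item $W_p(\nu_0,\nu_{t_0})$ cannot be bounded by a function of $(t_0,p)$ alone, since already $W_p(\nu_0,\nu_{t_0})\ge W_1(\nu_0,\nu_{t_0})\ge(1-e^{-t_0})|\mathbb{E}Y|$.
\end{itemize}

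\textbf{The missing idea is to absorb the $Y$-dependence into the left-hand side.} Bound it by the unknown $W:=W_p(\mathcal{L}(Y),\mathcal{L}(Z'))$ itself, which is finite because $\mathbb{E}|Y|^p<\infty$. Take $G\sim N(0,1)$ with $\|Y-G\|_{L^p}\le W+\epsilon$, and take $Z$ independent of $(Y,G)$. Since $(1-e^{-t_0})G-\sqrt{1-e^{-2t_0}}\,Z\sim N(0,2(1-e^{-t_0}))$,
\[
W_p(\nu_0,\nu_{t_0})\le\bigl\|(1-e^{-t_0})Y-\sqrt{1-e^{-2t_0}}\,Z\bigr\|_{L^p}\le(1-e^{-t_0})(W+\epsilon)+\sqrt{2(1-e^{-t_0})}\,\|Z\|_{L^p}.
\]
Letting $\epsilon\downarrow0$, your triangle inequality becomes
\[
e^{-t_0}W\le\sqrt{2(1-e^{-t_0})}\,\|Z\|_{L^p}+\int_{t_0}^\infty\|v_t\|_{L^p(\nu_t)}\,dt .
\]
This is the source of the factor $e^{t_0}$ in every $g_k$ of \eqref{eq: W-p bound final}.

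The paper does the same absorption but estimates $\int_0^{t_0}\|v_t\|_{L^p(\nu_t)}\,dt$ directly inside the dynamic integral: Jensen on the score representation, then the same coupling with $G$. That yields $g_0=e^{t_0}\|Z\|_{L^p}\arccos(e^{-t_0})$. Your repaired residual $e^{t_0}\sqrt{2(1-e^{-t_0})}\,\|Z\|_{L^p}$ equals $e^{t_0}\|Z\|_{L^p}\cdot 2\sin(\theta/2)$ with $\theta=\arccos(e^{-t_0})$. Since $2\sin(\theta/2)\le\theta$, it is no larger than the paper's $g_0$, so it implies the stated bound.

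\textbf{A second correction.} In your estimate of $\|v_t\|_{L^p(\nu_t)}$, the Hermite factor must be $\|H_{k-1}(Z)\|_{L^p}$, not $\|H_{k-1}(Z)\|_{L^{p'}}$. H\"older must pair $h(F_t)\in L^{p'}$ with the whole product $(a_k^{\mathcal{A},T}-a_k^{\mathcal{L}_{OU},T})(X)\,H_{k-1}(Z)$, because $Z$ and $F_t$ are dependent. Only after that does independence of $X$ and $Z$ split the norm. For $p>2$ your $L^{p'}$ version is not justified.

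With this correction and the absorption, your $g_k$ for $k\ge1$ coincide with the paper's. Your ``secondary difficulty'' also disappears: convergence of $\sum_k g_k\|\cdot\|_{L^p}$ is not needed, because the inequality is trivial when the right-hand side is infinite.
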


In the following, we will work with change of variable $Y:=T(X)$. We let $Z$ be a standard normal variable independent of $X$, which we will use to construct the OU flow in the proof.
And we let $Z'$ be a standard normal variable, which is the target distribution we want to compare with in the proposition. We use $Z'$ here to distinguish from the standard normal $Z\perp X$ that we will use to construct the OU flow. This proof is largely based on the spirit of Austern et al. \cite[Inequality C.4]{austern2022efficient}, Fang et al. \cite[Proposition 6.1]{fang2022wasserstein} and Bonis \cite[Theorem 3]{bonis2020steins}. 

\paragraph{Proof for Proposition \ref{pro: Wasserstein-$p$ bound without exchangeability final version}}
We first introduce the dynamic formulation of Wasserstein-$p$ distance from Benamou-Brenier \cite{ambrosio2005gradient}, which is a key ingredient in the proof.
\begin{lemma} 
    \label{lemma: Benamou Brenier for W-p}
    \textnormal{Benamou-Brenier for Wasserstein-$p$ distance \cite[Theorem~8.3.1]{ambrosio2005gradient}\cite[Inequality~3.8]{ledoux2015stein}} 
    Suppose $X, Z$ are two random variables in $\mathbb{R}^d$ with probability measures $\mathcal{L}(X), \mathcal{L}(Z) \in \mathcal{P}_p(\mathbb{R}^d)$ respectively. 
    For a weakly continuous curve of probability measures $\mu_t : I \to \mathcal{P}_p(\mathbb{R}^d)$ with open interval $I \subset \mathbb{R}$. If it  satisfies the continuity equation for some Borel vector field $v_t$ with $\int_I \int_{\mathbb{R}^d} \|v_t(x)\|_2^p d\mu_t(x) dt < \infty$ and
        $\partial_t \mu_t + \text{div}(\mu_t v_t) = 0, \; \forall (x,t) \in \mathbb{R}^d\times I$. Then curve $\mu_t: I\to \mathcal{P}_p(\mathbb{R}^d)$ is absolutely continuous and for Lebesgue-almost everywhere $t \in I$, we have
        \begin{align*}
            \lim_{ s\to t} \frac{W_p(\mu_s, \mu_t)}{|s-t|} \leq \left\{\int_{\mathbb{R}^d} \|v_t(x)\|_2^p d\mu_t(x) \right\}^{1/p} 
        \end{align*}
\end{lemma}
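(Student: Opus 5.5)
The plan is the standard Lagrangian argument from \cite[Theorem~8.3.1]{ambrosio2005gradient}. First prove the inequality when the velocity field is regular, by pushing the measures along the flow it generates. Then remove the regularity by mollification, using lower semicontinuity of $W_p$. Throughout, write $m(t):=\{\int\|v_t\|_2^p\,d\mu_t\}^{1/p}$. The hypothesis gives $m\in L^p(I)$, so $m\in L^1_{loc}(I)$ by H\"older. The goal is the integrated estimate
\begin{align*}
W_p(\mu_s,\mu_t)\le \int_t^s m(r)\,dr,\qquad t<s,\; s,t\in I .
\end{align*}
This gives absolute continuity of $t\mapsto\mu_t$ in $(\mathcal{P}_p,W_p)$ directly. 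The Lebesgue differentiation theorem then yields $\limsup_{s\to t}W_p(\mu_s,\mu_t)/|s-t|\le m(t)$ at a.e.\ $t$, which is the claim.

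\emph{Step 1 (regular fields).} Assume first that $v$ is locally Lipschitz in $x$, with $\sup_{r\in[t,s]}\big(\sup_K|\nabla_x v_r|+\sup_x\|v_r(x)\|/(1+\|x\|)\big)<\infty$ for every compact $K$. Under this growth condition the ODE $\dot X_r=v_r(X_r)$, $X_t=x$, has a unique global flow $X_r(x)$. Uniqueness of solutions of the continuity equation for such fields (via a duality/transport argument) gives $\mu_r=(X_r)_\#\mu_t$. The pair $(X_t,X_s)_\#\mu_t$ is then a coupling of $\mu_t$ and $\mu_s$. Applying Minkowski's integral inequality to $X_s-X_t=\int_t^s v_r(X_r)\,dr$ gives
\begin{align*}
W_p(\mu_t,\mu_s)\le \Big\|\int_t^s v_r(X_r)\,dr\Big\|_{L^p(\mu_t)}\le\int_t^s \|v_r(X_r)\|_{L^p(\mu_t)}\,dr=\int_t^s m(r)\,dr .
\end{align*}

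\emph{Step 2 (regularization).} For general $v$, fix a Gaussian kernel $\rho_\varepsilon$ and define
\begin{align*}
\mu^\varepsilon_r:=\mu_r*\rho_\varepsilon,\qquad E_r:=v_r\mu_r,\qquad v^\varepsilon_r:=\frac{E_r*\rho_\varepsilon}{\mu^\varepsilon_r}.
\end{align*}
Convolution commutes with $\partial_t$ and $\mathrm{div}$, so $(\mu^\varepsilon,v^\varepsilon)$ again solves the continuity equation. Jensen's inequality applied to the convex map $(a,b)\mapsto\|b\|^p/a^{p-1}$ gives $\int\|v^\varepsilon_r\|^p\,d\mu^\varepsilon_r\le m(r)^p$. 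The positivity and smoothness of the Gaussian kernel give the local Lipschitz and linear growth bounds that Step 1 needs. Step 1 therefore yields $W_p(\mu^\varepsilon_t,\mu^\varepsilon_s)\le\int_t^s m(r)\,dr$. Since $\mu^\varepsilon_r\to\mu_r$ weakly and $W_p$ is lower semicontinuous under weak convergence, letting $\varepsilon\downarrow0$ gives the integrated estimate for $\mu$ itself.

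I expect the main obstacle to be Step 2, specifically verifying that $v^\varepsilon$ really satisfies the Step 1 hypotheses with constants uniform on $[t,s]$. This matters because the weak continuity of $r\mapsto\mu_r$ and $m\in L^1_{loc}$ are only barely enough. I would first try to bound $|\nabla v^\varepsilon_r|$ through derivatives of $\rho_\varepsilon$ divided by $\mu^\varepsilon_r$. Near infinity, I would replace the uniform-in-$x$ growth condition by a localization with cutoff functions $\chi_R$. I would also note that in our application $\mu_t$ is the OU interpolation of $\mathcal{L}(T(X))$ with a Gaussian, which is already smooth and positive for $t>0$. This suggests bypassing the mollification entirely by working on $t>0$ and taking a limit at $t\downarrow0$.
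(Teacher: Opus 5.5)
Your route is the standard proof of this direction of \cite[Theorem~8.3.1]{ambrosio2005gradient}. That citation is all the paper offers here: the lemma is quoted, not proved. The argument has three parts: transport along the flow of a regular field plus Minkowski; the regularization $\mu^\varepsilon_r=\mu_r*\rho_\varepsilon$, $v^\varepsilon_r=(v_r\mu_r)*\rho_\varepsilon/\mu^\varepsilon_r$ with the joint convexity of $(a,b)\mapsto\|b\|^p/a^{p-1}$; and lower semicontinuity of $W_p$ followed by Lebesgue differentiation.

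The step that does not go through as written is the claim in Step 2 that the Gaussian kernel delivers your Step 1 hypotheses. What the mollified field actually satisfies is a local bound $\sup_{x\in K}\big(\|v^\varepsilon_r(x)\|+\|\nabla_x v^\varepsilon_r(x)\|\big)\le C_{K,\varepsilon}\,\|v_r\|_{L^1(\mu_r)}\le C_{K,\varepsilon}\,m(r)$. Here $C_{K,\varepsilon}$ depends on $\inf\{\mu^\varepsilon_r(x):r\in[t,s],\,x\in K\}>0$, which follows from tightness of the narrowly continuous family $\{\mu_r\}_{r\in[t,s]}$. This fails your Step 1 hypotheses in two ways:
\begin{itemize}
\item $m$ is only $p$-integrable on $[t,s]$, not bounded, so the supremum over $r$ in your Step 1 hypothesis can be infinite.
\item $C_{K,\varepsilon}$ blows up as $K$ grows, because the lower bound on $\mu^\varepsilon_r$ decays like the Gaussian. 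So no global linear-growth bound is available in general.
\end{itemize}

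You do not need cutoffs $\chi_R$ to fix this. Run Step 1 under the weaker hypotheses that the mollified field does satisfy: $\int_t^s\big(\sup_{x\in K}\|v_r(x)\|+\mathrm{Lip}(v_r,K)\big)\,dr<\infty$ for every compact $K$, and $\int_t^s\int\|v_r\|\,d\mu_r\,dr<\infty$. Then:
\begin{itemize}
\item Carath\'eodory theory gives maximal characteristics.
\item The integrability of $v$ against $\mu_r$ rules out explosion before time $s$ for $\mu_t$-a.e.\ starting point.
\item Uniqueness for the continuity equation in this class yields $\mu_r=(X_r)_\#\mu_t$.
\end{itemize}
This is the representation result of \cite[Section~8.1]{ambrosio2005gradient}. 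Your Minkowski estimate and the rest of the argument then go through unchanged. The $\limsup$ you obtain is also a genuine limit a.e., since absolutely continuous curves in a metric space have a metric derivative a.e.

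Note also that your argument only uses $m\in L^1_{\mathrm{loc}}(I)$, which is the hypothesis in \cite{ambrosio2005gradient}, rather than the $L^p(I)$ condition written in the lemma. This matters for the paper's application. For the OU flow the available bound is $\|\rho_t(F_t)\|_{L^p}\le\|Y\|_{L^p}+\|Z\|_{L^p}(e^{2t}-1)^{-1/2}$. This is $O(t^{-1/2})$ as $t\downarrow0$, so it is integrable but not $p$-integrable near $0$ when $p\ge2$. Your integrated estimate $W_p(\mu_t,\mu_s)\le\int_t^s m(r)\,dr$ handles this. Apply it on $[\delta,s]$, then let $\delta\downarrow0$ and $s\uparrow\infty$ using weak convergence and lower semicontinuity of $W_p$. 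That is exactly what \eqref{eq: BB bound for OU flow} requires.
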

Using the above lemma, we can prove the corollary below, and we will mainly work with this corollary in the proof since our application is one-dimensional.
    \begin{corollary}[Dynamic formulation of $W_p$]
    Let $\mathcal{L}(X), \mathcal{L}(Z') \in \mathcal{P}_p(\mathbb{R})$ and suppose there exists a weakly continuous curve $\mu_t:(0,\infty)\to\mathcal{P}_p(\mathbb{R})$ and a Borel vector field $v_t$ such that
    $\partial_t \mu_t + \partial_x(\mu_t v_t)=0$,
    $\sup_{t\in(0,\infty)} \|x\|_{L^p(\mu_t)}<\infty$. Moreover, suppose
    $\mu_t \Rightarrow \mathcal{L}(X)$ as $t\downarrow 0$, and
    $\mu_t \Rightarrow \mathcal{L}(Z')$ as $t\uparrow \infty$.
    Then
    \[
    W_p(\mathcal{L}(X),\mathcal{L}(Z'))
    \le
    \inf\left\{
    \int_{0}^{\infty}
    \left(\int_{\mathbb{R}} |v_t(x)|^p\,d\mu_t(x)\right)^{1/p}\!dt
    :\ \partial_t \mu_t + \partial_x(\mu_t v_t)=0,\ 
    \mu_t \overset{t\downarrow 0}{\Rightarrow} \mathcal{L}(X),\ 
    \mu_t \overset{t\uparrow \infty}{\Rightarrow} \mathcal{L}(Z')
    \right\},
    \]
    where $\Rightarrow$ denotes weak convergence.
    \end{corollary}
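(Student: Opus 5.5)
We will derive the corollary by applying Lemma~\ref{lemma: Benamou Brenier for W-p} on a compact time window and then using the triangle inequality for $W_p$ together with lower semicontinuity to handle the endpoints. Fix any admissible pair $(\mu_t,v_t)$ with $\int_0^\infty \|v_t\|_{L^p(\mu_t)}\,dt<\infty$; otherwise there is nothing to prove. For $0<s<T<\infty$ we apply the lemma on the open interval $I=(s/2,2T)$. We may assume $\int_I\int |v_t|^p d\mu_t\,dt<\infty$, and if needed we first restrict to a slightly smaller interval where this holds, since the integrability of $t\mapsto\|v_t\|_{L^p(\mu_t)}$ makes the $p$-th power finite on sets where it is bounded, e.g.\ via truncation. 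The lemma then gives that $t\mapsto\mu_t$ is absolutely continuous in $(\mathcal P_p,W_p)$ on $I$ with metric derivative $|\dot\mu|(t)\le \|v_t\|_{L^p(\mu_t)}$ for a.e.\ $t$.

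By absolute continuity of the curve, $W_p(\mu_s,\mu_T)\le\int_s^T|\dot\mu|(t)\,dt\le \int_0^\infty \|v_t\|_{L^p(\mu_t)}dt$. This is the standard fact that for an absolutely continuous curve in a metric space, the distance between endpoints is bounded by the integral of the metric derivative (Ambrosio--Gigli--Savar\'e, Thm.~1.1.2).

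Next we send $s\downarrow0$ and $T\uparrow\infty$. The main obstacle is that weak convergence alone does not give $W_p$ convergence. Instead, we use lower semicontinuity of $W_p$ under weak convergence of both marginals. Take couplings $\pi_{s,T}$ of $(\mu_s,\mu_T)$ that are optimal for $W_p$. Since $\mu_s\Rightarrow\mathcal L(X)$ and $\mu_T\Rightarrow\mathcal L(Z')$, the family $\{\pi_{s,T}\}$ is tight. Any weak limit $\pi$ is then a coupling of $\mathcal L(X)$ and $\mathcal L(Z')$. Because $|x-y|^p$ is continuous and nonnegative, the portmanteau theorem gives
\[
\int|x-y|^p\,d\pi\le\liminf\int|x-y|^p\,d\pi_{s,T}.
\]
Hence
\[
W_p(\mathcal L(X),\mathcal L(Z'))\le\liminf W_p(\mu_s,\mu_T)\le\int_0^\infty\|v_t\|_{L^p(\mu_t)}dt.
\]
The uniform moment bound $\sup_t\|x\|_{L^p(\mu_t)}<\infty$ ensures that all measures involved lie in $\mathcal P_p$, so every quantity above is finite and the lemma's hypothesis $\mu_t\in\mathcal P_p$ holds. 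Taking the infimum over admissible pairs yields the claim.
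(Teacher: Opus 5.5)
Your route is the one the paper intends: apply the Benamou--Brenier lemma and integrate the bound $|\dot\mu|(t)\le\|v_t\|_{L^p(\mu_t)}$. You carry it out more carefully than the paper does. The paper gives no separate proof; when it uses the corollary, it applies the lemma directly on $I=(0,\infty)$ and simply ``takes the integral.'' That shortcut never explains how weak convergence as $t\downarrow 0$ and $t\uparrow\infty$ transfers to $W_p$ at the endpoints. The uniform $L^p$ moment bound in the hypotheses, combined with weak convergence, gives $W_q$-convergence only for $q<p$, not $W_p$-convergence. Your localisation to $[s,T]$, followed by lower semicontinuity of $W_p$ under weak convergence of both marginals, is exactly the missing step, and it does not even need the moment bound. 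Localising also helps in the paper's own application. The available estimate $\|\rho_t(F_t)\|_{L^p}\le e^{-t}\|Y\|_{L^p}+\frac{e^{-2t}}{\sqrt{1-e^{-2t}}}\|Z\|_{L^p}$ is only $O(t^{-1/2})$ near $0$, so the lemma's hypothesis $\int_I\|v_t\|^p_{L^p(\mu_t)}\,dt<\infty$ cannot be verified on $(0,\infty)$ for $p\ge 2$. On $(s/2,2T)$, by contrast, the speed is bounded.

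The one step that fails as written is your justification of that hypothesis on the window for a general admissible pair. Integrability of $m(t):=\|v_t\|_{L^p(\mu_t)}$ does not make $m^p$ integrable on any interval containing $[s,T]$. For example, a Gaussian translated with speed $|t-t_0|^{-1/p}$, $t_0\in(s,T)$, has $m\in L^1$ but $\int m^p\,dt=\infty$ near $t_0$. The sets where $m$ is bounded need not be intervals, and truncating $v_t$ destroys the continuity equation.

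The repair is standard, and either of two routes works:
\begin{itemize}
\item Invoke \cite[Theorem~8.3.1]{ambrosio2005gradient} in its original form. Its converse part only requires $\|v_t\|_{L^p(\mu_t)}\in L^1(I)$; on a bounded window, the paper's restatement asks for more.
\item Reparametrise time by $\sigma(t)=\int_s^t\bigl(1+\|v_r\|_{L^p(\mu_r)}\bigr)\,dr$. By the time-rescaling lemma in \cite[Section~8.1]{ambrosio2005gradient}, $\tilde\mu_\sigma=\mu_{t(\sigma)}$ solves the continuity equation with $\tilde v_\sigma=t'(\sigma)\,v_{t(\sigma)}$. This field satisfies $\|\tilde v_\sigma\|_{L^p(\tilde\mu_\sigma)}\le 1$ on a bounded $\sigma$-interval and has the same length, $\int\|\tilde v_\sigma\|\,d\sigma=\int_s^T\|v_t\|\,dt$.
\end{itemize}
With either fix your argument is complete, provided you take the $\liminf$ along sequences $s_k\downarrow0$, $T_k\uparrow\infty$ before extracting a weakly convergent subsequence of couplings.
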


\noindent \paragraph{Discussion and Notation.}
For notation, $\mathcal{P}_p(\mathbb{R}^d)$ denotes the space of probability measures on $\mathbb{R}^d$ with finite $p$-th moment, 
\begin{align*}
    \mathcal{P}_p(\mathbb{R}^d) := \left\{
        \mu \in \mathcal{P}(\mathbb{R}^d) : \int_{\mathbb{R}^d} |x - \bar{x}|^p d\mu(x) < \infty, \text{for some } \bar{x} \in \mathbb{R}^d
    \right\}
\end{align*} 

 Weakly continuous curve $\mu_t : I \to \mathcal{P}_p(\mathbb{R}^d)$ requires that for all bounded continuous function $f: \mathbb{R}^d \to \mathbb{R}$, the map $t \mapsto \int_{\mathbb{R}^d} f(x) d\mu_t(x)$ is continuous on $I$, i.e.,:
\begin{align*}
    \lim_{n \to \infty} t_n \to t \implies \lim_{n \to \infty} \int_{\mathbb{R}^d} f(x) d\mu_{t_n}(x) = \int_{\mathbb{R}^d} f(x) d\mu_t(x)
\end{align*}
The original theorem in \cite[Theorem~8.3.1]{ambrosio2005gradient} only requires the continuity equation to hold in weak (distributional) sense, i.e., for all $\varphi \in C_c^{\infty}(\mathbb{R}^d\times I)$:
\begin{align*}
    \int_{I} \int_{\mathbb{R}^d} \partial_t \varphi(x,t) + \langle \nabla\varphi(x,t) , v_t(x) \rangle d\mu_t(x) dt = 0,
\end{align*}
while we provide the strong form of continuity equation in the lemma, and we verify the strong form holds for our case because of the smoothness of the density of $\mu_t$ in Lemma \ref{lemma: OU flow property}. For the interval $I$, we take $I = (0,\infty)$ in our application.
The statement in \cite[Theorem~8.3.1]{ambrosio2005gradient} also contains the converse part: if $\mu_t$ is absolutely continuous, then there exists a Borel vector field $v_t: \mathbb{R}^d \times I \to \mathbb{R}^d$ such that $v_t \in L^p(\mu_t)$ and $\|v_t\|_{L^p(\mu_t)} \leq |\mu'|(t)$ for Lebesgue-a.e. $t \in I$. And the continuity equation holds. This converse statement is not used in our proof and we only depends on the direction in Lemma \ref{lemma: Benamou Brenier for W-p}. See also \cite[Introduction]{lisini2007characterization} for more discussion on the continuity equation and absolutely continuous curves in Wasserstein space.

 
The following lemma verifies that Ornstein-Uhlenbeck (OU) flow provides a valid continuity equation connecting $\mathcal{L}(Y)$ and $\mathcal{L}(Z')$. 
Consider $F_t$ as the state at time $t>0$ for OU flow starting from $Y$.
\begin{lemma}\label{lemma: OU flow property}
Let $p>1$ and let $X\in \mathbb{R}^d$ a random vector with law $\mathcal{L}(X)\in\mathcal{P}_p(\mathbb{R}^d)$. Let $T:\mathbb{R}^d\to\mathbb{R}$ be a Borel measurable map such that $Y:=T(X)$ is a real random variable with $\mathbb{E}|Y|^p<\infty$. Let $Z\sim\mathcal{N}(0,1)$ be independent of $X$. For $t>0$ define
\[
F_t := e^{-t}Y + \sqrt{1-e^{-2t}}\,Z,
\]
and let $f_t$ denote the density of $F_t$ (w.r.t.\ Lebesgue measure). Then the following hold.
\begin{enumerate}
    \item (OU representation) \textnormal{\cite[Appendix~A.2]{fang2022wasserstein}} The family $(F_t)_{t>0}$ is the Ornstein--Uhlenbeck semigroup acting on $Y$; equivalently, $(F_t)$ is a version of the solution to the SDE
        \[
            dF_t = -F_t\,dt + \sqrt{2}\,dB_t,\qquad F_0=Y,
        \]
        where $(B_t)$ is a standard Brownian motion independent of $Y$.

    \item (Smooth positive density) \textnormal{\cite[Lemma 3.1]{johnson2001entropy}} For every $t>0$ the law of $F_t$ admits a strictly positive, smooth density $f_t$.
    \item (Commutation of OU semigroup and differentiation) \textnormal{\cite{bakry2013analysis} \cite[Eq. 16 $\&$ Lemma 4]{bonis2020steins}} For every $t>0 , k\in\mathbb{N}$, and $g \in C^{\infty}_c(\mathbb{R})$,
        \[
            \frac{d^k}{dx^k} P_t g(x) = e^{-kt} P_t\!\Bigl(\frac{d^k}{dx^k} g\Bigr)(x),\qquad x\in\mathbb{R},
        \]
        where $P_t g(x):=\mathbb{E}[g(F_t)\mid F_0=x]$ is the OU semigroup acting on $g$. Moreover,
        \[
         P_t \frac{d^k}{dx^k}g(x) = \frac{1}{\sqrt{1- e^{-2t}}^k} \mathbb{E}[h_k(Z) g(e^{-t}x + \sqrt{1-e^{-2t}}Z)], \qquad x\in\mathbb{R},
        \]
        where $h_k(x) = (-1)^k e^{x^2/2} \frac{d^k}{dx^k}e^{-x^2/2}$ is the $k$-th Hermite Polynomial.

    \item (Continuity equation) For every $t>0$ the density $f_t$ satisfies the continuity equation
        \[
            \partial_t f_t(x) + \frac{d}{dx}\bigl(f_t(x)\,\rho_t(x)\bigr)=0,\qquad x\in\mathbb{R},
        \]
        where $\rho_t(x):=-\partial_x\log f_t(x)-x$ is the score function of $f_t$.

    \item (Score representation) \textnormal{\cite[Lemma 2]{bonis2020steins}} For each $t>0$ the score admits the conditional expectation representation
        \[
            \rho_t(F_t)\;\overset{d.}{=}\; \mathbb{E}\!\Bigl[e^{-t}Y-\frac{e^{-2t}}{\sqrt{1-e^{-2t}}}\,Z\;\Big|\;F_t\Bigr],
        \]
        and, consequently,
        \[
            t\mapsto \|\rho_t(F_t)\|_{L^p(\mathcal{L}(F_t))}\in L^1((0,\infty)).
        \]

    \item (Moment bound and weak continuity) The curve $t\mapsto\mathcal{L}(F_t)$ is weakly continuous on $(0,\infty)$, and
        \[
            \sup_{t>0}\|F_t\|_{L^p}<\infty,
        \]
        (equivalently $\sup_{t>0}\|x\|_{L^p(\mathcal{L}(F_t))}<\infty$).
\end{enumerate}
\end{lemma}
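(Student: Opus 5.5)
The proof is a direct computation from the explicit representation $F_t=e^{-t}Y+\sigma_t Z$ with $\sigma_t:=\sqrt{1-e^{-2t}}$. I take the items in order; each reduces to an explicit Gaussian computation plus an interchange of differentiation and expectation.

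\emph{Items 1--3.} For item 1 I solve the SDE $dF_t=-F_t\,dt+\sqrt2\,dB_t$ by variation of constants:
\[
F_t=e^{-t}Y+\sqrt2\int_0^t e^{-(t-s)}\,dB_s .
\]
The stochastic integral is centered Gaussian with variance $\sigma_t^2$ and independent of $Y$, so for each fixed $t$ it has the law of $\sigma_t Z$. This is Mehler's formula.

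For item 2, conditioning on $Y$ gives
\[
f_t(x)=\mathbb{E}\bigl[\varphi_{\sigma_t}(x-e^{-t}Y)\bigr],
\]
where $\varphi_\sigma$ is the $\mathcal{N}(0,\sigma^2)$ density. This is strictly positive. Every $x$-derivative of the kernel is bounded uniformly in $x$ and $Y$ for fixed $t>0$, so dominated convergence gives $f_t\in C^\infty$. The same bound holds locally uniformly for $t$ in compacts of $(0,\infty)$, so $f_t$ is also smooth in $t$.

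For item 3, write $P_tg(x)=\mathbb{E}[g(e^{-t}x+\sigma_tZ)]$ with $g\in C_c^\infty$. Differentiating $k$ times under the expectation yields $e^{-kt}P_t(g^{(k)})$. For the Hermite formula, apply Gaussian integration by parts $k$ times:
\[
\mathbb{E}\bigl[g^{(k)}(e^{-t}x+\sigma_tZ)\bigr]=\sigma_t^{-k}\,\mathbb{E}\bigl[h_k(Z)\,g(e^{-t}x+\sigma_tZ)\bigr].
\]
This uses $(-1)^k\frac{d^k}{dz^k}e^{-z^2/2}=h_k(z)e^{-z^2/2}$.

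\emph{Item 4.} I first check that the Gaussian kernel solves the Fokker--Planck equation of the OU process. For fixed $y$, the function $u(t,x)=\varphi_{\sigma_t}(x-e^{-t}y)$ satisfies
\[
\partial_t u=\partial_x(xu)+\partial_x^2u .
\]
This is a direct computation using $\dot\sigma_t\sigma_t=e^{-2t}$. Averaging over $Y$, with the interchange justified by the locally uniform bounds from item 2, gives $\partial_tf_t=\partial_x(xf_t+f_t')$. By the definition of $\rho_t$ we have $f_t\rho_t=-f_t'-xf_t$, so $\partial_tf_t+\partial_x(f_t\rho_t)=0$.

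\emph{Items 5 and 6.} Differentiating the mixture formula for $f_t$ gives
\[
-f_t'(x)=\mathbb{E}\Bigl[\tfrac{x-e^{-t}Y}{\sigma_t^2}\,\varphi_{\sigma_t}(x-e^{-t}Y)\Bigr].
\]
Hence $-f_t'/f_t(F_t)=\mathbb{E}[(F_t-e^{-t}Y)/\sigma_t^2\mid F_t]=\mathbb{E}[Z/\sigma_t\mid F_t]$. Subtracting $F_t=e^{-t}Y+\sigma_tZ$ gives
\[
\rho_t(F_t)=\mathbb{E}\Bigl[-e^{-t}Y+\tfrac{e^{-2t}}{\sigma_t}Z\;\Big|\;F_t\Bigr].
\]
This agrees with the stated representation up to an overall sign, which depends on the orientation convention for the velocity field. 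I will fix that convention consistently, and it does not affect any $L^p$ norm.

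By conditional Jensen,
\[
\|\rho_t(F_t)\|_{L^p}\le e^{-t}\|Y\|_{L^p}+\frac{e^{-2t}}{\sigma_t}\|Z\|_{L^p}.
\]
As $t\downarrow0$ we have $\sigma_t\sim\sqrt{2t}$, so the singularity is $t^{-1/2}$ and integrable. As $t\to\infty$ both terms decay exponentially. Hence the norm lies in $L^1(0,\infty)$.

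For item 6, Minkowski gives $\|F_t\|_{L^p}\le\|Y\|_{L^p}+\|Z\|_{L^p}$ uniformly in $t$. Weak continuity follows because $t\mapsto F_t$ is almost surely continuous, so bounded convergence applies to $\mathbb{E}[g(F_t)]$ for bounded continuous $g$.

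The main obstacle is rigor in the interchanges of differentiation and expectation. $Y$ may be lattice-valued, so it has no density of its own, and all regularity must come from the Gaussian kernel at a fixed $t>0$. I will handle this by deriving bounds that hold locally uniformly for $t$ in compact subintervals of $(0,\infty)$. I will avoid claiming any uniformity as $t\downarrow0$; the behaviour near $0$ is needed only through the explicit $t^{-1/2}$ integrable bound above.
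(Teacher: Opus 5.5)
Your proposal is correct and follows essentially the paper's route: Fokker--Planck for item 4, conditional Jensen with $\int_0^\infty e^{-2t}(1-e^{-2t})^{-1/2}\,dt=1$ for item 5, and Minkowski plus dominated convergence for item 6; the differences are that you prove the cited items 1--3 directly and identify the score by differentiating the Gaussian mixture, rather than through the paper's weak identity $\tfrac{d}{dt}\mathbb{E}[g(F_t)]=\mathbb{E}[g'(F_t)\rho_t(F_t)]$. Your sign in item 5 is the right one and is not a matter of convention: with $\rho_t=-\partial_x\log f_t-x$ fixed by item 4, one gets $\rho_t(F_t)=\mathbb{E}\bigl[-e^{-t}Y+\tfrac{e^{-2t}}{\sqrt{1-e^{-2t}}}Z\bigm| F_t\bigr]$ almost surely, and the paper's weak identity gives the same since $\tfrac{d}{dt}F_t=-e^{-t}Y+\tfrac{e^{-2t}}{\sqrt{1-e^{-2t}}}Z$, so the stated formula (which is the one for $\partial_x\log f_t+x$) is off by a sign---harmless, because only $\|\rho_t(F_t)\|_{L^p}$ is used later.
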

\noindent We delay the proof of this lemma after the main proof for sake of  coherence. With this lemma, we can apply Benamou Brenier Inequality \ref{lemma: Benamou Brenier for W-p} to the OU flow $F_t$ connecting $\mathcal{L}(Y)$ and $\mathcal{L}(Z)$, and obtain the following bound for Wasserstein-$p$ distance via property (4) in Lemma \ref{lemma: OU flow property}.
\begin{align}
    W_p(\mathcal{L}(Y), \mathcal{L}(Z')) &\leq \int_{0}^{\infty} \| \rho_t(F_t)\|_{L^p} dt, \;p > 1  \quad \text{when }\mathbb{E}[|Y|^p]<\infty, \label{eq: BB bound for OU flow}
\end{align}
where $\rho_t$ is the score function of $F_t$ defined in property (4) in Lemma \ref{lemma: OU flow property}, and the above inequality holds since $\mathcal{L}(F_t)$ is a valid curve connecting $\mathcal{L}(Y)$ and $\mathcal{L}(Z')$ with the velocity field given by $v_t(x) = \rho_t(x)$ (see property (6) in Lemma \ref{lemma: OU flow property} for the weak continuity and the $p$-th moment bound for $\mathcal{L}(F_t)$). Moreover, from property (5) in Lemma \ref{lemma: OU flow property}, we have $\int_{0}^{\infty} \| \rho_t(F_t)\|_{L^p} dt < \infty$. Therefore, starting from the Benamou Brenier dynamic formulation \ref{lemma: Benamou Brenier for W-p}, we take integral on both sides and obtain the above bound for Wasserstein-$p$ distance.

We proceed to provide the following lemma, which yields a zero's identity that is used to formalize the generator comparison. We first summarize the intuition behind the lemma before providing the proof.

In canonical Stein's method, we subtract the zero term $\mathbb{E}[\mathcal{A} f(X)] = 0$ to the Stein equation at no cost. This subtraction yields a generator comparison $\mathbb{E}[\mathcal{A} f(X) - \mathcal{L}_{OU} f(X)]$ in the distributional distance bound. However, when working with the Benamou-Brenier formulation in \eqref{eq: BB bound for OU flow}, the upper bound involves an expectation with respect to $\mathcal{L}(F_t)$ rather than $\mathcal{L}(X)$. Thus, we must construct a zero term that is measurable with respect to $\sigma(F_t)$. This zero term is constructed via the zero's identity $\mathbb{E}[\mathcal{A} f(X)] = 0$. To change measure from $\mathcal{L}(X)$ to $\mathcal{L}(F_t)$, we choose a Lyapunov function $f$ that pushes forward $\mathcal{L}(X)$ to $\mathcal{L}(F_t)$, given by $f = P_t G$ for arbitrary $G$. We then apply commutation relations and Gaussian integration by parts repeatedly to eliminate all derivatives except the first-order derivative, which establishes the zero's identity.


Recall that $\mathcal{A}$ is the operator that satisfies Assumption \ref{assumption: Stein identity final version}, \ref{assumption: Kramers-Moyal expansion final version} and \ref{assumption: integrability of Kramers-Moyal coefficients final version},
 and $\sigma(Z)$ is independent of $\sigma(X)$.



\begin{lemma}\textnormal{(Zero's identity under OU flow) }
    \label{claim: bonis lemma 4}Under the above conditions, with $t >0$,
     the following construction,
    \begin{align*}
        \tau_t := \sum_{k=1}^{\infty} \frac{e^{-kt}}{\sqrt{1-e^{-2t}}^{k-1}}  h_{k-1}(Z) a^{\mathcal{A}, T}_k(X),
    \end{align*}
    satisfies $\mathbb{E}[\tau_t \mid F_t] = 0$ almost surely.
\end{lemma}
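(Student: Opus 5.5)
To show $\mathbb{E}[\tau_t \mid F_t]=0$, I will test $\tau_t$ against an arbitrary bounded smooth function of $F_t$. Concretely, I will prove $\mathbb{E}[\tau_t\, G(F_t)]=0$ for every $G\in C_c^\infty(\mathbb{R})$; since such $G$ determine $\sigma(F_t)$-measurable conditional expectations, this gives the claim. Set $f:=P_tG$, where $P_t$ is the OU semigroup. Then $f$ is smooth and, for $G$ compactly supported, lies in the class $\mathcal{F}$. Indeed, by Lemma \ref{lemma: OU flow property}(3), $f^{(k)}(x)=e^{-kt}(1-e^{-2t})^{-k/2}\,\mathbb{E}[h_k(Z)G(e^{-t}x+\sqrt{1-e^{-2t}}Z)]$. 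Moreover $\|h_k\|_{L^2(\gamma)}=\sqrt{k!}$, so $|f^{(k)}(x)|\le \|G\|_\infty\,(e^{-t}/\sqrt{1-e^{-2t}})^k\sqrt{k!}$, which is exactly the growth allowed in Definition \ref{assumption: function class}.

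With $f\in\mathcal F$ established, Assumption \ref{assumption: Stein identity final version} gives $\mathbb{E}[\mathcal{A}(f\circ T)(X)]=0$. Assumption \ref{assumption: Kramers-Moyal expansion final version} then expands this as $0=\mathbb{E}\big[\sum_{k\ge1}a_k^{\mathcal A,T}(X)f^{(k)}(Y)\big]$. To swap the sum and the expectation, I use the derivative bound above together with Assumption \ref{assumption: integrability of Kramers-Moyal coefficients final version}, taking $D=e^{-t}/\sqrt{1-e^{-2t}}$; this yields absolute summability, so Fubini applies and $0=\sum_k \mathbb{E}[a_k^{\mathcal A,T}(X)f^{(k)}(Y)]$.

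Next I rewrite each term as an expectation of $G(F_t)$. I substitute the Hermite representation from Lemma \ref{lemma: OU flow property}(3), using $Z$ independent of $X$:
\[
\mathbb{E}[a_k(X)f^{(k)}(Y)]=\frac{e^{-kt}}{\sqrt{1-e^{-2t}}^{\,k}}\,\mathbb{E}[a_k(X)h_k(Z)G(F_t)].
\]
This has the form of the claimed identity, except that $h_k$ appears where the statement has $h_{k-1}$, and there is one extra factor of $\sqrt{1-e^{-2t}}$ in the denominator. I remove both discrepancies with one Gaussian integration by parts in $Z$, conditional on $X$, based on $h_k(z)\phi(z)=-\frac{d}{dz}\big(h_{k-1}(z)\phi(z)\big)$. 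This gives
\[
\mathbb{E}[h_k(Z)G(F_t)\mid X]=\sqrt{1-e^{-2t}}\,\mathbb{E}[h_{k-1}(Z)G'(F_t)\mid X].
\]
The factor $\sqrt{1-e^{-2t}}$ produced here cancels the extra one in the prefactor. Hence
\[
0=\mathbb{E}\Big[G'(F_t)\sum_k \frac{e^{-kt}}{\sqrt{1-e^{-2t}}^{\,k-1}}h_{k-1}(Z)a_k(X)\Big]=\mathbb{E}[\tau_t\,G'(F_t)].
\]

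The remaining issue is that this shows $\mathbb{E}[\tau_t G'(F_t)]=0$ rather than $\mathbb{E}[\tau_t G(F_t)]=0$. I handle it by running the same argument from the start with $G$ replaced by an antiderivative $H$ of $G$, where $G\in C_c^\infty$. Every step only uses that $H$ is bounded with bounded smooth derivatives, which still gives $P_tH\in\mathcal F$. I therefore choose $H(x)=\int_{-\infty}^x G$ when $\int G=0$, which makes $H$ compactly supported. For the general case I add a bounded smooth step function, for instance $H(x)=c\,\Phi(x)$. These choices give $\mathbb{E}[\tau_t G(F_t)]=0$ for all $G\in C_c^\infty$, and a monotone-class argument extends this to all bounded measurable $G$.

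I expect the main obstacle to be the integrability needed for these interchanges. First, $\tau_t$ has to be in $L^1$, which requires $\sum_k \frac{e^{-kt}}{(1-e^{-2t})^{(k-1)/2}}\|h_{k-1}(Z)\|_{L^2}\,\mathbb{E}|a_k(X)|<\infty$; this follows from $\|h_{k-1}\|_{L^2}=\sqrt{(k-1)!}$ and Assumption \ref{assumption: integrability of Kramers-Moyal coefficients final version}. Second, summing the per-term integrations by parts inside the series needs the same bound, again by Fubini.
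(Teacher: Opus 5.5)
Your proposal is correct and follows essentially the paper's route: test against $P_tH$ with $H$ a bounded antiderivative, apply the Stein identity and the Kramers--Moyal expansion (using the integrability assumption to swap sum and expectation), and rewrite each term as an $h_{k-1}(Z)\,H'(F_t)$ term. You then pass from $C_c^\infty$ test functions to $\sigma(F_t)$ by a monotone-class argument, where the paper uses density in $C_0$ and a signed-measure/Riesz argument. The remaining differences are cosmetic: you reach $h_{k-1}$ through the level-$k$ Hermite formula plus one Gaussian integration by parts instead of the commutation $(P_tG)^{(k)}=e^{-kt}P_t g^{(k-1)}$, and splitting off $c\,\Phi$ is unnecessary because $\int_{-\infty}^x G$ is already bounded for every $G\in C_c^\infty$.
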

\begin{proof}[Proof of Lemma \ref{claim: bonis lemma 4}]
Consider any $g \in C_c^\infty(\mathbb{R})$ and let $G$ be an antiderivative of $g$, i.e., $G' = g$, we verify that $P_t G$ is in $C_b^\infty(\mathbb{R})$. This is satisfied since $\|P_t G\|_\infty$ is bounded by $\|G\|_\infty$. Additionally, from property (3) in Lemma \ref{lemma: OU flow property}, all the derivatives of $P_t G$ are bounded. Thus $P_t G \in C_b^\infty(\mathbb{R})$. From property (3) in Lemma \ref{lemma: OU flow property}, with the notation $D:= \frac{\max\{\|G\|_\infty,1\}}{\sqrt{e^{2t}-1}}$, we can quantify the bound on $k$-th derivative of $P_t G$ as follows,
\begin{align}
    \| (P_t G)^{(k)} \|_{\infty} &\leq \|G\|_{\infty} \frac{1}{\sqrt{e^{2t}-1}^{k}} \mathbb{E}[|h_k(Z)|] \notag\\
    &\overset{(a)}{\leq} \sqrt{k!}
    \|G\|_{\infty} \frac{1}{\sqrt{e^{2t}-1}^{k}} \notag\\
    &\leq \sqrt{k!}D^k. \label{eq: bound on k-th derivative of P_t G}
\end{align}
Inequality $(a)$ holds from the bound on Hermite polynomial's moment from \cite[Lemma 3]{bonis2020steins}
 as $\|h_{k}(Z)\|_{L^p} \leq \sqrt{p}^k \sqrt{k!}$. We use $p=1$ here. Therefore $P_t G$ satisfies the bound on $k$-th derivative in Definition \ref{assumption: function class}, and $P_t G \in \mathcal{F}$.

Now we have the following equalities
\begin{align}
    0 &\overset{(a)}{=}  \mathbb{E}\big[[\mathcal{A} ((P_t G) \circ T)](X)\big], \text{ for }  g \in C_c^\infty(\mathbb{R}), G' = g \notag\\
    &\overset{(b)}{=}  \mathbb{E}[\sum_{k=1}^{\infty}  (P_t G)^{(k)}(Y)  a^{\mathcal{A}, T}_k(X)]\notag\\
    &\overset{(c)}{=} \mathbb{E}[\sum_{k=1}^{\infty}e^{-kt} P_t g^{(k-1)}(Y)  a^{\mathcal{A}, T}_k(X) ] \notag\\
    &\overset{(d)}{=} \mathbb{E}[\underbrace{\sum_{k=1}^{\infty} \frac{e^{-kt}}{\sqrt{1-e^{-2t}}^{k-1}} h_{k-1}(Z)   a^{\mathcal{A}, T}_k(X)}_{:=\tau_t} g(F_t) ] \notag
\end{align}
Equality $(a)$ is from Assumption \ref{assumption: Stein identity final version}, where we apply the Stein identity $\mathbb{E}[[\mathcal{A} (f\circ T)](X)] = 0$
to the test function $f(x) = (P_t G)(x)$.
Equality $(b)$ is from Kramers Moyal Expansion in Assumption \ref{assumption: Kramers-Moyal expansion final version} and the change of variable $Y=T(X)$. We are able to apply this assumption since $P_t G \in \mathcal{F}$ according to upper bound \eqref{eq: bound on k-th derivative of P_t G}. 
Equality $(c)$ is from the commutation relation of OU semigroup and derivative operator as $\frac{d}{dx}P_t f(x) = e^{-t}P_t \frac{d}{dx}f(x)$ in property (3) of Lemma \ref{lemma: OU flow property}.  Equality $(d)$ is from 
integration by parts formula for Hermite polynomials (see also property (3) in Lemma \ref{lemma: OU flow property}),
\begin{align*}
    e^{-kt} P_t\!\Bigl(\frac{d^k}{dx^k} g\Bigr)(x) &= \frac{d^k}{dx^k} P_t g(x) = \frac{1}{\sqrt{e^{2t}-1}^k} \mathbb{E}[h_k(Z) g(e^{-t}x + \sqrt{1-e^{-2t}}Z)], \; \forall x \in \mathbb{R}.
\end{align*}
We multiply both sides by $e^{kt}$, and take $k-1$ instead of $k$ to obtain
\begin{align*}
    P_t g^{(k-1)}(x) = \frac{1}{\sqrt{1-e^{-2t}}^{k-1}} \mathbb{E}[h_{k-1}(Z) g(e^{-t}x + \sqrt{1-e^{-2t}}Z)], \; \forall x \in \mathbb{R},
\end{align*}
which justifies equality $(d)$. 

Note that the above equation holds for any $g \in C_c^\infty(\mathbb{R})$, and that $C_c^\infty(\mathbb{R})$ is dense in $C_o(\mathbb{R})$ under supremum norm, we have $\mathbb{E}[\tau_t \cdot g(F_t)] = 0$ for all $g \in C_o(\mathbb{R})$. The following standard argument shows $\mathbb{E}[\tau_t \mid F_t] = 0$ almost surely. We define a signed measure $\nu(A) := \mathbb{E}[\tau_t \cdot \mathbf{1}_{F_t \in A}]$ for all Borel set $A \subseteq \mathbb{R}$. According to the upper bound in \eqref{eq: bound on k-th derivative of P_t G}, we have the following bound on $\mathbb{E}[|\tau_t|]$
\begin{align*}
    \mathbb{E}[|\tau_t|] &= \mathbb{E}[\sum_{k=1}^{\infty} \frac{e^{-kt}}{s\sqrt{1-e^{-2t}}^{k-1}} \cdot \mathbb{E}[|h_{k-1}(Z)| \cdot | a^{\mathcal{A}, T}_k(X)| \mid \sigma(X) \lor \sigma(Z)]] \\
    &\overset{(a)}{\leq} \mathbb{E}[\sum_{k=1}^{\infty} \frac{e^{-kt}}{s\sqrt{1-e^{-2t}}^{k-1}} \cdot |h_{k-1}(Z)| \cdot | a^{\mathcal{A}, T}_k(X)| ] \\
    &\overset{(b)}{=} \sum_{k=1}^{\infty} \frac{e^{-kt}}{s\sqrt{1-e^{-2t}}^{k-1}} \cdot \mathbb{E}[|h_{k-1}(Z)| \cdot | a^{\mathcal{A}, T}_k(X)| ] \\
    &\overset{(c)}{=} \sum_{k=1}^{\infty} \frac{e^{-kt}}{s\sqrt{1-e^{-2t}}^{k-1}} \cdot \|h_{k-1}(Z)\|_1 \cdot \mathbb{E}[| a^{\mathcal{A}, T}_k(X)| ] \overset{(d)}{<} \infty,
\end{align*}
Inequality $(a)$ is from Jensen's inequality. Equality $(b)$ is from Tonelli's theorem since all the terms are non-negative. Inequality $(c)$ is from the independence between $\sigma(Z)$ and $\sigma(X)$. Inequality $(d)$ is from Assumption \ref{assumption: integrability of Kramers-Moyal coefficients final version} and Tonelli's theorem, since $\|h_{k-1}(Z)\|_1 \leq \sqrt{(k-1)!}$. Thus, $\nu$ is a finite signed measure. Since $\int g d\nu = 0$ for all $g \in C_o(\mathbb{R})$, by uniqueness from Riesz representation theorem, we have $\nu \equiv 0$. Therefore, $\mathbb{E}[\tau_t \mid F_t] = 0$ almost surely.

\end{proof}

With the above identity, we add $\tau_t$ to the integrand of the right hand side of \eqref{eq: BB bound for OU flow} to obtain
\begin{align*}
    \|\rho_t(F_t)\|_{L^p} &\overset{(a)}{=} \mathbb{E}\left[ \bigl|\mathbb{E}[ e^{-t} Y -\frac{e^{-2t}}{\sqrt{1-e^{-2t}}} Z \mid F_t]\bigr|^p\right]^{1/p} \\
     & \overset{(b)}{=}
    \mathbb{E}\left[ \bigl|\mathbb{E}[ e^{-t} Y -\frac{e^{-2t}}{\sqrt{1-e^{-2t}}} Z +\tau_t \mid F_t]\bigr|^p\right]^{1/p} \\
    &\overset{(c)}{\leq} \mathbb{E}\left[\bigl|e^{-t} Y -\frac{e^{-2t}}{\sqrt{1-e^{-2t}}} Z + \tau_t\bigr|^p\right]^{1/p} \\
        &\overset{(d)}{=} \mathbb{E}\left[\bigl|e^{-t} Y -\frac{e^{-2t}}{\sqrt{1-e^{-2t}}} h_1(Z) + \sum_{k=1}^{\infty} \frac{e^{-kt}}{\sqrt{1-e^{-2t}}^{k-1}}  h_{k-1}(Z) a^{\mathcal{A}, T}_k(X)\bigr|^p\right]^{1/p} \\
    &\overset{(e)}{\leq}  e^{-t} \cdot \left\| h_0(Z)(Y + a^{\mathcal{A}, T}_1(X))\right\|_{L^p} +  \frac{e^{-2t}}{\sqrt{1-e^{-2t}}} \cdot \left\| h_1(Z)(a^{\mathcal{A}, T}_2(X) -1)\right\|_{L^p}  \\
        &\quad + \sum_{k=3}^{\infty} \frac{e^{-kt}}{\sqrt{1-e^{-2t}}^{k-1}} \cdot\left\|  h_{k-1}(Z)a^{\mathcal{A}, T}_k(X) \right\|_{L^p} \\
    &\overset{(f)}{=} e^{-t} \cdot \left\| Y + a^{\mathcal{A}, T}_1(X)\right\|_{L^p} +  \frac{e^{-2t}\|h_1(Z)\|_{L^p}}{\sqrt{1-e^{-2t}}} \cdot \left\| a^{\mathcal{A}, T}_2(X) -1\right\|_{L^p} \\
        &\quad + \sum_{k=3}^{\infty} \frac{e^{-kt}\|h_{k-1}(Z)\|_{L^p}}{\sqrt{1-e^{-2t}}^{k-1}} \cdot\left\|  a^{\mathcal{A}, T}_k(X) \right\|_{L^p} 
\end{align*}
Equation $(a)$ is from the representation of score function $\rho_t(F_t)$ in property (5) of Lemma \ref{lemma: OU flow property}. Equaiton $(b)$ is from Lemma \ref{claim: bonis lemma 4}, where we add $\tau_t$ to the integrand and use the fact that $\mathbb{E}[\tau_t \mid F_t] = 0$ almost surely. Inequality $(c)$ is from Jensen's inequality. Equation $(d)$ is from the definition of $\tau_t$.
Inequality $(e)$ is from the definition of $\tau_t$ and triangle inequality of $L^p$ norm. Equation $(f)$ is from the independence between $\sigma(Z)$ and $\sigma(X)$.


The above bound holds for all $t>0$. Thus, integrating over $t\in (0,\infty)$, with Fubini-Tonelli theorem, we have
\begin{align}
    \mathcal{W}_p(\mathcal{L}(Y), \mathcal{L}(Z')) &\leq \int_{0}^{\infty} \| \rho_t(F_t)\|_{L^p} \,dt \notag\\
    &\overset{(a)}{\leq} \int_{0}^{t_0} \Big\| e^{-t} Y - \frac{e^{-2t}}{\sqrt{1-e^{-2t}}}\,Z \Big\|_{L^p} \,dt
        + \int_{t_0}^{\infty} \| \rho_t(F_t)\|_{L^p} \,dt \notag\\
    &\leq \underbrace{\int_{0}^{t_0} \Big\| e^{-t}Y - \frac{e^{-2t}}{\sqrt{1-e^{-2t}}}\,Z \Big\|_{L^p}\,dt }_{:=(b_0)}
        \notag\\
    &\quad + {\int_{t_0}^{\infty} e^{-t}\,dt} \;\Big\| \,a^{\mathcal{A}, T}_1(X) + Y\Big\|_{L^p}\notag\\
    &\quad + \int_{t_0}^{\infty} \frac{e^{-2t}\,\|h_1(Z)\|_{L^p}}{\sqrt{1-e^{-2t}}}\,dt \;\Big\| a^{\mathcal{A}, T}_2(X) - 1 \Big\|_{L^p} \notag\\
    &\quad + \sum_{k=3}^{\infty}\int_{t_0}^{\infty}
                \frac{e^{-kt}\,\|h_{k-1}(Z)\|_{L^p}}
                    {\bigl(1-e^{-2t}\bigr)^{(k-1)/2}}\,dt\;
                \Big\|  a^{\mathcal{A}, T}_k(X) \Big\|_{L^p}
                 \label{eq: W-p bound constants}
\end{align}
Inequality $(a)$ is from splitting the integral into two parts, and applying the score representation in property (5) of Lemma \ref{lemma: OU flow property} for the first part. The second part is from the bound on $\|\rho_t(F_t)\|_{L^p}$ above that leads to bound \eqref{eq: W-p bound constants}.

We handle the first summand $(b_0)$ as follows,
\begin{align}
    (b_0) &= \int_{0}^{t_0} \| e^{-t} Y - \frac{e^{-2t}}{\sqrt{1-e^{-2t}}} Z\|_{L^p} \notag\\
    &\overset{(a)}{\leq} \int_{0}^{t_0} e^{-t} \|Y - G\|_{L^p} + \|e^{-t} G - \frac{e^{-2t}}{\sqrt{1-e^{-2t}}} Z\|_{L^p} dt \notag\\ 
    &\overset{(b)}{\leq} (1- e^{-t_0}) \cdot (W_p(\mathcal{L}(Y),\mathcal{L}(Z')) +\epsilon) + \int_{0}^{t_0}\|e^{-t} G - \frac{e^{-2t}}{\sqrt{1-e^{-2t}}} Z\|_{L^p} dt \notag\\
    &\overset{(c)}{=} (1- e^{-t_0}) \cdot (W_p(\mathcal{L}(Y),\mathcal{L}(Z')) +\epsilon) + \|Z\|_{L^p} \int_{0}^{t_0} \frac{e^{-t}}{\sqrt{1-e^{-2t}}}dt \notag\\
    &\overset{(d)}{\leq} (1- e^{-t_0}) \cdot W_p(\mathcal{L}(Y),\mathcal{L}(Z')) + \|Z\|_{L^p} (\frac{\pi}{2} - \sin^{-1}(e^{-t_0})) \label{eq: W-p bound b0}
\end{align}
Here $G \sim \mathcal{N}(0,1)$ is another standard normal random variable. Inequality $(a)$ is from triangle inequality of $L^p$ norm.
For inequality $(b)$ and $(c)$, we first note that there exists a coupling of $(Y, G, Z)$ such that we can couple $Y$ and $G$ such that $\|Y - G\|_{L^p}$ is within $\epsilon$ from the Wasserstein distance $W_p(\mathcal{L}(Y),\mathcal{L}(Z'))$. Letting $\epsilon \to 0$ justifies inequality $(b)$. Additionally, we can couple $G$ and $Z$ such that
$G$ is independent of $Z$, leading to $e^{-t} G - \frac{e^{-2t}}{\sqrt{1-e^{-2t}}} Z \overset{d}{=} \frac{e^{-t}}{\sqrt{1-e^{-2t}}} Z$ in inequality $(c)$.
 Inequality $(d)$ holds since we have upper bound for arbitrary $\epsilon > 0$. Taking $\epsilon \to 0$ gives the bound $(d)$. Thus after rearranging the terms, we have 
\begin{align*}
    e^{-t_0} W_p(\mathcal{L}(Y), \mathcal{L}(Z')) \leq W_p(\mathcal{L}(Y), \mathcal{L}(Z')) - (b_0) +\|Z\|_{L^p} (\frac{\pi}{2} - \sin^{-1}(e^{-t_0})).
\end{align*}
Thus we can plug this bound into \eqref{eq: W-p bound constants} and rearrange to get the proposition \ref{pro: Wasserstein-$p$ bound without exchangeability final version} as follows.
\begin{align}
    W_p(\mathcal{L}(Y), \mathcal{L}(Z')) &\leq \underbrace{e^{t_0}\|Z\|_{L^p} (\frac{\pi}{2} - \sin^{-1}(e^{-t_0})) }_{:=g_0(t_0,p)} \notag\\
    &\quad + \underbrace{e^{t_0}\int_{t_0}^{\infty} e^{-t}\,dt}_{:=g_1(t_0,p)} \;\Big\| \,a^{\mathcal{A}, T}_1(X) + Y\Big\|_{L^p}\notag\\
    &\quad + \underbrace{e^{t_0}\int_{t_0}^{\infty} \frac{e^{-2t}\,\|h_1(Z)\|_{L^p}}{\sqrt{1-e^{-2t}}}\,dt}_{:=g_2(t_0,p)} \;\Big\|a^{\mathcal{A}, T}_2(X) - 1 \Big\|_{L^p} \notag\\
    &\quad + \sum_{k=3}^{\infty} \underbrace{e^{t_0}\int_{t_0}^{\infty}
                \frac{e^{-kt}\,\|h_{k-1}(Z)\|_{L^p}}
                    {\bigl(1-e^{-2t}\bigr)^{(k-1)/2}}\,dt}_{:=g_k(t_0,p)}\;
                \Big\|  a^{\mathcal{A}, T}_k(X) \Big\|_{L^p}
                 \label{eq: W-p bound final}
\end{align}

Recall that we denote $Y = T(X)$ and $Z' \sim \mathcal{N}(0,1)$. This upper bound provides the desired conclusion in Proposition \ref{pro: Wasserstein-$p$ bound without exchangeability final version}.
Now in order to conclude the proof of Proposition \ref{pro: Wasserstein-$p$ bound without exchangeability final version}, it suffices to prove lemma \ref{lemma: OU flow property}.
\paragraph{Proof of Lemma \ref{lemma: OU flow property}}
    For the first two parts, readers are referred to the references cited in each part. The commutation relation in part 3 can be shown via the Gaussian integration by parts (Stein's lemma).  For part 4, the continuity equation is verified via Fokker-Planck equation for OU process. Recall that $f_t$ is the density of $F_t$ w.r.t. Lebesgue measure, then: 
    \begin{align*}
        \partial_t f_t(x) &= \frac{d}{dx}(x f_t(x)) + h''_t(x) \\
        &= \frac{d}{dx}(f_t(x) \cdot (x + \frac{d}{dx} \log f_t(x))) = -\frac{d}{dx}(f_t(x) \cdot \rho_t(x)) 
    \end{align*}
    Part 5, i.e., $\|\rho_t(F_t)\|_{L^p(\mathcal{L}(F_t))} \in L_1((0,\infty))$, is shown as follows. We first verify the probabilistic representation of score function. For any test function $g \in C_c^\infty(\mathbb{R})$, by the continuity equation above, we first have
    \begin{align*}
        \mathbb{E}[\mathcal{L}_{OU} g(F_t)] &\overset{(a)}{=} \frac{d}{dt} \mathbb{E}[g(F_t)] = \int_{\mathbb{R}} g(x) \partial_t f_t(x) dx \\
        &\overset{(b)}{=} \int_{\mathbb{R}} g(x) \cdot -\frac{d}{dx}(f_t(x) \cdot \rho_t(x)) dx \overset{(c)}{=} \int_{\mathbb{R}} g'(x) f_t(x) \rho_t(x) dx = \mathbb{E}[g'(F_t) \rho_t(F_t)].
    \end{align*}
    Here inequality $(a)$ is from the generator definition of OU semigroup. Equality $(b)$ is from continuity equation. $(c)$ is from integration by parts, and the boundary term vanishes since $g$ has compact support. Note that this identity above holds for all $g \in C_c^\infty(\mathbb{R})$. Also note that if any random variable $Y$ satisfies $\mathbb{E}[g'(F_t) Y] = \mathbb{E}[\mathcal{L}_{OU} g(F_t)]$ for all $g \in C_c^\infty(\mathbb{R})$, then $\mathbb{E}[g'(F_t) Y] = \mathbb{E}[g'(F_t) \rho_t(F_t)]$ for all such $g$. Thus $Y = \rho_t(F_t)$ almost everywhere with respect to $\mathcal{L}(F_t)$ due to the density of $C_c^\infty(\mathbb{R})$ in $L^1(\mathbb{R},\sigma(F_t))$. With Gaussian integration by parts, it can be verified that the following choice of $Y$ satisfies the above equation,
    \begin{align*}
        \rho_t(F_t) \overset{d.}{=} \mathbb{E}[ e^{-t} X -\frac{e^{-2t}}{\sqrt{1-e^{-2t}}} Z \mid F_t].
    \end{align*}
    Thus it is a valid probabilistic representation of score function. By triangle inequality, when $\mathbb{E}[|X|^p]<\infty$,
    \begin{align*}
        \int_{0}^{\infty}\|\rho_t(F_t)\|_{L^p(\mathcal{L}(F_t))} dt &= \int_{0}^{\infty} \|\mathbb{E}[ e^{-t} X -\frac{e^{-2t}}{\sqrt{1-e^{-2t}}} Z \mid F_t] \|_{L^p(\mathcal{L}(F_t))} dt \\
        &\leq \int_{0}^{\infty} e^{-t} dt \mathbb{E}[|X|^p]^{1/p} +
        \int_{0}^{\infty} \frac{e^{-2t}}{\sqrt{1-e^{-2t}}}dt \mathbb{E}[|Z|^p]^{1/p} \\
        &\leq \mathbb{E}[|X|^p]^{1/p} +  \mathbb{E}[|Z|^p]^{1/p} < \infty
    \end{align*}
    In part 6, for the moment control, we first bound by triangle inequality,
    \begin{align*}
        \sup_{t > 0}\|F_t\|_{L^p} &\leq \sup_{t > 0} \{e^{-t} \|X\|_{L^p} + \sqrt{1-e^{-2t}} \|Z\|_{L^p}\} \leq \|X\|_{L^p} + \|Z\|_{L^p} < \infty   
    \end{align*}
    For the curve $t\mapsto F_t$ to be weakly  continuous, we need to show that for all bounded continuous function $g: \mathbb{R} \to \mathbb{R}$, the map $t \mapsto \mathbb{E}[g(F_t)]$ is continuous on $(0,\infty)$. Since $g$ is bounded continuous, for $t_0 > 0$, by Dominated Convergence Theorem,
    \begin{align*}
        \lim_{t \to t_0} \mathbb{E}[g(F_t)] &= \lim_{t \to t_0} \mathbb{E}[g(e^{-t}X + \sqrt{1-e^{-2t}}Z)] = \mathbb{E}[\lim_{t \to t_0} g(e^{-t}X + \sqrt{1-e^{-2t}}Z)] = \mathbb{E}[g(F_{t_0})]  \quad \square
    \end{align*}

\section{Proof for Single Server Queue}
Before the proof for each lemma (or theorem), we first introduce the stability of the single server queue (SSQ) and the well-definedness of moment generating function (MGF) for its stationary distribution. Let $V(q)=q^{2}$ and consider the SSQ generator defined in \eqref{eq: generator of SSQ}. For $\forall q\geq 0$:
\begin{align*}
    \mathcal{L}_{SSQ} V(q) &= \lambda(2q+1) + (\mu + \gamma q)(-2q+1)\mathbf{1}_{q\geq 1} \\
    &\leq -2\gamma q^2 + (2\lambda - 2\mu + \gamma) q + (\lambda + \mu) \\
    &\leq -\gamma q^2 + \frac{(2\lambda - 2\mu + \gamma)^2}{4\gamma} + (\lambda + \mu)
\end{align*}
Since the drift is negative outside the compact set $\{q\in[0, \frac{(2\lambda - 2\mu + \gamma)^2}{4\gamma^2} + \frac{\lambda + \mu}{\gamma}]\}$, the process is stable and has unique stationary distribution by Foster-Lyapunov theorem \cite{hajek1982hitting}. 
For well-definedness of MGF, we first develop stochastic comparison, coupling SSQ with the $M/M/\infty$ birth-death process $q_{M/M/\infty}$ with birth rate $\lambda$ and death rate $\gamma q$ (the $\mu=0$ degeneration). Since for $q\geq 0$ the SSQ death rate satisfies $\mu+\gamma q\ge\gamma q$, a standard monotone coupling yields $q_t\le q_{M/M/\infty,t}$ a.s. for all $t$ when started from the same initial state. Hence $q\le_{\mathrm{st}} q_{M/M/\infty}$ , and $q_{M/M/\infty}\stackrel{d}{=}\text{Poisson}(\lambda/\gamma)$. Therefore, for all $\theta\in\mathbb{R}$,
\begin{align}
\mathbb{E}\big[e^{\theta q}\big] &\le 1\lor \mathbb{E}\big[e^{\theta q_{M/M/\infty}}\big] = 1\lor \exp\!\Big(\tfrac{\lambda}{\gamma}(e^{\theta}-1)\Big) < \infty, \label{eq: well-definedness of MGF for SSQ}
\end{align}
so the MGF of $q$ is well defined (see also Strassen's coupling theorem \cite{strassen1965existence}).

In this section, we will assume $\gamma \leq \gamma_0$ for some small enough $\gamma_0$, depending on $\lambda$ and $\mu$. And we will recall this assumption repeatedly in the proof with explicit form of $\gamma_0$ later.
\begin{align}
    \gamma \leq \gamma_0 := \min \left\{
        \frac{e^{4\sqrt{2}}\lambda}{25},
        \frac{\mu^2}{(2^2 e^{4\sqrt{2}}\lambda)},
        \mu, (1/2eD_{1,\lambda,\mu})^{-\frac{1}{1/2-\alpha-\epsilon}}, 
        (2eD_{1,\lambda,\mu})^{-\frac{2}{1/2-\alpha-\epsilon}},
        (2\sqrt{2}eD_{1,\lambda,\mu})^{-1}, \lambda/(A_{\lambda,\mu}^2). 
    \right\}
    \label{eq: gamma assumption for SSQ W-p}
\end{align}

\subsection{Proof for Lemma \ref{lem: Wasserstein-$p$ bound SSQ reduction}: Wasserstein-$p$ Bound Reduction for SSQ} \label{sec: proof of Wasserstein-p bound reduction for SSQ}

Before the proof, we first introduce the following lemma to convert sub-exponential MGF condition to $L^p$ norm bound from \cite{vershynin2018high}. We delay the proof of this lemma at the end of this subsection.

\begin{lemma} \label{lem: sub-exponential Lp norm} 
    \textnormal{(Subexponential from MGF to $L^p$ bound) }For sub-exponential random variable $X \sim SE(\beta, \alpha)$, defined by the condition:
    \begin{align*}
        \mathbb{E}[e^{\theta|X|}]  \leq A\cdot e^{\beta^2 \theta^2/2}, \quad \forall \theta \in [0,1/\alpha]
    \end{align*}
    We have upper bound for $L^p$ norm, $\forall p \geq 1$, in terms of $\beta$ and $\alpha$.
    \begin{align*}
        \mathbb{E}[|X|^p] &\leq 2e\sqrt{2\beta}A^{1/p} \cdot \bigg( \beta \sqrt{p} + \alpha p \bigg)
    \end{align*}
\end{lemma}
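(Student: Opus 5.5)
The plan is the standard route from an MGF bound to a tail bound to moments, along the lines of \cite{vershynin2018high}. I read the conclusion as a bound on the $L^p$ norm $\mathbb{E}[|X|^p]^{1/p}$, which is how Lemma \ref{lem: sub-exponential Lp norm} is used afterwards. The prefactor $A^{1/p}$ and the linear scaling in $(\beta\sqrt{p}+\alpha p)$ both point to this reading.

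\textbf{Step 1: tail bound.} By Markov's inequality applied to $e^{\theta|X|}$, for every $\theta\in[0,1/\alpha]$ and $x>0$,
\[
\mathbb{P}(|X|>x)\le A\exp\!\big(-\theta x+\beta^2\theta^2/2\big).
\]
I optimize $\theta$ in two cases.
\begin{itemize}
\item If $x\le \beta^2/\alpha$, take $\theta=x/\beta^2$, which is admissible. This gives $A e^{-x^2/(2\beta^2)}$.
\item If $x>\beta^2/\alpha$, take $\theta=1/\alpha$. Since $\beta^2/(2\alpha^2)<x/(2\alpha)$ in this range, this gives $A e^{-x/(2\alpha)}$.
\end{itemize}
Hence, for all $x>0$,
\[
\mathbb{P}(|X|>x)\le A\big(e^{-x^2/(2\beta^2)}+e^{-x/(2\alpha)}\big).
\]

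\textbf{Step 2: layer-cake formula.} Write $\mathbb{E}|X|^p=\int_0^\infty p x^{p-1}\mathbb{P}(|X|>x)\,dx$ and integrate the two pieces explicitly:
\[
\int_0^\infty p x^{p-1}e^{-x^2/(2\beta^2)}dx=\tfrac{p}{2}(2\beta^2)^{p/2}\Gamma(p/2),\qquad
\int_0^\infty p x^{p-1}e^{-x/(2\alpha)}dx=p(2\alpha)^p\Gamma(p).
\]
I then use the elementary bounds $\Gamma(p/2)\le (p/2)^{p/2}$ and $\Gamma(p)\le p^p$ for $p\ge1$, adjusting by harmless constant factors at small $p$. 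This yields
\[
\mathbb{E}|X|^p\le A\Big[\tfrac{p}{2}(\beta\sqrt{p})^p+p(2\alpha p)^p\Big].
\]

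\textbf{Step 3: take $p$-th roots.} Use subadditivity $(u+v)^{1/p}\le u^{1/p}+v^{1/p}$ and $p^{1/p}\le e^{1/e}\le e$. This gives
\[
\|X\|_{L^p}\le e A^{1/p}\big(\beta\sqrt{p}+2\alpha p\big),
\]
which is of the claimed form $cA^{1/p}(\beta\sqrt{p}+\alpha p)$. The absolute constant can then be absorbed into the stated prefactor $2e\sqrt2$. The printed $\sqrt{2\beta}$ looks like a typographical artifact for $\sqrt2$; if it is meant literally, I would track it as an extra multiplicative factor in the final constant.

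The only delicate points are bookkeeping. One is checking the regime split in Step 1, in particular that the choice $\theta=x/\beta^2$ respects the constraint $\theta\le 1/\alpha$. The other is the Gamma-function bounds near $p=1$, where $(p/2)^{p/2}$ can underestimate $\Gamma(p/2)$. For the latter I would use $\Gamma(s)\le \max\{1,s^s\}\cdot\sqrt{\pi}$, which only changes the absolute constant. I do not expect a genuine obstacle.
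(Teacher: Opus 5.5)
Your proposal is correct and follows essentially the same route as the paper: a Chernoff bound with the case split at $x=\beta^2/\alpha$, then the layer-cake formula with both pieces extended to $(0,\infty)$, then Gamma-function estimates before taking $p$-th roots. The only differences are minor: the paper uses the Stirling-type bound $\Gamma(m)\le 2e\sqrt{2\pi}\,m^{m-1/2}e^{-m}$ where you use $\Gamma(s)\le\sqrt{\pi}\max\{1,s^s\}$, which changes only the absolute constant; your reading of the conclusion as a bound on $\|X\|_{L^p}$, with $\sqrt{2\beta}$ a typo, is consistent with the paper's proof, which yields the prefactor $2e\sqrt{2\pi}$; and you correctly keep the factor $A$ on the exponential-tail piece, which the paper's display drops.
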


For standard normal distribution, we can view it as $SE(1, 0)$, thus give us:
\begin{align}
    \mathbb{E}[|Z|^p] &\leq 2e\sqrt{2\pi} \cdot \sqrt{p} \label{eq: Lp norm of Z}
\end{align}

\begin{proof}[Proof of Lemma \ref{lem: Wasserstein-$p$ bound SSQ reduction}]
For this reduction, we study the below $p$'s range:
\begin{align}
    1 < p \leq \frac{\lambda}{2} (\frac{1}{\gamma}) \label{eq: Initial p's regime for Wasserstein distance SSQ}
\end{align} which is non-empty since $\gamma\leq \gamma_0$. This assumption on parameters guarantees $\gamma < \lambda/2$ and the above interval is non-empty.
We will apply Proposition \ref{pro: Wasserstein-$p$ bound without exchangeability final version} to the SSQ setting. First, we defined the operator $\mathcal{A}$ as the scaled infinitesimal generator of the SSQ process.  Formally, $\mathcal{A}$ is defined as follows,
\begin{align}
    \mathcal{A} f(x) &:= \frac{1}{\gamma}\mathcal{L}_{SSQ}f (x) = 
     \lim_{t \to 0} \frac{1}{\gamma t} \bigg( \mathbb{E}[f(q_t) - f(q_0) \mid q_0 = x] \bigg), \quad \forall f: \mathbb{N} \to \mathbb{R}, \label{eq: implicit formula for A, SSQ}
\end{align}

We let the transformation $T_1$ be defined as $T_1(x) := \frac{\sqrt{\gamma}}{\sqrt{\lambda}} (x-\frac{\lambda -\mu}{\gamma})$. Thus we have $\tilde{q}= T_1(q)$. For this choice of $\mathcal{A}$ and $T_1$, we can compute the action of $\mathcal{A}$ on function $f \circ T_1$ explicitly as follows.
\begin{align}
    \mathcal{A} (f \circ T_1)(x) &= \lim_{t \to 0} \frac{1}{\gamma t} \bigg( \mathbb{E}[f(T_1(q_t)) - f(T_1(q_0)) \mid q_0 = x] \bigg) \notag\\
    &= \lim_{t \to 0} \frac{1}{\gamma t} \bigg( \mathbb{E}[f(\frac{\sqrt{\gamma}}{\sqrt{\lambda}}(q_t - \frac{\lambda -\mu}{\gamma})) - f(\frac{\sqrt{\gamma}}{\sqrt{\lambda}}(x - \frac{\lambda -\mu}{\gamma})) \mid q_0 = x] \bigg) \notag\\
    &= \frac{\lambda}{\gamma} \bigg[ f\bigg(\frac{\sqrt{\gamma}}{\sqrt{\lambda}}(x+1 - \frac{\lambda -\mu}{\gamma})\bigg) - f\bigg(\frac{\sqrt{\gamma}}{\sqrt{\lambda}}(x - \frac{\lambda -\mu}{\gamma})\bigg) \bigg] \notag\\
    &\quad + \frac{1}{\gamma}\bigg[ \mu + \gamma x \bigg] \cdot \bigg[ f\bigg(\frac{\sqrt{\gamma}}{\sqrt{\lambda}}(x-1 - \frac{\lambda -\mu}{\gamma})\bigg) - f\bigg(\frac{\sqrt{\gamma}}{\sqrt{\lambda}}(x - \frac{\lambda -\mu}{\gamma})\bigg) \bigg] \mathbf{1}_{\{x \geq 1\}}. \label{eq: explicit formula for A, SSQ}
\end{align}

 Additionally, from the well-definedness of MGF in \eqref{eq: well-definedness of MGF for SSQ}, we know that $T_1(q)$ has finite moments of all orders. Recall that for notation convention, we will apply the operator $\mathcal{A}$ on function $f:\mathbb{R} \to \mathbb{R}$ by restricting its domain to $\operatorname{supp}(\mathcal{L}(q))$. Now we verify the assumptions in Proposition \ref{pro: Wasserstein-$p$ bound without exchangeability final version} as follows. 

First, for any function $f\in C_b(\mathbb{R})$, we want to check that $\mathcal{A} (f \circ T_1) \in L^1(\mathcal{L}(q))$. Since $f$ is bounded, it suffices to check that function $q$ is in $L^1(\mathcal{L}(q))$. By the well-definedness of MGF in \eqref{eq: well-definedness of MGF for SSQ}, the first moment of $q$ is finite and thus $q$ is in $L^1(\mathcal{L}(q))$. By the definition of invariant distribution, we have $\mathbb{E}[\mathcal{A} (f \circ T_1)(q)] = 0$. Thus Assumption \ref{assumption: Stein identity final version} holds.

Secondly, we verify Assumption \ref{assumption: Kramers-Moyal expansion final version} and \ref{assumption: integrability of Kramers-Moyal coefficients final version} as follows. We first prove the following explicit formula for $a^{\mathcal{A}, T_1}_k(x)$, $\forall k \geq 1$ via Lemma \ref{lemma: identity for Kramers-Moyal coefficients final version}.
\begin{align*}
    a^{\mathcal{A}, T_1}_k(x) = \frac{1}{k!\gamma}\bigg(\frac{\sqrt{\gamma}}{\sqrt{\lambda}}\bigg)^k \cdot \bigg( \lambda + (-1)^k [\mu + \gamma x] \mathbf{1}_{\{x \geq 1\}} \bigg), \quad \forall k \geq 1.
\end{align*}

This explicit formula is derived by applying the operator $\mathcal{A}$ to the function $(\cdot - T_1(x))^k \circ T_1$ and evaluating at $x$ as follows. 
\begin{align*}
    a^{\mathcal{A}, T_1}_k(x) &\overset{(a)}{=} \frac{1}{k!} [\mathcal{A} ((\cdot - T_1(x))^k \circ T_1)] (x) \\
    &= \frac{\lambda}{k!\gamma} \bigg[ \bigg(\frac{\sqrt{\gamma}}{\sqrt{\lambda}}(x+1 - \frac{\lambda-\mu}{\gamma}) - \frac{\sqrt{\gamma}}{\sqrt{\lambda}}(x - \frac{\lambda-\mu}{\gamma})\bigg)^k \bigg] \\
    &\quad + \frac{1}{k!\gamma}\bigg[ \mu + \gamma x \bigg] \cdot \bigg[ \bigg(\frac{\sqrt{\gamma}}{\sqrt{\lambda}}(x-1 - \frac{\lambda-\mu}{\gamma}) - \frac{\sqrt{\gamma}}{\sqrt{\lambda}}(x - \frac{\lambda-\mu}{\gamma})\bigg)^k \bigg] \mathbf{1}_{\{x \geq 1\}} \\
    &= \frac{\lambda}{k!\gamma}\bigg[ \bigg(\frac{\sqrt{\gamma}}{\sqrt{\lambda}}\bigg)^k - 0 \bigg] + \frac{1}{k!\gamma}\bigg[ \mu + \gamma x \bigg] \cdot \bigg[ (-\frac{\sqrt{\gamma}}{\sqrt{\lambda}})^k - 0 \bigg] \mathbf{1}_{\{x \geq 1\}} \\
    &= \frac{1}{k!\gamma}\bigg(\frac{\sqrt{\gamma}}{\sqrt{\lambda}}\bigg)^k \cdot \bigg( \lambda + (-1)^k [\mu + \gamma x] \mathbf{1}_{\{x \geq 1\}} \bigg), \quad \forall k \geq 1. 
\end{align*}

 For Assumption \ref{assumption: Kramers-Moyal expansion final version}, we need to show that the infinite Taylor series converges to \(\mathcal{A} (f \circ T_1)(x)\). By Taylor's theorem with Lagrange remainder, for each fixed $n \in \mathbb{N}$, we let $\xi_{\pm}$ be the two points (
    subscripts $\pm$ represent the two points respectively
 ) between $T_1(x)$ and $T_1(x \pm 1)$ such that the following expansion holds,
\begin{align*}
    f(T_1(x \pm 1)) &= \sum_{j=0}^{n} \frac{1}{j!} f^{(j)}(T_1(x)) (T_1(x \pm 1) - T_1(x))^j + \frac{1}{(n+1)!} f^{(n+1)}(\xi_{\pm}) (T_1(x \pm 1) - T_1(x))^{n+1}\\
    &= \sum_{j=0}^{n} \frac{1}{j!} f^{(j)}(T_1(x)) \bigg(\pm \frac{\sqrt{\gamma}}{\sqrt{\lambda}}\bigg)^j + \frac{1}{(n+1)!} f^{(n+1)}(\xi_{\pm}) \bigg(\pm \frac{\sqrt{\gamma}}{\sqrt{\lambda}}\bigg)^{n+1}
\end{align*}
Therefore, using the definition of $\mathcal{A}$ in \eqref{eq: explicit formula for A, SSQ}, we can write the operator $\mathcal{A}$ applied to function $f \circ T_1$ as
\begin{align*}
    \mathcal{A} (f \circ T_1)(x) &= \frac{\lambda}{\gamma} \bigg[ f\bigg(T_1(x + 1)\bigg) - f\bigg(T_1(x)\bigg) \bigg] + \frac{1}{\gamma}\bigg[ \mu + \gamma x \bigg] \cdot \bigg[ f\bigg(T_1(x - 1)\bigg) - f\bigg(T_1(x)\bigg) \bigg] \mathbf{1}_{\{x \geq 1\}} \\
    &= \frac{\lambda}{\gamma} \bigg[ \sum_{j=1}^{n} \frac{1}{j!} f^{(j)}(T_1(x)) \bigg(\frac{\sqrt{\gamma}}{\sqrt{\lambda}}\bigg)^j + \frac{1}{(n+1)!} f^{(n+1)}(\xi_{+}) \bigg(\frac{\sqrt{\gamma}}{\sqrt{\lambda}}\bigg)^{n+1} \bigg] \\
    &\quad + \frac{1}{\gamma}\bigg[ \mu + \gamma x \bigg] \cdot \mathbf{1}_{\{x \geq 1\}} \cdot \bigg[ \sum_{j=1}^{n} \frac{1}{j!} f^{(j)}(T_1(x)) \bigg(-\frac{\sqrt{\gamma}}{\sqrt{\lambda}}\bigg)^j + \frac{1}{(n+1)!} f^{(n+1)}(\xi_{-}) \bigg(-\frac{\sqrt{\gamma}}{\sqrt{\lambda}}\bigg)^{n+1} \bigg] \\
    &:= \sum_{j=1}^{n} \frac{\lambda}{\gamma} \bigg[ \frac{1}{j!} f^{(j)}(T_1(x)) \bigg(\frac{\sqrt{\gamma}}{\sqrt{\lambda}}\bigg)^j \bigg] + \sum_{j=1}^{n} \frac{1}{\gamma}\bigg[ \mu + \gamma x \bigg] \cdot \mathbf{1}_{\{x \geq 1\}} \cdot \bigg[ \frac{1}{j!} f^{(j)}(T_1(x)) \bigg(-\frac{\sqrt{\gamma}}{\sqrt{\lambda}}\bigg)^j \bigg]  + R_n\\
    &= \sum_{j=1}^{n} \frac{1}{j!} f^{(j)}(T_1(x))(a^{\mathcal{A}, T_1}_j(q))(x) + R_n.
\end{align*}
The remainder term $R_n$ has the following upper bound on its absolute value,
\begin{align*}
    |R_n| &\leq \frac{\lambda}{\gamma} \cdot \bigg| \frac{1}{(n+1)!} f^{(n+1)}(\xi_{+}) \bigg(\frac{\sqrt{\gamma}}{\sqrt{\lambda}}\bigg)^{n+1} \bigg| + \frac{1}{\gamma}\bigg| \mu + \gamma x \bigg| \cdot \mathbf{1}_{\{x \geq 1\}} \cdot \bigg| \frac{1}{(n+1)!} f^{(n+1)}(\xi_{-}) \bigg(-\frac{\sqrt{\gamma}}{\sqrt{\lambda}}\bigg)^{n+1} \bigg| \\
    &\overset{(a)}{\leq} \frac{1}{\gamma(n+1)!} D_f^{n+1} \sqrt{(n+1)!}  
    ) \cdot \bigg(\frac{\sqrt{\gamma}}{\sqrt{\lambda}}\bigg)^{n+1} \bigg(\lambda(1+|\xi_{+}|)^{m} + |\mu+ \gamma x| (1+|\xi_{-}|^{m}))\bigg)
\end{align*}
Inequality $(a)$ follows from the definition of $\mathcal{F}$ in \ref{assumption: function class}. Such assumption imposes a constant $D_f > 0$ such that $f^{(n+1)}(x) \leq D_f^{n+1} \sqrt{(n+1)!} (1+|x|)^{m}$ with some $m$. Thus the remainder term $R_n$ converges to $0$ as $n \to \infty$, and we conclude that the equality in Assumption \ref{assumption: Kramers-Moyal expansion final version} holds. 

Finally, we check the condition $\mathbb{E}[ \sum_{k=1}^{\infty} D^k\sqrt{k!}|a^{\mathcal{A}, T_1}_k(q)| ] < \infty$ for all $D > 0$.
\begin{align*}
    \mathbb{E}\bigg[ \sum_{j=1}^{\infty} D^j\sqrt{j!} |a^{\mathcal{A}, T_1}_j(q)| \bigg] &= \frac{1}{\gamma}\mathbb{E}\bigg[ \sum_{j=1}^{\infty} \frac{D^j}{\sqrt{j!}} \bigg| \bigg(\frac{\sqrt{\gamma}}{\sqrt{\lambda}}\bigg)^j \cdot \bigg( \lambda + (-1)^j [\mu + \gamma q] \mathbf{1}_{\{q \geq 1\}} \bigg) \bigg| \bigg] \\
    &\overset{(a)}{=} \frac{1}{\gamma}\sum_{j=1}^{\infty} \frac{D^j}{\sqrt{j!}} \bigg(\frac{\sqrt{\gamma}}{\sqrt{\lambda}}\bigg)^j \cdot \mathbb{E}\bigg[ \bigg| \lambda + (-1)^j [\mu + \gamma q] \mathbf{1}_{\{q \geq 1\}} \bigg| \bigg] \\
    &\leq \frac{1}{\gamma}\sum_{j=1}^{\infty} \frac{D^j}{\sqrt{j!}} \bigg(\frac{\sqrt{\gamma}}{\sqrt{\lambda}}\bigg)^j \cdot \bigg( \lambda + \mu + \mathbb{E}[\gamma |q|] \bigg) \overset{(b)}{<}\infty
\end{align*}
Here inequality $(a)$ is from Tonelli's theorem. Inequality $(b)$ is from the well-definedness of MGF in \eqref{eq: well-definedness of MGF for SSQ}, which implies that $\mathbb{E}[|q|] < \infty$, and from the fact that $\sum_{j=1}^{\infty} \frac{A^j}{\sqrt{j!}}< \infty$ for all constant $A > 0$. 
Thus Assumption \ref{assumption: integrability of Kramers-Moyal coefficients final version} holds. Consequently, we can apply Proposition \ref{pro: Wasserstein-$p$ bound without exchangeability final version}.

Starting from Proposition \ref{pro: Wasserstein-$p$ bound without exchangeability final version}, we recall that the Wasserstein-$p$ distance between $\mathcal{L}(\tilde{q})$ and $\mathcal{L}(Z)$ is bounded as follows,
\begin{align}
    W_p(\mathcal{L}(\tilde{q}), \mathcal{L}(Z))
    &\le
    g_0(t_0,p)\;
        + 
    \underbrace{g_1(t_0,p)\;
        \Bigl\| \,(a^{\mathcal{A}, T_1}_1(q) +\tilde{q}\,\Bigr\|_{L^p}}_{\mathclap{(b_1)}} \notag\\
    &\quad +\;
    \underbrace{g_2(t_0,p)\;
        \Bigl\| \,a^{\mathcal{A}, T_1}_2(q)-1\,\Bigr\|_{L^p}}_{\mathclap{(b_2)}} +
    \underbrace{\sum_{k=3}^{\infty}
        g_k(t_0,p)\;
        \Bigl\| \,a^{\mathcal{A}, T_1}_k(q)\,
        \Bigr\|_{L^p}}_{\mathclap{(b_3)}} .
    \notag
\end{align}
The explicit forms of $g_k(t_0,p)$ are given in \eqref{eq: W-p bound final}. We  $t_0 = -\frac{1}{2}\log(1-p\gamma/\lambda)$. Then we bound each term in \eqref{eq:Final Stein Wasserstein-$p$ bound} as follows. First notice that assumption \eqref{eq: Initial p's regime for Wasserstein distance SSQ} implies $e^{t_0} \leq \sqrt{2}$. Thus
\begin{align}
    g_0(t_0,p) &= e^{t_0}\|Z\|_{L^p} (\frac{\pi}{2} - \sin^{-1}(e^{-t_0})) \notag\\
    &\leq \sqrt{2} \|Z\|_{L^p} (\frac{\pi}{2} - \sin^{-1}(e^{-t_0})) \notag\\
    &= \sqrt{2} \|Z\|_{L^p} (\frac{\pi}{2} - \sin^{-1}(\sqrt{1- \frac{p\gamma}{\lambda}})) \notag\\
    &\overset{(a)}{\leq}  4e\sqrt{\pi} \sqrt{p} \cdot \frac{\sqrt{p\gamma}}{\sqrt{\lambda(1-p\gamma/\lambda)}} \notag\\
    &\overset{(b)}{\leq} \frac{8e\sqrt{\pi}}{\sqrt{\lambda}} \sqrt{\gamma} p. \label{eq: b0 upper bound SSQ}
\end{align}
Here inequality $(a)$ holds from the inequality $\frac{\pi}{2} - \sin^{-1}(x) \leq \frac{\sqrt{1-x^2}}{x}$ for $x \in (0,1)$ and setting $x = \sqrt{1- \frac{p\gamma}{\lambda}}$. The inequality can be seen from the monotonicity of function $u(x) = \frac{\sqrt{1-x^2}}{x} -\frac{\pi}{2} + \sin^{-1}(x) $ on $(0,1)$ as follows,
\begin{align*}
    u'(x) = \frac{-1}{x^2\sqrt{1-x^2}} + \frac{1}{\sqrt{1-x^2}} = \frac{x^2 - 1}{x^2\sqrt{1-x^2}} < 0, \quad \forall x \in (0,1), \quad \text{and }   u(1) = 0.
\end{align*}
Inequality $(b)$ holds from assumption \eqref{eq: Initial p's regime for Wasserstein distance SSQ}, since we have $\sqrt{p\gamma/\lambda} \leq 1/\sqrt{2}$.

Next, we bound $(b_1)$. Recalling the explicit formula of $\mathcal{A}$ in \eqref{eq: explicit formula for A, SSQ}, and that by assumption on $p$ in \eqref{eq: Initial p's regime for Wasserstein distance SSQ}, we have $e^{t_0}\leq \sqrt{2}$. With the definition of $g_1(t_0,p)$ in \eqref{eq: W-p bound final}, we compute $(b_1)$  as follows.
\begin{align}
    (b_1) &= g_1(t_0,p)\;
        \Bigl\| a^{\mathcal{A}, T_1}_1(q) +\tilde{q}\,\Bigr\|_{L^p} \notag\\
    &= \sqrt{2} \int_{-\frac{1}{2}\log(1-p\gamma/\lambda)}^{\infty} e^{-r} dr \cdot \| a^{\mathcal{A}, T_1}_1(q) + \tilde{q}\|_{L^p} \notag\\
    &\overset{(a)}{=} \sqrt{2} \int_{-\frac{1}{2}\log(1-p\gamma/\lambda)}^{\infty} e^{-r} dr \cdot \| \frac{1}{\gamma}\cdot \frac{\sqrt{\gamma}}{\sqrt{\lambda}} \bigg[ \lambda - (\mu + \gamma q) \mathbf{1}_{\{ q \geq 1\}} \bigg] + \tilde{q}\|_{L^p} \notag\\
    &\overset{(b)}{=} \sqrt{2}\int_{-\frac{1}{2}\log(1-p\gamma/\lambda)}^{\infty} e^{-r} dr \cdot \frac{\sqrt{\gamma}}{\sqrt{\lambda}} \cdot \| \frac{\lambda-\mu}{\gamma} - q + \frac{1}{\gamma} \mu \mathbf{1}_{\{ q = 0\}} +  q - \frac{\lambda-\mu}{\gamma} \|_{L^p} \notag\\
    &= \sqrt{2}\int_{-\frac{1}{2}\log(1-p\gamma/\lambda)}^{\infty} e^{-r} dr \cdot \frac{\sqrt{\gamma}}{\sqrt{\lambda}} \cdot \| \frac{\mu}{\gamma} \mathbf{1}_{\{ q = 0\}}\|_{L^p} \notag\\
    &\leq \sqrt{2}\int_{0}^{\infty} e^{-r} dr\cdot \frac{\mu}{\sqrt{\lambda}\sqrt{\gamma}} \| \mathbf{1}_{\{ q = 0\}}\|_{L^p} = \frac{\sqrt{2}\mu}{\sqrt{\lambda}\sqrt{\gamma}} \| \mathbf{1}_{\{ q = 0\}}\|_{L^p}. \label{eq: b1 upper bound SSQ}
\end{align}
Equality $(a)$ is from plugging in the explicit formula of $\mathcal{A}$ and the definition of $q = T_1^{-1}(\tilde{q})$, where $T_1(x) = \frac{\sqrt{\gamma}}{\sqrt{\lambda}}(x - \frac{\lambda - \mu}{\gamma})$ is the normalization function defined before. Equality $(b)$ is plugging in the definition of $\tilde{q}$ and using the fact that $q\in \mathbb{N}$ to simplify the indicator function.

Similarly, we bound $(b_2)$. Recalling that $e^{t_0}\leq \sqrt{2}$, we have
\begin{align}
    (b_2) &= g_2(t_0,p)\;
        \Bigl\| \,a^{\mathcal{A}, T_1}_2(q)-1\,\Bigr\|_{L^p} \notag\\
    &\overset{(a)}{=} \sqrt{2}\int_{-\frac{1}{2}\log(1-p\gamma/\lambda)}^{\infty} \frac{e^{-2r}\|h_1(Z)\|_{L^p}}{\sqrt{1-e^{-2r}}} dr \cdot \| a^{\mathcal{A}, T_1}_2(q) - 1\|_{L^p} \notag\\
    &\overset{(b)}{=} \sqrt{2}\int_{-\frac{1}{2}\log(1-p\gamma/\lambda)}^{\infty} \frac{e^{-2r}\|h_1(Z)\|_{L^p}}{\sqrt{1-e^{-2r}}} dr \cdot \| \frac{1}{2\gamma} \bigg(
         \frac{\gamma}{\lambda}(\lambda + \mu + \gamma q - \mu \mathbf{1}_{\{ q = 0\}}) - 1 \bigg)\|_{L^p} \notag\\
    &= \sqrt{2}\int_{-\frac{1}{2}\log(1-p\gamma/\lambda)}^{\infty} \frac{e^{-2r}\|h_1(Z)\|_{L^p}}{\sqrt{1-e^{-2r}}} dr \cdot \frac{\gamma}{\lambda}\| \frac{1}{2\gamma} \bigg(
        \lambda + \mu + \gamma q - \mu \mathbf{1}_{\{ q = 0\}} - 2\lambda \bigg)\|_{L^p} \notag\\
    &\overset{(c)}{\leq} \sqrt{2}\int_{-\frac{1}{2}\log(1-p\gamma/\lambda)}^{\infty} \frac{e^{-2r}\|h_1(Z)\|_{L^p}}{\sqrt{1-e^{-2r}}} dr \cdot \frac{\gamma}{\lambda} (\frac{1}{2}\|q - \frac{\lambda-\mu}{\gamma}\|_{L^p} + \frac{\mu}{2\gamma} \| \mathbf{1}_{\{ q = 0\}}\|_{L^p}) \notag\\
    &\overset{(d)}{\leq} 2\sqrt{2}e \sqrt{2\pi} \sqrt{p} \int_{-\frac{1}{2}\log(1-p\gamma/\lambda)}^{\infty} \frac{e^{-2r}}{\sqrt{1-e^{-2r}}} dr \cdot \frac{\gamma}{\lambda} (\frac{1}{2}\|q - \frac{\lambda-\mu}{\gamma}\|_{L^p} + \frac{\mu}{2\gamma} \| \mathbf{1}_{\{ q = 0\}}\|_{L^p}) \notag\\
    &\overset{(e)}{=} 2\sqrt{2}e\sqrt{2\pi} \sqrt{p} (1 - \sqrt{1 - (1- \frac{p\gamma}{\lambda})}) \cdot \frac{\gamma}{2\lambda} \cdot (\| q - \frac{\lambda-\mu}{\gamma} \|_{L^p} + \frac{\mu}{\gamma} \cdot \| \mathbf{1}_{\{ q = 0\}}\|_{L^p}) \notag\\
    &\leq 2\sqrt{2}e\sqrt{2\pi} \frac{\gamma\sqrt{p}}{2\lambda} \cdot (\| q - \frac{\lambda-\mu}{\gamma} \|_{L^p} + \frac{\mu}{\gamma} \cdot \| \mathbf{1}_{\{ q = 0\}}\|_{L^p}). \label{eq: b_2 upper bound SSQ}
\end{align}
Equality $(a)$ is from plugging in the definition of $g_2(t_0,p)$ in \eqref{eq: W-p bound final}. Equality $(b)$ is from plugging in the explicit formula of $\mathcal{A}$ in \eqref{eq: explicit formula for A, SSQ} and the definition of $q = T_1^{-1}(\tilde{q})$. Inequality $(c)$ is from the triangle inequality. Inequality $(d)$ is from the $L^p$ norm for $h_1(Z)=Z$ as in \eqref{eq: Lp norm of Z}. Equation $(e)$ is from explicitly solving the integration.

For $(b_3)$, we first split the summation into two parts based on whether index $k$ is odd or even. For odd $k \geq 3$, we 
compute the following identity using Lemma \ref{lemma: identity for Kramers-Moyal coefficients final version},
\begin{align}
    \|a^{\mathcal{A}, T_1}_k(q)\|_{L^p} &= \frac{\sqrt{\gamma}^k}{k!\sqrt{\lambda}^k} \| \frac{1}{\gamma} \bigg(\lambda + (\mu + \gamma  q) (-1)^k - (-1)^k \mu \mathbf{1}_{\{ q = 0\}} \bigg)\|_{L^p} \label{eq: conditional expectation of tilde q SSQ} 
\end{align}
And recall that the first part of summation in $(b_3)$ is given by
\begin{align*}
    (b_{3,1}) &:= \sum_{k\geq 3, odd}
        g_k(t_0,p)\;
        \Bigl\| \,a^{\mathcal{A}, T_1}_k(q)\,
        \Bigr\|_{L^p} \\
        &= \sum_{k\geq 3, odd}
        g_k(t_0,p)\;
        \Bigl\| \,a^{\mathcal{A}, T_1}_k(q)\,
        \Bigr\|_{L^p} \\
        &\overset{(a)}{\leq} \sqrt{2}\sum_{k\geq 3, odd} \int_{-\frac{1}{2}\log(1-p\gamma/\lambda)}^{\infty} \frac{e^{-kr}\|h_{k-1}(Z)\|_{L^p}}{k!\sqrt{1-e^{-2r}}^{k-1}} dr \cdot \notag\\
    &\quad \frac{\sqrt{\gamma}^k}{\sqrt{\lambda}^k} \bigg(\frac{\mu}{\gamma} \| \mathbf{1}_{q=0}\|_{L^p} + \| q - \frac{\lambda-\mu}{\gamma}\|_{L^p} \bigg) 
\end{align*}
Here inequality $(a)$ holds from identity \eqref{eq: conditional expectation of tilde q SSQ}, definition of $g_k(t_0,p)$ in \eqref{eq: W-p bound final}, and triangle inequality.

Continuing, we first upper bound the integral inside the right hand side for the above inequality.
\begin{align}
    \sqrt{2}\sum_{k\geq 3, odd} \frac{\sqrt{\gamma}^k}{\sqrt{\lambda}^k}&\int_{-\frac{1}{2}\log(1-p\gamma/\lambda)}^{\infty} \frac{e^{-kr}\|h_{k-1}(Z)\|_{L^p}}{k!\sqrt{1-e^{-2r}}^{k-1}} dr \notag\\
    &\overset{(a)}{=} \sqrt{2}\sum_{k\geq 3, odd} \frac{\sqrt{\gamma}^k}{\sqrt{\lambda}^k}\frac{\|h_{k-1}(Z)\|_{L^p}}{k!} \int_{0}^{\sqrt{1-p\gamma/\lambda}} \frac{x^{k-1}}{(1-x^2)^{(k-1)/2}} dx \notag\\
    &\overset{(b)}{\leq} \sqrt{2}\sum_{k\geq 1} \frac{\sqrt{\gamma}^{2k+1}}{\sqrt{\lambda}^{2k+1}}\frac{\|h_{2k}(Z)\|_{L^p}}{(2k+1)!} \int_{0}^{\sqrt{1-p\gamma/\lambda}} \frac{x^{2k}}{(1-x^2)^{k}} dx \notag\\
    &\overset{(c)}{\leq} \sqrt{2}\sum_{k\geq 1} \frac{\sqrt{\gamma}^{2k+1}}{\sqrt{\lambda}^{2k+1}} \frac{p^k}{(2k+1)\sqrt{(2k)!}}
    \int_{0}^{\sqrt{1-p\gamma/\lambda}} \frac{x^{2k}}{(1-x^2)^{k}} dx \notag\\
    &\overset{(d)}{\leq} \sqrt{2}\sum_{k=1}^{\infty}\frac{\sqrt{\gamma}^{2k+1}p^k}{\sqrt{\lambda}^{2k+1}} \frac{\pi^{1/4}k^{1/4}e^{19/300}}{(2k+1)2^k k!} \int_{0}^{\sqrt{1-p\gamma/\lambda}} \frac{x^{2k}}{(1-x^2)^{k}} dx \notag\\
    &\overset{(e)}{\leq} \sqrt{2}\sum_{k=1}^{\infty} \frac{\sqrt{\gamma}^{2k+1}p^k}{\sqrt{\lambda}^{2k+1}} \frac{\pi^{1/4}k^{1/4}e^{19/300}}{(2k+1)2^k k!}(1-\frac{p\gamma}{\lambda})^k \int_{0}^{\sqrt{1-p\gamma/\lambda}} \frac{1}{1-x^2} dx \notag\\
    &\overset{(f)}{\leq} 2\sqrt{2}e \cdot\frac{\sqrt{\gamma}^{3}p}{\sqrt{\lambda}^{3}} \sum_{k\geq 1} \frac{1}{2^k k!} (1-\frac{p\gamma}{\lambda})^k \int_{0}^{\sqrt{1-p\gamma/\lambda}} \frac{1}{1-x^2} dx \notag\\
    &= 2\sqrt{2}e \cdot \frac{\sqrt{\gamma}^{3}p}{\sqrt{\lambda}^{3}}  \sum_{k=1}^{\infty} \frac{1}{2^k k!} (1-\frac{p\gamma}{\lambda})^k \log(\frac{1+\sqrt{1-p\gamma/\lambda}}{1-\sqrt{1-p\gamma/\lambda}}) \notag\\
    &\leq 2\sqrt{2}e \cdot \frac{\sqrt{\gamma}^{3}p}{\sqrt{\lambda}^{3}}  \sum_{k=1}^{\infty} \frac{1}{2^k k!} \log(\frac{1+\sqrt{1-p\gamma/\lambda}}{1-\sqrt{1-p\gamma/\lambda}})\notag \\
    &\leq 2\sqrt{2}e^2\cdot \frac{\sqrt{\gamma}^{3}p}{\sqrt{\lambda}^{3}} \log(\frac{1+\sqrt{1-p\gamma/\lambda}}{1-\sqrt{1-p\gamma/\lambda}}) \notag\\
    &\overset{(g)}{\leq} 2\sqrt{2}e^2\cdot \frac{\sqrt{\gamma}^{3}p}{\sqrt{\lambda}^{3}} \log (\frac{4\lambda}{p\gamma}) \notag\\
    &\overset{(h)}{\leq} 8\sqrt{2}e^2 \frac{\gamma \sqrt{p}}{\lambda}. \label{eq: b_3,1 integral upper bound SSQ}
\end{align}
Equality $(a)$ holds from change of variables $x:=e^{-r}$. Inequality $(b)$ is re-ordering the index by writing these odd indices as $2k+1$. Inequality $(c)$ is the upper bound on $\|h_{2k}(Z)\|_{L^p}$ as in \cite[Lemma 3]{bonis2020steins}. We quote here for convenience,
\begin{align*}
    \|h_{k}(Z)\|_{L^p} \leq \sqrt{p}^k \sqrt{k!}.
\end{align*}
Inequality $(d)$ holds from the lower bound estimation for Stirling Approximation (see \cite{robbins1955remark}),
\begin{align}
    \sqrt{(2m)!} \geq e^{-\frac{19}{300}} 2^m m! \frac{1}{(\pi m)^{1/4}}. \notag
\end{align}
Inequality $(e)$ is bounding the integrand using the boundary values. Inequality $(f)$ is to get a clean right hand side with the constants. Inequality $(g)$ is from assumption on $p$ in \eqref{eq: Initial p's regime for Wasserstein distance SSQ}. Inequality $(h)$ is from $\log(x)\leq 2\sqrt{x}$. Using this bound on integral, we obtain bound on odd index summation as,
\begin{align}
    (b_{3,1}) \leq 8e^2 \frac{\gamma \sqrt{p}}{\lambda} \bigg( \|  \hat{q}\|_{L^p} + \frac{\mu}{\gamma}\|\sum_{i=1}^{n}\mathbf{1}_{q_i=0}\|_{L^p}\bigg) \label{b_3,1 upper bound SSQ}
\end{align}

For the summation involving even indices $k$, we bound them in the following.
\begin{align}
    (b_{3,2}) &:= \sum_{k\geq 4, even} g_k(t_0,p)\;
        \Bigl\| a^{\mathcal{A}, T_1}_k(q)\,
        \Bigr\|_{L^p} \notag\\
    &\overset{(a)}{\leq} \sqrt{2}\sum_{k\geq 4, even} \int_{-\frac{1}{2}\log(1-p\gamma/\lambda)}^{\infty} \frac{e^{-kr}\|h_{k-1}(Z)\|_{L^p}}{k!\sqrt{1-e^{-2r}}^{k-1}} dr \cdot \notag\\
    &\quad \frac{\sqrt{\gamma}^k}{\sqrt{\lambda}^k} \bigg(\frac{\mu}{\gamma} \| \mathbf{1}_{q=0}\|_{L^p} + \| q - \frac{\lambda-\mu}{\gamma}\|_{L^p} + \frac{2\lambda}{\gamma} \bigg) \notag 
\end{align}
Inequality $(a)$ is from identity \eqref{eq: conditional expectation of tilde q SSQ}, definition of $g_k(t_0,p)$ in \eqref{eq: W-p bound final}, and triangle inequality. Similarly as in odd case, we first bound the integral inside the summation for the right hand side of the above upper bound. 
\begin{align}
    \sqrt{2}\sum_{k\geq 4, even} \frac{\sqrt{\gamma}^k}{\sqrt{\lambda}^k}&\int_{-\frac{1}{2}\log(1-p\gamma/\lambda)}^{\infty} \frac{e^{-kr}\|h_{k-1}(Z)\|_{L^p}}{k!\sqrt{1-e^{-2r}}^{k-1}} dr \notag\\
    &\overset{(a)}{\leq} \sqrt{2}\sum_{k\geq 3, odd} \frac{\sqrt{\lambda}^{k+1}}{\sqrt{\gamma}^{k+1}} \frac{\|h_{k}(Z)\|_{L^p}}{(k+1)!} \int_{0}^{\sqrt{1-p\gamma/\lambda}} \frac{x^k}{(1-x^2)^{k/2}}dx \notag \\
    &\leq \sqrt{2}\sum_{k=1}^{\infty} \frac{\gamma^{k+1}}{\lambda^{k+1}} \frac{\|h_{2k+1}(Z)\|_{L^p}}{(2k+2)!} \int_{0}^{\sqrt{1-p\gamma/\lambda}} \frac{x^{2k+1}}{\sqrt{1-x^2}^{2k+1}}dx \notag\\
    &\leq \sqrt{2}\sum_{k=1}^{\infty} \frac{\gamma^{2}}{\lambda^{2}p^{k-1}} \frac{\|h_{2k+1}(Z)\|_{L^p}}{(2k+2)!} \int_{0}^{\sqrt{1-p\gamma/\lambda}}  \frac{x}{\sqrt{1-x^2}^{3}}dx \; (1-p\gamma/\lambda)^k \notag \\
    &\overset{(b)}{\leq} \sqrt{2}\sum_{k=1}^{\infty} \frac{\gamma^{2}}{\lambda^{2}p^{k-1}} \frac{p^{k+1/2}}{(2k+2)\sqrt{(2k+1)!}}(\frac{\sqrt{\lambda}}{\sqrt{\gamma p}} - 1) \notag \\
    &\leq \sqrt{2}\frac{\gamma^{3/2}p}{\lambda^{3/2}} \frac{1}{(2k+2)\sqrt{(2k+1)!}}\notag \\
    &\overset{(c)}{\leq} \sqrt{2}\frac{\gamma^{3/2}p}{\lambda^{3/2}} \frac{2e (\pi k)^{1/4}}{2k+2} \sum_{k=1}^{\infty} \frac{1}{2^k k!} \leq \sqrt{2}e^2 \frac{\gamma^{3/2}p}{\lambda^{3/2}} \label{eq: b_3,2 integral upper bound SSQ}
\end{align}
 Similar to the case for odd indices, here inequality $(a)$ is from shifting the index and change of variables. The inequalities below $(a)$ are from re-indexing and bounding the integral from boundary terms. Inequality $(b)$ is using \cite[Lemma 3]{bonis2020steins} for bounds on $\|h_{2k}(Z)\|_{L^p}$ and solving the integral explicitly. Inequality $(c)$ is again using the Stirling lower bound approximation in the following way,
\begin{align*}
    \sqrt{(2m+1!} \geq \sqrt{(2m+1)} \sqrt{(2m)!} \geq \sqrt{2m+1} e^{-\frac{19}{300}} 2^m m! \frac{1}{(\pi m)^{1/4}}
\end{align*}
Using \eqref{eq: b_3,2 integral upper bound SSQ} we have
\begin{align}
    (b_{3,2}) \leq \sqrt{2}e^2 \frac{\gamma^{3/2}p}{\lambda^{3/2}} \bigg(\frac{\mu}{\gamma} \|  \mathbf{1}_{q=0}\|_{L^p} + \|  q - \frac{\lambda-\mu}{\gamma}\|_{L^p} + \frac{2\lambda}{\gamma} \bigg) \label{eq: b_3,2 upper bound SSQ}
\end{align} 
With the above bounds on each terms of the upper bound in Proposition \ref{pro: Wasserstein-$p$ bound without exchangeability final version}, i.e., \eqref{eq: b0 upper bound SSQ}, \eqref{eq: b1 upper bound SSQ}, \eqref{eq: b_2 upper bound SSQ}, \eqref{eq: b_3,1 integral upper bound SSQ}, \eqref{eq: b_3,2 upper bound SSQ}, we now bound Wasserstein-$p$ distance as 
\begin{align}
    W_p(\mathcal{L}(\tilde{q}), \mathcal{L}(Z))&\leq \frac{8e\sqrt{\pi}}{\lambda} \gamma p^{3/2} + 2\sqrt{2}e^2\frac{\sqrt{\gamma}p}{\sqrt{\lambda}} \notag\\
    &\quad + \sqrt{2}\bigg(\frac{\mu}{\sqrt{\lambda}} \frac{1}{\sqrt{\gamma}} + e\sqrt{2\pi}\frac{\mu}{\lambda} \sqrt{p} + 8e^2 \frac{\mu}{\lambda}\sqrt{p} + e^2 \frac{\mu\sqrt{\gamma}p}{\lambda^{3/2}}\bigg)  \|\mathbf{1}_{q=0} \|_{L^p} \notag \\
    &\quad + \sqrt{2}\bigg( e\sqrt{2\pi}\frac{\gamma \sqrt{p}}{\lambda} + 8e^2\frac{\gamma\sqrt{p}}{\lambda} + e^2\frac{\gamma^{3/2}p}{\lambda^{3/2}}   \bigg) \| q - \frac{\lambda-\mu}{\gamma}\|_{L^p} \label{eq: C_10, lambda, mu}\\
    &\leq C_{10, \lambda, \mu} \cdot \big(p\sqrt{\gamma} + \frac{1}{\sqrt{\gamma}} \|\mathbf{1}_{q=0} \|_{L^p} +\gamma\sqrt{p} \| q - \frac{\lambda-\mu}{\gamma}\|_{L^p} \big) \notag
\end{align}
With constant $C_{10, \lambda, \mu}$ defined as 
\begin{align*}
C_{10,\lambda,\mu}
  &:= \sqrt{2}\,\max\left\{
      \frac{4e\sqrt{2\pi}}{\sqrt{\lambda}} + \frac{2e^{2}}{\sqrt{\lambda}},\,
      \frac{\mu}{\sqrt{\lambda}} + e\sqrt{2\pi}\frac{\mu}{\lambda}
        + 8e^{2}\frac{\mu}{\sqrt{\lambda}} + \frac{e^{2}\mu}{2\sqrt{\lambda}},\,
      \frac{e\sqrt{2\pi}}{\lambda} + 8e^{2}\frac{1}{\lambda}
        + \frac{e^{2}}{2\lambda}
      \right\}.
\end{align*}
\end{proof}

\begin{proof}[Proof for Lemma \ref{lem: sub-exponential Lp norm}]
From Chernoff bound with optimal choice of $\theta$, we derive sub-gaussian and sub-exponential tail bounds from the condition $\mathbb{E}[e^{\theta|X|}]  \leq e^{\beta^2 \theta^2/2}, \forall \theta \in [0,1/\alpha]$:
\begin{align*}
    \mathbb{P}(|X| > a) &\leq \begin{cases}
        A\cdot e^{-a^2/2\beta^2} & \text{if } a \leq \beta^2/\alpha \\
        A\cdot e^{-a/2\alpha} & \text{if } a > \beta^2/\alpha
    \end{cases}
\end{align*}
Thus from the p-th moment identity, there is :
\begin{align*}
    \mathbb{E}[|X|^p] &= \int_0^\infty p\cdot t^{p-1} \mathbb{P}(|X| \geq t) dt \\
    &\leq Ap\cdot \int_0^{\beta^2/\alpha} t^{p-1} e^{-r^2/2\beta^2} dt + p\cdot \int_{\beta^2/\alpha}^\infty t^{p-1} e^{-r/2\alpha} dt \\
    &\leq Ap\cdot \int_0^{\infty} t^{p-1} e^{-r^2/2\beta^2} dt + p\cdot \int_{0}^\infty t^{p-1} e^{-r/2\alpha} dt \\
    &= 2^{\frac{p-1}{2}} \beta^p A\cdot \Gamma\left(\frac{p}{2}\right) + (2\alpha)^p \cdot \Gamma(p) 
\end{align*}
Thus for $L^p$ norm, we have
\begin{align*}
    \|X \|_{L^p} &\leq A^{1/p} [2^{\frac{p-1}{2}} \beta^p \cdot \Gamma\left(\frac{p}{2}\right)]^{1/p} + [(2\alpha)^p \cdot \Gamma(p)]^{1/p} \\
    &\overset{(a)}{\leq}A^{1/p} \bigg[ 2e\sqrt{2\pi} \sqrt{2}^{p-1} \beta^p \cdot \left(\frac{p}{2}\right)^{p/2-1/2} e^{-p/2} \bigg]^{1/p} + \bigg[ 2e\sqrt{2\pi} (2\alpha)^p \cdot (1/e)^p p^{p-1/2} \bigg]^{1/p} \\
    &\leq 2e\sqrt{2\pi}A^{1/p} \cdot \beta p + 2e\sqrt{2\pi} \cdot \alpha p 
\end{align*}
Inequality $(a)$ holds from Stirling approximation for Gamma function \cite[Lemma 1]{zouzias2010low}: $\Gamma(m) \leq 2e \sqrt{2\pi} m^{m-1/2} e^{-m}$.
\end{proof}

\subsection{Proof for Lemma \ref{lem: tight bound on P(q_infty = 0)}: Tight Bound on $\mathbb{P}(q = 0)$}
We provide tight upper and lower bounds for $\mathbb{P}(q = 0)$
The key technique is to approximate the summation term by Riemann
integral and Laplace-style method \cite{azevedo1994laplace}.
\begin{proof}
Since the Assumption \ref{ass:heavy_overload} is invariant under scaling of $\mu$, without loss of generality, we assume $\mu = 1$ in the following proof. One can rescale parameters back for the final result.
With detailed balance equation for SSQ, we have
\begin{align*}
    \mathbb{P}(q = i) = \frac{\lambda}{\gamma * i + 1} \mathbb{P}(q = i-1)
        = \prod_{j=1}^{i} \frac{\lambda}{\gamma * j + 1} \mathbb{P}(q = 0), \quad \forall i \geq 1
\end{align*}
Let $Z:=1+\sum_{n=1}^{\infty} \prod_{j=1}^{n}\frac{\lambda}{\gamma * j + 1}$, we have $\mathbb{P}(q = 0) = \frac{1}{Z}$. Let $a_n := \prod_{j=1}^{n}\frac{\lambda}{\gamma * j + 1}$, we have $a_n = \exp(n \ln\lambda - \sum_{j=1}^{n} \ln(\gamma * j + 1))$. Then we have
\begin{align*}
    &a_n \geq \exp(n \ln\lambda - \frac{1}{\gamma} \int_{0}^{\gamma n + \gamma} \ln(u + 1) du) 
    = \exp \bigg( n \ln\lambda - \frac{1}{\gamma}\big((1 + \gamma n +\gamma) \ln(\gamma n + \gamma+ 1) - \gamma n -\gamma\big)\bigg) \\
    &a_n \leq \exp(n \ln\lambda - \frac{1}{\gamma} \int_{0}^{\gamma n} \ln(u + 1) du) 
    = \exp \bigg( n \ln\lambda - \frac{1}{\gamma}\big((1 + \gamma n) \ln(\gamma n + 1) - \gamma n\big)\bigg)
\end{align*}

Let $g(x) = -x \ln \lambda + (1+x)\ln(1+x) - x$.  
Then $a_n \geq \exp(-\frac{1}{\gamma}(\gamma\ln \lambda - g(\gamma n + \gamma))) = \frac{1}{\lambda}\exp(-\frac{1}{\gamma} g(\gamma n + \gamma))$ and 
$a_n \leq \exp(-\frac{1}{\gamma} g(\gamma n))$.  
Since $g'(x) = -\ln \lambda + \ln(1+x)$ and $g''(x) = 1/(1+x)>0$, $g$ is convex on $(-1,\infty)$ with a unique minimum at $x^* = \lambda - 1$.  We first bound the summation by integrals:
\begin{align*}
    Z &= 1 + \sum_{n=1}^{\infty} a_n 
    \;\leq\; \sum_{n=0}^{\infty} a_n 
    \;=\; \frac{1}{\gamma} \sum_{n=0}^{\infty} \exp\!\left(-\tfrac{1}{\gamma}g(\gamma n)\right)\gamma \\
    &\overset{(a)}{\leq} \frac{1}{\gamma}\int_{0}^{\infty} \exp\!\left(-\tfrac{1}{\gamma}g(x)\right)dx 
       + \exp\!\left(-\tfrac{1}{\gamma}g(x^*)\right).
\end{align*}

$(a)$ follows from unimodal sum-integral comparison, using the convexity of $g(x)$:  
\begin{align*}
\text{If } g(\gamma \lfloor x^*/\gamma \rfloor) \le g(\gamma \lceil x^*/\gamma \rceil)
& \sum_{n=0}^{\lfloor x^*/\gamma \rfloor} \exp\!\left(-\tfrac{1}{\gamma}g(\gamma n)\right)\gamma
  \;\le\; \int_{0}^{\gamma \lfloor x^*/\gamma \rfloor} \exp\!\left(-\tfrac{1}{\gamma}g(x)\right)\,dx
  + \exp\!\left(-\tfrac{1}{\gamma}g(x^*)\right)\gamma,\\
& \sum_{n=\lceil x^*/\gamma \rceil}^{\infty} \exp\!\left(-\tfrac{1}{\gamma}g(\gamma n)\right)\gamma
  \;\le\; \int_{\gamma \lfloor x^*/\gamma \rfloor}^{\infty} \exp\!\left(-\tfrac{1}{\gamma}g(x)\right)\,dx.
\\[1ex]
\text{If } g(\gamma \lfloor x^*/\gamma \rfloor) > g(\gamma \lceil x^*/\gamma \rceil)
& \sum_{n=0}^{\lfloor x^*/\gamma \rfloor} \exp\!\left(-\tfrac{1}{\gamma}g(\gamma n)\right)\gamma
  \;\le\; \int_{0}^{\gamma \lceil x^*/\gamma \rceil} \exp\!\left(-\tfrac{1}{\gamma}g(x)\right)\,dx,\\
& \sum_{n=\lceil x^*/\gamma \rceil}^{\infty} \exp\!\left(-\tfrac{1}{\gamma}g(\gamma n)\right)\gamma
  \;\le\; \int_{\gamma \lceil x^*/\gamma \rceil}^{\infty} \exp\!\left(-\tfrac{1}{\gamma}g(x)\right)\,dx
  + \exp\!\left(-\tfrac{1}{\gamma}g(x^*)\right)\gamma.
\end{align*}

By Taylor Expansion to 2nd order residual, we obtain $g(x) \geq g(x^*) + g'(x^*)(x-x^*) + \frac{(x-x^*)^2}{2(x^* + 1 +(e^{2}-1)\lambda)}$, for $\forall x \in [0, x^* + (e^{2}-1)\lambda]$. Decompose the above integral into two parts at point $(e^{2}-1)\lambda$, we have Gaussian tail via convexity for the first part:
\begin{align}
    \int_{0}^{x^* + (e^2 - 1)\lambda} \exp(- \frac{1}{\gamma} g(x)) dx &\leq \int_{0}^{x^* + (e^2 - 1)\lambda} \exp(- \frac{1}{\gamma} g(x^*)) \exp(-\frac{(x-x^*)^2}{2\gamma(x^* + 1 + R)}) dx \notag\\
    &\leq \exp(- \frac{1}{\gamma} g(x^*)) \int_{-\infty}^{\infty} \exp(-\frac{(x-x^*)^2}{2e^2 \gamma \lambda }) dx \notag\\
    &\leq \exp(- \frac{1}{\gamma} g(x^*)) \sqrt{2\pi e^2 \gamma \lambda} \label{eq: Gaussian Tail for P_0}
\end{align}
Beyond this separating point, we have linear lower bound for $g(x)$ from convexity. $g(x) -g(x^*) = (x+1)\ln(x+1) - \lambda \ln \lambda +(x-x^*) \ln \lambda - (x-x^*) \geq (x+1) (2+\ln \lambda) - \lambda \ln \lambda - (x-x^*)(\ln \lambda -1) \geq (x-x^*),$ for$\forall x\geq x^* + (e^2-1)\lambda$. Thus:
\begin{align}
    \int_{x^* + (e^2 - 1)\lambda}^{\infty} \exp (- \frac{1}{\gamma} g(x) dx &\leq \int_{x^* + (e^2 - 1)\lambda}^{\infty} \exp(- \frac{1}{\gamma} g(x^*)) \exp(-\frac{(x-x^*)}{\gamma}) dx \notag\\
    &\leq \exp(- \frac{1}{\gamma} g(x^*)) \int_{0}^{\infty} \exp(-\frac{1}{\gamma}x ) dx \notag\\
    &= \gamma \exp(- \frac{1}{\gamma} g(x^*)) \label{eq: Exponential Tail for P_0}
\end{align}

Combining \eqref{eq: Gaussian Tail for P_0} and \eqref{eq: Exponential Tail for P_0}, we have the upper bound for $Z$:
\begin{align}
    Z \leq \frac{1}{\gamma} (\gamma+e\sqrt{2\pi \lambda}\sqrt{\gamma} + \gamma) \exp(- \frac{1}{\gamma} g(x^*)) \leq \frac{1}{\sqrt{\gamma}} (2\sqrt{\mu}+\sqrt{2\pi e^2 \lambda}) \exp(\frac{1}{\gamma}(\lambda - 1 - \ln \lambda)) \label{eq: upper bound for Z}
\end{align}

For the lower bound of $Z$, noticing that $Z \geq \frac{1}{\lambda} \sum_{0}^{\infty}\exp(-\frac{1}{\gamma}g(\gamma n + \gamma))$ and by Assumption \ref{ass:heavy_overload} $x^* = \lambda - 1 \geq C \sqrt{\gamma}$, we have
\begin{align}
    Z &\overset{(a)}{\geq} \frac{1}{\lambda} \bigg( \frac{1}{\gamma}\int_{x^*}^{x^* + \sqrt{\gamma}} \exp(-\frac{1}{\gamma}g(x)) dx + \exp(-\frac{1}{\gamma}g(x^*- \gamma)) - \exp(-\frac{1}{\gamma}g(x^*)) \bigg) \notag \\
    &\geq \frac{1}{\lambda} \bigg( \frac{1}{\sqrt{\gamma}} \exp(-\frac{1}{\gamma}g(x^* +\sqrt{\gamma})) + \exp(-\frac{1}{\gamma}g(x^*- \gamma)) - \exp(-\frac{1}{\gamma}g(x^*)) \bigg) \notag \\
    &\overset{(b)}{\geq} \frac{1}{\lambda} \bigg( \frac{1}{\sqrt{\gamma}} \exp(-\frac{1}{\gamma}g(x^*) - \frac{1}{\sqrt{\gamma}}\ln (\frac{\lambda+\sqrt{\gamma}}{\lambda})) + \exp(-\frac{1}{\gamma}g(x^*) - \ln (\frac{\lambda}{\lambda - \gamma})) - \exp(-\frac{1}{\gamma}g(x^*)) \bigg) \notag \\ 
    &= \frac{1}{\lambda} \bigg[ \frac{1}{\sqrt{\gamma}} (\frac{\lambda}{\lambda+\sqrt{\gamma}})^{\frac{1}{\sqrt{\gamma}}} - \frac{\gamma}{\lambda} \bigg] \exp(-\frac{1}{\gamma}g(x^*)) \notag \\
    &\overset{(c)}{\geq}\frac{1}{\lambda\sqrt{\gamma}} \bigg[ (\frac{1 + C\sqrt{\gamma}}{1 + (C+1)\sqrt{\gamma}})^{\frac{1}{\sqrt{\gamma}}} - \frac{\gamma^{3/2}}{1+C\sqrt{\gamma}} \bigg] \exp(-\frac{1}{\gamma}g(x^*)) \notag \\
    &\overset{(d)}{\geq} \frac{1}{\lambda\sqrt{\gamma}} (e^{-1}\wedge \frac{C^2+C-1}{(1+C)(2+C)}) \exp(-\frac{1}{\gamma}g(x^*)) \notag \\
    &= (e^{-1}\wedge \frac{C^2+C-1}{(1+C)(2+C)}) \frac{1}{\lambda\sqrt{\gamma}} \exp(\frac{1}{\gamma}(\lambda - 1 - \ln \lambda)) \label{eq: lower bound for Z}
\end{align}

Where $(a)$ holds from unimodal sum-integral comparison: 
\begin{align*}
    \sum_{n=\lfloor \frac{x^*-\gamma}{\gamma}\rfloor}^{\lceil \frac{x^*+\sqrt{\gamma}}{\gamma}\rceil} \exp(-\frac{1}{\gamma}g(\gamma n + \gamma)) &\geq \frac{1}{\gamma} \int_{\lfloor \frac{x^*}{\gamma}\rfloor \gamma + \gamma}^{x^* + \sqrt{\gamma}} \exp(-\frac{1}{\gamma}g(x)) dx  + \exp(-\frac{1}{\gamma}g(\gamma \lfloor \frac{x^*-\gamma}{\gamma}\rfloor + \gamma)) \\
    &\geq \frac{1}{\gamma} \int_{\lfloor \frac{x^*}{\gamma}\rfloor \gamma + \gamma}^{x^* + \sqrt{\gamma}} \exp(-\frac{1}{\gamma}g(x)) dx  +\frac{1}{\gamma} \int_{x^*}^{\lfloor \frac{x^*}{\gamma}\rfloor \gamma + \gamma} \exp(-\frac{1}{\gamma}g(x)) dx  \\
    & \quad + \exp(-\frac{1}{\gamma}g(x^*- \gamma)) - \exp(-\frac{1}{\gamma}g(x^*)) 
\end{align*}
Here we use $\lambda - 1 > \gamma$ by Assumption \ref{ass:heavy_overload}. $(b)$ holds from the convexity of $g(x)$: $g(x) \leq g(x^*) + g'(x)(x-x^*)$. $(c)$ holds from Assumption \ref{ass:heavy_overload}. $(d)$ holds from the fact that function $f(\gamma):=\Big(\frac{1+C\sqrt{\gamma}}{1+(C+1)\sqrt{\gamma}}\Big)^{\frac{1}{\sqrt{\gamma}}}-\frac{\gamma}{1+C\sqrt{\gamma}}$ obtains its infimum at the boundary $\gamma\in\{0,1\}$. To see this, we check the concavity of $f(x)$ as combination of $h(x):=\Big(\frac{1+Cx}{1+(C+1)x}\Big)^{\frac{1}{x}}$ and $j(x):=-\frac{x^3}{1+Cx}$. For $h(x)$, we define $g(x):=\ln h(x) = \frac{1}{x} \ln(\frac{1 + Cx}{1 + (C+1)x})$, and auxiliary function $a_x(u):= \frac{C+u}{(1+x(C+u))^2} $ then:
\begin{align*}
    h''(x) &= \exp(g(x)) (g'(x)^2 + g(x) g''(x)) \\
    &\overset{(a)}{=} \exp(g(x)) \left( \bigg( \int_{0}^{1} \frac{C+u}{(1+x(C+u))^2} du\bigg)^2 -2\int_{0}^{1} \frac{(C+u)^2}{(1+ x(C+u))^3} du \right) \\
    &\overset{(b)}{=} \exp(g(x)) \left( \int_{0}^{1} \int_{0}^{1} a_x(u) a_x(v) - 2 (1+x(C+u))a_x^2(u) dudv \right) \\
    &\overset{(c)}{\leq}\frac{1}{2} \exp(g(x)) \bigg[  \int_{0}^{1} \int_{0}^{1} 2a_x(u) a_x(v) - 2 (a_x^2(u)+a_x^2(v)) dudv \bigg] < 0
\end{align*} 
Where $(a)$ holds from $g(x) = - \frac{1}{x} \int_{Cx}^{(C+1)x} \frac{1}{1+u} du = \int_{0}^{1} \frac{du}{1+x(C+u)}$. $(b)$ and $(c)$ hold from symmetry of $u, v$. For $j(x)$, we have $j''(x) = -\frac{2x(C^2x^2+3Cx + 3)}{(1+Cx)^3} < 0$. Thus, $f''(x) = h''(x) + j''(x) < 0$, and $f(x)$ obtains its infimum at the boundary, and $(d)$ holds with $f(0) = e^{-1}, f(1) = \frac{C^2+C-1}{(1+C)(2+C)}$.

Combining \eqref{eq: upper bound for Z} and \eqref{eq: lower bound for Z}, we acquire an order-wise tight interval for $\mathbb{P}(q = 0)$ w.r.t. $\gamma$:

\begin{align}
    \mathbb{P}(q = 0) &\leq \lambda(e\vee \frac{(1+C)(2+C)}{C^2+C-1})\cdot \sqrt{\gamma} \exp(-\frac{1}{\gamma}(\lambda - 1 - \ln \lambda)) \label{eq: tight bound for P_0}\\
    \mathbb{P}(q = 0) &\geq \frac{1}{(2\sqrt{\gamma} + \sqrt{2\pi e^2 \lambda})} \cdot \sqrt{\gamma} \exp(-\frac{1}{\gamma}(\lambda - 1 - \ln \lambda)) \notag
\end{align}  \end{proof}

\subsection{Proof for Proposition \ref{pro: Lp norm of q_infty}: Lp norm for normalized queue} \label{apx: proof for Lp norm of q_infty}

Now we establish the $L^p$ norm bound for centered queue length, recalling that $\hat{q}:=q - \frac{\lambda - \mu}{\gamma}$. $L^p$ norm for normalized queue $\tilde{q}$ would follow directly from the relation $\tilde{q} = \frac{\sqrt{\gamma}}{\sqrt{\lambda}} \hat{q}$. We use the Lyapunov drift method with exponential test function to derive bounds for the moment generating function (MGF) of $q$, and then derive $L^p$ norm bounds based on this MGF bound. In this section, we separate the proof into three parts: an upper bound on $\mathbb{E}[\exp(\theta |\hat{q}|)]$ for $\theta>0$, upper bound and lower bounds for $L^p$ norm bounds of $\hat{q}$.

\subsubsection{Upper Bound for MGF of Absolute Centered Queue Length: $\mathbb{E}[\exp(\theta |\hat{q}|)]$ with positive $\theta$}
We use Lyapunov drift method with test functions $\exp(\theta q)$ and $\exp(-\theta q)$, $\theta>0$ to derive upper bounds for $\mathbb{E}[\exp(\theta \hat{q})]$ and $\mathbb{E}[\exp(-\theta \hat{q})]$ respectively . Combining these two bounds, we obtain the one-sided MGF bound for absolute centered queue length $|\hat{q}|$, i.e., $\mathbb{E}[\exp(\theta |\hat{q}|)]$ with $\theta>0$. The final $L^p$ norm upper bounds would then follow from these bounds on $\mathbb{E}[\exp(\theta |\hat{q}|)]$. In particular, the analysis for the exponent of $\mathbb{E}[\exp(\theta |\hat{q}|)]$ will lead to two complementary $L^p$ norm upper bounds, namely the Sub-exponential and Sub-Poisson bounds. We combine these two upper bounds for the final result.

We start with the analysis for $\mathbb{E}[\exp(\theta q)]$ with $\theta>0$. 
From the stability of this process and existence of MGF for $q$ in \eqref{eq: well-definedness of MGF for SSQ}, we can now apply the zero-drift identity $\mathbb{E}[\mathcal{L}_{\text{SSQ}} f(q)]=0$ to exponential Lyapunov function $f(q)=e^{\theta q}$, leading to the following equation.
\begin{align*}
    0 &= \mathbb{E} [\mathcal{L}_{SSQ} f_\theta (q)] \\
    &= \mathbb{E} \bigg[ \lambda[f_\theta(q+1) - f_\theta(q)] + (\mu + \gamma q) [f_\theta(q-1) - f_\theta(q)] \mathbf{1}_{q \geq 1} \bigg] \\
    &= \mathbb{E}[\lambda e^{\theta q} (e^\theta -1) + \mu e^{\theta q}(e^{-\theta}-1) - \mu \mathbf{1}_{q=0}(e^{-\theta}-1) + \gamma q e^{\theta q}(e^{-\theta}-1)] \\
    &\overset{(a)}{=} -\lambda M(\theta)e^\theta + \mu M(\theta) + \gamma M'(\theta) - \mu \mathbb{P}(q=0).
\end{align*}
Equality $(a)$ holds from the definition of MGF $M(\theta) := \mathbb{E}[e^{\theta q}]$ and its derivative $M'(\theta) = \mathbb{E}[q e^{\theta q}]$. We can interchange the derivative and expectation since $\mathbb{E}[q e^{\theta q}]$ is well-defined from the fact that $q e^{\theta q} \leq e^{(\theta + 1) q}$ a.s. for $\theta>0$ and the MGF exists from \eqref{eq: well-definedness of MGF for SSQ}.
Thus, we obtain an Ordinary Differential Equation (ODE) for the stationary $M(\theta):=\mathbb{E}[e^{\theta q}], \theta>0$:
\begin{align*}
\mu \mathbb{P}(q = 0) = M(\theta)[\mu - \lambda e^\theta] + \gamma M'(\theta).
\end{align*}
After solving this ODE, we have an intermediate expression for $M(\theta)$ in terms of $\mathbb{P}(q=0)$,
\begin{align}
    M(\theta) &= \exp\left(-\frac{\mu}{\gamma}\theta + \frac{\lambda}{\gamma}e^\theta - \frac{\lambda}{\gamma}\right)  \notag\\
    &\quad + \exp\left(-\frac{\mu}{\gamma}\theta + \frac{\lambda}{\gamma}e^\theta\right) \cdot 
    \left[\mu \frac{\mathbb{P}(q=0)}{\gamma} \int_{0}^{\theta} 
    \exp\left(\frac{\mu}{\gamma}y - \frac{\lambda}{\gamma}e^{y}\right) dy\right]. \label{MGF for q_infty}
\end{align}
Using the solution of $M(\theta)$ for $q$ in \eqref{MGF for q_infty}, we can bound $\mathbb{E}[e^{\theta \hat{q}}]$ with centered queue length $\hat{q}$.
\begin{align}
    \mathbb{E}\left[ \exp((\hat{q})\theta)\right] &= M(\theta) \cdot \exp(-\frac{\lambda - \mu}{\gamma}\theta) \notag \\
    &= \exp\left(-\frac{\lambda}{\gamma}\theta + \frac{\lambda}{\gamma}e^\theta - \frac{\lambda}{\gamma}\right) (1+\frac{\mu}{\gamma}\mathbb{P}(q=0) \int_{0}^{\theta} \exp\left(\frac{\lambda}{\gamma} + \frac{\mu}{\gamma}y - \frac{\lambda}{\gamma}e^{y}\right) dy) \notag \\
    &\overset{(a)}{\leq } \exp\left(-\frac{\lambda}{\gamma}\theta + \frac{\lambda}{\gamma}e^\theta - \frac{\lambda}{\gamma}\right) (1+\frac{\mu}{\gamma}\mathbb{P}(q=0) \int_{0}^{\infty} \exp\left(-\frac{\lambda - \mu}{\gamma}y\right) dy) \notag \\
    &= \exp\left(-\frac{\lambda}{\gamma}\theta + \frac{\lambda}{\gamma}e^\theta - \frac{\lambda}{\gamma}\right) (1+ \frac{\mu}{\lambda-\mu} \mathbb{P}(q=0) ) \label{eq: MGF upper bound intermediate step SSQ}
\end{align}
Inequality $(a)$ holds since $ \frac{\lambda}{\gamma}e^y \geq \frac{\lambda}{\gamma}(y+1)$.

Next, we analyze $\mathbb{E}[e^{-\theta q}]$ with $\theta>0$. Because the test function $e^{-\theta q}$ is bounded, $\forall \theta > 0$, we can aplly the zero drift identity directly. In particular, we use test function $e^{-\theta q}$, and then bound the Laplace transform of $q$ and $\hat{q}$ as follows.
\begin{align}
    0 &= \mathbb{E}[\mathcal{L}_{SSQ} e^{-\theta q}] = \mathbb{E} \bigg[ \lambda e^{-\theta q} (e^{-\theta} -1) + (\mu + \gamma q) e^{-\theta q}(e^{\theta}-1) \mathbf{1}_{q \geq 1} \bigg] \notag\\
    0&= -\lambda \mathbb{E}[e^{-\theta q}] e^{-\theta} + \mu \mathbb{E}[e^{-\theta q}] + \gamma \mathbb{E}[q e^{-\theta q}] - \mu \mathbb{P}(q=0) \notag\\ 
    &\leq -\lambda \mathbb{E}[e^{-\theta q}] e^{-\theta} + \mu \mathbb{E}[e^{-\theta q}] + \gamma \mathbb{E}[q e^{-\theta q}] \notag\\
    \mathbb{E}[e^{-\theta q}] &\overset{(a)}{\leq} \exp\left(\frac{\lambda}{\gamma}e^{-\theta} + \frac{\mu}{\gamma} \theta - \frac{\lambda}{\gamma}\right) \notag\\
    \mathbb{E}[e^{-\theta \hat{q}}] &= \mathbb{E}[e^{-\theta q}] \cdot \exp(\frac{\lambda - \mu}{\gamma}\theta) \notag\\
    &\leq \exp\left( \frac{\lambda}{\gamma}e^{-\theta} + \frac{\lambda}{\gamma} \theta + \frac{\lambda}{\gamma} \right) \leq \exp\left( \frac{\lambda}{\gamma}e^{\theta} - \frac{\lambda}{\gamma} \theta + \frac{\lambda}{\gamma} \right) \label{eq: Laplace upper bound intermediate step SSQ}
\end{align}
Inequality $(a)$ holds from first writing the differential inequality for $\mathbb{E}[e^{-\theta q}]$ and Gronwall's inequality. Again, we can interchnage the derivative and expectation since $\mathbb{E}[q e^{-\theta q}]$ is well-defined from the boundedness that $q e^{-\theta q} \leq \frac{1}{\theta}$.
Combining the two bounds in \eqref{eq: MGF upper bound intermediate step SSQ} and \eqref{eq: Laplace upper bound intermediate step SSQ}, we can bound the exponential of absolute centered queue length $|\hat{q}|$, i.e., $\mathbb{E}[\exp(\theta |\hat{q}|)]$ with $\theta>0$ using inequality
\begin{align*}
\mathbb{E}\left[ \exp(\theta |\hat{q}|)\right] \leq \mathbb{E} \left[ \exp(\theta (\hat{q}))\right] + \mathbb{E}\left[ \exp(-\theta (\hat{q}))\right].
\end{align*}
 
\subsubsection{Lp norm upper bounds for centered queue length $\hat{q}$}
In this part, we build on the bounds for the exponential moment $\mathbb{E}[\exp(\theta |\hat{q}|)]$ (for $\theta>0$) established in the previous section, and convert them into upper bounds on the $L^p$ norm of the centered queue length $\hat{q}$. In the upper bound \eqref{eq: Sub-Poisson upper bound on L-p norm} of Proposition \ref{pro: Lp norm of q_infty}, we present two types of $L^p$ bounds: a sub-exponential bound and a sub-Poisson bound, stiching together by taking the minimum.
 Mirroring the “small deviation $\leftrightarrow$ Gaussian” and “large deviation $\leftrightarrow$ Poisson” intuition in Figure \ref{fig: Phase Transition Diagram}, these two bounds correspond to analyzing the behavior of $\mathbb{E}[\exp(\theta |\hat{q}|)]$ in the regimes of small and large $\theta$, respectively.

We first exploit the quadratic-in-$\theta$ structure of the exponent to obtain sub-exponential $L^p$ bounds. We then leverage the Poisson-like exponent structure to derive sub-Poisson $L^p$ bounds, which are complementary and particularly useful for large $p$. We begin with the sub-exponential $L^p$ bounds.
\begin{align}
    \mathbb{E}\left[ \exp(\theta |\hat{q}|)\right] &\leq \mathbb{E} \left[ \exp(\theta (\hat{q}))\right] + \mathbb{E}\left[ \exp(-\theta (\hat{q}))\right] \notag\\
    &\overset{(a)}{\leq} (2 + \frac{\mu}{\lambda - \mu} \mathbb{P}(q=0)) \cdot \exp\left( \frac{\lambda}{\gamma}e^{\theta} - \frac{\lambda}{\gamma} \theta + \frac{\lambda}{\gamma} \right) \notag\\
    &\overset{(b)}{\leq} (2 + C_{\lambda,\mu}' \cdot \frac{\mu}{\lambda-\mu} \frac{\sqrt{\gamma}}{\sqrt{\mu}}  \exp(-\frac{1}{\gamma/\mu}(\lambda/\mu - 1 -  \ln (\frac{\lambda}{\mu})))) \cdot \exp\left( \frac{\lambda}{\gamma}e^{\theta} - \frac{\lambda}{\gamma} \theta + \frac{\lambda}{\gamma} \right) \notag\\
    &\overset{(c)}{\leq} (2+ C_{\lambda,\mu}' \frac{\mu}{\lambda-\mu} \frac{\sqrt{\gamma}}{\sqrt{\mu}} \exp(- \frac{(\lambda -\mu)^2}{2\lambda  \gamma})) \cdot \exp\left( \frac{\lambda}{\gamma}e^{\theta} - \frac{\lambda}{\gamma} \theta + \frac{\lambda}{\gamma} \right) \notag\\
    &\overset{(d)}{\leq} (2 + C_{\lambda,\mu}' \cdot \frac{\mu}{\lambda-\mu} \frac{\sqrt{\gamma}}{\sqrt{\mu}} \cdot \frac{\sqrt{2\lambda \gamma}}{(\lambda-\mu)}) \cdot \exp\left( \frac{\lambda}{\gamma}e^{\theta} - \frac{\lambda}{\gamma} \theta + \frac{\lambda}{\gamma} \right) \notag\\ 
    &\overset{(e)}{\leq} (2+ C_{\lambda,\mu}' \sqrt{2})\exp\left( \frac{\lambda}{\gamma}e^{\theta} - \frac{\lambda}{\gamma} \theta + \frac{\lambda}{\gamma} \right) \label{eq: two-sided MGF upper bound SSQ}\\
    &\overset{(f)}{\leq} (2+C_{\lambda,\mu}' \sqrt{2}) \exp\left( \frac{1}{2} \frac{2\lambda}{\gamma}\theta^2\right), \quad \forall \theta\in [0,1]. \label{eq: two-sided MGF sub-expo upper bound SSQ}
\end{align}
Inequality $(a)$ holds from combining \eqref{eq: MGF upper bound intermediate step SSQ} and \eqref{eq: Laplace upper bound intermediate step SSQ}. Inequality $(b)$ holds from Lemma \ref{lem: tight bound on P(q_infty = 0)}. Inequality $(c)$ is from the crude bound $\lambda/\mu - 1 - \ln (\lambda/\mu) \geq \frac{(\lambda -\mu)^2}{2\lambda\mu}$. Inequality $(d)$ is from the fact that polynomial function is upper bounded by 
exponential function, i.e., 
\begin{align}
    e^x &\geq x^y /\Gamma(y+1), \quad \forall x > 0, y > 0. \label{eq: Exp Dominating Polynomial Stirling Bound}
\end{align}
Inequality $(e)$ holds from heavily overloaded Assumption \ref{ass:heavy_overload}, and $C>1$ in Assumption \ref{ass:heavy_overload}.
Inequality $(f)$ holds from taylor expansion for $\exp(\theta)$ with residual term. We elaborate the inequality $(d)$ that polynomial function is upper bounded by exponential function in details here. Specifically, we plug in $x=\frac{(\lambda -\mu)^2}{2\lambda\mu}$ and $y=1/2$ into \eqref{eq: Exp Dominating Polynomial Stirling Bound}. With $\Gamma(3/2)\leq \Gamma(1+1)=1$ , we then have inequality $(d)$.
Now it suffices to obtain the above inequality \eqref{eq: Exp Dominating Polynomial Stirling Bound}. To see this, we study the monotonicity of function $g(x) := e^{x}/x^y$, which has minimizer $x^* = y$ from following derivation.
\begin{align*}
    g'(x) = \frac{e^x}{x^y} (1 - \frac{y}{x}) = 0 \implies x^* = y,\quad  g''(x) = \frac{e^x}{x^y} (1 - \frac{y}{x} + \frac{y(y+1)}{x^2}) > 0, \forall x > 0, y >0
\end{align*}
Thus it suffices to show $\Gamma(y+1) >y^ye^{-y}$ for $y >0$. We define $h(y):=\ln \Gamma(y+1) - y\ln y +y$. We first have its limit at 0,
\begin{align*}
    \lim_{y \to 0^+} h(y) = \lim_{y \to 0^+} \ln \Gamma(y+1) - y\ln y + y = 0
\end{align*}
Then we check the monotonicity of $h(y)$:
\begin{align*}
    h'(y) &= \frac{\Gamma'(y+1)}{\Gamma(y+1)} - \ln y \overset{(a)}{=} \psi(y+1) - \ln y \\
        &\overset{(b)}{=} \psi(y) + \frac{1}{y} - \ln y \overset{(c)}{>} 0 , \quad \forall y >0
\end{align*}
Here $\psi(y)$ is the digamma function in equality $(a)$. Equality $(b)$ holds from the recurrence relation of digamma function. Inequality $(c)$ holds from \cite[Inequality 2.2]{alzer1997some}, where the author shows that $\psi(y)>\frac{1}{y}+\ln(y)$ for all $y>0$. 


Coming back to the sub-exponential bound \eqref{eq: two-sided MGF sub-expo upper bound SSQ}, we have shown that $\hat{q} \in SE(\frac{2\lambda}{\gamma}, 1)$ with pre-exponential constant $2+C_{\lambda,\mu}' \sqrt{2}$.
Using lemma \ref{lem: sub-exponential Lp norm}, we obtain sub-exponential $L^p$ norm bound.
\begin{align}
    \|q - \frac{\lambda - \mu}{\gamma}\|_{L^p} &\leq (2+C_{\lambda,\mu}' \frac{\mu}{2\lambda})^{1/p} 2e\sqrt{2\pi}\bigg( \frac{\sqrt{2\lambda}}{\sqrt{\gamma}} \sqrt{p} + p \bigg) \notag\\
    &\overset{\triangle}{=} C_{1,\lambda,\mu} \sqrt{\frac{\lambda}{\gamma}} \sqrt{p} + C_{2,\lambda,\mu} p \label{eq: sub-expo Lp norm SSQ} 
\end{align}
Where $C_{1,\lambda,\mu} = 4e\sqrt{\pi}(2+C_{\lambda,\mu}' \sqrt{2})$ and $C_{2,\lambda,\mu} = 2e\sqrt{2\pi}(2+C_{\lambda,\mu}' \sqrt{2})$.

Meanwhile, starting from the two-sided MGF bound in \eqref{eq: two-sided MGF upper bound SSQ}, we can also derive sub-Poisson type $L^p$ norm bounds for $\hat{q}$, which is complementary to the above sub-exponential bound, especially for large $p$. The proof below is in the spirit of \cite{ahle2021sharpsimpleboundsraw} (see their proof of Theorem 1). For simplicity, we use $b:= \frac{\lambda}{\gamma}$ in the following derivaion. We set the parameter $\theta$ as $\theta_*:= W(\frac{p}{b})$, where $W(\cdot)$ is the Lambert W function satisfying $\frac{p}{b} = \theta_*e^{\theta_*}$.
\begin{align}
    \mathbb{E}\bigl[|\hat{q}|^p\bigr]
    &\overset{(a)}{\le}
    \mathbb{E}\bigl[e^{\theta_*|\hat{q}|}\bigr]\left(\frac{p}{e\theta_*}\right)^p \notag\\ 
    &\overset{(b)}{\le}
    \bigl(2+ C_{\lambda,\mu}' \sqrt{2}\bigr)
    \exp\!\Bigl(b e^{\theta_*}-b\theta_*+b\Bigr)
    \left(\frac{p}{e\theta_*}\right)^p \notag\\ 
    &=
    \bigl(2+ C_{\lambda,\mu}' \sqrt{2}\bigr)
    \exp\!\Bigl(b e^{\theta_*}-b\theta_*+b\Bigr)
    \left(\frac{p}{e\theta_* b}\right)^p b^p \notag\\ 
    &\overset{(c)}{=}
    \bigl(2+ C_{\lambda,\mu}' \sqrt{2}\bigr)
    \exp\!\Bigl(b\bigl(\tfrac{p}{\theta_* b}-1\bigr)\Bigr)
    \Bigl(\frac{e^{\theta_*}}{e}\Bigr)^p b^p \notag\\ 
    &=
    \bigl(2+ C_{\lambda,\mu}' \sqrt{2}\bigr)
    \exp\!\Bigl(p\Bigl(\tfrac{b}{p}\bigl(\tfrac{p}{\theta_* b}-1\bigr)+\theta_*-1\Bigr)\Bigr)
    b^p \notag\\ 
    &\overset{(d)}{\leq}
    \bigl(2+ C_{\lambda,\mu}' \sqrt{2}\bigr)
    \exp\!\Bigl(p \log\!\Bigl(\frac{p/b}{\log(1+p/b)}\Bigr)\Bigr) b^p \notag\\
    \|\hat{q}\|_{L^p}
    &\overset{(e)}{\leq}
      (2+ C_{\lambda,\mu}' \sqrt{2}) \cdot \frac{p}{\log(1+p/b)} \label{eq: sub-Poisson upper bound on L-p norm SSQ}
\end{align}
Here inequality $(a)$ holds from basic inequality $1+z \leq e^z$, where we substitute with $z = \theta_* |\hat{q}|/p-1$ to derive it.
Inequality $(b)$ holds from bounds on two-sided MGF in \eqref{eq: two-sided MGF upper bound SSQ} with Assumption \ref{ass:heavy_overload} and $\gamma\leq \gamma_0$. 
Equality $(c)$ holds from the definition of $\theta_*:=W(p/b)$ and $\frac{p}{b} = \theta_*e^{\theta_*}$.
Inequality $(d)$ holds from \cite[Lemma 2]{ahle2021sharpsimpleboundsraw}, stating that $\frac{1}{W(x)}+W(x)\leq \frac{y}{x}+\log(\frac{x}{\log y})$ for all $y>1$ and $x>0$. Finally, inequality $(e)$ is by raising both sides of above upper bound to power $1/p$, and the pre-exponent constant is no less than 2.

Having obtained the two types of upper bounds in \eqref{eq: sub-expo Lp norm SSQ}  and \eqref{eq: sub-Poisson upper bound on L-p norm SSQ}, we present a short discussion on their comparisons and complementarities. Specifically, we have the following order-wise comparisons.
\begin{align}
    &\sqrt{\frac{\lambda}{\gamma}}\sqrt{p} + p = O(p),\quad \frac{p}{\log(1+p\gamma/\lambda)} = O\left(\frac{p}{\log p}\right), & \text{for large $p$}, \notag\\
    &\sqrt{\frac{\lambda}{\gamma}}\sqrt{p} + p = O\left(\frac{1}{\sqrt{\gamma}}\right),\quad \frac{p}{\log(1+p\gamma/\lambda)} = O\left(\frac{1}{\gamma}\right), & \text{for small $p$}. \label{eq: discussion for p's dependency on Lp norm SSQ}
\end{align}
Note that $\gamma \ll 1$. Therefore, the sub-exponential bound is tighter for small $p$ regime, while the sub-Poisson bound is tighter for large $p$ regime.

\subsubsection{Lower Bounds for Lp norm of centered queue length $\hat{q}$}
Having established the upper bound for $L^p$ norm of $\hat{q}$, we now derive the lower bound. We first recall the following bound for the MGF of $\hat{q}$ with positive $\theta$. Starting from \eqref{MGF for q_infty}, it can be shown from the above derivation as in \eqref{eq: two-sided MGF upper bound SSQ} that
\begin{align}
    1\leq \frac{\mathbb{E}\left[ \exp((\hat{q})\theta)\right] }{\exp\left(-\frac{\lambda}{\gamma}\theta + \frac{\lambda}{\gamma}e^\theta - \frac{\lambda}{\gamma}\right)} \leq\underbrace{1+C_{\lambda,\mu}' \sqrt{2}}_{:=A_{\lambda,\mu}} \label{eq: MGF for q_infty - lambda/mu/gamma, Poisson style}
\end{align}
with $C_{\lambda,\mu}'$ being the constant in Lemma \ref{lem: tight bound on P(q_infty = 0)}.
    

At the beginning of this part of proof, we again introduce the notations $b:=\frac{\lambda}{\gamma}$ to simplify the expressions. We first utilize the integration by parts identity to connect the $L^p$ norm with tail probability as follows.
\begin{align}
    \mathbb{E}\left[ |\hat{q}|^p \right] &= p\int_{0}^{\infty} t^{p-1} \mathbb{P}(|\hat{q}| \geq t) dt \geq p\int_{1/2f(p)}^{f(p)} t^{p-1} \mathbb{P}(\hat{q} \geq t) dt \label{eq: L-p norm integral identity} 
\end{align}
The boundary terms for integral, $f(p)$ is chosen as $f(x) := x/\log(1+x/b)$. Now it suffices to lower bound the tail probability $\mathbb{P}(\hat{q} \geq t)$ for $t\in [1/2 f(p), f(p)]$. To this end,
we will use the change of measure technique \cite{theodosopoulos2005reversionchernoffbound} to derive a lower bound on tail probability. 
We change measure from the law $\mathcal{L}(\hat{q})$ to tilted measure in the following way.
\begin{align*}
    Q\{x\} =\frac{e^{\theta_* x}}{M(\theta_*)}\,\mathcal{L}(\hat{q})\{x\}, \; \forall x\in \text{supp}(\mathcal{L}(\hat{q})), \quad M(\theta_*) = \mathbb{E}_{x\sim\mathcal{L}(\hat{q})}\left[ e^{\theta_*x}\right], \quad \theta_* := \log (\frac{8t}{b}+1)
\end{align*}
The tilted measure then enables us to lower bound tail probability as follows.
\begin{align*}
    \mathbb{P}(\hat{q} \geq t) &= \mathbb{E}_{x\sim \mathcal{L}(\hat{q})}\left[ \mathbf{1}_{\{x\geq t\}}\right] \\
    &= \mathbb{E}_{x\sim Q}\left[ \mathbf{1}_{\{x\geq t\}}M(\theta_*) e^{-\theta_* x} \right] \\ 
    &\overset{(a)}{\geq} \mathbb{E}_{x\sim Q}\left[ \mathbf{1}_{\{t\leq x \leq t+\delta\}}M(\theta_*) e^{-\theta_* x} \right] \\
    &\geq \underbrace{Q\left(\{t\leq x\leq t+\delta\}\right)}_{(a_1)} \cdot \underbrace{M(\theta_*) e^{-\theta_* (t+\delta)}}_{(a_2)}
\end{align*}
Inequality $(a)$ holds from the choice of $\delta = (14 + 2A_{\lambda,\mu})t \geq 0$.
Now it suffices to lower bound the two terms $(a_1)$ and $(a_2)$ above. In the following, we will use Cantelli inequality to lower bound $(a_1)$, and use the MGF bounds in \eqref{eq: MGF for q_infty - lambda/mu/gamma, Poisson style} to lower bound $(a_2)$. We start with lower bounding $(a_1)$ while denoting $\mu_t := \mathbb{E}_{X\sim Q}[X]$ and $v_t := \mathbb{E}_{X\sim Q}[(X - \mu_t)^2]$ as the mean and variance under tilted measure $Q$.
\begin{align}
    (a_1) &\overset{(a)}{\geq} \frac{(\mu_t - t)^2}{v_t + (\mu_t - t)^2} - \frac{v_t}{v_t + (t+\delta - \mu_t)^2} \notag 
\end{align}
Inequality $(a)$ holds from the two sided Cantelli inequality.
\begin{align*}
    Q(\{\mu_t - c \leq x \leq \mu_t + d\}) &\geq \frac{c^2}{c^2 + v_t} - \frac{v_t}{d^2 + v_t} 
\end{align*}
In order to obtain an explicit lower bound for $(a_1)$, 
we need to check the range of mean $\mu_t$ and variance $v_t$ with respect to deviation $t$ under tilted measure $Q$. We first show that the choice of $t\in [1/2 f(p), f(p)]$ ensures that $t\geq 1/2 b$. Thus $t\geq 1/2 b \geq 1/2$ from Assumption \ref{ass:heavy_overload}. To see this, we use the monotonicity of $f(x)$ illustrated as followed,
\begin{align*}
    f'(x) &= \frac{1}{\log^2(1+x/b)} \cdot \bigg( \log(1+x/b) - \frac{x}{b+x} \bigg) =: \frac{1}{\log^2(1+x/b)} \cdot g(x/b) \\
    g'(x) &= \frac{1}{1+x} - \frac{1}{(1+x)^2} = \frac{x}{(1+x)^2} \geq 0, \forall x \geq 0 \implies g(x/b) \geq g(0) = 0, \forall x \geq 0 \\
    f(x) &\geq \lim_{x\to 0} f(x) = \lim_{x\to 0} \frac{1+x/b}{1/b} = b
\end{align*}
Now we check the range of $\mu_t$ and $v_t$ via cumulant generating function, i.e., $ \log \mathbb{E}[e^{\theta X}]$ of $\hat{q}$ for tilted measure $Q$.
\begin{align}
    \mu_t &= \int_{\mathbb{R}} x \frac{e^{\theta_*x}}{M(\theta_*)} d\pi(x) = \frac{M'(\theta_*)}{M(\theta_*)} = \frac{d}{d\theta} \log M(\theta) \bigg|_{\theta = \theta_*} \notag \\
    &= b (e^{\theta_*} - 1) + \frac{
        \mu/\gamma \cdot \mathbb{P}(q = 0) \exp\left(\frac{\mu}{\gamma}\theta_* +\frac{\lambda}{\gamma} - \frac{\lambda}{\gamma}e^{\theta_*} \right)
    }{
        1 + \frac{\mu}{\gamma} \mathbb{P}(q = 0) \int_{0}^{\theta_*} \exp\left(\frac{\mu}{\gamma}y + \frac{\lambda}{\gamma} - \frac{\lambda}{\gamma}e^{y} \right) dy
    } \notag \\
    &\leq b (e^{\theta_*} - 1) + A_{\lambda,\mu}= 8t + A_{\lambda,\mu} \notag \\
     &\leq (8+2A_{\lambda,\mu})t \label{eq: upper bound on mu_t} \\
    \mu_t &\geq b (e^{\theta_*} - 1) = 8t \label{eq: lower bound on mu_t} \\
    v_t &= \int_{\mathbb{R}} (x - \mu_t)^2 \frac{e^{\theta_*x}}{M(\theta_*)} d\pi(x) = \frac{M''(\theta_*)}{M(\theta_*)} - \mu_t^2 = \frac{d^2}{d\theta^2} \log M(\theta) \bigg|_{\theta = \theta_*} \notag \\
    &=  b e^{\theta_*} - \frac{
        \frac{\mu}{\gamma} \mathbb{P}(q = 0) (e^{\theta_*}\lambda/\gamma - \mu/\gamma) \exp\left(\frac{\mu}{\gamma}\theta_* +\frac{\lambda}{\gamma} - \frac{\lambda}{\gamma}e^{\theta_*} \right) 
    }{
        \left(1 + \frac{\mu}{\gamma} \mathbb{P}(q = 0) \int_{0}^{\theta_*} \exp\left(\frac{\mu}{\gamma}y + \frac{\lambda}{\gamma} - \frac{\lambda}{\gamma}e^{y} \right) dy\right)^2
    } \notag \\
    &\quad \quad - \frac{
         (\mu/\gamma \cdot \mathbb{P}(q = 0) \exp\left(\frac{\mu}{\gamma}\theta_* +\frac{\lambda}{\gamma} - \frac{\lambda}{\gamma}e^{\theta_*} \right))^2
    }{\left(1 + \frac{\mu}{\gamma} \mathbb{P}(q = 0) \int_{0}^{\theta_*} \exp\left(\frac{\mu}{\gamma}y + \frac{\lambda}{\gamma} - \frac{\lambda}{\gamma}e^{y} \right) dy\right)^2} \leq be^{\theta_*} =8t + b \notag\\
    &\overset{(a)}{\leq} 10t \label{eq: upper bound on niu_t}
\end{align}
Inequality $(a)$ holds from $t\geq 1/2 b$.
Now for $(a_1)$, we can lower bound the probability under tilted measure $Q$ by constant as follows.
\begin{align}
    (a_1) &\geq \frac{(\mu_t - t)^2}{v_t + (\mu_t - t)^2} - \frac{v_t}{v_t + (t+\delta - \mu_t)^2} \notag \\
    &\overset{(a)}{\geq} \frac{(7t)^2}{10t + (7t)^2} - \frac{10t}{10t + ((15+2A_{\lambda,\mu} - (8+2A_{\lambda,\mu})t))^2} \notag \\
    &= \frac{49t^2 - 10t}{10t + 49t^2} \notag\\
    &\overset{(b)}{\geq}  \frac{49 \times 1/4 - 10\times 1/2}{10\times1/2 + 49\times1/4} = \frac{29}{69} \label{eq: lower bound on tilted measure, constant}
\end{align}
Inequality $(a)$ holds from combining the ranges on $\mu_t$ and $v_t$ with respect to $t$ in \eqref{eq: upper bound on mu_t}, \eqref{eq: lower bound on mu_t} and \eqref{eq: upper bound on niu_t}. Inequality $(b)$ holds from the range of $t\in [1/2 f(p), f(p)]$ and $f(p)\geq b \geq 1$.


For $(a_2)$, first we recall that $M(\theta_*) \geq \exp(-\frac{\lambda}{\gamma}\theta_* -\frac{\lambda}{\gamma} + \frac{\lambda}{\gamma}e^{\theta_*})$ from \eqref{eq: MGF for q_infty - lambda/mu/gamma, Poisson style}. Thus we have the following lower bound.
\begin{align}
    (a_2) &= M(\theta_*) e^{-\theta_* (t+\delta)} \geq \exp(-\frac{\lambda}{\gamma}\theta_* -\frac{\lambda}{\gamma} + \frac{\lambda}{\gamma}e^{\theta_*} - \theta_*(t+\delta)) \notag \\
    &= \exp\left( b e^{\log(1+\frac{8t}{b})} - b \log(1+\frac{8t}{b}) - b - \log(1+\frac{8t}{b})(t+(14 + 2A_{\lambda,\mu})t) \right) \notag \\
    &= \exp\left( 8t - (b+ (15+A_{\lambda,\mu})t)\log(1+\frac{8t}{b})\right) \notag \\
    &= \exp\left( 8t - b \log(1+\frac{8t}{b}) - (15+A_{\lambda,\mu})t\log(1+\frac{8t}{b}) \right) \notag \\
    &\overset{(a)}{\geq} \exp\left( -(15+A_{\lambda,\mu})t\log(1+\frac{8t}{b}) \right) \notag \\
    &\overset{(b)}{\geq} \exp\left( -(15+A_{\lambda,\mu})f(p)\log(1+\frac{8f(p)}{b}) \right) \notag \\
    &\overset{(b)}{\geq} \exp\left( -(15+A_{\lambda,\mu}) (9p + 8b) \right) \label{eq: lower bound on a_2 for L-p norm for centered queue length} 
\end{align}
Inequality $(a)$ holds from the bound $\log(1+x) \leq x$ for $x\geq 0$. Inequality $(b)$ holds from the choice of $t\in [1/2 f(p), f(p)]$. Inequality
$(c)$ holds from the following fact.
\begin{align*}
    (15+A_{\lambda,\mu})f(p)\log(1+\frac{8f(p)}{b}) &\leq (15+A_{\lambda,\mu}) \bigg( 9p + 8b \bigg) 
\end{align*}
To obtain inequality $(c)$, we develop the monotonicity of the following function, $F(x) := -\log(1+\frac{8x}{\log(1+x)}) + 9\log(1+x) + \frac{8\log(1+x)}{x}$. For $x\geq0$, we have
\begin{align*}
    F'(x) &= \frac{9}{1+x} + \frac{8}{x(1+x)} - \frac{8\log(1+x)}{x^2} \\
    &\quad - \frac{8}{(1+\frac{8x}{\log(1+x)})}(\frac{1}{\log(1+x)} - \frac{x}{(1+x)\log^2(1+x)}) \\
    &= \frac{1}{1+x} + \bigg( \frac{8}{1+x} + \frac{8}{x(1+x)} - \frac{8\log(1+x)}{x^2}\bigg) \\
    &\quad - \frac{8}{(1+\frac{8x}{\log(1+x)})}(\frac{1}{\log(1+x)} - \frac{x}{(1+x)\log^2(1+x)}) \\
    &\overset{(a)}{\geq} \frac{1}{1+x} - \frac{8}{(1+\frac{8x}{\log(1+x)})}(\frac{1}{\log(1+x)} - \frac{x}{(1+x)\log^2(1+x)}) \\
    &\overset{(b)}{\ge} \frac{1}{1+x} - \frac{\log(1+x)}{x} (\frac{1}{\log(1+x)} - \frac{x}{(1+x)\log^2(1+x)}) \\
    &= \frac{1}{1+x} - \frac{1}{x} + \frac{1}{(1+x)\log(1+x)} \\
    &= - \frac{1}{x(1+x)} + \frac{1}{(1+x)\log(1+x)} \overset{(c)}{\geq} 0, \quad \forall x\ge 0. \\
    F(x) &\geq \lim_{x\to 0^+} F(x) =8 - \log(9) >0.
\end{align*}
Inequality $(a)$ holds from the fact that $\log(1+x) \leq x$. Thus $\frac{8}{1+x} + \frac{8}{x} - \frac{8\log(1+x)}{x^2}\geq 0$. Inequality $(b)$ holds from the fact that $1+\frac{8x}{\log(1+x)} \geq \frac{8x}{\log(1+x)}$ for $x\geq 0$. Inequality $(c)$ again holds from $\log(1+x) \leq x$. Using the monotonicity of $F(x)$, we plug in $x:=p/b$, and thus show \eqref{eq: lower bound on a_2 for L-p norm for centered queue length}.
\begin{align*}
    \log(1+\frac{8f(p)}{b}) &\leq 9 \log(1+p/b) + \frac{b}{p}\log(1+p/b) \\
     (15+A_{\lambda,\mu})f(p)\log(1+\frac{8f(p)}{b}) &\leq (15+A_{\lambda,\mu}) \bigg( 9p + 8b \bigg) 
\end{align*} Combining bounds for $(a_1)$ and $(a_2)$ in
\eqref{eq: lower bound on tilted measure, constant} and \eqref{eq: lower bound on a_2 for L-p norm for centered queue length}, we have the lower bound on tail probability as follows.
\begin{align*}
    \mathbb{P}(\hat{q} \geq t) &\geq \frac{29}{69} \exp\left( -(15+A_{\lambda,\mu})(9p + 8b) \right), \quad \forall t\in [1/2 f(p), f(p)] 
\end{align*}
Recall that $f(p) = p/\log(1+p/b)$, and $b = \lambda/\gamma$. Continuing from \eqref{eq: L-p norm integral identity}, we use the above lower bound on tail probability to derive the lower bound on $L^p$ norm of $\hat{q}$ as follows.
\begin{align}
    \mathbb{E}\left[ |\hat{q}|^p \right]
    &\geq p\int_{1/2f(p)}^{f(p)} t^{p-1} \mathbb{P}(\hat{q} \geq t) dt \notag \\
    &\geq p (1/2f(p))^{p-1} \int_{1/2f(p)}^{f(p)} \frac{29}{69} \exp\left( -(15+A_{\lambda,\mu})(9p + 8b) \right) dt \notag \\
    &= \frac{29}{69} p (1/2f(p))^{p} \exp\left( -(15+A_{\lambda,\mu})(9p + 8b) \right) \notag \\
     \left\| \hat{q} \right\|_{L^p} &\geq (\frac{29}{69} p )^{1/p} 1/2f(p) \exp (-(15+A_{\lambda,\mu})(9 + 8b/p)) \notag \\
    &= \underbrace{(\frac{29}{69} p )^{1/p} \exp(-9(15+A_{\lambda,\mu}))}_{:= C_{4,\lambda,\mu}} \exp(-8(15+A_{\lambda,\mu})b/p) \frac{p}{2\log(1+p/b)} \label{eq: sub-Poisson lower bound on L-p norm SSQ}
\end{align}


\paragraph{Results} Combining the sub-exponential upper bound in \eqref{eq: sub-expo Lp norm SSQ}, the sub-Poisson upper and lower bounds in \eqref{eq: sub-Poisson lower bound on L-p norm SSQ}, we summarize the final result for $L^p$ norm of centered queue length $\hat{q}$ as follows.
\begin{align}
    \| \hat{q} \|_{L^p} &\leq \min\{C_{1,\lambda,\mu}\sqrt{\frac{1}{\gamma}} \sqrt{p} + C_{2,\lambda,\mu} p \notag \\
    &\quad, C_{3,\lambda,\mu} \frac{p}{\log(1+\gamma p/\lambda)}\} \notag \\
    &\geq C_{4,\lambda,\mu} \exp\left( -C_{5,\lambda,\mu} \frac{1}{ \gamma p} \right) \frac{p}{\log(1+\gamma p/\lambda)} \label{eq: constants for L-p norm}
\end{align}
Where $C_{1,\lambda,\mu} = 2e\sqrt{2\pi\lambda}(2+ \frac{ C_{\lambda,\mu}'^2}{4})$ and $C_{2,\lambda,\mu} = 2e\sqrt{2\pi}$. $C_{3,\lambda,\mu} = 2+ 2A_{\lambda,\mu}$, $C_{4,\lambda,\mu} = (\frac{29}{69}*2) \cdot \exp(-9(15+A_{\lambda,\mu}))$, $C_{5,\lambda,\mu} = (120 + 8A_{\lambda,\mu})\lambda$, and $A_{\lambda,\mu} = C_{\lambda,\mu}' \cdot (\frac{\lambda}{C^2\mu})^{1/(2-4\alpha)} \Gamma(1/(2-4\alpha) +1)$. Constant $C_{\lambda,\mu}'$ is defined in Lemma \ref{lem: tight bound on P(q_infty = 0)} and $C$ is in Assumption \ref{ass:heavy_overload}.

\subsection[Proof for Theorem \ref{thm: Gaussian Wasserstein-$p$ upper bound for M/M/1+M}: Wasserstein-$p$ distance between q and Z]{Proof for Theorem \ref{thm: Gaussian Wasserstein-$p$ upper bound for M/M/1+M}: Wasserstein-$p$ distance between $\tilde{q}$ and $Z$} \label{sec: proof for W-p SSQ}
In this section, we will obtain different upper and lower bounds for Wasserstein-$p$ distance between $\mathcal{L}(\tilde{q})$ and $\mathcal{L}(Z)$. As mentioned in the proof sketch section \ref{sec: proof sketch}, we utilize Stein's method and traingle inequality to acquire different upper bounds, while we use traingle inequality and quantile coupling to acquire different lower bounds. We will illustrate these results via different techniques in the sequel. At the start of this section, we recall our assumption that $\gamma \leq \gamma_0$. This range of $\gamma$ provides pre-limit version of the limit results since the latter studies $\gamma \to 0$.

We start with Stein's method. In this part of proof, we assume that $p$ satisfies $1 \leq p \leq \frac{\lambda}{2\gamma}$. From Lemma \ref{lem: Wasserstein-$p$ bound SSQ reduction}, we reduce upper bounding Wasserstein-$p$ distance between $\tilde{q}$ and $\mathcal{L}(Z)$ into upper bounding idle time proportion $\mathbb{P}(q=0)$ and $L^p$ norm bound $\|\hat{q}\|_{L^p}$ as follows,
    \begin{align*}
        W_p(\mathcal{L}(\tilde{q}), \mathcal{L}(Z))&\leq C_{10, \lambda, \mu} \cdot \big(p\sqrt{\gamma} + \frac{1}{\sqrt{\gamma}} \|\mathbf{1}_{q=0} \|_{L^p} +\gamma\sqrt{p} \| q - \frac{\lambda-\mu}{\gamma}\|_{L^p} \big).
    \end{align*}
Using this reduced form, we combine the bounds for $\mathbb{P}(q=0)$ in Lemma \ref{lem: tight bound on P(q_infty = 0)} and $L^p$ norm of $\hat{q}$ in Proposition \ref{pro: Lp norm of q_infty} to obtain the following upper bound for Wasserstein-$p$ distance between $\tilde{q}$ and $\mathcal{L}(Z)$.
\begin{align}
    W_p(&\mathcal{L}(\tilde{q}) , \mathcal{L}(Z))\leq \sqrt{2}\bigg( \frac{4e\sqrt{2\pi}+2e^2}{\sqrt{\lambda}} + ((e+8e^2+\sqrt{2}e^2)\sqrt{2\pi}/\sqrt{\lambda}) (C_{1,\lambda,\mu} + C_{2,\lambda,\mu}\frac{\lambda}{\sqrt{2}}) \bigg)\cdot\gamma\sqrt{p} \notag\\ 
    &+ \sqrt{2}(1+2e\sqrt{\pi}+ 8\sqrt{2}e^2 + 2e^2)\frac{\mu}{\sqrt{\lambda}}(C_{\lambda,\mu}' \lor 1) \cdot \frac{1}{\sqrt{\gamma}} \exp \left( -\frac{1}{\gamma/\mu} (\lambda/\mu - 1 - \ln (\lambda/\mu))\right) \label{eq: Wasserstein-$p$ upper bound SSQ, Stein}
\end{align}

\paragraph{Triangle Inequality}
Meanwhile, from triangle inequality, we can relax the assumption on $p$ in above Stein's method. Specifically, we can drop the assumption that $p \leq \frac{\lambda}{2\gamma}$. Using triangle inequality, we upper bound Wasserstein-$p$ distance for all $p > 1$ as 
\begin{align}
    W_p(\mathcal{L}(\tilde{q}), \mathcal{L}(Z)) &= \inf_{\pi \in \Pi(\pi, \mathcal{L}(Z))} \mathbb{E}_{(X,Y) \sim \pi} [|X-Y|^p]^{1/p} \notag\\
    &\leq  \sqrt{\frac{\gamma}{\lambda}}\left\| (q - \frac{\lambda - \mu}{\gamma}) \right\|_{L^p} + \left\| Z \right\|_{L^p} \overset{(a)}{\leq} \sqrt{\frac{\gamma}{\lambda}}\left\| \hat{q} \right\|_{L^p} + 4e \sqrt{2\pi} \sqrt{p} 
\label{eq: Wasserstein-$p$ upper bound SSQ, Triangle}
\end{align}
Here $(a)$ is from the $L^p$ norm of Gaussian random variable as in \eqref{eq: Lp norm of Z}. The opposite direction also holds from triangle inequality as follows.
\begin{align}
    \mathcal{W}_{p}(\mathcal{L}(\tilde{q}), \mathcal{L}(Z)) &= \inf_{\pi \in \Pi(\pi, \mathcal{L}(Z))} \mathbb{E}_{(X,Y) \sim \pi} [|X-Y|^p]^{1/p} \notag\\
    &\geq  \sqrt{\frac{\gamma}{\lambda}}\left\| \hat{q} \right\|_{L^p} - \left\| Z \right\|_{L^p} \label{eq: Gaussian Wasserstein-$p$ Lower L^p norm, Triangle} 
\end{align}
Such upper and lower bounds via triangle inequality reduce the problem of upper and lower bounding Wasserstein-$p$ distance into bounding $L^p$ norm of centered queue length $\hat{q}$. Thus, combining the bounds for $L^p$ norm of $\hat{q}$ in Proposition \ref{pro: Lp norm of q_infty}, we have the following upper bound for Wasserstein-$p$ distance between $\tilde{q}$ and $\mathcal{L}(Z)$.
\begin{align*}
    W_p(&\mathcal{L}_{\tilde{q}} , \mathcal{L}(Z))\leq  
    \bigg((C_{4, \lambda, \mu}' \sqrt{p} + C_{2, \lambda, \mu}' \cdot p\sqrt{\gamma}) \land( C_{5, \lambda, \mu}' \frac{p\sqrt{\gamma}}{\log(1+\gamma p/\lambda)} ) \bigg) \\
    &\geq \frac{C_{4,\lambda,\mu}}{\sqrt{\lambda}} \exp\left( -C_{5,\lambda,\mu} \frac{1}{\gamma p} \right) \frac{p\sqrt{\gamma}}{\log(1+\gamma p/\lambda)} - C_{6,\lambda,\mu}'\sqrt{p}.
\end{align*}
We delay the definitions of constants $C_{i,\lambda,\mu}'$ for $i=2,4,5,6$ to the end of this proof (see \eqref{eq: constants for SSQ Wasserstein-$p$ bounds}). The result from triangle inequality holds for all $p\geq 1$, and it provides an order-wise tight upper bound for large $p$, when $p=\Omega(\frac{1}{\gamma})$. Compared to the result from Stein's method in \eqref{eq: Wasserstein-$p$ upper bound SSQ, Stein}, the result from triangle inequality provides a better upper bound for large $p$, while the result from Stein's method provides a better upper bound for small $p$.

\paragraph{Quantile Coupling: Proof for Proposition \ref{pro: Quantile Coupling Lower Bound for SSQ}} Note that the above lower bound via triangle inequality in \eqref{eq: Gaussian Wasserstein-$p$ Lower L^p norm, Triangle} gives trivial lower bound, i.e., negative value, when $p$ is in the range $p=o(\frac{1}{\gamma})$. The following quantile coupling technique provides a non-trivial lower bound for Wasserstein-$p$ distance and complements the lower bound via triangle inequality in small $p$ regime. In the following, we prove Proposition \ref{pro: Quantile Coupling Lower Bound for SSQ}. For simplicity of expressions, we let $P_0:=\mathbb{P}(q=0)$. Without loss of generality, we assume $\mu=1$ in the following proof due to the scaling property of the Assumption \ref{ass:heavy_overload}. 
We use $F_\pi^{-1}(u)$ to denote the generalized quantile function for distribution $\pi$, defined as $F_\pi^{-1}(u):= \inf\{x: F_\pi(x) \geq u\}$. From \cite[Theorem~8.1]{MR520959}, the quantile coupling between the two $1$-dimensional distributions gives the optimal coupling in terms of Wasserstein-$p$ distance. Thus we have the lower bound as follows.
\begin{align}
    W_p(\mathcal{L}_{\tilde{q}}, \mathcal{L}(Z)) &\overset{(a)}{=} \left[ \int_{0}^{1} \left| F_{\mathcal{L}_{\tilde{q}}}^{-1}(u) - F_{\mathcal{L}(Z)}^{-1}(u) \right|^p du \right]^{1/p} \notag\\
    &\overset{(b)}{\geq} \left[ \int_{0}^{cP_0} \left| F_{\mathcal{L}_{\tilde{q}}}^{-1}(u) - F_{\mathcal{L}(Z)}^{-1}(u) \right|^p du \right]^{1/p} \notag\\
    &\overset{(c)}{=} \left[ \int_{0}^{cP_0} \left|  -\frac{\lambda-\mu}{\sqrt{\lambda \gamma}} -\Phi^{-1}(u) \right|^p du \right]^{1/p} \notag\\
    &= \left[ \int_{0}^{cP_0} \left|  \frac{\lambda-\mu}{\sqrt{\lambda \gamma}} +\Phi^{-1}(u) \right|^p du \right]^{1/p} \notag\\
    &\overset{(d)}{\geq} \left[ \int_{0}^{cP_0}  \frac{1}{\sqrt{\frac{\lambda-\ln \lambda-1}{\gamma} - \ln(cC_{\lambda,\mu}'\sqrt{\gamma})}^p}  du \right]^{1/p} \notag\\
    &= \frac{1}{\sqrt{\frac{\lambda-\ln \lambda-1}{\gamma} - \ln(cC_{\lambda,\mu}'\sqrt{\gamma})}} (cP_0)^{1/p} \notag\\
    &\overset{(e)}{\geq} \frac{\sqrt{2}}{\sqrt{\lambda-\ln \lambda -1 }} \cdot\sqrt{\gamma} (cP_0)^{1/p} \notag\\
    &\overset{(f)}{=}  \underbrace{\frac{\sqrt{2}[2C_{\lambda,\mu}' e^{2\sqrt{2}} \sqrt{\lambda}]^{-1}}{\sqrt{\lambda-\ln \lambda -1 }}}_{:=C_{quantile, \lambda, \mu}} \cdot\sqrt{\gamma} P_0^{1/p}  \label{eq: Gaussian Wasserstein-$p$ Lower bound, P_0}
\end{align}
Equality $(a)$ holds from the quantile coupling. Note that $F_{\mathcal{L}(Z)}^{-1}(U)$ with $U\sim \text{Uniform}(0,1)$ has the same distribution as $Z$, same for $F_{\mathcal{L}_{\tilde{q}}}^{-1}(U)$. Thus the coupling $(F_{\mathcal{L}_{\tilde{q}}}^{-1}(U), F_{\mathcal{L}(Z)}^{-1}(U))$ is a feasible coupling between $\mathcal{L}(\tilde{q})$ and $\mathcal{L}(Z)$. Its optimality follows from \cite[Theorem~8.1]{MR520959}. Inequality $(b)$ holds from restricting the integration domain to $(0, cP_0)$ for constant $c := [2C_{\lambda,\mu}' e^{2\sqrt{2}} \sqrt{\lambda}]^{-1} \leq 1$. For equality $(c)$, we use the fact that $F_{\mathcal{L}_{\tilde{q}}}^{-1}(u) = -\frac{\lambda-\mu}{\sqrt{\lambda \gamma}}$ for all $u \in (0, P_0)$ due to the point mass at $-\frac{\lambda-\mu}{\sqrt{\lambda \gamma}}$ in distribution $\mathcal{L}(\tilde{q})$. The fact $F_{\mathcal{L}_{\tilde{q}}}^{-1}(u) = -\frac{\lambda-\mu}{\sqrt{\lambda \gamma}}$ follows from the definition of generalized quantile function and right-continuity of CDF. Inequality $(d)$ holds from the following bound on quantile function, which we will prove later,
\begin{align}
    \Phi^{-1}(u) + \frac{\lambda-\mu}{\sqrt{\lambda \gamma}} \leq -\frac{1}{\sqrt{\left(\frac{\lambda-\ln \lambda -1}{\gamma} - \ln(cC_{\lambda,\mu}'\sqrt{\gamma})\right)}}, \quad \forall u \in (0, cP_0). \label{eq: quantile function bound for SSQ, used in quantile W-p}
\end{align}
Inequality $(e)$ holds from the fact that $\lambda - \ln \lambda -1 \geq 0 \geq2\gamma \ln(cC_{\lambda,\mu}'\sqrt{\gamma})$ under the assumption that $\gamma \leq \gamma_0$. Equality $(f)$ holds from the definition of $c=[2C_{\lambda,\mu}' e^{2\sqrt{2}} \sqrt{\lambda}]^{-1}$. Thus we have the lower bound for Wasserstein-$p$ distance between $\tilde{q}$ and $\mathcal{L}(Z)$ in terms of idle time proportion $P_0$ as in \eqref{eq: Gaussian Wasserstein-$p$ Lower bound, P_0}. The result in Proposition \ref{pro: Quantile Coupling Lower Bound for SSQ} follows directly from \eqref{eq: Gaussian Wasserstein-$p$ Lower bound, P_0}. Combined with the lower bound for $P_0$ in Lemma \ref{lem: tight bound on P(q_infty = 0)}, we have the final result in Proposition \ref{pro: Quantile Coupling Lower Bound for SSQ}.
\begin{align*}
    W_p(\mathcal{L}_{\tilde{q}}, \mathcal{L}(Z)) &\geq C'_{7, \lambda, \mu} \cdot \gamma \exp\left(-\frac{\lambda/\mu - \ln(\lambda/\mu) -1 }{p\gamma}\right) 
\end{align*}
We again delay the definition of constant $C'_{7, \lambda, \mu}$ to later part of this proof (see \eqref{eq: constants for SSQ Wasserstein-$p$ bounds}).
To this end, it suffices to prove \eqref{eq: quantile function bound for SSQ, used in quantile W-p} to complete the proof for Proposition \ref{pro: Quantile Coupling Lower Bound for SSQ}.

To show \eqref{eq: quantile function bound for SSQ, used in quantile W-p},
 we start from the bound on quantile function for standard normal random variable $Z$. Specifically, we use the following bound on lower quantile $\Phi^{-1}(u)$ from \cite[Theorem~2.1]{inglot2010inequalities}.
\begin{align*}
    \Phi^{-1}(u) &+ \frac{\lambda-\mu}{\sqrt{\lambda \gamma}}\overset{(a)}{\leq} -\sqrt{2\ln(1/u)} + \frac{\ln (4\ln(1/u))+2}{2\sqrt{2\ln(1/u)}} + \frac{\lambda-\mu}{\sqrt{\lambda \gamma}} \\
    &\overset{(b)}{\leq} -\sqrt{2\ln(1/cP_0)} + \frac{\ln (4\ln(1/cP_0))+2}{2\sqrt{2\ln(1/cP_0)}} + \frac{\lambda-\mu}{\sqrt{\lambda \gamma}} \\
    &\overset{(c)}{\leq} -\sqrt{2(A+B)} + \frac{\ln (4(A+B))+2}{2\sqrt{2(A+B)}} + \frac{\lambda-\mu}{\sqrt{\lambda \gamma}} \\
    &= -\frac{\sqrt{2}B+\sqrt{2}A}{\sqrt{A+B}} + \frac{ \ln (4(A+B))+2}{2\sqrt{2(A+B)}} + \frac{\lambda-\mu}{\sqrt{\lambda \gamma}} \\
    &\overset{(d)}{\leq} -\frac{\sqrt{2}B+\sqrt{2}A}{\sqrt{A+B}} + \frac{ \ln (4(A+B))+2}{2\sqrt{2(A+B)}} + \sqrt{2B} \\
    &= \left( -\frac{\sqrt{2}B}{\sqrt{A+B}} + \sqrt{2B} \right) + \left( -\frac{\sqrt{2}A}{\sqrt{A+B}} + \frac{ \ln (4(A+B))+2}{2\sqrt{2(A+B)}} \right) \\
    &= \frac{A\sqrt{2B}}{\sqrt{A+B}(\sqrt{A+B}+\sqrt{B})} + \frac{-4A + \ln (4(A+B))+2}{2\sqrt{2(A+B)}}\\
    &\overset{(e)}{\leq} \frac{A}{\sqrt{A+B}}\cdot \frac{1}{\sqrt{2}} + \frac{-4A + \ln (4(A+B))+2}{2\sqrt{2(A+B)}} \\
    &= \frac{-2A + \ln (4(A+B))+2}{2\sqrt{2(A+B)}} \\
    &= \frac{1}{2\sqrt{2(A+B)}} (2\ln c + 2\ln C_{\lambda,\mu}' + 2 + \ln(4(\lambda-\ln \lambda -1) - 4\gamma\ln(cC_{\lambda,\mu}'\sqrt{\gamma}))) \\
    &\overset{(f)}{\leq}  \frac{1}{2\sqrt{2(A+B)}} (2\ln c+ 2\ln C_{\lambda,\mu}' + 2 + \ln(4\lambda))\\
    &\overset{(g)}{\leq} -\frac{2\sqrt{2}}{2\sqrt{2(A+B)}} 
\end{align*}
Inequality $(a)$ holds from \cite[Theorem~2.1]{inglot2010inequalities} which is quoted as follows.
\begin{align*}
    \Phi^{-1}(u) &\leq -\sqrt{2\ln(1/u)} + \frac{\ln (4\ln(1/u))+2}{2\sqrt{2\ln(1/u)}}, \quad \forall u \in (0,0.1)
\end{align*}
Due to the assumption $\gamma \leq \gamma_0$, we have $cP_0 \leq 1/10$ from the following derivation using the upper bound for $P_0$ in Lemma \ref{lem: tight bound on P(q_infty = 0)}.
\begin{align*}
    cP_0 \leq cC_{\lambda,\mu}' \sqrt{\gamma} \exp(-\frac{\lambda-1-\ln\lambda}{\gamma}) \leq 1/10 \impliedby \gamma \leq \frac{1}{(10cC_{\lambda,\mu}')^2} = \frac{1}{(2^2 5^2 e^{4\sqrt{2}}\lambda)}
\end{align*}
Thus the domain $u \in (0, cP_0)$ is a subset of $(0,0.1)$. Inequality $(b)$ holds from the monotonicity of the right hand side function in $u$ in inequality $(a)$. Thus we plug in $u = cP_0$ and yield right hand side of $(b)$. Inequality $(c)$ holds from the monotonicity of the right hand side function in $P_0$. We plug in the upper bound for $P_0$ from Lemma \ref{lem: tight bound on P(q_infty = 0)} as follows,
\begin{align*}
    P_0 &\leq C_{\lambda,\mu}' \sqrt{\gamma} \exp\left( -\frac{1}{\gamma} (\lambda - \ln \lambda -1) \right).
\end{align*}
We then apply change of variable after plugging in $P_0$ to achieve the right hand side of $(c)$, where we define
$A$ and $B$ as follows
\begin{align*}
    A &:= -\ln(cC_{\lambda,\mu}'\sqrt{\gamma}), \quad B := \frac{\lambda-\ln(\lambda)-1}{\gamma}.
\end{align*}
Inequality $(d)$ holds from the crude bound $\lambda - \ln \lambda - 1 \leq \frac{(\lambda - 1)^2}{2\lambda}$. Thus we have $\frac{\lambda-\mu}{\sqrt{\lambda \gamma}} = \frac{\lambda-1}{\sqrt{\lambda \gamma}} \leq \sqrt{2B}$. Inequality $(e)$ holds from the fact that $A, B \geq 0$ and thus $\sqrt{A+B} + \sqrt{B} \geq 2 \sqrt{B}$. Inequality $(f)$ holds from the assumption on $\gamma$ such that
\begin{align*}
    -4\gamma \ln(cC_{\lambda,\mu}'\sqrt{\gamma}) \leq 1, \impliedby \gamma \leq \frac{c^2C_{\lambda,\mu}'^2}{16} = \frac{1}{(2^2 e^{4\sqrt{2}}\lambda)}
\end{align*}
Combined with $\lambda \geq 1$, we have $4\lambda - 4\ln \lambda -4 -4\gamma \ln(cC_{\lambda,\mu}'\sqrt{\gamma}) \leq 4\lambda$. Inequality $(g)$ holds from the choice of $c$ such that $2\ln c + 2\ln C_{\lambda,\mu}' + 2 + \ln(4\lambda) \leq -2\sqrt{2}$.

\paragraph{Result}
In summary, combining the upper bounds from Stein's method in \eqref{eq: Wasserstein-$p$ upper bound SSQ, Stein} and triangle inequality in \eqref{eq: Wasserstein-$p$ upper bound SSQ, Triangle}, and the lower bounds from triangle inequality in \eqref{eq: Gaussian Wasserstein-$p$ Lower L^p norm, Triangle} and quantile coupling in Proposition \ref{pro: Quantile Coupling Lower Bound for SSQ},
 we have the following bounds for $\mathcal{W}_p(\mathcal{L}(\tilde{q}), \mathcal{L}(Z))$ as 
\begin{align*}
    \mathcal{W}_p(\mathcal{L}(\tilde{q}), \mathcal{L}(Z)) 
    &\leq  C_{1, \lambda, \mu}' (\frac{1}{\sqrt{\gamma}})\exp(-\frac{C^2}{2\lambda}\frac{1}{p\gamma^{1-2\alpha}}) + C_{2, \lambda, \mu}' p\sqrt{\gamma} \quad \text{if } 1< p\leq \lambda/(2\gamma) \\
    \mathcal{W}_p(\mathcal{L}(\tilde{q}), \mathcal{L}(Z)) 
    &\leq \bigg((C_{4, \lambda, \mu}' \sqrt{p} + C_{2, \lambda, \mu}' \cdot p\sqrt{\gamma}) \land( C_{5, \lambda, \mu}' \frac{p\sqrt{\gamma}}{\log(1+\gamma p/\lambda)} ) \bigg) \\
    \mathcal{W}_p(\mathcal{L}(\tilde{q}), \mathcal{L}(Z)) &\geq \max \Bigg\{\frac{C_{4,\lambda,\mu}}{\sqrt{\lambda}} \exp\left( -C_{5,\lambda,\mu} \frac{1}{\gamma p} \right) \frac{p\sqrt{\gamma}}{\log(1+\gamma p/\lambda)} - C_{6,\lambda,\mu}'\sqrt{p} \\
    &\quad, C'_{7, \lambda, \mu} \cdot \gamma \exp\left(-\frac{\lambda/\mu - \ln(\lambda/\mu) -1 }{p\gamma}\right) \Bigg\}
\end{align*}
Where the constants $C_{i,\lambda,\mu}, C_{i,\lambda,\mu}'$ are defined as 
\begin{align}
    C_{1,\lambda,\mu}'&:= \sqrt{2}(1+2e\sqrt{\pi}+ 8\sqrt{2}e^2 + 2e^2)\frac{\mu}{\sqrt{\lambda}}(C_{\lambda,\mu}' \lor 1) \notag\\
    C_{2,\lambda,\mu}' &:=\sqrt{2}\bigg( \frac{4e\sqrt{2\pi}+2e^2}{\sqrt{\lambda}} + ((e+8e^2+\sqrt{2}e^2)\sqrt{2\pi}/\sqrt{\lambda}) (C_{1,\lambda,\mu} + 2e\sqrt{\pi}\lambda) \bigg)  \notag\\
    C_{4, \lambda, \mu}' &:= C_{1,\lambda,\mu}+ 2e\sqrt{2\pi}, \quad C_{5, \lambda, \mu}' := 1/\sqrt{\lambda}\cdot (6+ 2C_{\lambda,\mu}') + 2e\sqrt{2\pi}, \notag\\
    C'_{7, \lambda, \mu} &:= \frac{\sqrt{2}}{2C_{\lambda,\mu}' e^{2\sqrt{2}} \sqrt{\lambda} \sqrt{\lambda-\ln \lambda -1 }} \min\left\{\frac{D_{\lambda,\mu}'}{\sqrt{\mu}},1\right\}, \quad C_{1,\lambda,\mu} = 2e\sqrt{2\pi\lambda}(2+ \frac{ C_{\lambda,\mu}'^2}{4}) \notag\\
    C_{4,\lambda,\mu} &:= (\frac{29}{69}*2) \cdot \exp(-9(15+A_{\lambda,\mu}), \quad C_{5,\lambda,\mu} := (120 + 8A_{\lambda,\mu})\lambda\notag\\
    C_{\lambda,\mu}' &=  \lambda/\mu(e\vee \frac{(1+C)(2+C)}{C^2+C-1}), \quad D_{\lambda,\mu}' = \frac{\sqrt{\mu}}{(2\sqrt{\mu} + \sqrt{2\pi e^2 \lambda})} \label{eq: constants for SSQ Wasserstein-$p$ bounds}
\end{align}

Now in order to obtain the final result in Theorem \ref{thm: Gaussian Wasserstein-$p$ upper bound for M/M/1+M}, it remains to analyze the dominant terms in above upper bounds for Wasserstein-$p$ distance between $\mathcal{L}(\tilde{q})$ and $\mathcal{L}(Z)$. In particular, we will focus on showing the order of joint dependency on $p$ and $\gamma$ in above upper and lower bounds. Since there are two variables $p$ and $\gamma$ in above bounds, in the following analysis, we fix $\gamma \leq \gamma_0$ a constant, and view $p$ as our independent variable. Our analysis for dominating terms will discuss different regimes of $p$ as a function of $\gamma$. 
Consequently, we have the following three stage behavior for Wasserstein-$p$ distance as in \eqref{eq: Wasserstein-$p$ bound, first regime}, \eqref{eq: Wasserstein-$p$ bound, second regime}, \eqref{eq: Wasserstein-$p$ bound, third regime} below.

First, when $\ln(p) /\ln(1/\gamma) < 1-2\alpha$, i.e., $\lim_{\gamma \to 0}\gamma^{1-2\alpha}p \to 0$, we show the dominating term for Wasserstein-$p$ is $p\sqrt{\gamma}$ using Wasserstein-$p$ bound \eqref{eq: Stein Wasserstein-$p$ upper p's function}. Particularly, suppose $\ln(p)/\ln(1/\gamma) \leq 1-2\alpha - 2\epsilon,$ with $ 0< \epsilon < 1/2-\alpha$, we have the following bound.
\begin{align}
    \mathcal{W}_{p}(\mathcal{L}(\tilde{q}), \mathcal{L}(Z)) &\leq C_{1, \lambda, \mu}' (\frac{1}{\sqrt{\gamma}}+\sqrt{p})\exp(-\frac{C^2}{2\lambda}\frac{1}{p\gamma^{1-2\alpha}}) + C_{2, \lambda, \mu}' p\sqrt{\gamma} + C_{3, \lambda, \mu}' p^{3/2}\gamma \notag\\
    &\overset{(a)}{\leq}  C_{1, \lambda, \mu}' (\frac{1}{\sqrt{\gamma}}+\sqrt{p}) (\frac{2\lambda}{C^2})^A \Gamma(A+1) \cdot p^A \gamma^{A(1-2\alpha)} +\notag\\
    &\quad \quad C_{2, \lambda, \mu}' p\sqrt{\gamma} + C_{3, \lambda, \mu}' p^{3/2}\gamma \\
    &\overset{(b)}{\leq} D_{1,\lambda,\mu} \cdot p\sqrt{\gamma} \label{eq: Wasserstein-$p$ bound, first regime}
\end{align}
Where $(a)$ holds with $A:=1+\alpha/\epsilon$ from the fact that polynomial functions are upper bounded by exponential functions as in  \eqref{eq: Exp Dominating Polynomial Stirling Bound}. And $(b)$ holds from choosing constant $D_{1,\lambda,\mu}$ as the following:
\begin{align*}
    D_{1,\lambda,\mu} &:= 2C_{1,\lambda,\mu}' (\frac{2\lambda}{C^2})^A \Gamma(A+1) + C_{2, \lambda, \mu}' + C_{3, \lambda, \mu}' 
\end{align*}

When $ 1- 2\alpha \leq \ln(p) /\ln(1/\gamma) \leq 1/2$, from analyzing on \eqref{eq: Triangle Wasserstein-$p$ upper p's function}, we show that the dominant term shifts from $p\sqrt{\gamma}$ to $\sqrt{p}$. We denote constant $D_{2,\lambda,\mu} := C_{4, \lambda, \mu}' + C_{2, \lambda, \mu}'$.
\begin{align}
    \mathcal{W}_{p}(\mathcal{L}(\tilde{q}), \mathcal{L}(Z)) &\leq (C_{4, \lambda, \mu}' \sqrt{p} + C_{2, \lambda, \mu}' \cdot p\sqrt{\gamma}) \leq D_{2,\lambda,\mu} \cdot \sqrt{p}  \label{eq: Wasserstein-$p$ bound, second regime}
\end{align}

Finally, when $\ln(p) /\ln(1/\gamma) \geq 1/2$, the dominant term is $ \frac{p\sqrt{\gamma}}{\log (1+p\gamma/\lambda)}$. Here we denote $D_{3,\lambda,\mu} := C_{5, \lambda, \mu}' + C_{6, \lambda, \mu}'$ and $C_{5, \lambda, \mu}', C_{6, \lambda, \mu}'$ are defined in \eqref{eq: constants for SSQ Wasserstein-$p$ bounds}.
\begin{align}
    \mathcal{W}_{p}(\mathcal{L}(\tilde{q}), \mathcal{L}(Z)) &\leq C_{5, \lambda, \mu}' \frac{p\sqrt{\gamma}}{\log (1+p\gamma/\lambda)} + C_{6, \lambda, \mu}' \sqrt{p} 
    \overset{(a)}{\leq} D_{3,\lambda,\mu} \cdot \frac{p\sqrt{\gamma}}{\log (1+p\gamma/\lambda)} \label{eq: Wasserstein-$p$ bound, third regime}
\end{align}

$(a)$ holds since $\sqrt{p}$ is always upper bounded by $ \sqrt{\gamma}/\sqrt{\lambda} \cdot p/\ln(1+p\gamma/\lambda)$, as can be seen from the following derivation.
\begin{align*}
    \log (1+x) \leq \sqrt{x} \implies \log(1+p\gamma/\lambda) \leq \sqrt{p\gamma/\lambda} \implies \sqrt{p} \leq \frac{p\sqrt{\gamma}}{\sqrt{\lambda}\log(1+p\gamma/\lambda)}
\end{align*}

Meanwhile, for the lower bound, it can be derived that, when p is in the range $1 \leq p < D_{5,\lambda,\mu}/\gamma$, we have the following bound.
\begin{align*}
    \mathcal{W}_p(\mathcal{L}(\tilde{q}), \mathcal{L}(Z)) &\geq C'_{7, \lambda, \mu} \cdot \gamma \exp\left(-\frac{\lambda/\mu - \ln(\lambda/\mu) -1 }{p\gamma/\mu}\right),
\end{align*}
where constant $D_{5,\lambda,\mu}$ is defined as in \eqref{eq: D_4, lambda, mu SSQ} below. 

When $p\geq D_{5,\lambda,\mu}/\gamma$, Wasserstein-$p$ distance is lower bounded by order $\Omega(\frac{p\sqrt{\gamma}}{\log(1+p\gamma/\lambda)})$. Specifically, we have the following bound,
\begin{align*}
    \mathcal{W}_p(\mathcal{L}(\tilde{q}), \mathcal{L}(Z)) &\geq  D_{4,\lambda,\mu}' \cdot \frac{p\sqrt{\gamma}}{\log(1+\gamma p/\lambda)}.
\end{align*}
Constant $D_{4,\lambda,\mu},D_{5,\lambda,\mu}$ are defiend as 
\begin{align}
    D_{5,\lambda,\mu} &:= \frac{2^{12}\lambda}{C_{4,\lambda,\mu}^4}\cdot\exp(4C_{5,\lambda,\mu}\lambda), D_{4,\lambda,\mu} := \frac{1}{2} \frac{C_{4,\lambda,\mu}}{\sqrt{\lambda}}\exp(-C_{5,\lambda,\mu}/D_{5,\lambda,\mu}) \label{eq: D_4, lambda, mu SSQ}
\end{align}

\subsection{Proof for Theorem \ref{thm: SSQ Tail bound}: SSQ Tail Bound}
In this section, we present the proof for Theorem \ref{thm: SSQ Tail bound} on tail bounds for steady-state queue length in SSQ. We will present two different approaches to prove Theorem \ref{thm: SSQ Tail bound}. The first approach is based on concentration via Wasserstein-$p$, i.e., Lemma \ref{lemma: tail bound via wasserstein}, which directly connects Wasserstein-$p$ distance bounds to tail bounds. This argument will cover Theorem \ref{thm: SSQ Tail bound, first regime} and Theorem \ref{thm: SSQ Tail bound, second regime} in two different regimes of deviation and provide efficient concentration inequalities. The second approach is based on transform method, i.e., moment generating function (MGF) bound and Markov inequality. This approach will cover Theorem \ref{thm: SSQ Tail bound, third regime} and Theorem \ref{thm: SSQ Tail bound, fourth regime} in large deviation regimes with order-wise tight exponents.
\subsubsection{Concentration via Wasserstein-$p$} \label{subsubsec: SSQ Tail Bound via Concentration}
In this section, we prove Theorem \ref{thm: SSQ Tail bound, first regime} and Theorem \ref{thm: SSQ Tail bound, second regime} using Lemma \eqref{eq: Tail bound from Wasserstein-$p$ distance}. This argument also allow us to cover third and fourth regimes in Theorem \ref{thm: SSQ Tail bound, third regime} and Theorem \ref{thm: SSQ Tail bound, fourth regime}. However, the exponent terms in these two regimes are not as sharp as those derived from transform method in next section, and we cannot achieve order-wise tight lower bounds. So we merely present the third and fourth regimes here for completeness. We will work with sharper bounds in next section using transform method.

Recall that we have different dominant terms of Wasserstein-$p$ distance in different regimes of $p$ as in \eqref{eq: Wasserstein-$p$ bound, first regime}, \eqref{eq: Wasserstein-$p$ bound, second regime}, \eqref{eq: Wasserstein-$p$ bound, third regime}. We use them here to derive tail bounds in different regimes of deviation $a$. Also recall our assumption on $\gamma$ in \eqref{eq: gamma assumption for SSQ W-p} that
\begin{align*}
    \gamma \leq \gamma_0 := \min \left\{
        \frac{e^{4\sqrt{2}}\lambda}{25},
        \frac{\mu^2}{(2^2 e^{4\sqrt{2}}\lambda)},
        \mu, (1/2eD_{1,\lambda,\mu})^{-\frac{1}{1/2-\alpha-\epsilon}}, 
        (2eD_{1,\lambda,\mu})^{-\frac{2}{1/2-\alpha-\epsilon}},
        (2\sqrt{2}eD_{1,\lambda,\mu})^{-1}, \lambda/(A_{\lambda,\mu}^2). 
    \right\}
\end{align*}

For simplicity of all below derivations, we will use the notation $a$ for deviation level, instead of $a_\gamma$ as in Theorem \ref{thm: SSQ Tail bound}. 
\paragraph{Proof for Theorem \ref{thm: SSQ Tail bound, first regime}} 
For the setup of this deviation regime, we consider a constant $a>0$. Moreover, we assume $\gamma$ is small enough depending on $a$ as follows. 
\begin{align}
    \gamma_{a}:=\min \{ \gamma_0, (2\alpha)^{\frac{1}{1-2\alpha}} a^{-\frac{1}{1-2\alpha -\epsilon}}, (\frac{\alpha}{eD_{1,\lambda,\mu}} a)^{\frac{1}{1/2 -\alpha}}, (\frac{1}{eD_{1,\lambda,\mu}a})^2, (\frac{1}{2}-\alpha -\epsilon)^{\frac{1}{1/2-\alpha -\epsilon}}, \frac{4}{a^6} \}. \label{eq: gamma_a}
\end{align}
Here, $0<\epsilon<1/2-\alpha$ and $\gamma_0$ is defined in \eqref{eq: gamma assumption for SSQ W-p}. In order to apply Lemma \ref{lemma: tail bound via wasserstein} for tail bounds, we need to pick proper $p$ and $\rho$ depending on $a$ and $\gamma$. Particularly, we will pick $p$ and $\rho$ such that the regime of $p$ falls into the first regime in \eqref{eq: Wasserstein-$p$ bound, first regime} as $\ln(p)/\ln(1/\gamma) < 1-2\alpha$. Our choice is as follows.
\begin{align}
    \rho &:= 1 - e \mathcal{W}_p(\mathcal{L}(\tilde{q}), \mathcal{L}(Z)) / a, \quad
    p := a^2/2 + \ln(1/\sqrt{\gamma}) \label{eq: p and rho for Wasserstein-$p$ distance}
\end{align}
We first verify the validity of the above choice of $p$ and $\rho$, i.e., $p$ falls into the first regime in \eqref{eq: Wasserstein-$p$ bound, first regime} and $\rho \in [0,1]$. First notice that, from the assumption \eqref{eq: gamma assumption for SSQ W-p} that $\gamma$ is small enough, we have the following condition on $p$:
\begin{align*}
    1< p &:= a^2/2 + \ln(1/\sqrt{\gamma}) \overset{(a)}{<}  a^2/2+\frac{1}{1-2\alpha-2\epsilon} (\frac{1}{\gamma})^{1/2-\alpha-\epsilon}\overset{(b)}{\leq} \frac{1}{2} (\frac{1}{\gamma})^{1-2\alpha-2\epsilon} + a^2/2 \overset{(c)}{\leq} (\frac{1}{\gamma})^{1-2\alpha -2\epsilon}
\end{align*}
Inequality $(a)$ holds from the fact that logarithmic function is upper bounded by polynomial function, i.e., 
\begin{align}
    \ln(1/\sqrt{\gamma}) = \frac{1}{2\alpha} \ln (1/\gamma^\alpha) \leq \frac{1}{2\alpha} \gamma^{-\alpha} \label{eq: ln inequality}
\end{align}
Inequality $(b)$ and $(c)$ hold from the assumption \eqref{eq: gamma_a} and can be shown from $\gamma \leq (1/2 -\alpha - \epsilon)^{\frac{1}{1/2 -\alpha -\epsilon}}$ and $\gamma \leq (\frac{1}{a})^{\frac{1}{1/2 -\alpha -\epsilon}}$ respectively. Therefore, we have verified that $p$ falls into the first regime in \eqref{eq: Wasserstein-$p$ bound, first regime}, i.e., $\ln(p)/\ln(1/\gamma) < 1-2\alpha$ and $p> 1$. Thus, from the choice of $p$ in \eqref{eq: p and rho for Wasserstein-$p$ distance} and Wasserstein-$p$ bound \eqref{eq: Wasserstein-$p$ bound, first regime}, we have
\begin{align*}
    \mathcal{W}_p(\mathcal{L}(\tilde{q}), \mathcal{L}(Z)) &\leq D_{1,\lambda,\mu} \cdot p\sqrt{\gamma} \\
    & = D_{1,\lambda,\mu} \cdot \sqrt{\gamma} (\ln(1/\sqrt{\gamma}) +a^2/2)
\end{align*}

Now we verify that $\rho \in [0,1]$. Recall the choice of $\rho$ in \eqref{eq: p and rho for Wasserstein-$p$ distance}. We have $\rho \leq 1$ trivially. For the lower bound of $\rho$,
\begin{align*}
    \rho &:= 1 - e \mathcal{W}_p(\mathcal{L}(\tilde{q}), \mathcal{L}(Z)) / a \\
    &\geq 1 - \frac{1}{a}\bigg(eD_{1,\lambda,\mu} \sqrt{\gamma} (\ln(1/\sqrt{\gamma}) +a^2/2)\bigg) \\
    &= 1 - \frac{1}{a}\bigg(eD_{1,\lambda,\mu} \sqrt{\gamma} \ln(1/\sqrt{\gamma}) + e D_{1,\lambda,\mu} \sqrt{\gamma} a^2/2\bigg)\\
    &\overset{(a)}{\geq} 1 - \frac{1}{a}\bigg(eD_{1,\lambda,\mu} \sqrt{\gamma} \ln(1/\sqrt{\gamma}) + a/2\bigg) \\
    &\overset{(b)}{\geq} 1 - \frac{1}{a}\bigg(a/2 + a/2\bigg) = 0
\end{align*}
Inequality $(a)$ holds from the assumption \eqref{eq: gamma_a} of $\gamma \leq (\frac{1}{eD_{1,\lambda,\mu}a})^2$. This assumption leads to $e D_{1,\lambda,\mu} \sqrt{\gamma} a^2/2 \leq a/2$. Inequality $(b)$ can be shown equivalence with the assumption \eqref{eq: gamma_a} of $\gamma \leq (\frac{\alpha}{eD_{1,\lambda,\mu}} a)^{\frac{1}{1/2 -\alpha}}$ via the same argument used in \eqref{eq: ln inequality}.


Having verified the validity of $p$ and $\rho$, we can now apply Lemma \ref{lemma: tail bound via wasserstein} to derive the tail bound.
\begin{align*}
    \left|\mathbb{P}(\tilde{q} > a) - \mathbb{P}(Z > a)\right| &\leq eD_{1,\lambda,\mu}\cdot  \sqrt{\gamma} (\ln(1/\sqrt{\gamma}) +a^2/2) \phi(\rho a) + \sqrt{\gamma} e^{-a^2/2} \\
    &\overset{(a)}{\leq} eD_{1,\lambda,\mu} \cdot \sqrt{\gamma} (\ln(1/\sqrt{\gamma}) +a^2/2) \exp((1-\rho)a^2)\phi(-a^2/2) + \sqrt{\gamma} e^{-a^2/2} \\
    &\overset{(b)}{\leq} \frac{D_{1,\lambda,\mu}\exp(1+2eD_{1,\lambda,\mu})}{\sqrt{2\pi}}\cdot \sqrt{\gamma} (\ln(1/\sqrt{\gamma}) +a^2/2) \exp(-a^2/2) + \sqrt{\gamma} e^{-a^2/2} 
\end{align*}

Inequality $(a)$ can be shown equivalent to $(\rho - 1)^2\geq 0$. Inequality $(b)$ is derived by plugging in the definition of $\phi(\cdot)$ and bounding $\exp((1-\rho)a^2)$ as follows.
\begin{align*}
    \exp((1-\rho)a^2) &= \exp( e W_p(\mathcal{L}(\tilde{q}), \mathcal{L}(Z)) a) \leq \exp\bigg( e D_{1,\lambda,\mu} \sqrt{\gamma} ( (\ln(1/\sqrt{\gamma}) +a^2/2)) a \bigg) \\
    &\overset{(a)}{\leq} \exp(eD_{1,\lambda,\mu} (1+1)) 
\end{align*}
Inequality $(a)$ is equal to the assumption \eqref{eq: gamma assumption for SSQ W-p} of $\gamma \leq (2\alpha)^{\frac{1}{1-2\alpha}} a^{-\frac{1}{1-2\alpha -\epsilon}}$, and $\gamma \leq \frac{4}{a^6}$. With the above derivation, we obtain the efficient concentration inequality for constant deviation regime as follows,
\begin{align}
    \left|\mathbb{P}(\tilde{q} > a) - \mathbb{P}(Z > a)\right| \leq D_{1,\lambda,\mu}' \cdot \sqrt{\gamma} (\ln(1/\sqrt{\gamma}) +a^2/2) e(-a^2/2) \label{eq: Tail bound, first regime},
\end{align}
with constant $D_{1,\lambda,\mu}' := \frac{D_{1,\lambda,\mu}\exp(1+2eD_{1,\lambda,\mu})}{\sqrt{2\pi}}+1$ and $D_{1,\lambda,\mu}$ is defined in \eqref{eq: Wasserstein-$p$ bound, first regime} and \eqref{eq: constants for SSQ Wasserstein-$p$ bounds}.

Beyond constant regime, we consider deviation $a$ the independent variable and $\gamma$ a constant. In the following, the range of $a$ w.r.t. $\gamma$ is separated into three regimes, named as near constant deviation, moderate deviation and large deviation. We obtain different tail bounds in these three regimes respectively using Lemma \ref{lemma: tail bound via wasserstein} and different dominant terms of Wasserstein-$p$ distance in \eqref{eq: Wasserstein-$p$ bound, first regime}, \eqref{eq: Wasserstein-$p$ bound, second regime}, \eqref{eq: Wasserstein-$p$ bound, third regime}.


\paragraph{Proof for Theorem \ref{thm: SSQ Tail bound, second regime}} 
For this regime, we consider deviation $a$ in the following range depending on $\gamma$,
\begin{align}
    \underbrace{2eD_{1,\lambda,\mu}}_{D_{Tail, 2, l}} \leq
    a \leq \underbrace{\min\left\{1,\frac{1}{2eD_{1,\lambda,\mu}}\right\} }_{D_{Tail, 2, u}}(1/\gamma)^{1/2-\alpha - \epsilon}, \label{eq: near constant deviation setup}
\end{align}
with fixed $ 0<\epsilon<1/2-\alpha$ and $D_{1,\lambda,\mu}$ defined in \eqref{eq: Wasserstein-$p$ bound, first regime}. We first note that the $\delta$ in Theorem \ref{thm: SSQ Tail bound, second regime} is defined as $\delta := 1/2 - \alpha - \epsilon$, and that the definition of $\gamma_0$ in \eqref{eq: gamma assumption for SSQ W-p} also depends on this fixed $\epsilon$. Also recall that we use notation $a$ for deviation level for simplicity.
 From the assumption $\gamma \leq \gamma_0$ in \eqref{eq: gamma assumption for SSQ W-p}, the above interval is non-empty, since $\gamma \leq (2eD_{1,\lambda,\mu})^{-\frac{2}{1/2-\alpha-\epsilon}}$ and $\gamma \leq (1/2eD_{1,\lambda,\mu})^{-\frac{1}{1/2-\alpha-\epsilon}}$.

In this regime, we pick $p$ and $\rho$ same as in \eqref{eq: p and rho for Wasserstein-$p$ distance}, i.e., $p= a^2/2 + \ln(1/\sqrt{\gamma})$ and $\rho = 1 - e \mathcal{W}_p(\mathcal{L}(\tilde{q}), \mathcal{L}(Z)) / a$. Similar to the constant deviation regime, we need to verify the validity of this choice of $p$ and $\rho$, i.e., $p$ falls into the first regime in \eqref{eq: Wasserstein-$p$ bound, first regime} and $\rho \in [0,1]$. First, we check that $p$ falls into the first regime in \eqref{eq: Wasserstein-$p$ bound, first regime},
\begin{align*}
    1\leq p := a^2/2 + \ln(1/\sqrt{\gamma}) &\overset{(a)}{\leq} a^2/2 + \frac{1}{2}(1/\gamma)^{1-2\alpha - 2\epsilon}  \overset{(b)}{\leq} (1/\gamma)^{1-2\alpha - 2\epsilon}.
\end{align*}
Inequality $(a)$ is from assumption \eqref{eq: gamma assumption for SSQ W-p} and inequality $(b)$ is from the regime of deviation $a$ in \eqref{eq: near constant deviation setup}. Thus, we have verified that $p$ falls into the first regime in \eqref{eq: Wasserstein-$p$ bound, first regime}, i.e., $\ln(p)/\ln(1/\gamma) < 1-2\alpha$ and $p> 1$. From Wasserstein-$p$ bound in the first regime \eqref{eq: Wasserstein-$p$ bound, first regime}, we have
\begin{align*}
    \rho &= 1 - e \mathcal{W}_p(\mathcal{L}(\tilde{q}), \mathcal{L}(Z)) / a \\
    &\geq 1 - e D_{1,\lambda,\mu} \sqrt{\gamma} (a^2/2 + \ln(1/\sqrt{\gamma})) / a \\
    &= 1 - \frac{1}{a}\bigg(e D_{1,\lambda,\mu} \sqrt{\gamma} a^2/2 + e D_{1,\lambda,\mu} \sqrt{\gamma} \ln(1/\sqrt{\gamma})\bigg) \\
    &\overset{(a)}{\geq} 1 - 1/4 - e D_{1,\lambda,\mu} \sqrt{\gamma} \ln(1/\sqrt{\gamma}) / a\\
    &\overset{(b)}{\geq} 1- 1/4 - 1/4 = 1/2
\end{align*}
Inequality $(a)$ holds from the regime of deviation $a$ in \eqref{eq: near constant deviation setup}. Specifically, the setup of upper bound on $a$ in \eqref{eq: near constant deviation setup} ensures that
\begin{align}
    \sqrt{\gamma}(a^2/2 + \ln(1/\sqrt{\gamma})) \leq a/(2D_{1,\lambda,\mu}e), \label{eq: quadratic inequality for near constant deviation, SSQ}
\end{align} 
which is proved below. Inequality $(b)$ also holds from the regime of deviation $a$ in \eqref{eq: near constant deviation setup}, which ensures that $\sqrt{\gamma} e D_{1,\lambda,\mu} \ln(1/\sqrt{\gamma}) / a \leq 1/4$. Now, to prove inequality \eqref{eq: quadratic inequality for near constant deviation, SSQ}, we rearrange it into a quadratic inequality of $a$ as $\sqrt{\gamma}(a^2/2 + \ln(1/\sqrt{\gamma})) - a/(2D_{1,\lambda,\mu}e) \leq 0$ and check its roots. 
\begin{align*}
    a & \leq \frac{1}{2eD_{1,\lambda,\mu} (\gamma)^{1/2-\alpha - \epsilon}} \leq \frac{1}{2D_{1,\lambda,\mu}e\sqrt{\gamma}} \leq \frac{1}{\sqrt{\gamma}} \bigg( \frac{1}{2eD_{1,\lambda,\mu}}+\sqrt{\frac{1}{4e^2D_{1,\lambda,\mu}^2} - 2\gamma\ln(1/\sqrt{\gamma})}\bigg) \\
    a &\geq 2eD_{1,\lambda,\mu}\sqrt{\gamma}\ln(\frac{1}{\sqrt{\gamma}}) \geq \frac{1}{\sqrt{\gamma}} (\frac{2\gamma\ln(1/\sqrt{\gamma}) }{1/eD_{1,\lambda,\mu}}) \\
    &\geq \frac{1}{\sqrt{\gamma}} \frac{2\gamma\ln(1/\sqrt{\gamma})}{\bigg( \frac{1}{2D_{1,\lambda,\mu}e}+\sqrt{\frac{1}{4e^2D_{1,\lambda,\mu}^2} - 2\gamma\ln(1/\sqrt{\gamma})}\bigg)} \\
    &= \frac{1}{\sqrt{\gamma}} \bigg( \frac{1}{2eD_{1,\lambda,\mu}}-\sqrt{\frac{1}{4e^2D_{1,\lambda,\mu}^2} - 2\gamma\ln(1/\sqrt{\gamma})}\bigg)
\end{align*}
Note that from $\gamma\leq\gamma_0$ \eqref{eq: gamma assumption for SSQ W-p}, we know that $\gamma\ln(1/\sqrt{\gamma}) \leq 1/(8e^2D_{1,\lambda,\mu}^2)$ and thus the above square root are real. From the verification of the square root bounds, we have proved inequality \eqref{eq: quadratic inequality for near constant deviation, SSQ} and thus verified that $\rho \in [0,1]$. With this pair of $(p, \rho)$, we can now apply Lemma \ref{lemma: tail bound via wasserstein} to derive the tail bound in this regime.
\begin{align*}
    \left|\mathbb{P}(\tilde{q} > a) - \mathbb{P}(Z > a)\right| &\leq eD_{1,\lambda,\mu}\cdot  \sqrt{\gamma} ( (\ln(1/\sqrt{\gamma}) +a^2/2)) \phi(\rho a) + \sqrt{\gamma} e^{-a^2/2} \\
    & = \frac{eD_{1,\lambda,\mu}}{\sqrt{2\pi}}\cdot \sqrt{\gamma} ( (\ln(1/\sqrt{\gamma}) +a^2/2))\\
    &\quad e^{-a^2/2 (1 - D_{1,\lambda,\mu}\sqrt{\gamma}(\ln(1/\sqrt{\gamma}) + a^2/2)/a)}  + \sqrt{\gamma} e^{-a^2/2} 
\end{align*}

\paragraph{Proof for Upper bound in Theorem \ref{thm: SSQ Tail bound, third regime}}
In this regime, we consider deviation $a$ in the following range depending on $\gamma$,
\begin{align}
    \underbrace{\sqrt{2}e D_{2,\lambda,\mu}}_{D_{Tail, 3, l}} \frac{1}{\gamma^{1/2-\alpha} }\leq  a \leq \underbrace{\sqrt{2}e D_{2,\lambda,\mu} }_{D_{Tail, 3, u}}\frac{1}{\sqrt{\gamma} }, \label{eq: sub-Gaussian deviation setup}
\end{align}
with $D_{2,\lambda,\mu}$ defined in \eqref{eq: Wasserstein-$p$ bound, second regime}.

In this regime, we will use the following choice of $p$ and $\rho$,
\begin{align}
    \rho := 1 - e W_p / a, p:=a^2/(2 e^2 D_{2,\lambda,\mu}^2). \label{eq: p and rho for sub-Gaussian Wasserstein-$p$ distance}
\end{align}
Again, we need to verify the validity of $p$ and $\rho$. First, we check that $p$ falls into the second regime in \eqref{eq: Wasserstein-$p$ bound, second regime}, i.e., $1-2\alpha \leq \ln(p) /\ln(1/\gamma) \leq 1/2$.
\begin{align*}
    (1/\gamma)^{1 - 2\alpha}  \leq p &= a^2/(2 e^2 D_{2,\lambda,\mu}^2) \leq 1/\sqrt{\gamma} 
\end{align*}
This holds from the regime of deviation $a$ in \eqref{eq: sub-Gaussian deviation setup}. Next, we verify that $\rho \in [0,1]$. Trivially, we have $\rho \leq 1$. For the lower bound of $\rho$, we upper bound Wasserstein-$p$ from \eqref{eq: Wasserstein-$p$ bound, second regime} as follows,
\begin{align*}
    \mathcal{W}_p(\mathcal{L}(\tilde{q}), \mathcal{L}(Z)) &\leq D_{2,\lambda,\mu} \sqrt{p} = D_{2,\lambda,\mu} \sqrt{a^2/(2 e^2 D_{2,\lambda,\mu}^2)} = a/( \sqrt{2} e)
\end{align*}
Then the verification of lower bound of $\rho$ follows directly,
\begin{align*}
    \rho &= 1 - e W_p / a \geq 1 - e D_{2,\lambda,\mu} \sqrt{p}/(a) \geq 1 - 1/\sqrt{2} 
\end{align*}
Therefore, with this combination of $\rho$ and $p$, it can be shown from \eqref{eq: Tail bound from Wasserstein-$p$ distance}:
\begin{align*}
    \left|\mathbb{P}(\tilde{q} > a) - \mathbb{P} (Z > a)\right| &\leq 
    e/2\sqrt{\pi}\cdot a \exp(-\frac{a^2}{2}(1-\sqrt{1/2})^2) + \exp(-\frac{a^2}{2e^2D_{2,\lambda,\mu}})
\end{align*}

\paragraph{Proof for Upper bound in Theorem \ref{thm: SSQ Tail bound, fourth regime}}
In this regime, we consider deviation $a$ in the following range depending on $\gamma$,
\begin{align}
    a \geq  \underbrace{\exp(2eD_{3,\lambda,\mu}\sqrt{\lambda})}_{D_{Tail, 4, l}} \cdot\frac{1}{\sqrt{\gamma}} \label{eq: large deviation setup}
\end{align}
To prove the tail bound in this regime, we again need to pick proper $p$ and $\rho$. Specifically, we choose $p$ and $\rho$ as follows.
\begin{align}
    \rho := 1 - e W_p / a, \quad p:= \frac{\sqrt{\lambda}}{2eD_{3,\lambda,\mu}} \cdot \frac{a}{\sqrt{\gamma}}\log(1+a\sqrt{\gamma}) \label{eq: p and rho for large deviation Wasserstein-$p$ distance}
\end{align}
We verify the validity of this choice of $p$ and $\rho$, i.e., $p$ falls into the third regime in \eqref{eq: Wasserstein-$p$ bound, third regime} and $\rho \in [0,1]$. First, we check that $p$ falls into the third regime in \eqref{eq: Wasserstein-$p$ bound, third regime}, i.e., $p \geq \lambda/\gamma$.
\begin{align*}
    p &= \frac{\sqrt{\lambda}}{2eD_{3,\lambda,\mu}} \cdot \frac{a}{\sqrt{\gamma}}\log(1+a\sqrt{\gamma}) \overset{(a)}{\geq} \frac{1}{\gamma}
\end{align*}
Inequality $(a)$ holds from the regime of deviation $a$ in \eqref{eq: large deviation setup}. For $\rho$, we first upper bound Wasserstein-$p$ from \eqref{eq: Wasserstein-$p$ bound, third regime} as follows,
\begin{align*}
    e\mathcal{W}_p(\mathcal{L}(\tilde{q}), \mathcal{L}(Z)) &\overset{(a)}{\leq} eD_{3,\lambda,\mu} 
    \frac{\sqrt{\gamma}}{\sqrt{\lambda}} p/\ln(1+p\gamma/\lambda) \\
    &= eD_{3,\lambda,\mu} \frac{\sqrt{\gamma}}{\sqrt{\lambda}} \cdot \frac{\frac{\sqrt{\lambda}}{2eD_{3,\lambda,\mu}} \cdot \frac{a}{\sqrt{\gamma}}\log(1+a\sqrt{\gamma})}{\log\left(1+ \frac{\sqrt{\lambda}}{2eD_{3,\lambda,\mu}} \cdot \frac{a\sqrt{\gamma}}{\sqrt{\lambda}}\log(1+a\sqrt{\gamma})\right)} \\
    &= \frac{1}{2}\frac{a\log(1+a\sqrt{\gamma})}{\log\left(1+ \frac{1}{2e D_{3,\lambda,\mu}} \cdot a\sqrt{\gamma} \log(1+a\sqrt{\gamma})\right)} \\
    &\overset{(b)}{\leq} \frac{1}{2}\cdot \frac{a \log(1+a\sqrt{\gamma})}{\log(1+ a\sqrt{\gamma})} = 1/2\cdot a 
\end{align*}
Inequality $(a)$ is from the dominating term for Wasserstein-$p$ \eqref{eq: Wasserstein-$p$ bound, third regime}. Inequality $(b)$ holds from the assumption of this regime \eqref{eq: large deviation setup}, which ensures that $\frac{1}{2e D_{3,\lambda,\mu}} \cdot a\sqrt{\gamma} \log(1+a\sqrt{\gamma}) \geq 1$. thus, we have verified that $\rho \in [0,1]$ as follows,
\begin{align*}
    \rho &= 1 - e W_p / a \geq 1 - e D_{3,\lambda,\mu} \frac{\sqrt{\gamma}}{\sqrt{\lambda}} p/(a \ln(1+p\gamma/\lambda)) \geq 1 - 1/2 = 1/2
\end{align*}
It can be shown from this choice of $\rho, p$, with Lemma \ref{lemma: tail bound via wasserstein} that
\begin{align*}
    \left|\mathbb{P}(\tilde{q} > a) - \mathbb{P} (Z > a)\right| &\leq \notag\\
        e/2\sqrt{2\pi}\cdot a \exp(-\frac{a^2}{8}) &+ \exp\left(-\frac{\sqrt{\lambda}}{2eD_{3,\lambda,\mu}} \cdot \frac{a}{\sqrt{\gamma}}\log(1+a\sqrt{\gamma}) \right)
\end{align*}


\subsubsection{Transform Method Argument}
From the above derivation using Wasserstein-$p$ distance, we have obtained efficient tail bounds in Theorem \ref{thm: SSQ Tail bound, first regime}, and \ref{thm: SSQ Tail bound, second regime}. The derivation provides upper bounds for all regimes, but the lower bound is only non-trivial in Theorem \ref{thm: SSQ Tail bound, first regime} and \ref{thm: SSQ Tail bound, second regime}. In this section, we revisit the tail bound using MGF control for $\tilde{q}$ to derive tail bound, which complements the Wasserstein-$p$ distance approach. It provides asymptotically tight upper and lower bound for Theorem \ref{thm: SSQ Tail bound, third regime}, and \ref{thm: SSQ Tail bound, fourth regime}. Thus it provides moderate and large deviation principles (MDP, LDP). Particularly, the moderate and large deviation principles state that the exponent of tail probability converges to a quadratic form and the rate function, i.e., Legendre transform of log-MGF, respectively. The deviation principles from transform method cover the whole range of deviation $a$, whereas the Wasserstein-$p$ distance approach only covers up to $a=o((1/\gamma)^{\min\{1/2-\alpha,1/6\}})$ deviation.

\paragraph{Proof for Theorem \ref{thm: SSQ Tail bound, third regime} and \ref{thm: SSQ Tail bound, fourth regime}}

For simplicity of notation, we denote $a:= a_{\gamma}\sqrt{\lambda}/\sqrt{\gamma}$. Since we are considering tail probability $\mathbb{P}(\tilde{q} > a)$, it is equivalent to consider tail probability for $\hat{q} := \tilde{q} - \frac{\lambda-\mu}{\gamma}$ since 
\begin{align*}
    \mathbb{P}(\tilde{q} > a_\gamma) = \mathbb{P}(\hat{q} > a_\gamma \sqrt{\lambda}/\sqrt{\gamma}) = \mathbb{P}(\hat{q} > a)
\end{align*}
 Recall from \eqref{eq: MGF for q_infty - lambda/mu/gamma, Poisson style} that the MGF for $\hat{q}=q - \frac{\lambda-\mu}{\gamma}$ is controlled as
\begin{align*}
    exp\left(\frac{\lambda}{\gamma}e^\theta - \frac{\lambda}{\gamma} - \theta \frac{\lambda}{\gamma}\right) \leq 
    \mathbb{E}[\exp\{\theta \hat{q}\}] &\leq A_{\lambda,\mu} \cdot \exp\left(\frac{\lambda}{\gamma}e^\theta - \frac{\lambda}{\gamma} - \theta \frac{\lambda}{\gamma}\right), \forall \theta \geq 0
\end{align*}
We can obtain the following pre-limit tail bounds for $\hat{q}$ using Markov inequality for upper bounds.
\begin{align}
    \mathbb{P}(\hat{q} > a) &\leq A_{\lambda,\mu} \cdot\inf_{\theta \geq 0} e^{-\theta a} \exp\left(\frac{\lambda}{\gamma}e^\theta - \frac{\lambda}{\gamma} - \theta \frac{\lambda}{\gamma}\right) \notag\\
        &= A_{\lambda,\mu} \cdot \exp\left(a - (\frac{\lambda}{\gamma} + a )\log\left(1 + \frac{a}{\lambda/\gamma}\right)\right) \label{eq: LDP MGF upper}\\
        &\overset{(a)}{\leq} A_{\lambda,\mu} \cdot \exp\left(-\frac{a^2}{2\lambda/\gamma} + \frac{a^3}{2(\lambda/\gamma)^2}\right) 
        \label{eq: MDP MGF upper}
\end{align}
Inequality $(a)$ is from Taylor expansion for $\log(1+x)$ and the bound $\log(1+x) \geq x - x^2/2$, for $x>0$.

For lower bound, we again use Markov inequality but with change of measure technique. Specifically, we tilt the original measure $\mathcal{L}(\hat{q})$ to a new measure $Q$ defined as
\begin{align*}
    \frac{Q\{x\}}{\mathcal{L}(\hat{q})\{x\}} = \exp(\theta^* x ) / \mathbb{E}[\exp\{\theta^* \hat{q}\}], \forall x \in \text{supp}(\hat{q}), \theta^* = \log(1 + \frac{a+\triangle/2}{\lambda/\gamma}), \triangle := 8\sqrt{a + \lambda/\gamma}
\end{align*}
With this tilted distribution $Q$, we have
\begin{align*}
    \mathbb{P}(\hat{q} > a) &\overset{(a)}{\geq} 
        \underbrace{Q(a \leq X \leq a + \triangle)}_{(a_1)}
        \cdot 
        \underbrace{\exp\left\{ \lambda/\gamma \cdot(e^{\theta^*} - 1 - \theta^*) - \theta^* (a + \triangle) \right\}}_{(a_2)}
        \notag
\end{align*}
Inequality $(a)$ holds from restricting the event $\{\hat{q} > a\}$ to $\{a \leq X \leq a + \triangle\}$ and change of measure, recalling that $\triangle := 8\sqrt{a + \lambda/\gamma}>0$.
Now in order to bound the lower bound in \eqref{eq: MDP MGF upper}, we need to bound $(a_1)$ and $(a_2)$ respectively.

To bound $(a_1)$, we use Chebyshev's inequality since the tilted distribution $Q$ is centered around $a$. We first check the mean and variance of $Q$, denoting them as $\mathbb{E}_Q[X]$ and $\text{Var}_Q[X]$ respectively.
From similar derivation of \eqref{eq: upper bound on mu_t}, \eqref{eq: lower bound on mu_t}, \eqref{eq: upper bound on niu_t}, it can be shown that the mean and variance of $Q$ is bounded as 
\begin{align*}
    \mathbb{E}_Q[X] &= \frac{d}{d\theta} \log \mathbb{E}[\exp\{\theta \hat{q}\}]\mid_{\theta = \theta^*} \in [a + \triangle/2, a + \triangle/2+ A_{\lambda,\mu}] \\
    \text{Var}_Q[X] &= \frac{d^2}{d\theta^2} \log \mathbb{E}[\exp\{\theta \hat{q}\}]\mid_{\theta = \theta^*} \leq \lambda/\gamma + a+ \triangle/2
\end{align*}
With the above bounds, we can bound $(a_1)$ as follows.
\begin{align*}
    (a_1) = Q(a \leq X \leq a + \triangle) &\overset{(a)}{\geq} 1 - \frac{\text{Var}_Q[X]}{(\triangle/2 - A_{\lambda,\mu})^2}\\
     &\overset{(b)}{\geq} 1 - \frac{\lambda/\gamma + a+ \triangle/2}{(\triangle/2 - A_{\lambda,\mu})^2} \\
     &= 1 - \frac{\lambda/\gamma + a+ \triangle/2}{(4\sqrt{a + \lambda/\gamma} - A_{\lambda,\mu})^2} \\
     &\overset{(c)}{\geq} 1 - \frac{\lambda/\gamma + a+ \triangle/2}{(3\sqrt{a + \lambda/\gamma})^2} \\
     &= 1 - \frac{\lambda/\gamma + a+ 4\sqrt{a + \lambda/\gamma}}{9(a + \lambda/\gamma)} \\
     &\overset{(d)}{\geq} \frac{2}{3}
\end{align*}
Inequality $(a)$ is from Chebyshev's inequality. Inequality $(b)$ is from the bound on variance of $Q$.
 Inequality $(c)$ holds from the assumption that $ \gamma \leq \lambda/(A_{\lambda,\mu}^2)$. Thus we have $A_{\lambda,\mu} \leq \sqrt{\lambda/\gamma +a}$. Inequality $(d)$ holds from the fact that $\gamma\leq \mu \leq \lambda$.

To bound $(a_2)$, we use Taylor expansion for logarithm functions.
\begin{align}
    (a_2) &= \exp\left\{ -(a+ \triangle+ \lambda/\gamma)\log(1+\frac{a+\triangle/2}{\lambda/\gamma}) + a + \triangle/2 \right\} \label{eq: Change of measure Exponential MDP SSQ} \\
    &= \exp\left\{ -\triangle \ln (1+\frac{a+\triangle/2}{\lambda/\gamma}) + a+ \triangle/2 - (a+\lambda/\gamma)\log(1+\frac{a+\triangle/2}{\lambda/\gamma}) \right\} \notag\\
    &\overset{(a)}{\geq} \exp\left\{ \triangle \ln (1+\frac{a+\triangle/2}{\lambda/\gamma}) + \frac{(a+\triangle/2)^2}{2\lambda/\gamma} - \frac{(a+\triangle/2)^3}{3(\lambda/\gamma)^2} - \frac{a(a+\triangle/2)}{\lambda/\gamma} \right\} \notag \\
    &= \exp\left\{ \triangle \ln (1+\frac{a+\triangle/2}{\lambda/\gamma}) + \frac{\triangle^2}{8\lambda/\gamma} - \frac{a^2}{2\lambda/\gamma} - \frac{(a+\triangle/2)^3}{3(\lambda/\gamma)^2} \right\} \notag
\end{align}
Inequality $(a)$ is from Taylor expansion for $\log(1+x)$ and the bound $\log(1+x) \leq x - x^2/2 + x^3/3$, $\log(1+x) \geq x$, for $x>0$. Combined with the bound on $(a_1)$, we have lower bound as
\begin{align}
    \mathbb{P}(\hat{q} > a) &\geq \frac{2}{3} \exp\left\{ \triangle \ln (1+\frac{a+\triangle/2}{\lambda/\gamma}) + \frac{\triangle^2}{8\lambda/\gamma} - \frac{a^2}{2\lambda/\gamma} - \frac{(a+\triangle/2)^3}{3(\lambda/\gamma)^2} \right\}\notag\\
    &\geq \frac{2}{3} \exp\left\{ -\frac{a^2}{2\lambda/\gamma} - \frac{(a+\triangle/2)^3}{3(\lambda/\gamma)^2} + 8\sqrt{a + \lambda/\gamma} \ln (1+\frac{a+4\sqrt{a + \lambda/\gamma}}{\lambda/\gamma}) + \frac{8(a + \lambda/\gamma)}{\lambda/\gamma} \right\}  \label{eq: MDP MGF lower}\\
    \mathbb{P}(\hat{q} > a)&\leq A_{\lambda,\mu} \cdot \exp\left(-\frac{a^2}{2\lambda/\gamma} + \frac{a^3}{2(\lambda/\gamma)^2}\right)\notag
\end{align}
where we recall the upper bound from \eqref{eq: MDP MGF upper} in the last line. This two sided bound completes the proof for Theorem \ref{thm: SSQ Tail bound, third regime}. In the main content, we also include the lower bound \eqref{eq: MDP MGF lower} in Theorem \ref{thm: SSQ Tail bound, second regime} for completeness.

Meanwhile, we can also derive a Poisson-style tail bound from \eqref{eq: LDP MGF upper} and \eqref{eq: Change of measure Exponential MDP SSQ} as follows. For upper bound, we recall \eqref{eq: LDP MGF upper}
\begin{align*}
    \mathbb{P}(\hat{q} > a) &\leq A_{\lambda,\mu} \cdot \exp\left(a - (\frac{\lambda}{\gamma} + a )\log\left(1 + \frac{a}{\lambda/\gamma}\right)\right)
\end{align*}
For lower bound, we recall our constant lower bound for $(a_1)$ as $2/3$ and we use \eqref{eq: Change of measure Exponential MDP SSQ} to lower bound $(a_2)$. The lower bound is then given as
\begin{align*}
    \mathbb{P}(\hat{q} > a) &\geq \frac{2}{3}\exp\left\{ -(a+ \triangle+ \lambda/\gamma)\log(1+\frac{a+\triangle/2}{\lambda/\gamma}) + a + \triangle/2 \right\}
\end{align*}
Together, the upper and lower bound provide us with the right hand side in Theorem \ref{thm: SSQ Tail bound, fourth regime}. 
Note that in the main content, for Theorem \ref{thm: SSQ Tail bound, third regime} and \ref{thm: SSQ Tail bound, fourth regime}, there are two versions of upper bounds, combined together by taking the minimum. One version is from transform method as above, and the other version is from concentration argument using Wasserstein-$p$ distance, i.e., the third and fourth paragraph in Section \ref{subsubsec: SSQ Tail Bound via Concentration}.

\paragraph{Proof for Corollary \ref{cor: Deviation Principle for M/M/1+M}: Deviation Principles}

Starting from the tail bounds in formulas \eqref{eq: MDP MGF upper} and \eqref{eq: MDP MGF lower}, we can derive moderate deviaiton principle for SSQ, i.e., the second limit result in Corollary \ref{cor: Deviation Principle for M/M/1+M}. Specifically, we focus on the moderate deviation setting as follows,
\begin{align}
    a_\gamma = 1/\gamma^\delta, \quad 0<\delta<1/2, \quad, a:= a_{\gamma}\sqrt{\lambda}/\sqrt{\gamma} \label{eq: MDP setting}
\end{align}

First we have an order analysis for the exponent terms in \eqref{eq: MDP MGF upper} and \eqref{eq: MDP MGF lower}. Recalling that we have the change of variable from $a_\gamma$ to $a$ and from $\tilde{q}$ to $\hat{q}$, and the setup in \eqref{eq: MDP setting}, the following order analysis holds.
\begin{align*}
    \Omega(1)\cdot \exp\left(-\frac{a^2}{2\lambda/\gamma}(1 + o(1))\right)  \overset{(a)}{\leq} \mathbb{P}(\hat{q} > a) &\overset{(b)}{\leq} O(1)\cdot \exp\left(-\frac{a^2}{2\lambda/\gamma}(1 - o(1))\right)
\end{align*}
To see inequality $(a)$ and $(b)$, we analyze the exponent terms in \eqref{eq: MDP MGF upper} and \eqref{eq: MDP MGF lower} respectively.
\begin{align*}
    \ln \mathbb{P}(\hat{q} > a) &\geq -\frac{a^2}{2\lambda/\gamma} - \frac{(a+\triangle/2)^3}{3(\lambda/\gamma)^2} + 8\sqrt{a + \lambda/\gamma} \ln (1+\frac{a+4\sqrt{a + \lambda/\gamma}}{\lambda/\gamma}) + \frac{8(a + \lambda/\gamma)}{\lambda/\gamma} + \ln(2/3) \\
    &\overset{(a)}{\geq} -\frac{a^2}{2\lambda/\gamma} - o(1) \cdot \frac{a^2}{2\lambda/\gamma}  + 8\sqrt{a + \lambda/\gamma} \ln (1+\frac{a+4\sqrt{a + \lambda/\gamma}}{\lambda/\gamma}) + \frac{8(a + \lambda/\gamma)}{\lambda/\gamma} + \ln(2/3) \\
    &\overset{(b)}{\geq} -\frac{a^2}{2\lambda/\gamma} - o(1) \cdot \frac{a^2}{2\lambda/\gamma}  +\ln(2/3)\\
    &\overset{(c)}{\geq} -\frac{a^2}{2\lambda/\gamma} - o(1) \cdot \frac{a^2}{2\lambda/\gamma}  + o(1)\cdot \frac{a^2}{2\lambda/\gamma}  \\
    \ln \mathbb{P}(\hat{q} > a) &\leq -\frac{a^2}{2\lambda/\gamma} + \frac{a^3}{2(\lambda/\gamma)^2} + \ln(A_{\lambda,\mu})\\
    &\overset{(d)}{\leq} \frac{a^2}{2\lambda/\gamma}(-1 + o(1))
\end{align*}
Inequality $(a)$ holds from the order analysis of $\frac{(a+\triangle/2)^3}{3(\lambda/\gamma)^2} = o(a^2/(\lambda/\gamma))$ in moderate deviation regime, i.e., $a_\gamma = 1/\gamma^\delta, 0<\delta<1/2$ and thus $a = 1/\gamma^{\delta'}, 1/2<\delta'<1$. Inequality $(b)$ holds from the positivity of term $8\sqrt{a + \lambda/\gamma} \ln (1+\frac{a+4\sqrt{a + \lambda/\gamma}}{\lambda/\gamma}) + \frac{8(a + \lambda/\gamma)}{\lambda/\gamma}$. Inequality $(c)$ holds from the order analysis of $ \ln(3/2) = o(a^2/(\lambda/\gamma))$. Inequality $(d)$ holds from the order analysis of $a^3/(\lambda/\gamma)^2 = o(a^2/(\lambda/\gamma))$ and the constant term $\ln(A_{\lambda,\mu}) = o(a^2/(\lambda/\gamma))$ in moderate deviation regime.

With the above order analysis, upper bound \eqref{eq: MDP MGF upper} and lower bound \eqref{eq: MDP MGF lower} provide asymptotically tight tail bound in moderate deviation regime. We show the exponential term is precisely Gaussian form of $-a_\gamma^2/2$ and prove the second limit theorem in Corollary \ref{cor: Deviation Principle for M/M/1+M}, i.e.,
\begin{align*}
    \lim_{\gamma \to 0} -\frac{2}{a_\gamma^2} \ln \mathbb{P} (\tilde{q} > a_\gamma) = 1, \quad a_\gamma = 1/\gamma^\delta, \forall 0<\delta<1/2.
\end{align*}

In large deviation regime, we study the below deviation setting,
\begin{align}
    a_\gamma = 1/\gamma^\delta, \quad \delta>1/2, a:= a_{\gamma}\sqrt{\lambda}/\sqrt{\gamma} \label{eq: LDP setting}
\end{align}
In this regime, we can derive large deviation principle for SSQ, i.e., the third limit result in Corollary \ref{cor: Deviation Principle for M/M/1+M}. Specifically, we start from the tail bounds in formulas \eqref{eq: LDP MGF upper} and \eqref{eq: Change of measure Exponential MDP SSQ}. With those two-sided bounds, we can analyze the order of exponent terms and achieve asymptotically tight tail bound. Our large deviation principles exhibit a shift from sub-Gaussian exponent to sub-Poisson exponent.

From bounds in \eqref{eq: LDP MGF upper} and \eqref{eq: Change of measure Exponential MDP SSQ}, we analyze the order of exponent term. Particularly, we have    
\begin{align*}
    \ln \mathbb{P}(\hat{q} > a) &\leq a - (\frac{\lambda}{\gamma} + a )\log\left(1 + \frac{a}{\lambda/\gamma}\right) + \ln(A_{\lambda,\mu}) \\
    &= - a\log(1+\frac{a}{\lambda/\gamma}) + (a - \frac{\lambda}{\gamma}\log(1+\frac{a}{\lambda/\gamma})) + \ln(A_{\lambda,\mu}) \\
    &\overset{(a)}{\leq} - a\log(1+\frac{a}{\lambda/\gamma}) + \frac{a^2}{2\lambda/\gamma} + \ln(A_{\lambda,\mu}) \\
    &\overset{(b)}{\leq} a\ln(1+\frac{a}{\lambda/\gamma}) + o(1)\cdot a\ln(1+\frac{a}{\lambda/\gamma}) + \ln(A_{\lambda,\mu}) \\
    &\leq - a\ln(1+\frac{a}{\lambda/\gamma}) + o(1)\cdot a\ln(1+\frac{a}{\lambda/\gamma})\\
    \ln \mathbb{P}(\hat{q} > a) &\geq -(a+ \triangle+ \lambda/\gamma)\log(1+\frac{a+\triangle/2}{\lambda/\gamma}) + a + \triangle/2 + \ln(2/3) \\
    &= - (a+\triangle)\log(1+\frac{a+\triangle/2}{\lambda/\gamma}) + (a+\triangle/2 - \frac{\lambda}{\gamma}\log(1+\frac{a+\triangle/2}{\lambda/\gamma})) + \ln(2/3) \\
    &\overset{(c)}{\geq} -(a+\triangle)\ln(1+\frac{a+\triangle/2}{\lambda/\gamma}) + \ln(2/3)\\
    &\overset{(d)}{\geq} a\ln(1+\frac{a}{\lambda/\gamma}) - o(1)\cdot a\ln(1+\frac{a}{\lambda/\gamma})
\end{align*}
Inequality $(a)$ holds from the bound $\log(1+x) \geq x - x^2/2$ for $x>0$. Inequality $(b)$ holds from the order analysis of $ \frac{a^2}{2\lambda/\gamma} = o(a\ln(1+a/(\lambda/\gamma)))$ in large deviation regime, i.e., $a_\gamma = 1/\gamma^\delta, \delta>1/2$ and thus $a = 1/\gamma^{\delta'}, \delta'>1$. Inequality $(c)$ holds from the basic fact that $\log(1+x)\leq x$ for $x>0$. Inequality $(d)$ holds from the order analysis of $\triangle \log(1+(a+\triangle/2)/(\lambda/\gamma)) = o(a)$ in large deviation regime, same for constant term $\ln(3/2) = o(a\ln(1+a/(\lambda/\gamma)))$.

Now we change variable back to $a_\gamma$ and $\tilde{q}$, and we have the following asymptotically tight tail bound in large deviation regime,
\begin{align*}
    \lim_{\gamma \to 0} -\frac{\sqrt{\gamma}}{\sqrt{\lambda}a_\gamma \log(1+a_\gamma \sqrt{\gamma}/\sqrt{\lambda})} \ln \mathbb{P}(\tilde{q} > a_\gamma) = 1, \quad a_\gamma = 1/\gamma^\delta, \forall \delta>1/2
\end{align*}

The result is analogous to Cramer's theorem \cite{dembo2009large} in CLT, here the Fenchel-Legendre transform of log-MGF provides the tight rate function for large deviation.

\paragraph{Discussion}
In this section, we can build a methodological connection between the bounds for different deviation regimes and those in classical concentration inequalities. Specifically, we can draw the connection in three folds:
\begin{enumerate}
  \item For constant deviation, Wasserstein-$p$ bounds via Stein's method yield non-uniform Berry-Esseen-style rates of convergence for tails. Non-uniform Berry-Esseen bounds here refer to a non-uniform dependency of the error on the deviation $a$. 
  \item For moderate deviations, Wasserstein-$p$ bounds via triangle inequality recover Bernstein-type inequalities.
  \item For large deviations, the transform method produces Cramér's theorem and Bennett-type bounds.
\end{enumerate}

\section{Proof for Load Balancing System: Join-The-Shortest-queue (JSQ)} \label{sec: proof for JSQ}
In this section, we study join-the-shortest-queue (JSQ) policy for heterogeneous server setting. Recall that we have the notation
\begin{align}
    \tilde{q}_\Sigma := \frac{1}{n} \langle n\sqrt{\gamma/\lambda} (\mathbf{q} - \frac{\lambda-\mu}{n\gamma}\mathbf{1}), \mathbf{1} \rangle = \frac{\sqrt{\gamma}}{\sqrt{\lambda}} (\sum_i q_i - \frac{\lambda-\mu}{\gamma}) \overset{\triangle}{=} \frac{\sqrt{\gamma}}{\sqrt{\lambda}} \hat{q}_\Sigma. \label{eq: definition for JSQ}
\end{align}

\noindent Also recall that we have assumption on relationship of $(\lambda, \gamma, \mu)$ satisfying \ref{ass:heavy_overload} with $\mu:=\sum_{i=1}^{n} \mu_i$. And we use $\mathbf{q}_\perp$ to denote the projection of $\mathbf{q}$ onto the subspace orthogonal to $\mathbf{1}$, i.e., $\mathbf{q}_\perp = \mathbf{q} - \bar{q}_\Sigma\mathbf{1}$, where $\bar{q}_\Sigma := 1/n \sum_{i=1}^{n}q_i$. We use $\mathbf{q}$ to denote the steady-state queue length vector under JSQ policy, while we will use $\mathbf{x}$ for deterministic vector in $\mathbb{R}^n$ to distinguish. The projection $x_\perp$ and $x_\parallel$ are defined in the same way as $\mathbf{q}_\perp$ and $\mathbf{q}_\parallel$. First, we begin the proof by showing the stability and existence of expectation for JSQ system. 

\paragraph{Stability and Existence of MGF} \label{par: stability and existence of MGF for JSQ}
We use test function $f(\mathbf{q}) = \sum_{i=1}^{n} q_i^2$ into the generator \eqref{eq: generator for join shortest queue} to show positive recurrence of the Markov process $\mathbf{q}(t)$:
\begin{align*}
    \mathcal{L}_{JSQ} \| \mathbf{q}\|^2 &= \lambda (2\min_i q_i + 1) + \sum_{i=1}^{n} (\mu_i \mathbf{1}_{\{ q_i \neq 0\}} + \gamma q_i)(-2q_i + 1) \\
    &= -2\gamma\sum_{i=1}^{n} q_i^2 + 2\lambda \min_i q_i + \gamma \sum_{i=1}^{n} q_i - \sum_{i=1}^{n} \mu_iq_i + \lambda + \mu - \sum_{i=1}^{n} \mu_i \mathbf{1}_{\{ q_i = 0\}} \\
    &\leq -2\gamma \| \mathbf{q}\|^2 + (2\lambda/n +\gamma)\sum_{i=1}^{n} q_i + \lambda + \mu \\
    &\overset{(a)}{\leq} -\gamma \| \mathbf{q}\|^2 + \frac{n}{4\gamma}(2\lambda/n + \gamma)^2 + \lambda + \mu 
\end{align*}
Here inequality $(a)$ is AMGM. Since the drift is negative outside the compact set, which is defined as $\{ \mathbf{q} \in \mathbb{R}^n:
 \|\mathbf{q}\|^2 \leq (\frac{n}{4\gamma}(2\lambda/n + \gamma)^2 + \lambda + \mu)/\gamma\}$, the Markov process $\mathbf{q}(t)$ is positive recurrent from Foster-Lyapunov theorem \cite{hajek1982hitting}. In the following, we show existence of expectation $\mathbb{E}[\sum_i \exp(\theta q_i)]$ for $\theta\geq 0$ by applying the coupling argument in Section \ref{sec: coupling argument}. We use $q_{JSQ,i}$ to denote the steady-state queue length for $i$-th server under JSQ policy, and $q_{M/M/\infty,i}$ to denote the steady-state queue length for $i$-th server under M/M/$\infty$ queueing system with arrival rate $\lambda$ and service rate $\gamma$. From Section \ref{sec: coupling argument}, we have $q_{JSQ,i} \leq_{st} q_{M/M/\infty,i}$ for each $i$. Here $\leq_{st}$ denotes stochastic dominance. Note that we have explicit form for MGF of $q_{M/M/\infty,i}$ as $\mathbb{E}[\exp(\theta q_{M/M/\infty,i})] = \exp(\lambda/\gamma (e^{\theta} - 1)), \forall \theta \geq 0$.
 Thus we have
\begin{align}
    \sum_{i=1}^{n} \mathbb{E}[\exp(\theta q_{JSQ,i})] \leq \sum_{i=1}^{n} \mathbb{E}[\exp(\theta q_{M/M/\infty,i})] = n \exp(\lambda/\gamma (e^{\theta} - 1)) \vee 1 < \infty, \forall \theta \geq 0 \label{eq: well-defined MGF for JSQ}
\end{align}
Thus $\mathbb{E}[\exp(\theta q_{JSQ,i})] < \infty, \forall i, \theta \in \mathbb{R}$ since if $\theta <0$, $\exp(\theta q_{JSQ,i}) \leq 1$, whereas if $\theta \geq 0$, we have the above bound. This completes the proof for existence of MGF.

Before the proof for theorems under JSQ policy, we first pin down the assumption for $\gamma\leq \gamma_1$ as follows.
\begin{align}
    \gamma \leq \gamma_1&:= \min\{e^{-1}, \mu,\frac{\lambda^2}{C^2} \ln^2(\lambda/\mu), \frac{e^{4\sqrt{2}}\lambda}{25},
        \frac{\mu^2}{(2^2 5^2e^{4\sqrt{2}}\lambda)}, (\xi/2)^{2/\xi},\\
         &\frac{1}{2^{16} e^4 B_{1,\lambda,\mu,n}^4}, \min\{
        \langle\boldsymbol{\phi}, \boldsymbol{1}\rangle^2, \frac{1}{2} (\frac{\langle\boldsymbol{\phi}, \boldsymbol{1}\rangle^4}{eB_{1,\lambda,\mu,n}})^{1/3}\}^{1/\min\{1/6,\xi/2\}} 
        \} \label{eq: gamma assumption for JSQ W-p}
\end{align}
where $\xi = 1/2-\alpha-\epsilon$ for arbitrary $\epsilon \in (0,1/2-\alpha)$. $B_{1,\lambda,\mu,n}$ defined in \eqref{eq: B1, B2 for JSQ tail bound}. 
We will revisit this assumption in the proof for Theorem \ref{thm: Wasserstein-$p$ distance JSQ} and Theorem \ref{thm: JSQ Tail bound}. In the following, we follow the same order as Section \ref{sec: proof for JSQ} and present the proof fo State Space Collapse for JSQ policy, two ingredients for Wasserstein-$p$ distance, Wasserstein-$p$ distance bound and tail bound for JSQ policy sequentially.

\subsection{Proof for Theorem \ref{thm: SSC JSQ}: State Space Collapse}
In this section, we prove Theorem \ref{thm: SSC JSQ} for state space collapse of JSQ policy. Recall that we use $\mathbf{q}_\parallel$ to denote the projection of queue length vector $\mathbf{q}$ onto the subspace spanned by $\boldsymbol{1}$, and $\mathbf{q}_\perp$ to denote the projection onto the orthogonal subspace. We further recall the definition of $\tilde{q}_\Sigma$ in \eqref{eq: definition for JSQ} and the properties that the parallel component $\mathbf{q}_\parallel$ is unique and has closed form $\mathbf{q}_\perp = \frac{1}{n}(\sum_{i=1}^n q_i)\boldsymbol{1}$. Meanwhile, pythagoras identity gives us closed form expression for $\|\mathbf{q}_\perp\|$ as discrepancy among different queues $\|\mathbf{q}_\perp\|^2 = \|\mathbf{q}\|^2 - \|\mathbf{q}_\parallel\|^2 = \sum_{i=1}^n (q_i - \frac{1}{n}\sum_{j=1}^n q_j)^2$.

We first derive the p-th moment bound on $\gamma \langle \mathbf{q}, \boldsymbol{1} \rangle$ for explicit dependency on $p$. This $p$-th moment bound is an ingredient for later proof of SSC when we are using Lyapunov drift argument.
For $i$-th coordinate of queue length vector $\mathbf{q}$, we coupled the queue length process with the process $q_i'$ where the dispatcher no longer follows JSQ policy, but deterministically routes to the $i$-th queue. In this case, the queue length $q_i'$ is a single server queue (SSQ) with arrival rate $\lambda$, service rate $\mu_i$ and abandonment rate $\gamma$. This SSQ stochastically dominates $q_i$ under JSQ policy, i.e., $q_i \leq_{st} q_i'$. Therefore, in terms of MGF, we set in the below $\theta := 1$,
\begin{align*}
    \mathbb{E}\left[ e^{\theta \gamma q_i} \right] &\leq \mathbb{E}\left[ e^{\theta \gamma q_i'} \right] \overset{(a)}{\leq} A_{\lambda,\mu} \exp\left( \frac{\lambda}{\gamma} e^{\gamma} - \frac{\lambda}{\gamma} - \mu_i \right) \overset{(b)}{\leq} A_{\lambda,\mu} \exp ( 2\lambda - \mu_i)
\end{align*}
Inequality $(a)$ holds from the bound on MGF with positive $\theta$ for SSQ \eqref{eq: MGF for q_infty - lambda/mu/gamma, Poisson style}. Note that the bound in \eqref{eq: MGF for q_infty - lambda/mu/gamma, Poisson style} is for $\hat{q}$, we move the centered term $\frac{\lambda -\mu}{\gamma}$ to the right hand side and obtain the bound for $q_i'$. 
 Inequality $(b)$ holds from the fact $\exp(\gamma) \leq 2\gamma +1$ and the assumption \eqref{eq: gamma assumption for JSQ W-p} that $\gamma \leq 1$.

Then from the fact that polynomial functions are upper bounded by exponential functions, we have
\begin{align}
    \mathbb{E}\left[ \gamma^p \langle \mathbf{q}, \boldsymbol{1} \rangle^p \right] &\leq n^p \Gamma(p+1) \cdot \mathbb{E}\left[ e^{\frac{1}{n}\theta \gamma \langle \mathbf{q},\boldsymbol{1} \rangle} \right] \notag\\
    & \leq n^p \Gamma(p+1) \cdot \Pi_{i=1}^{n} \mathbb{E}\left[ \left(e^{\theta \gamma q_i'}\right) \right]^{1/n} \notag\\
    &\leq n^p \Gamma(p+1) \cdot A_{\lambda,\mu} \exp \left( 2\lambda - \frac{1}{n}\mu \right), \label{eq: p-moment for gamma queue length}
\end{align}
which exhibits the independence of $\gamma$ in right hand side of the upper bound. The dependency on $p$ for the upper bound is through the term $\Gamma(p+1)$, which is of order $p^p$.


Next, we show a two-stage control on drift for $\|\mathbf{q}_\perp\|$, i.e., $\mathcal{L}_{JSQ} \|\mathbf{q}_\perp\|$, via the following boundedness of a single jump and drift analysis on $\|\mathbf{q}_\perp\|^2$. Recall that $\|\cdot\|$ denotes the Euclidean norm for vector in $\mathbb{R}^n$, unlike $\|\cdot\|_{L^p}$ which denotes the $L^p$ norm for random variable. This two-stage control of $\mathcal{L}_{JSQ} \|\mathbf{q}_\perp\|$ will also be used in the proof for SSC late when we apply Lyapunov drift method. We define $f(\mathbf{x}):=\|\mathbf{x}_\perp\| = \|\mathbf{x} - 1/n \langle \mathbf{x}, \boldsymbol{1}\rangle \|$, we first get a uniform bound on the difference $\|\mathbf{x}'_\perp\| - \|\mathbf{x}_\perp\|$, for every pair $ (\mathbf{x}, \mathbf{x}')$ such that $\mathbf{Q}_{JSQ}(\mathbf{x}, \mathbf{x}') > 0$. Recall that $\mathbf{Q}_{JSQ}(\mathbf{x}, \mathbf{x}')$ is the transition rate from state $\mathbf{x}$ to $\mathbf{x}'$ under JSQ policy defined in \eqref{eq: generator for join shortest queue}. We have
\begin{align}
    \big| \|\mathbf{x}'_\perp\| - \|\mathbf{x}_\perp\|\big| &= \frac{\big| \|\mathbf{x}'_\perp\|^2 - \|\mathbf{x}_\perp\|^2 \big|}{\big| \|\mathbf{x}'_\perp\| + \|\mathbf{x}_\perp\| \big|}\notag\\
     &\overset{(a)}{\leq} \frac{2|x_i - 1/n\sum_i x_i| +(1-1/n)^2 }{\big| \|\mathbf{x}'_\perp\| + \|\mathbf{x}_\perp\| \big|} \notag\\
     &= \frac{2|x_i - 1/n\sum_i x_i| }{\big| \|\mathbf{x}'_\perp\| + \|\mathbf{x}_\perp\| \big|} + \frac{(1-1/n)^2}{\big| \|\mathbf{x}'_\perp\| + \|\mathbf{x}_\perp\| \big|} \notag\\
     &\overset{(b)}{\leq} \frac{2|x_i - 1/n\sum_i x_i| }{\sqrt{\sum_i (x_i - 1/n \sum_i x_i)^2}} + \frac{(1-1/n)^2}{\big| \|\mathbf{x}'_\perp\| + \|\mathbf{x}_\perp\| \big|} \notag\\
    &\overset{(c)}{\leq} \frac{2|x_i - 1/n\sum_i x_i| }{\sqrt{\sum_i (x_i - 1/n \sum_i x_i)^2}} + \frac{(1-1/n)^2}{1-1/n}\notag \\
     &\leq 2 +1 = 3,
      \label{eq: first order difference constant control}
\end{align}
where $(a)$ holds from the fact that $\|\mathbf{x}_\perp\|^2 = \|\mathbf{x}\|^2 - \| 1/n \langle \mathbf{x}, \boldsymbol{1}\rangle\|^2 = \sum_i (x_i - 1/n \sum_i x_i)^2$, and the fact that $(\mathbf{x}, \mathbf{x}'): \mathbf{Q}_{JSQ}(\mathbf{x}, \mathbf{x}') > 0$ have difference 1 in exactly one coordinate. Inequality $(b)$ holds from basic bound $\|\mathbf{x}'_\perp\| + \|\mathbf{x}_\perp\| \geq \|\mathbf{x}_\perp\|$, and the closed form of $\|\mathbf{x}_\perp\|$. Inequality
 $(c)$ holds from the following lower bound on $\big| \|\mathbf{x}'_\perp\| + \|\mathbf{x}_\perp\| \big|$ as a property of projection. Without loss of generality, we assume $ \mathbf{x}' = \mathbf{x} \pm \mathbf{e}_i$ for a fix $i$, then we have
\begin{align*}
    \big| \|\mathbf{x}'_\perp\| + \|\mathbf{x}_\perp\| \big| &= \sqrt{\sum_{j=1}^{n} (x_j \pm \mathbf{1}_{\{j=i\}} - 1/n(\sum_i x_i \pm 1))^2} + \sqrt{\sum_{j=1}^{n} (x_j - 1/n\sum_i x_i)^2}\\
    &\geq |x_i - 1/n\sum_i x_i| + |x_i - 1/n\sum_i x_i \pm (1-1/n)| \\
    &\geq 1-1/n
\end{align*}
Now, with bounded single jump \eqref{eq: first order difference constant control}, we can move to the drift analysis on $\|\mathbf{q}_\perp\|$, which provides us the two-stage control on $\mathcal{L}_{JSQ} \|\mathbf{q}_\perp\|$.
Specifically, we apply infinitesimal generator $\mathcal{L}_{JSQ}$ to $f(\mathbf{q}) = \|\mathbf{q}_\perp\|^2$, while we denote $q_* := \min_{i} q_i$ in the following. Recall that $\bar{q}$ is the average queue length $\bar{q} = 1/n \sum_{i=1}^{n} q_i$. We have
\begin{align}
    \mathcal{L}_{JSQ} \|\mathbf{q}_\perp\|^2 &= \lambda (2 q_* - 2\bar{q} +1 - 1/n) +\sum_{i=1}^{n}(\mu_i \boldsymbol{1}_{\{ q_i \neq 0\}} +\gamma q_i) (-2q_i + 2\bar{q} +1-1/n) \notag\\
    &= \lambda (2 q_* - 2\bar{q} +1 - 1/n) -2\gamma \sum_{i=1}^{n}(q_i -\bar{q})^2 \notag\\
    & \quad + (1-1/n) \gamma\sum_{i=1}^{n}q_i - \bar{q}\sum_{i=1}^{n} \mu_i \boldsymbol{1}_{\{ q_i =0\}} + (1-1/n)\sum_{i=1}^{n} \mu_i \boldsymbol{1}_{\{ q_i \neq 0\}} \notag\\
    &= \lambda(1-1/n) + (1-1/n)\sum_{i=1}^{n} \mu_i\boldsymbol{1}_{\{ q_i \neq 0\}} + \gamma \sum_{i=1}^{n} q_i +2\lambda (q_* - \bar{q}) - 2\gamma \sum_{i=1}^{n}(q_i -\bar{q})^2 - \bar{q}\sum_{i=1}^{n} \mu_i \boldsymbol{1}_{\{ q_i =0\}} \notag\\
    &\overset{(a)}{\leq} (\lambda + \mu) + \gamma \sum_{i=1}^{n} q_i +2\lambda (q_* - \bar{q}) \notag\\
    &\overset{(b)}{\leq} (\lambda + \mu) + \gamma \sum_{i=1}^{n} q_i -\frac{2\lambda }{\sqrt{n(n-1)}} \|\mathbf{q}_\perp\| \label{eq: drift analysis on perpendicular, secon moment}
\end{align}
Inequality $(a)$ holds from the positivity of terms in last line. Inequality $(b)$ holds from the following lower bound on $\bar{q} - q_*$ as a property of $q_*$ is the minimum coordinate of $\mathbf{q}$:
\begin{align*}
    \bar{q} - q_* & = 1/n \sum_{i=1}^{n} (q_i - q_*) \geq \frac{1}{\sqrt{n(n-1)}} \|\mathbf{q}_\perp\|
\end{align*}
To see this, we first use change of variables  $s_i:=q_i - q_*$ for simplicity, then we have $\bar{q} - q_* = 1/n \sum_{i=1}^{n} s_i$, and
\begin{align*}
    \sum_{i=1}^{n} (q_i - \bar{q})^2 &= \sum_{i=1}^{n} (s_i - 1/n \sum_{j=1}^{n} s_j)^2 = \sum_{i=1}^{n} s_i^2 - \frac{1}{n}\left(\sum_{i=1}^{n} s_i\right)^2 \leq \frac{n-1}{n} (\sum_{i=1}^{n} s_i)^2 \\
     \bar{q} - q_* &= 1/n \sum_{i=1}^{n} s_i \geq \frac{1}{\sqrt{n(n-1)}} \sqrt{\sum_{i=1}^{n} (q_i -\bar{q})^2}= \frac{1}{\sqrt{n(n-1)}} \|\mathbf{q}_\perp\| 
\end{align*}

Now, the control on $\mathcal{L}_{JSQ} \|\mathbf{q}_\perp\|^2$ gives us two stage control on $\mathcal{L}_{JSQ} \|\mathbf{q}_\perp\|$ as follows.
\begin{align}
    \mathcal{L}_{JSQ} \|\mathbf{q}_\perp\| &= \sum_{\mathbf{q}'\in \mathbb{Z}^n, \mathbf{q}' \neq \mathbf{q}} \mathbf{Q}_{JSQ}(\mathbf{q}, \mathbf{q}') (\|\mathbf{q}'_\perp\| - \|\mathbf{q}_\perp\|) \label{eq: definition of gen first order diff}\\ 
    &\leq \sum_{\mathbf{q}'\in \mathbb{Z}^n, \mathbf{q}' \neq \mathbf{q}} \mathbf{Q}_{JSQ}(\mathbf{q}, \mathbf{q}') \frac{\|\mathbf{q}'_\perp\|^2 - \|\mathbf{q}_\perp\|^2}{2\|\mathbf{q}_\perp\|} = \frac{\mathcal{L}_{JSQ} \|\mathbf{q}_\perp\|^2}{2\|\mathbf{q}_\perp\|} \notag\\
    &\overset{(a)}{\leq} \begin{cases}
         - \underbrace{\frac{\lambda}{\sqrt{n(n-1)}}}_{:=\zeta >0}, & \text{if } \|\mathbf{q}_\perp\| > L(\mathbf{q}) \label{eq: gen first order diff two control}\\
        3(\lambda + \mu +\gamma \sum_{i=1}^{n} q_i) , & \text{if } \|\mathbf{q}_\perp\| \leq L(\mathbf{q}) 
    \end{cases}
\end{align}
with the definition of $L(\mathbf{q}) := \frac{2(n-1)/n[\gamma\sum_{i=1}^{n}q_i + \mu +\lambda )]}{\lambda /\sqrt{n(n-1)}}$. To see inequality $(a)$, we consider the two cases correspondingly. The first case is when $\|\mathbf{q}_\perp\| > L(\mathbf{q})$, we have from \eqref{eq: drift analysis on perpendicular, secon moment} that
\begin{align*}
    \mathcal{L}_{JSQ} \|\mathbf{q}_\perp\| &\leq \frac{(\lambda + \mu) + \gamma \sum_{i=1}^{n} q_i -\frac{2\lambda }{\sqrt{n(n-1)}} \|\mathbf{q}_\perp\|}{2\|\mathbf{q}_\perp\|} \leq -\frac{\lambda}{\sqrt{n(n-1)}}
\end{align*}
The second case is when $\|\mathbf{q}_\perp\| \leq L(\mathbf{q})$, we have from \eqref{eq: definition of gen first order diff} and \eqref{eq: first order difference constant control} that
\begin{align*}
    \mathcal{L}_{JSQ} \|\mathbf{q}_\perp\| &\leq \sum_{\mathbf{q}'\in \mathbb{Z}^n, \mathbf{q}' \neq \mathbf{q}} \mathbf{Q}_{JSQ}(\mathbf{q}, \mathbf{q}') \cdot 3 = 3(\lambda + \mu +\gamma \sum_{i=1}^{n} q_i)
\end{align*}
Note that the two-stage control \eqref{eq: gen first order diff two control} identifies the negative drift region and the positive drift region for $\mathcal{L}_{JSQ} \|\mathbf{q}_\perp\|$. However, unlike classical Lyapunov drift method, we do not obtain a uniform negative drift outside a compact set of $\mathbf{q}_\perp$, since the negative drift region depends not only on $\|\mathbf{q}_\perp\|$ but also on the state $\mathbf{q}$ via $L(\mathbf{q})$.

With the two-stage control on $\mathcal{L}_{JSQ} \|\mathbf{q}_\perp\|$ in \eqref{eq: gen first order diff two control} and the $p$-th moment bound on $\gamma \langle \mathbf{q}, \boldsymbol{1} \rangle$ in \eqref{eq: p-moment for gamma queue length}, we are ready to prove Theorem \ref{thm: SSC JSQ} for state space collapse of JSQ policy. We aim to bound the $p$-th moment of $\|\mathbf{q}_\perp\|$ for arbitrary $p\geq 1$. We choose Lyapunov function $f(\mathbf{q}) = \|\mathbf{q}_\perp\|^{p+1}$ to obtain this $p$-th moment bound on $\|\mathbf{q}_\perp\|$ via polynomial root. 
With JSQ generator applied to $f(\mathbf{q}) = \|\mathbf{q}_\perp\|^{p+1}$, we have the following bound
\begin{align}
    \mathcal{L}_{JSQ} f(\mathbf{q}) &= \sum_{\mathbf{q}'\in \mathbb{Z}^n, \mathbf{q}' \neq \mathbf{q}} \mathbf{Q}_{JSQ}(\mathbf{q}, \mathbf{q}') (\|\mathbf{q}'_\perp\|^{p+1} - \|\mathbf{q}_\perp\|^{p+1}) \notag\\
    &\overset{(a)}{\leq} \sum_{\mathbf{q}'\in \mathbb{Z}^n, \mathbf{q}' \neq \mathbf{q}} \mathbf{Q}_{JSQ}(\mathbf{q}, \mathbf{q}') \bigg( (p+1)\|\mathbf{q}_\perp\|^{p} \cdot (\|\mathbf{q}'_\perp\| - \|\mathbf{q}_\perp\|) + \notag\\
    &\quad \frac{(p+1)p}{2}\max\{\|\mathbf{q}'_\perp\|, \|\mathbf{q}_\perp\|\}^{p-1} (\|\mathbf{q}'_\perp\| - \|\mathbf{q}_\perp\|)^2 \bigg) \notag\\
    &= (p+1)\|\mathbf{q}_\perp\|^{p} \cdot \sum_{\mathbf{q}'\in \mathbb{Z}^n, \mathbf{q}' \neq \mathbf{q}} \mathbf{Q}_{JSQ}(\mathbf{q}, \mathbf{q}') (\|\mathbf{q}'_\perp\| - \|\mathbf{q}_\perp\|) + \notag\\
    &\quad  \sum_{\mathbf{q}'\in \mathbb{Z}^n, \mathbf{q}' \neq \mathbf{q}} \mathbf{Q}_{JSQ}(\mathbf{q}, \mathbf{q}') \bigg( \frac{(p+1)p}{2}\max\{\|\mathbf{q}'_\perp\|, \|\mathbf{q}_\perp\|\}^{p-1} (\|\mathbf{q}'_\perp\| - \|\mathbf{q}_\perp\|)^2 \bigg) \notag\\
    &\overset{(b)}{\leq} -\zeta (p+1)\|\mathbf{q}_\perp\|^{p} + (p+1)(3\lambda + 3\mu + 3\gamma \sum_{i=1}^{n} q_i + \zeta) L^p (\mathbf{q}) +  \notag\\ 
    &\quad  \sum_{\mathbf{q}'\in \mathbb{Z}^n, \mathbf{q}' \neq \mathbf{q}} \mathbf{Q}_{JSQ}(\mathbf{q}, \mathbf{q}') \bigg( \frac{(p+1)p}{2}\max\{\|\mathbf{q}'_\perp\|, \|\mathbf{q}_\perp\|\}^{p-1} (\|\mathbf{q}'_\perp\| - \|\mathbf{q}_\perp\|)^2 \bigg) \notag\\
    &\overset{(c)}{\leq} -\zeta (p+1)\|\mathbf{q}_\perp\|^{p} + (p+1)(3\lambda + 3\mu + 3\gamma \sum_{i=1}^{n} q_i + \zeta) L^p (\mathbf{q}) +  \notag\\
    &\quad 9\frac{(p+1)p}{2} \sum_{\mathbf{q}'\in \mathbb{Z}^n, \mathbf{q}' \neq \mathbf{q}} \mathbf{Q}_{JSQ}(\mathbf{q}, \mathbf{q}') (\|\mathbf{q}'_\perp\|^{p-1} + \|\mathbf{q}_\perp\|^{p-1}) \label{eq: gen Taylor expansion for L-p norm}
\end{align}
Inequality $(a)$ holds from Taylor expansion with Lagrange remainder. Inequality $(b)$ holds from the two stage control \eqref{eq: gen first order diff two control}. Specifically, if $\|\mathbf{q}_\perp\| > L(\mathbf{q})$, then we have negative drift $-\zeta$ for the first term in the second last line; else if $\|\mathbf{q}_\perp\| \leq L(\mathbf{q})$, then we upper bound the first term in the second last line by $(p+1)(3\lambda + 3\mu + 3\gamma \sum_{i=1}^{n} q_i) L^p (\mathbf{q})$. Note that we add a $\zeta$ in the right hand side of $(b)$ to use $\|\mathbf{q}_\perp\| \leq L(\mathbf{q})$ in the second case. Inequality $(c)$ holds from the bounded single jump \eqref{eq: first order difference constant control}, i.e., $\big| \|\mathbf{q}'_\perp\| - \|\mathbf{q}_\perp\| \big| \leq 3$ for every pair $(\mathbf{q}, \mathbf{q}')$ such that $\mathbf{Q}_{JSQ}(\mathbf{q}, \mathbf{q}') >0$.

In steady state, because of the well-definedness of MGF for JSQ policy \eqref{par: stability and existence of MGF for JSQ}
, we can take expectation on both sides of \eqref{eq: gen Taylor expansion for L-p norm} and set the left hand side to be zero. Then, we have
\begin{align*}
    0 &= \mathbb{E}\left[ \mathcal{L}_{JSQ} f(\mathbf{q}) \right] \\
        &\leq -\zeta (p+1)\mathbb{E}\left[ \|\mathbf{q}_\perp\|^{p} \right] + (p+1)\mathbb{E}\left[(3\lambda + 3\mu + 3\gamma  \sum_{i=1}^{n} q_i  + \zeta)  L^p (\mathbf{q}) \right] +  \\
        &\quad \mathbb{E}\left[ 9\frac{(p+1)p}{2} \sum_{\mathbf{q}'\in \mathbb{Z}^n, \mathbf{q}' \neq \mathbf{q}} \mathbf{Q}_{JSQ}(\mathbf{q}, \mathbf{q}') (\|\mathbf{q}'_\perp\|^{p-1} + \|\mathbf{q}_\perp\|^{p-1}) \right] \\
        &\overset{(a)}{=}  -\zeta (p+1)\mathbb{E}\left[ \|\mathbf{q}_\perp\|^{p} \right] + (p+1)\mathbb{E}\left[(3\lambda + 3\mu + 3\gamma  \sum_{i=1}^{n} q_i  + \zeta)  L^p (\mathbf{q}) \right] +  \\
        &\quad + 9(p+1)p \mathbb{E}\left[ \|\mathbf{q}_\perp\|^{p-1}  \cdot \sum_{\mathbf{q}'\in \mathbb{Z}^n, \mathbf{q}' \neq \mathbf{q}} \mathbf{Q}_{JSQ}(\mathbf{q}, \mathbf{q}')\right] \\
        &\overset{(b)}{=} -\zeta (p+1)\mathbb{E}\left[ \|\mathbf{q}_\perp\|^{p} \right] + (p+1) (2\sqrt{n(n-1)})^p \mathbb{E}[(3\mu+3\lambda+3\gamma \sum_{i=1}^{n} q_i + \zeta)(\mu+\lambda+\gamma \sum_{i=1}^{n} q_i)^p] \\
        &\quad + 9(p+1)p \mathbb{E}\left[ \|\mathbf{q}_\perp\|^{p-1}  \cdot \sum_{\mathbf{q}'\in \mathbb{Z}^n, \mathbf{q}' \neq \mathbf{q}} \mathbf{Q}_{JSQ}(\mathbf{q}, \mathbf{q}')\right] \\
        &\overset{(c)}{\leq }
        -\underbrace{\zeta (p+1)}_{a_{p}} \mathbb{E}\left[ \|\mathbf{q}_\perp\|^{p} \right] 
        + \underbrace{(p+1) (2\sqrt{n(n-1)})^p \mathbb{E}\left[(3\mu+3\lambda+3\gamma \sum_{i=1}^{n} q_i + \zeta)(\mu+\lambda+\gamma \sum_{i=1}^{n} q_i)^p\right]}_{a_0} \\
        &\quad + \underbrace{9(p+1)p \mathbb{E}\left[ (\sum_{\mathbf{q}'\in \mathbb{Z}^n, \mathbf{q}' \neq \mathbf{q}} \mathbf{Q}_{JSQ}(\mathbf{q}, \mathbf{q}'))^p\right]^{1/p}}_{a_{p-1}} \mathbb{E}[ \|\mathbf{q}_\perp\|^{p}]^{(p-1)/p}
\end{align*}
Equality $(a)$ holds from the zero drift identity on $\|\mathbf{q}_\perp\|^{p-1}$, i.e.,
\begin{align*}
    0=\mathbb{E}[\mathcal{L}_{JSQ}\|\mathbf{q}_\perp\|^{p-1}]= \mathbb{E}[\sum_{\mathbf{q}'\in \mathbb{Z}^n, \mathbf{q}' \neq \mathbf{q}} \mathbf{Q}_{JSQ}(\mathbf{q}, \mathbf{q}')(\|\mathbf{q}'_\perp\|^{p-1} - \|\mathbf{q}_\perp\|^{p-1})] 
\end{align*}
Equality $(b)$ holds from the definition of $L(\mathbf{q})$. Inequality $(c)$ holds from H\"older's Inequality that $\mathbb{E}[\|\mathbf{q}_\perp|^{p-1}] \leq \mathbb{E}[|\mathbf{q}_\perp|^{p}]^{(p-1)/p}$. Rearranging the above inequality, we can see a polynomial inequality on $\mathbb{E}[\|\mathbf{q}_\perp|^{p}]$ as follows,
\begin{align}
    a_p \mathbb{E}[\|\mathbf{q}_\perp\|^{p}] \leq  a_{p-1} \mathbb{E}[\|\mathbf{q}_\perp\|^{p}]^{(p-1)/p} + a_0 \notag
\end{align}
From Fujiwara's argument on the magnitude of polynomials' root \cite{fujiwara1916obere}, we bound the p-th moment as 
\begin{align*}
    \mathbb{E}[\|\mathbf{q}_\perp\|^{p}] ^{1/p} &\leq 2\max\{(\frac{|a_0|}{|a_p|})^{1/p}, \frac{|a_{p-1}|}{|a_p|}\} 
\end{align*}
Using p's dependency from \eqref{eq: p-moment for gamma queue length}, we bound the two terms in sequel. For $(\frac{|a_{p-1}|}{|a_p|})$:
\begin{align*}
    \frac{|a_{p-1}|}{|a_p|} &\overset{(a)}{\leq} \frac{9p}{\zeta} \cdot (\lambda + \mu + \mathbb{E}[\gamma^p \langle \mathbf{q}, \boldsymbol{1} \rangle^p]^{1/p})  \\
    &\overset{(b)}{\leq} \frac{9p}{\zeta} \cdot (\lambda + \mu + \frac{2\sqrt{2\pi}}{e} A_{\lambda,\mu} \exp(2\lambda - \mu/n) \cdot np)
\end{align*}
$(a)$ holds from triangle inequality for $L^p$ norm. Inequality $(b)$ is using the p-th moment bound on $\gamma \langle \mathbf{q}, \boldsymbol{1} \rangle$ in \eqref{eq: p-moment for gamma queue length} along with
 the stirling approximation for $\Gamma$ function in \eqref{eq: Exp Dominating Polynomial Stirling Bound}. For $(\frac{|a_{0}|}{|a_p|})^{1/p}$, we have
\begin{align*}
    (\frac{|a_{0}|}{|a_p|})^{1/p} &= (1/\zeta)^{1/p} (2\sqrt{n(n-1)}) \cdot \mathbb{E}[(3\lambda + 3\mu + 3\gamma \sum_{i=1}^{n} q_i + \zeta) \cdot (\lambda + \mu + \gamma \langle \mathbf{q}, \boldsymbol{1} \rangle)^p])^{1/p} \\
    &\overset{(a)}{\leq} \max(1,1/\zeta) 2n \bigg( \zeta(\lambda + \mu + \|\gamma \langle \mathbf{q}, \boldsymbol{1} \rangle\|_{L^p}) + 3\cdot 2((\lambda + \mu)^{(p+1)/p} + \mathbb{E}[\gamma^{p+1} \langle \mathbf{q}, \boldsymbol{1} \rangle^{p+1}]^{1/p} )\bigg)\\
    &\overset{(b)}{\leq} 2n\max(1,1/\zeta) \bigg( \zeta(\lambda + \mu + \frac{2\sqrt{2\pi}}{e} A_{\lambda,\mu} \exp(2\lambda - \mu/n) np) \\
    &\quad + 6\cdot \big((\lambda + \mu)^{2} + \frac{4\sqrt{2\pi}}{e} A_{\lambda,\mu} \exp(2\lambda - \mu/n) n^2 p \big) \bigg)
\end{align*}

Inequality $(a)$ holds from $(x+y)^{1/p} \leq x^{1/p} + y^{1/p}$, and $(x+y)^{p+1} \leq 2^p (x^{p+1} + y^{p+1})$ for $x,y \geq 0$. Inequality $(b)$ is from $(p+1)^{1/p} \leq 2, \forall p > 1$ and the bound on p-th moment of $\gamma \langle \mathbf{q}, \boldsymbol{1} \rangle$ in \eqref{eq: p-moment for gamma queue length}.

Combining the two above bounds, we obtain the quadratic growth of p-th moment for $\|\mathbf{q}_\perp\|$:
\begin{align}
    \mathbb{E}[\|\mathbf{q}_\perp\|^{p}]^{1/p} &\leq \max\{ E_{n,\lambda,\mu}^{(1)} p^2, E_{n,\lambda,\mu}^{(2)} p \} \leq E_{n,\lambda,\mu} p^2 \label{eq: p-moment for q_perp, state space collapse}
\end{align}
Where constant $E_{n,\lambda,\mu}^{(1)} := \frac{18}{\zeta} (\lambda +\mu +\frac{2\sqrt{2\pi}}{e}A_{\lambda,\mu} \exp(2\lambda - \mu/n) n)$, and $E_{n,\lambda,\mu}^{(2)} := 4n\max(1,1/\zeta) \bigg( \zeta(\lambda + \mu + \frac{2\sqrt{2\pi}}{e} A_{\lambda,\mu} \exp(2\lambda - \mu/n) n) + 6\cdot \big((\lambda + \mu)^{2} + \frac{4\sqrt{2\pi}}{e} A_{\lambda,\mu} \exp(2\lambda - \mu/n) n^2 \big) \bigg)$. And constant $E_{n,\lambda,\mu} := \max\{ E_{n,\lambda,\mu}^{(1)}, E_{n,\lambda,\mu}^{(2)} \}$. Recalling that $\zeta = \frac{\lambda}{\sqrt{n(n-1)}}$.

\subsection{Proof for Proposition \ref{pro: zero probability sum for JSQ}: Probability mass at zero}
In this section, we will prove Proposition \ref{pro: zero probability sum for JSQ} that bounds the probability mass at zero for the queue length process under JSQ policy in heavy overload regime. The proof is based on the Lyapunov drift method with properly chosen exponential Lyapunov function, the raitonale of choice is explained in Section \ref{sec: proof sketch}.

We apply the generator $\mathcal{L}_{JSQ}$ in \eqref{eq: generator for join shortest queue} on the Lyapunov function $f_\theta(\mathbf{q}) = \sum_{i=1}^{n} \exp(-\sqrt{\gamma}\theta q_i)$ with $\theta$ in the following range, recall $C$ is from Assumption \ref{ass:heavy_overload}:
\begin{align}
    \theta \in \left[0, \frac{C}{\lambda}\right] \label{eq: theta interval for Laplace Transform, JSQ}
\end{align}
Then, we have the following derviation:
\begin{align}
    0 &= \mathbb{E}[\mathcal{L}_{JSQ} f_\theta(\mathbf{q}) ] \notag\\
    &= \mathbb{E}\left[ \lambda \left( \exp(-\sqrt{\gamma}\theta (q_{\min} + 1)) - \exp(-\sqrt{\gamma}\theta q_{\min}) \right) \right] \notag\\
    &\quad + \sum_{i=1}^{n} \mathbb{E}\left[ (\mu_i \mathbf{1}_{\{ q_i \neq 0\}} + \gamma q_i) \left( \exp(-\sqrt{\gamma}\theta (q_i - 1)) - \exp(-\sqrt{\gamma}\theta q_i) \right) \right] \notag\\
      0&= \lambda e^{-\sqrt{\gamma}\theta} \mathbb{E}[ \exp(-\sqrt{\gamma}\theta q_{\min}) ] +  \mathbb{E}[\sum_{i=1}^{n}\mu_i \exp(-\sqrt{\gamma}\theta q_i)] \notag\\
    &\quad + \gamma  \mathbb{E}[ \sum_{i=1}^{n}q_i \exp(-\sqrt{\gamma}\theta q_i)] -\mathbb{E}[\sum_{i=1}^{n}\mu_i \mathbf{1}_{\{ q_i \neq 0\}}] \notag\\
    &\overset{(a)}{=} \lambda e^{-\sqrt{\gamma}\theta} N_*(\theta) +  \sum_{i=1}^{n}\mu_i N_i(\theta) + \sqrt{\gamma}N'(\theta) -\mathbb{E}[\sum_{i=1}^{n}\mu_i \mathbf{1}_{\{ q_i \neq 0\}}] \notag\\
    &\leq \lambda e^{-\sqrt{\gamma}\theta} N_*(\theta) +  \sum_{i=1}^{n}\mu_i N_i(\theta) + \sqrt{\gamma}N'(\theta) \notag\\
    &\overset{(b)}{\leq} -\frac{\lambda}{n} e^{-\sqrt{\gamma}\theta} N(\theta) +  \frac{\mu}{n} N(\theta) -\sqrt{\gamma} N'(\theta), \text{if } \sqrt{\gamma}\theta \leq \ln(\frac{\lambda}{\mu}) \label{eq: ODE for Laplace Transform JSQ}
\end{align}

Equality $(a)$ is change of variables. We use $N_*(\theta) := \mathbb{E}[ \exp(-\sqrt{\gamma}\theta q_{\min}) ]$, $N_i(\theta) := \mathbb{E}[\exp(-\sqrt{\gamma}\theta q_i)]$, $N(\theta) := \mathbb{E}[\sum_{i=1}^{n}\exp(-\sqrt{\gamma}\theta q_i)]$, and $N'(\theta) = \frac{d N(\theta)}{d \theta}$. $(b)$ holds from the relationship between $N_*(\theta)$ and $N_i(\theta)$. Specifically, we first change of variables that $D_i := N_*(\theta) - N_i(\theta)\geq 0$. Then, we have the following computation
\begin{align}
    \frac{1}{n} \lambda e^{-\sqrt{\gamma}\theta} \sum_{i=1}^{n} 
    (N_*(\theta) - N_i(\theta))&= \frac{1}{n} \lambda e^{-\sqrt{\gamma}\theta} \sum_{i=1}^{n} D_i \notag\\
    &\overset{(a)}{\geq} \frac{\mu}{n} \sum_{i=1}^{n} D_i\notag\\
    &\geq \frac{\mu}{n} \sum_{i=1}^{n} D_i - \sum_{i=1}^{n} \mu_i D_i(\theta) \notag\\
    &\overset{(b)}{=} \sum_{i=1}^{n} \mu_i N_i(\theta) - \frac{\mu}{n} N(\theta) \label{eq: algebraic computation for Laplace Transform JSQ}
\end{align}
Inequality $(a)$ holds from the fact that $\lambda e^{-\sqrt{\gamma}\theta} \geq \mu$, which is guaranteed by the choice of $\theta$ range in \eqref{eq: theta interval for Laplace Transform, JSQ} that $\theta \leq \frac{C}{\lambda}$ as well as $\gamma\leq \gamma_1$ from \eqref{eq: gamma assumption for JSQ W-p} as follows.
\begin{align*}
    \gamma \leq \frac{\lambda^2}{C^2} \ln^2(\lambda/\mu), \implies \lambda e^{-\sqrt{\gamma}\theta} &\geq \lambda e^{-\sqrt{\gamma}\frac{C}{\lambda}} \geq \mu.
\end{align*}
Equality $(b)$ holds from plugging back the definition of $D_i$.

Starting from \eqref{eq: ODE for Laplace Transform JSQ}, we bound the moment generating function with negative $\theta$ of $\mathbf{q}$ as follows.
\begin{align*}
    N(\theta) &\overset{(a)}{\leq} \exp \left( \frac{\lambda}{n\gamma} e^{-\sqrt{\gamma}\theta} + \frac{\mu}{n\sqrt{\gamma}} \theta - \frac{\lambda}{n\gamma} \right) \\
    &\overset{(b)}{\leq} \exp \left( \frac{\lambda}{n\gamma} (1-\sqrt{\gamma}\theta + \frac{1}{2}\gamma\theta^2) + \frac{\mu}{n\sqrt{\gamma}} \theta - \frac{\lambda}{n\gamma} \right) \overset{(c)}{\leq} \exp \left( -\frac{\lambda - \mu}{2n\sqrt{\gamma}} \theta\right) 
\end{align*}
Inequality $(a)$ is from Gronwall's inequality applied to the ODE in \eqref{eq: ODE for Laplace Transform JSQ}.
Inequality $(b)$ is bounded from Taylor Series: $e^{-x} \leq 1 - x + \frac{1}{2}x^2$ for $x \geq 0$. Inequality $(c)$ holds from the interval of $\theta$ in \eqref{eq: theta interval for Laplace Transform, JSQ} 

Then we can upper bound the zero's probability mass from this MGF bound as follows.
\begin{align}
    \sum_{i=1}^{n} \mathbb{P}(q_i = 0) &= \sum_{i=1}^{n} \mathbb{E}[\mathbf{1}_{\{ q_i = 0\}}] =\sum_{i=1}^{n} \mathbb{E}[\mathbf{1}_{\{ q_i = 0\}}\exp(-\sqrt{\gamma}\theta q_i)] \notag\\
    &\leq \sum_{i=1}^{n} \mathbb{E}[\exp(-\sqrt{\gamma}\theta q_i)] = N(\theta), \quad \forall \theta \in [0, \frac{C}{\lambda}] \notag\\
    \sum_{i=1}^{n} \mathbb{P}(q_i = 0) &\leq \exp \left( -\frac{C}{2n\lambda}\cdot \frac{\lambda - \mu}{\sqrt{\gamma}} \right) \label{eq: zero's probability mass upper bound JSQ}
\end{align}

Now we obtain a lower bound on $\mathbb{P}(\sum_{i=1}^{n} q_i = 0)$ by coupling the JSQ system with the same arrival and abandonment process, but with all servers' service rates being 0. This is equivalent to a degenerated $M/M/\infty$ queue with arrival rate $\lambda$ and service rate $\gamma$. It is shown in Section \ref{sec: coupling argument} that for stationary distribution, the total queue length in $M/M/\infty$ stochastically dominates the total queue length in JSQ system:
\begin{align}
    \mathbb{P}(\sum_{i=1}^{n} q_i = 0) &\geq \mathbb{P}(\sum_{i=1}^{n} q_{i, M/M/\infty} = 0 ) = e^{-\lambda/\gamma} \label{eq: zero's probability mass lower bound JSQ}
\end{align}
Here we denote $q_{i, M/M/\infty}$ as the queue length in server $i$ under the $M/M/\infty$ system. Since the stationary distribution of total queue length in $M/M/\infty$ system is Poisson with parameter $\lambda/\gamma$, the probability that the total queue length is zero is $e^{-\lambda/\gamma}$.


Meanwhile, we can also couple JSQ system with SSQ, where the server is now a resource pooled server with service rate $\mu = \sum_{i=1}^{n} \mu_i$ but the same arrival rate $\lambda$ and abandonment rate $\gamma$. From the coupling, we have the following stochastic domination:
\begin{align}
    \mathbb{P}(\sum_{i=1}^{n} q_i = 0) &\leq \mathbb{P}(\mathbf{q}_{SSQ} = 0 ) \overset{(a)}{\leq} C_{\lambda,\mu}' \sqrt{\gamma/\mu} \exp\left( -\frac{1}{\gamma/\mu} (\lambda/\mu - 1 - \ln (\lambda/\mu)) \right) \label{eq: zero's probability mass upper bound JSQ homogeneous}
\end{align}
Here $\mathbf{q}_{SSQ}$ is the queue length in the SSQ system.
Inequality $(a)$ is from the upper bound in Lemma \ref{lem: tight bound on P(q_infty = 0)}. 

Combining \eqref{eq: zero's probability mass upper bound JSQ} and \eqref{eq: zero's probability mass lower bound JSQ}, we complete the proof for Proposition \ref{pro: zero probability sum for JSQ}.



\subsection{Proof for Proposition \ref{pro: Moment Bounds on hat q for JSQ}: Moment Bounds on $\hat{q}_\Sigma$}
In this section, we will prove Proposition \ref{pro: Moment Bounds on hat q for JSQ} that bounds the $p$-th moment of the centered total queue length $\hat{q}_\Sigma = \sum_i q_i - \frac{\lambda-\mu}{\gamma}$. The proof is based on the Lyapunov drift method with properly chosen exponential Lyapunov function, the raitonale of choice is again explained in Section \ref{sec: proof sketch}.

We use the following two Lyapunov functions as test functions into the generator $\mathcal{L}_{JSQ}$ in \eqref{eq: generator for join shortest queue}.
\begin{align*}
    f_\theta(\mathbf{q}) = \sum_{i=1}^{n} \exp (-\theta q_i), \; g_\theta(\mathbf{q})= \sum_{i=1}^{n} \exp (\theta q_i),  \theta \geq 0 
\end{align*}
Since the MGF of $\mathbf{q}$ is well-defined in steady state \eqref{par: stability and existence of MGF for JSQ}, we can take expectation on both sides of $\mathcal{L}_{JSQ} f_\theta(\mathbf{q})$ and $\mathcal{L}_{JSQ} g_\theta(\mathbf{q})$ respectively, and set the left hand side to be zero.
\begin{align}
    0 &= \mathbb{E}[\mathcal{L}_{JSQ} g_\theta(\mathbf{q})] \notag\\
    &= \lambda \left( e^{\theta (q_{\min} + 1)} - e^{\theta q_{\min}} \right) + \sum_{i=1}^{n} (\mu_i + \gamma q_i) \left( e^{\theta (q_i - 1)} - e^{\theta q_i} \right) \notag\\
     0&= \lambda e^{\theta} \mathbb{E}[e^{\theta q_{\min}}] - \sum_{i=1}^{n} \mu_i \mathbb{E}[e^{\theta q_i}] + \gamma \sum_{i=1}^{n} \mathbb{E}[q_i e^{\theta q_i}] + \sum_{i=1}^{n} \mu_i \mathbb{E}[\mathbf{1}_{\{ q_i = 0\}}] \notag\\ 
    &\overset{(a)}{=} \gamma M'(\theta) + \sum_{i=1}^{n} \mu_i M_i(\theta) - \lambda e^{\theta} M_*(\theta) - P_0 \notag\\
    &\overset{(b)}{\geq} \gamma M'(\theta) + \frac{\mu}{n} M(\theta) - \frac{\lambda}{n} e^{\theta} M(\theta) - P_0 \label{eq: ODE upper bound for MGF JSQ}
\end{align}

Inequality $(a)$ is change of variables. We use $M_*(\theta) := \mathbb{E}[e^{\theta q_{\min}}]$, $M_i(\theta) := \mathbb{E}[e^{\theta q_i}]$, $M(\theta) := \mathbb{E}[\sum_{i=1}^{n} e^{\theta q_i}]$, and $M'(\theta) = \frac{d M(\theta)}{d \theta}$. And $P_0 := \sum_{i=1}^{n} \mu_i \mathbb{E}[\mathbf{1}_{\{ q_i = 0\}}]$ is the weighted probability mass at zero. Inequality $(b)$ holds from similar shifting argument as in \eqref{eq: algebraic computation for Laplace Transform JSQ}. Note that here $(b)$ holds for $\forall \theta \geq 0$, while \eqref{eq: algebraic computation for Laplace Transform JSQ} has upper bound conditions for $\theta$.

Starting from the ODE \eqref{eq: ODE upper bound for MGF JSQ}, we re-organize the terms in this inequality to achieve
\begin{align*}
    \frac{d}{d\theta} \left( e^{-\frac{\lambda}{n\gamma} e^{\theta} + \frac{\mu}{n\gamma}\theta} M(\theta) \right) &\leq -\frac{\lambda}{n\gamma} e^{-\frac{\lambda}{n\gamma} e^{\theta} + \frac{\mu}{n\gamma}\theta} P_0
\end{align*}
Integrating both sides , we have the following upper bound on $M(\theta)$
\begin{align*}
    M(\theta) &\leq e^{\frac{\lambda}{n\gamma} e^{\theta} - \frac{\mu}{n\gamma}\theta} \left( \exp(-\frac{\lambda}{n\gamma}) + \frac{P_0}{\gamma} \int_0^\theta e^{-\frac{\lambda}{n\gamma} e^{x} + \frac{\mu}{n\gamma}x} dx \right) \\
    &\overset{(a)}{\leq} e^{\frac{\lambda}{n\gamma} e^{\theta} - \frac{\mu}{n\gamma}\theta - \frac{\lambda}{n\gamma}} \left(
        1 + \frac{P_0}{\gamma} \cdot \frac{n\gamma}{\lambda - \mu} 
    \right) \\
    &\overset{(b)}{\leq} (1 + \mu \Gamma(\frac{1}{1/2 - \alpha}+1)(\frac{\lambda}{2n})^{-\frac{1}{1/2 - \alpha}} \frac{n\gamma}{\lambda - \mu} ) e^{\frac{\lambda}{n\gamma} e^{\theta} - \frac{\mu}{n\gamma}\theta - \frac{\lambda}{n\gamma}} \\
    &\overset{(c)}{\leq}  (1 +n \mu \Gamma(\frac{1}{1/2 - \alpha}+1)(\frac{\lambda}{2n})^{-\frac{1}{1/2 - \alpha}})e^{\frac{\lambda}{n\gamma} e^{\theta} - \frac{\mu}{n\gamma}\theta - \frac{\lambda}{n\gamma}}
\end{align*}
Inequality $(a)$ holds from the following bound on the integral term:
\begin{align*}
    \int_0^\theta e^{-\frac{\lambda}{n\gamma} e^{x} + \frac{\mu}{n\gamma}x + -\frac{\lambda}{n\gamma}} dx \leq \int_0^\theta e^{-\frac{\lambda - \mu}{n\gamma}x} dx \leq \frac{n\gamma}{\lambda - \mu} 
\end{align*}
Inequality $(b)$ holds from the upper bound on $P_0$ using \eqref{eq: zero's probability mass upper bound JSQ} and the dominance of exponential on polynomials \eqref{eq: Exp Dominating Polynomial Stirling Bound}. Inequality $(c)$ is the Assumption \ref{ass:heavy_overload}. 
With the upper bound on $M(\theta)$, we can bound $\mathbb{E}[\exp(\theta \sum_{i=1}^{n} q_i)]$ as follows.
\begin{align}
    \mathbb{E}[\exp(\theta \sum_{i=1}^{n} q_i)] &\overset{(a)}{\leq} \frac{1}{n} \sum_{i=1}^{n} \mathbb{E}[\exp(n\theta q_i)] \notag\\
    &\leq \frac{1}{n} (1 +n \mu \Gamma(\frac{1}{1/2 - \alpha}+1)(\frac{C}{2n\lambda})^{-\frac{1}{1/2 - \alpha}})e^{\frac{\lambda}{n\gamma} e^{n\theta} - \frac{\mu}{\gamma}\theta - \frac{\lambda}{n\gamma}} \notag\\
    &\overset{\triangle}{=} A_{\lambda,\mu,n} \exp\left( \frac{\lambda}{n\gamma} e^{n\theta} - \frac{\mu}{\gamma}\theta - \frac{\lambda}{n\gamma} \right) \label{eq: MGF upper bound JSQ}
\end{align}
Inequality $(a)$ is from Jensen's inequality that 
\begin{align}
    \exp (\frac{\theta}{n}\sum_{i=1}^{n} q_i) \leq \frac{1}{n} \sum_{i=1}^{n}  g_{\theta}(\mathbf{q}), \quad \text{for } \theta \geq 0 \label{eq: Jensen's inequality for MGF JSQ}
\end{align}

Meanwhile, applying the  generator to $f_\theta(\mathbf{q}) = \sum_{i=1}^{n} \exp (-\theta q_i)$, we use similar argument as we use in \eqref{eq: ODE for Laplace Transform JSQ} to obtain the following inequality. We use the change of variable: $\tilde{N}_i(\theta) := \mathbb{E} [e^{-\theta q_i}]$, $\tilde{N}_*(\theta) := \mathbb{E} [e^{-\theta q_{min}}]$, $\tilde{N}(\theta)=\sum_{i=1}^{n} \tilde{N}_i(\theta)$, then we have
\begin{align}
    \gamma \tilde{N}'(\theta) &= -\lambda e^{-\theta} \tilde{N}_*(\theta) + \sum_{i=1}^{n} \mu_i \tilde{N}_i(\theta) - P_0 \notag\\
    &\leq -\frac{\lambda}{n} e^{-\theta} \tilde{N}(\theta) + \frac{\mu}{n} \tilde{N}(\theta) \notag\\
     \tilde{N}(\theta) &\leq \exp\left( \frac{\lambda}{n\gamma} e^{-\theta} + \frac{\mu}{n\gamma}\theta - \frac{\lambda}{n\gamma} \right), \notag
\end{align}
The above inequality holds for all $\theta \geq 0$, especially $\theta \leq \ln (\frac{\lambda}{\mu})$ which will be used later. Moreover, since $\tilde{N}(\theta)$ is monotone decreasing in $\theta$, we have the following upper bound for $\theta \geq \ln (\frac{\lambda}{\mu})$:
\begin{align}
    \tilde{N}(\theta) \leq 
    \begin{cases}
        \frac{1}{n}\exp\left( \frac{\lambda}{n\gamma} e^{-n\theta} + \frac{\mu}{\gamma}\theta - \frac{\lambda}{n\gamma} \right), & \text{if } \theta \leq \frac{1}{n}\ln (\frac{\lambda}{\mu}) \\
        \frac{1}{n}\exp\left( \frac{\mu}{n\gamma} + \frac{\mu}{n\gamma}\ln(\frac{\lambda}{\mu}) - \frac{\lambda}{n\gamma} \right), & \text{if } \theta \geq \frac{1}{n}\ln (\frac{\lambda}{\mu}), \\
    \end{cases} \label{eq: negative tilde MGF upper bound JSQ}
\end{align}
where the first case is from the previous upper bound, and the second case is from monotonicity of $\tilde{N}(\theta)$ that $\tilde{N}(\theta) \leq \tilde{N}(\frac{1}{n}\ln (\frac{\lambda}{\mu}))$ for $\theta \geq \frac{1}{n}\ln (\frac{\lambda}{\mu})$.

Using \eqref{eq: negative tilde MGF upper bound JSQ} we have control on $\mathbb{E}[\exp(-\theta \hat{q}_\Sigma)]$. Firstly, for $\theta \leq \frac{1}{n}\ln (\frac{\lambda}{\mu})$, we have
\begin{align*}
    \mathbb{E}[\exp(-\theta \hat{q}_\Sigma)]&= \mathbb{E}[\exp(-\theta ( \hat{q}_{\Sigma}))] \\
    &= \tilde{N}( \theta) \exp\left( \frac{\lambda-\mu}{\gamma}\theta \right) \\
    &\overset{(a)}{\leq} \frac{1}{n} \sum_{i=1}^{n} \mathbb{E}[\exp(-n\theta q_i)] \exp\left( \frac{\lambda-\mu}{\gamma}\theta \right) \\
    &\overset{(b)}{\leq} \frac{1}{n}\exp\left( \frac{\lambda}{n\gamma} e^{-n\theta} + \frac{\lambda}{\gamma}\theta - \frac{\lambda}{n\gamma} \right) \\
    &\leq \frac{1}{n}\exp\left( \frac{\lambda}{n\gamma} e^{n\theta} - \frac{\lambda}{\gamma}\theta - \frac{\lambda}{n\gamma} \right)
\end{align*}
Inequality $(a)$ is from Jensen's inequality. Inequality $(b)$ is from the first case in \eqref{eq: negative tilde MGF upper bound JSQ}. 

When $\theta \geq \frac{1}{n}\ln (\frac{\lambda}{\mu})$, we have the following bound.
\begin{align*}
    \mathbb{E}[\exp(-\theta \hat{q}_{\Sigma} )] &\overset{(a)}{\leq} \frac{1}{n}\exp\left( \frac{\mu}{n\gamma} + \frac{\mu}{n\gamma}\ln(\frac{\lambda}{\mu}) - \frac{\lambda}{n\gamma} + \frac{\lambda-\mu}{\gamma}\theta \right) \\ 
    &\overset{(b)}{\leq} \frac{1}{n}\exp\left( \frac{\lambda}{n\gamma} e^{n\theta} - \frac{\lambda}{\gamma}\theta - \frac{\lambda}{n\gamma} \right)
\end{align*}
Inequality $(a)$ is from \eqref{eq: negative tilde MGF upper bound JSQ}. Inequality $(b)$ holds from checking the following inequality holds, where we use the notation $\lambda'=\lambda/\mu$,
\begin{align}
    f_{\lambda'}(\theta):=\lambda' e^{n\theta} - \left(n (2\lambda' -1) \theta + 1+ \ln(\lambda')\right) \geq 0 ,\; \text{if } \theta \geq \frac{1}{n}\ln(\lambda'). \label{eq: f_lambda'_inequality}
\end{align}
To see this inequality holds, we first check the monotonicity of $f_{\lambda'}(\theta)$. First order derivative of $f_{\lambda'}(\theta)$ gives us monotonicity when $\theta \geq \frac{1}{n}\ln(\lambda')$,
\begin{align*}
    f_{\lambda'}'(\theta) &= n \lambda' e^{n\theta} - n(2\lambda' - 1) \geq n(\lambda'^2 - (2\lambda' - 1)) \geq 0,\; \text{if } \theta \geq \frac{1}{n}\ln(\lambda')
\end{align*}
It suffices to verify $f_{\lambda'}(\theta)$ at $\theta = \frac{1}{n}\ln(\lambda')$,
\begin{align*}
    f_{\lambda'}(\frac{1}{n}\ln(\lambda')) &= \lambda'^2 - (2\lambda')\ln \lambda' -1 \overset{(a)}{\geq} 0
\end{align*}
Inequality $(a)$ holds from the assumption that $\lambda' \geq 1$. Therefore inequality \eqref{eq: f_lambda'_inequality} holds.

Hence, combining the two cases, we obtain upper bound on $\mathbb{E}[\exp(-\theta \hat{q}_{\Sigma} )]$ for $\forall \theta \geq 0$:
\begin{align}
    \mathbb{E}[\exp(-\theta \hat{q}_{\Sigma} )] &\leq \frac{1}{n}\exp\left( \frac{\lambda}{n\gamma} e^{n\theta} - \frac{\lambda}{\gamma}\theta - \frac{\lambda}{n\gamma} \right), \forall \theta \geq 0 \label{eq: negative tilde MGF upper bound whole domain JSQ}
\end{align}
Combining \eqref{eq: MGF upper bound JSQ} and \eqref{eq: negative tilde MGF upper bound whole domain JSQ}, we obtain upper bound on $\mathbb{E}[\exp(\theta | \hat{q}_{\Sigma}|)]$,
\begin{align}
    \mathbb{E}[\exp(\theta | \hat{q}_{\Sigma}|)] &\leq \mathbb{E}[\exp(\theta ( \hat{q}_{\Sigma}))] + \mathbb{E}[\exp(-\theta ( \hat{q}_{\Sigma}))] \\
    &\leq (A_{\lambda,\mu,n} +\frac{1}{n}) \exp\left( \frac{\lambda}{n\gamma} e^{n\theta} - \frac{\lambda}{\gamma}\theta - \frac{\lambda}{n\gamma} \right) \label{eq: absolute MGF upper bound JSQ}
\end{align}

Then we can use the above upper bound to control $\|  \hat{q}_{\Sigma} \|_{L^p}$ for $p > 1$. Recalling arguments in proposition \ref{pro: Lp norm of q_infty}, we first identify the sub-exponential variance of $ \hat{q}_{\Sigma}$. For $\theta \in [0,1]$, 
\begin{align*}
    \mathbb{E}[\exp(\theta | \hat{q}_{\Sigma}|)] &\leq(A_{\lambda,\mu,n} +\frac{1}{n}) \exp\left( \frac{\lambda}{n\gamma} e^{n\theta} - \frac{\lambda}{n\gamma}\theta - \frac{\lambda}{n\gamma} \right) \\
    &\overset{(a)}{\leq} (A_{\lambda,\mu,n} +\frac{1}{n}) \exp\left( \frac{n\lambda}{\gamma}\int_{0}^{1} (1-x)e^{n\theta x}dx\cdot \theta^2 \right) \\
    &= (A_{\lambda,\mu,n} +\frac{1}{n}) \exp\left( \frac{\lambda}{\gamma} (\frac{1}{n}e^n - 1- \frac{1}{n}) \theta^2 \right) 
\end{align*}
Inequality $(a)$ holds from the integral residual form of Taylor series expansion for $e^{n\theta}$. 
From lemma \ref{lem: sub-exponential Lp norm}, we have
\begin{align}
    \|  \hat{q}_{\Sigma} \|_{L^p} &\leq C_{\lambda,\mu,n}'' \cdot\frac{\sqrt{p}}{\sqrt{\gamma}} + C_{\lambda,\mu,n}'\cdot p \label{eq: subexpo L-p norm JSQ}
\end{align}
where $C_{\lambda,\mu,n}'$ and $C_{\lambda,\mu,n}''$ are constants defined as followed:
\begin{align*}
    C_{\lambda,\mu,n}' &:= (A_{\lambda,\mu,n} +\frac{1}{n}) \sqrt{2 (\frac{1}{n}e^n - 1- \frac{1}{n})\lambda} \cdot 2e\sqrt{2\pi}, \quad C_{\lambda,\mu,n}'' := 2e\sqrt{2\pi} 
\end{align*}

Notice that without the Taylor expansion step $(a)$, if we directly use the upper bound \eqref{eq: absolute MGF upper bound JSQ} as follows,
\begin{align*}
    \mathbb{E}[\exp(\theta \frac{| \hat{q}_{\Sigma}|}{n})] &\leq (A_{\lambda,\mu,n} +\frac{1}{n}) \exp\left( \frac{\lambda}{n\gamma} e^{\theta} - \frac{\lambda}{n\gamma}\theta - \frac{\lambda}{n\gamma} \right).
\end{align*}
Using the same derivation in proposition \ref{pro: Lp norm of q_infty} that leads to \eqref{eq: sub-Poisson upper bound on L-p norm SSQ} , we have sub-Poisson bound on $\|  \hat{q}_{\Sigma} \|_{L^p}$ for $p > 1$:
\begin{align}
    \|  \hat{q}_{\Sigma} \|_{L^p} &\leq n(A_{\lambda,\mu,n} +\frac{1}{n}) \frac{p}{\log(1+\frac{n\gamma}{\lambda}p)}.\label{eq: Lp norm Sub poisson upper bound q_infty JSQ}
\end{align}

On the other hand, we can obtain the lower bound for $\|  \hat{q}_{\Sigma} \|_{L^p}$ from the coupling argument as in \eqref{eq: zero's probability mass upper bound JSQ homogeneous}. We coupled the JSQ system with the SSQ with resource-pooled server that has service rate $\mu = \sum_{i=1}^{n} \mu_i$ and the same arrival rate $\lambda$ and abandonment rate $\gamma$.
 In steady state, JSQ's total queue length stochastically dominates the SSQ's queue length, hence we have for any deviation $a \geq 0$
\begin{align*}
    \mathbb{P}(\sum_{i=1}^{n} q_i \geq a) \geq \mathbb{P} ( q_{SSQ} \geq a).
\end{align*}
The notation $ q_{SSQ}$ denotes the queue length in the SSQ system in steady state, following the same notation above.

Thus, following the same computation starting from \eqref{eq: L-p norm integral identity}, we have
\begin{align}
    \| \hat{q}_{\Sigma} \|_{L^p} &\geq \left(p \int_{0}^{\infty} t^{p-1} \mathbb{P}( \hat{q}_{\Sigma} \geq t) dt \right)^{\frac{1}{p}} \notag\\
    &\geq \left(p \int_{0}^{\infty} t^{p-1} \mathbb{P}(q_{SSQ} \geq t) dt 
    \right)^{\frac{1}{p}} \notag\\
    &\overset{(a)}{\geq} C_{4,\lambda,\mu} \exp(-8(15+A_{\lambda,\mu})\frac{\lambda}{\gamma p}) \frac{p}{\log(1+\frac{p\gamma}{\lambda})} \label{eq: Lp norm lower bound  JSQ}
\end{align}
Inequality $(a)$ is from the lower bound in proposition \ref{pro: Lp norm of q_infty} for SSQ, where $A_{\lambda,\mu}$ is defined in \eqref{eq: MGF for q_infty - lambda/mu/gamma, Poisson style}.
Combining \eqref{eq: subexpo L-p norm JSQ}, \eqref{eq: Lp norm Sub poisson upper bound q_infty JSQ}, and \eqref{eq: Lp norm lower bound  JSQ}, we complete the proof for Proposition \ref{pro: Moment Bounds on hat q for JSQ}.

\paragraph{Discussion} The dominating terms for different $p$'s regime in the upper bound for Proposition \ref{pro: Moment Bounds on hat q for JSQ} 
 is the same as in \eqref{eq: discussion for p's dependency on Lp norm SSQ}. This further exhibits the effect of SSC that total queue length behaves approximately equal and therefore the moment bounds behave similarly
 in SSQ manner.

\subsection{Proof for Theorem \ref{thm: Wasserstein-$p$ distance JSQ}: \texorpdfstring{$\mathcal{W}_p$}{W\_p} Distance for JSQ}
In this section, we will prove Theorem \ref{thm: Wasserstein-$p$ distance JSQ} that bounds the Wasserstein-$p$ distance between the law of the total queue length $\hat{q}_\Sigma$ and the standard normal distribution $\mathcal{N}(0,1)$ in steady state. We will use Stein's Method, triangle inequality, and the quantile coupling technique to achieve the bound. The proof is similar to the one in Section \ref{sec: proof for W-p SSQ}, we will quote similar steps and highlight the differences in the proof for JSQ system.

\subsubsection{Stein's Method} 
For the following analysis that uses Stein's method, we study the $p$'s range in 
\begin{align}
    1 < p \leq \frac{\lambda}{2} (\frac{1}{\gamma}) \label{eq: Initial p's regime for Wasserstein distance JSQ}
\end{align}

We use Proposition \ref{pro: Wasserstein-$p$ bound without exchangeability final version} that enables us to bound the Wasserstein-$p$ distance by generator coupling. For the setup of Proposition \ref{pro: Wasserstein-$p$ bound without exchangeability final version}, we choose steady-state queue length vector $\mathbf{q}$ as $X$. Transformation $T_2$ is defiend as $T_2(\mathbf{x}) = \frac{\sqrt{\gamma}}{\sqrt{\lambda}} ( \sum_{i=1}^{n} x_i - \frac{\lambda-\mu}{\gamma} )$, thus $\tilde{q}_{\Sigma} = T_2(\mathbf{q})$. From the well-definedness of MGF in \eqref{eq: well-defined MGF for JSQ}, $T_2(\mathbf{q})$ has finite moments of all order $p$, therefore we can bound any Wasserstein-$p$ distance when $p > 1$.
 We define the operator $\mathcal{A}$ acting on function $g:\mathbb{R}^n \to \mathbb{R}$ as  the scaled infinitesimal generator of the queue length process in JSQ system. 
\begin{align}
    \mathcal{A} g(\mathbf{x}) := \frac{1}{\gamma}\mathcal{L}_{JSQ} g(\mathbf{x}) &=
     \lim_{t \to 0} \frac{1}{t} \bigg( \mathbb{E}[g(\mathbf{q}) \mid \mathbf{q} = \mathbf{x}] - g(\mathbf{x}) \bigg), \quad \text{for } \mathbf{x} \in \operatorname{supp}(\mathcal{L}(\mathbf{q})) \label{eq: operator A for JSQ, implicit}
\end{align}
With the definition of $\mathcal{L}_{JSQ}$ in \eqref{eq: generator for join shortest queue}, we can compute $\mathcal{A} (f\circ T_2)$ explicitly as follows,
\begin{align}
    \mathcal{A} (f\circ T_2)(\mathbf{x}) &= \frac{1}{\gamma}\mathcal{L}_{JSQ} (f\circ T_2)(\mathbf{x}) \notag\\
        &= \frac{\lambda}{\gamma} \biggl( f\biggl( T_2(\mathbf{x}) + \frac{\sqrt{\gamma}}{\sqrt{\lambda}} \biggr) - f\bigl(T_2(\mathbf{x})\bigr) \biggr) \notag\\
        &\quad + \frac{1}{\gamma}\sum_{i=1}^{n} (\mu_i \mathbf{1}_{\{x_i \neq 0\}} + \gamma x_i) \biggl( f\biggl( T_2(\mathbf{x}) - \frac{\sqrt{\gamma}}{\sqrt{\lambda}} \biggr) - f\bigl(T_2(\mathbf{x})\bigr) \biggr) \notag\\
        &= \frac{\lambda}{\gamma} \biggl( f\biggl( \frac{\sqrt{\gamma}}{\sqrt{\lambda}} (\sum_{j=1}^{n} x_j+1) - \frac{\lambda-\mu}{\sqrt{\lambda\gamma}} \biggr) - f\biggl(\frac{\sqrt{\gamma}}{\sqrt{\lambda}} \sum_{j=1}^{n} x_j
         - \frac{\lambda-\mu}{\sqrt{\lambda\gamma}}\biggr) \biggr) \notag\\
        &\quad + \frac{1}{\gamma}\sum_{i=1}^{n} (\mu_i \mathbf{1}_{\{x_i \neq 0\}} + \gamma x_i)  \biggl( f\biggl( \frac{\sqrt{\gamma}}{\sqrt{\lambda}} \biggl(\sum_{j=1}^{n} x_j - 1\biggr) - \frac{\lambda-\mu}{\sqrt{\lambda\gamma}} \biggr) \notag\\
        &\qquad - f\biggl(\frac{\sqrt{\gamma}}{\sqrt{\lambda}} \sum_{j=1}^{n} x_j - \frac{\lambda-\mu}{\sqrt{\lambda\gamma}}\biggr) \biggr) \label{eq: operator A for JSQ, explicit}
\end{align}
 Again, we verify the assumptions in Proposition \ref{pro: Wasserstein-$p$ bound without exchangeability final version}. Similarly to SSQ, it can be shown that $\mathcal{A} f\circ T_2$ has finite expectation for all $f \in C_b(\mathbb{R})$ from the boundedness of $f$ and the finiteness of moments of $\tilde{q}_{\Sigma}$ in Proposition \ref{pro: Moment Bounds on hat q for JSQ}. Also, we have
 $\mathbb{E}[\mathcal{A} (f\circ T_2)
(\tilde{q}_{\Sigma})] = 0$ for all $f \in C_b(\mathbb{R})$ from the definition of generator. Thus the Assumption \ref{assumption: Stein identity final version} holds.
 
For Assumption \ref{assumption: Kramers-Moyal expansion final version} and \ref{assumption: integrability of Kramers-Moyal coefficients final version}, we first use the explicit form of $\mathcal{A}$ in \eqref{eq: operator A for JSQ, explicit} to show
\begin{align}
    a^{\mathcal{A}, T_2}_j(\mathbf{x}) = \frac{1}{j!\gamma} (\sqrt{\gamma/\lambda})^j \left( \lambda + \sum_{i=1}^{n} (\mu_i \mathbf{1}_{\{x_i \neq 0\}} + \gamma x_i) (-1)^j \right) \label{eq: conditional expectation of tilde q JSQ} 
\end{align}
by applying Lemma \ref{lemma: identity for Kramers-Moyal coefficients final version}.


With the above expression, we can show that for all constant $D > 0$,
\begin{align*}
    \mathbb{E}[\sum_{j=1}^{\infty}D^j\sqrt{j!}|a^{\mathcal{A}, T_2}_j(\mathbf{x})|] &\leq \frac{1}{\gamma}\sum_{j=1}^{\infty} \frac{D^j}{\sqrt{j!}}
    (\sqrt{\gamma/\lambda})^j \left( \lambda + \sum_{i=1}^{n} (\mu_i + \gamma \mathbb{E}[|q_i|]) \right) <\infty,
\end{align*}
thus verify Assumption \ref{assumption: integrability of Kramers-Moyal coefficients final version}. For Assumption \ref{assumption: Kramers-Moyal expansion final version}, using the condition $f \in \mathcal{F}$ that there exists constant $D_f$ and $m$ such that $|f^{(j)}(x)| \leq D_f^j \sqrt{j!}(1+x)^m$, combined with the Taylor's theorem for Lagrange form of remainder, we can use exact same argument in SSQ when proving Lemma \ref{lem: Wasserstein-$p$ bound SSQ reduction} to show that the remainder term in the Taylor series expansion of $\mathcal{A} (f\circ T_2)(\mathbf{x})$ converges to zero.  Thus, Assumption \ref{assumption: Kramers-Moyal expansion final version} holds.


Now we apply Proposition \ref{pro: Wasserstein-$p$ bound without exchangeability final version} to bound $\mathcal{W}_p(\mathcal{L}(\tilde{q}_{\Sigma}), \mathcal{L}(Z))$. Particularly, we choose the parameters in Proposition \ref{pro: Wasserstein-$p$ bound without exchangeability final version} as $t_0 = -\frac{1}{2}\log(1-p\gamma/\lambda)$. In the sequel, we bound the terms $g_0(t_0,p),(b_0),(b_1),(b_2)$ in the right hand side of the inequality respectively. First recall that under our assumption \eqref{eq: Initial p's regime for Wasserstein distance JSQ}, we have $e^{t_0}\leq \sqrt{2}$. From upper bound \eqref{eq: b0 upper bound SSQ}, we have the same upper bound for $(g_0(t_0,p)$ in JSQ system, 
\begin{align}
    g_0(t_0,p) &\leq   \frac{8e\sqrt{\pi}}{\sqrt{\lambda}} \sqrt{\gamma} p \label{eq: g0 upper bound JSQ}
\end{align}

Next, we bound the terms $(b_1), (b_2), (b_3)$ respectively. We use the identity in \eqref{eq: conditional expectation of tilde q JSQ} to compute the quantities involving operator $\mathcal{A}$ in each of these three terms. We omit the computation that is similar to SSQ, and only highlight the key steps.

For the first term $(b_1)$, we use \eqref{eq: conditional expectation of tilde q JSQ} to bound it as 
\begin{align}
    (b_1) &= e^{t_0} \int_{-\frac{1}{2}\log(1-p\gamma/\lambda)}^{\infty} e^{-r} dr \cdot \| a^{\mathcal{A}, T_2}_1(\mathbf{q}) + \tilde{q}_{\Sigma}\|_{L^p} \notag\\
    &= \sqrt{2}\int_{-\frac{1}{2}\log(1-p\gamma/\lambda)}^{\infty} e^{-r} dr \cdot \frac{\sqrt{\gamma}}{\sqrt{\lambda}} \cdot \| \frac{\lambda-\mu}{\gamma} - \sum_{i=1}^{n} q_i + \frac{1}{\gamma} \sum_{i=1}^{n} \mu_i \mathbf{1}_{\{ q_i = 0\}} +  \hat{q}_{\Sigma} \|_{L^p} \notag\\
    &\leq \sqrt{2}\int_{0}^{\infty} e^{-r} dr\cdot \frac{\mu}{\sqrt{\lambda}\sqrt{\gamma}} \|\sum_{i=1}^{n} \mathbf{1}_{\{ q_i = 0\}}\|_{L^p} = \frac{\mu}{\sqrt{\lambda}\sqrt{\gamma}} \|\sum_{i=1}^{n} \mathbf{1}_{\{ q_i = 0\}}\|_{L^p}\label{eq: b1 upper bound JSQ}
\end{align}
Similarly, for $(b_2)$, we have
\begin{align}
    (b_2) &= e^{t_0}\lim_{t\to 0} \int_{-\frac{1}{2}\log(1-p\gamma/\lambda)}^{\infty} \frac{e^{-2r}\|h_1(Z)\|_{L^p}}{\sqrt{1-e^{-2r}}} dr \cdot \|  a^{\mathcal{A}, T_2}_2(\mathbf{q}) - 1 \|_{L^p} \notag\\
    &= \sqrt{2}\int_{-\frac{1}{2}\log(1-p\gamma/\lambda)}^{\infty} \frac{e^{-2r}\|h_1(Z)\|_{L^p}}{\sqrt{1-e^{-2r}}} dr \cdot \frac{\gamma}{\lambda}\| \frac{1}{2\gamma} \bigg(
        \lambda + \mu + \gamma \sum_{i=1}^{n} q_i - \sum_{i=1}^{n} \mu_i \mathbf{1}_{\{ q_i = 0\}} - 1 \bigg)\|_{L^p} \notag\\
    &\leq \sqrt{2}\int_{-\frac{1}{2}\log(1-p\gamma/\lambda)}^{\infty} \frac{e^{-2r}\|h_1(Z)\|_{L^p}}{\sqrt{1-e^{-2r}}} dr \cdot \frac{\gamma}{\lambda} (\frac{1}{2}\| \hat{q}_{\Sigma}\|_{L^p} + \frac{\mu}{2\gamma} \| \sum_{i=1}^{n} \mathbf{1}_{\{ q_i = 0\}}\|_{L^p}) \notag\\
    &\overset{(a)}{\leq} 4e\sqrt{\pi}(1 - \sqrt{1 - (1- \frac{p\gamma}{\lambda})}) \cdot \frac{\gamma\sqrt{p}}{2\lambda} \cdot (\| \hat{q}_{\Sigma} \|_{L^p} + \frac{\mu}{\gamma} \cdot \|\sum_{i=1}^{n} \mathbf{1}_{\{ q_i = 0\}}\|_{L^p}) \notag\\
    &\leq 4e\sqrt{\pi} \frac{\gamma\sqrt{p}}{2\lambda} \cdot (\| \hat{q}_{\Sigma} \|_{L^p} + \frac{\mu}{\gamma} \cdot \|\sum_{i=1}^{n} \mathbf{1}_{\{ q_i = 0\}}\|_{L^p}) \label{eq: b_2 upper bound JSQ}
\end{align}
Hre inequality $(a)$ is from the $L^p$ norm for $h_1(Z)=Z$ as in \eqref{eq: Lp norm of Z} and the explicit computation for integration. 
For $(b_3)$, we split the integral into two parts, based on whether $k$ is even or odd. For odd $k$, we have
\begin{align}
    (b_3,1) &:= e^{t_0}\sum_{k\geq 3, odd} \int_{-\frac{1}{2}\log(1-p\gamma/\lambda)}^{\infty} \frac{e^{-kr}\|h_{k-1}(Z)\|_{L^p}}{\sqrt{1-e^{-2r}}^{k-1}} dr \cdot \| a^{\mathcal{A}, T_2}_k(\mathbf{q}) \|_{L^p} \notag\\
    &\overset{(a)}{\leq} \sqrt{2}\sum_{k\geq 3, odd} \int_{-\frac{1}{2}\log(1-p\gamma/\lambda)}^{\infty} \frac{e^{-kr}\|h_{k-1}(Z)\|_{L^p}}{k!\sqrt{1-e^{-2r}}^{k-1}} dr \cdot \notag\\
    &\quad \frac{\sqrt{\gamma}^k}{\sqrt{\lambda}^k} \bigg(\frac{\mu}{\gamma} \| \sum_{i=1}^{n} \mathbf{1}_{q_i=0}\|_{L^p} + \|  \hat{q}_{\Sigma}\|_{L^p} \bigg) \notag
\end{align}
Here inequality $(a)$ holds from identity \eqref{eq: conditional expectation of tilde q JSQ}. Following the same procedure as in \eqref{eq: b_3,1 integral upper bound SSQ}, we compute the integral above and obtain the upper bound for odd index summation,
\begin{align}
    (b_{3,1}) \leq 8\sqrt{2}e^2 \frac{\gamma \sqrt{p}}{\lambda} \bigg( \|  \hat{q}_{\Sigma}\|_{L^p} + \frac{\mu}{\gamma}\|\sum_{i=1}^{n}\mathbf{1}_{q_i=0}\|_{L^p}\bigg) \label{eq: b_3,1 upper bound JSQ}
\end{align}
For the even index $k$, we handle it in the following way,
\begin{align}
    (b_{3,2}) &:= 
    e^{t_0}\sum_{k\geq 4, even} \int_{-\frac{1}{2}\log(1-p\gamma/\lambda)}^{\infty} \frac{e^{-kr}\|h_{k-1}(Z)\|_{L^p}}{\sqrt{1-e^{-2r}}^{k-1}} dr \cdot \| a^{\mathcal{A}, T_2}_k(\mathbf{q}) \|_{L^p} \notag\\
    &\overset{(a)}{\leq} \sqrt{2}\sum_{k\geq 4, even} \int_{-\frac{1}{2}\log(1-p\gamma/\lambda)}^{\infty} \frac{e^{-kr}\|h_{k-1}(Z)\|_{L^p}}{k!\sqrt{1-e^{-2r}}^{k-1}} dr \cdot \notag\\
    &\quad \frac{\sqrt{\gamma}^k}{\sqrt{\lambda}^k} \bigg(\frac{\mu}{\gamma} \| \sum_{i=1}^{n} \mathbf{1}_{q_i=0}\|_{L^p} + \|  \hat{q}_{\Sigma}\|_{L^p} + \frac{2\lambda}{\gamma} \bigg) \notag 
\end{align}
$(a)$ is from identity \eqref{eq: conditional expectation of tilde q JSQ} and triangle inequality. The integral above can also be bounded from the treatment in \eqref{eq: b_3,2 integral upper bound SSQ}. Thus we have
\begin{align}
    (b_{3,2}) \leq \sqrt{2}e^2 \frac{\gamma^{3/2}p}{\lambda^{3/2}} \bigg(\frac{\mu}{\gamma} \| \sum_{i=1}^{n} \mathbf{1}_{q_i=0}\|_{L^p} + \|  \hat{q}_{\Sigma}\|_{L^p} + \frac{2\lambda}{\gamma} \bigg) \label{eq: b_3,2 upper bound JSQ}
\end{align} 
With the above bounds, i.e., \eqref{eq: g0 upper bound JSQ}, \eqref{eq: b1 upper bound JSQ}, \eqref{eq: b_2 upper bound JSQ}, \eqref{eq: b_3,1 upper bound JSQ}, \eqref{eq: b_3,2 upper bound JSQ}, we reduce bounding Wasserstein-$p$ distance to bounding the $L^p$ norms $\| \hat{q}_{\Sigma}\|_{L^p}$ and $\|\sum_{i=1}^{n} \mathbf{1}_{q_i=0}\|_{L^p}$, which are handled in Proposition \ref{pro: Moment Bounds on hat q for JSQ} and Proposition \ref{pro: zero probability sum for JSQ} respectively.
Combining all the bounds together,
we now bound Wasserstein-$p$ distance as 
\begin{align}
    W_p(\mathcal{L}(\tilde{q}_{\Sigma}), \mathcal{L}(Z))&\overset{(a)}{\leq} \sqrt{2}\bigg(((e+8e^2+\sqrt{2}e^2)\sqrt{2\pi}/\sqrt{\lambda}) (C_{\lambda,\mu,n}'' + C_{\lambda,\mu,n}'\frac{\lambda}{\sqrt{2}}) + \frac{4e\sqrt{2\pi}+2e^2}{\sqrt{\lambda}} \bigg)\cdot\gamma\sqrt{p} \notag\\ 
    &\quad + \sqrt{2}(1+2e\sqrt{\pi}+ 8\sqrt{2}e^2 + 2e^2)\frac{\mu}{\sqrt{\lambda}} \cdot \frac{1}{\sqrt{\gamma}} \exp \left( -\frac{C}{2n\lambda}\cdot \frac{\lambda - \mu}{p\sqrt{\gamma}} \right) \label{eq: Wasserstein-$p$ upper bound JSQ, Stein}
\end{align}

\subsubsection{Triangle Inequality and Quantile Coupling}
Using similar argument as SSQ, we can adopt triangle inequality to obtain different versions of bound for Wasserstein-$p$ distance. We first use triangle inequality to upper bound Wasserstein-$p$ distance as
\begin{align}
    W_p(\mathcal{L}(\tilde{q}_{\Sigma}),\mathcal{L}(Z))&\leq \|\tilde{q}_{\Sigma}\|_{L^p} + \|Z\|_{L^p} \leq (C_{\lambda,\mu,n}''\frac{\lambda}{\sqrt{2}} + C_{\lambda,\mu,n}' + 4e\sqrt{2\pi}) \sqrt{p}\label{eq: Wasserstein-$p$ upper bound JSQ, Triangle}
\end{align}

Meanwhile, we obtain the lower bound from triangle inequality and quantile coupling, similarly as in SSQ \eqref{eq: Gaussian Wasserstein-$p$ Lower L^p norm, Triangle} and \eqref{eq: Gaussian Wasserstein-$p$ Lower bound, P_0}.
\begin{align}
    W_p(\mathcal{L}(\tilde{q}_{\Sigma}),\mathcal{L}(Z))&\geq \frac{\sqrt{\gamma}}{\sqrt{\lambda}} \|\hat{q}_{\Sigma}\|_{L^p} - \|Z\|_{L^p} \notag\\
    &\overset{(a)}{\geq} C_{4,\lambda,\mu} \exp(-8(15+A_{\lambda,\mu})\frac{\lambda}{\gamma p}) \frac{p}{\log(1+\frac{p\gamma}{\lambda})} - 4e\sqrt{2\pi}\sqrt{p} \label{eq: Wasserstein-$p$ lower bound JSQ, Triangle} \\
    W_p(\mathcal{L}(\tilde{q}_{\Sigma}),\mathcal{L}(Z)) &=\left[ \int_{0}^{1} \left| F_\mathcal{L}(\tilde{q}_{\Sigma})^{-1}(u) - F_\mathcal{L}(Z)^{-1}(u) \right|^p du \right]^{1/p} \notag\\
    &\overset{(b)}{\geq} \frac{\sqrt{2}[2C_{\lambda,\mu}' e^{2\sqrt{2}} \sqrt{\lambda}]^{-1}}{\sqrt{\lambda-\ln \lambda -1 }} \cdot\sqrt{\gamma} \mathbb{P}(\sum_{i=1}^{n} q_i = 0)^{1/p} \label{eq: Wasserstein-$p$ lower bound JSQ, P_0}
\end{align}
Inequality $(a)$ is from lower bound in Proposition \ref{pro: Moment Bounds on hat q for JSQ}. 
Inequality $(b)$ is from the same argument as in SSQ (see inequality \eqref{eq: Gaussian Wasserstein-$p$ Lower bound, P_0}), where we use quantile coupling to lower bound the Wasserstein-$p$ distance by the probability mass on zero state. Specifically, we have the exact same upper bound for $\mathbb{P}(\sum_{i=1}^{n} q_i = 0)$ in JSQ, i.e., \eqref{eq: zero's probability mass upper bound JSQ homogeneous},
 as in SSQ (see upper bound in Lemma \ref{lem: tight bound on P(q_infty = 0)}), and we have same assumption on $\gamma$i.e., $\gamma \leq \frac{\mu^2}{(2^2 5^2e^{4\sqrt{2}}\lambda)}$ that is used in SSQ to derive \eqref{eq: Gaussian Wasserstein-$p$ Lower bound, P_0}.
 Thus inequality $(b)$ holds from the exactly same compuation in \eqref{eq: Gaussian Wasserstein-$p$ Lower bound, P_0}.

Combining the above upper and lower bounds on Wasserstein-$p$ distance in \eqref{eq: Wasserstein-$p$ lower bound JSQ, P_0}, \eqref{eq: Wasserstein-$p$ lower bound JSQ, Triangle}, \eqref{eq: Wasserstein-$p$ upper bound JSQ, Triangle}, \eqref{eq: Wasserstein-$p$ upper bound JSQ, Stein}, we obtain the final result for Wasserstein-$p$ in JSQ setting, under the assumption of regime of p \eqref{eq: Initial p's regime for Wasserstein distance JSQ}:

\begin{align}
    \mathcal{W}_p(\mathcal{L}(\tilde{q}_{\Sigma}), \mathcal{L}(Z)) 
    &\leq  A_{1,\lambda,\mu,n}\cdot p\sqrt{\gamma} + A_{1,\lambda,\mu,n}'\cdot\frac{1}{\sqrt{\gamma}}\exp \left( -\frac{C}{2n\lambda}\cdot \frac{\lambda - \mu}{p\sqrt{\gamma}} \right), \; \text{if} \; 1< p \leq \frac{\lambda}{2} (\frac{1}{\gamma}) 
    , \notag\\
    \mathcal{W}_p(\mathcal{L}(\tilde{q}_{\Sigma}), \mathcal{L}(Z)) &\leq \min \Bigg\{  A_{2,\lambda,\mu,n}\cdot\sqrt{p}, \quad A_{2,\lambda,\mu,n}'\cdot\frac{p\sqrt{\gamma}}{\log(1+\frac{n\gamma}{\lambda}p)} \Bigg\}\notag\\
    \mathcal{W}_p(\mathcal{L}(\tilde{q}_{\Sigma}), \mathcal{L}(Z)) &\geq \max \Bigg\{\frac{C_{4,\lambda,\mu}}{\sqrt{\lambda}} \exp\left( -C_{5,\lambda,\mu} \frac{1}{\gamma p} \right) \frac{p\sqrt{\gamma}}{\log(1+\gamma p/\lambda)} - C_{6,\lambda,\mu}'\sqrt{p} \notag\\
    &\quad, C_{\text{OT}, \lambda, \mu}' \cdot \sqrt{\gamma} \exp\left(-\frac{\lambda }{p\gamma}\right) \Bigg\} \notag
\end{align}
And we can use same analysis on dominating terms as in SSQ to show that
\begin{subnumcases}{\mathcal{W}_p(\mathcal{L}(\tilde{q}_{\Sigma}), \mathcal{L}(Z)) \leq}
    E_{1,\lambda,\mu,n} \cdot p\sqrt{\gamma}, & if $p \in [1, 1/\gamma^{1/2-\alpha-\epsilon}]$ \\
    E_{2,\lambda,\mu,n} \cdot \sqrt{p} , & if $p \in [1/\gamma^{1/2-\alpha}, 1/\gamma)$ \\
    E_{3,\lambda,\mu,n} \cdot \frac{p\sqrt{\gamma}}{\log(1+\frac{n\gamma}{\lambda}p)}, & if $p \in [1/\gamma, \infty)$ 
\end{subnumcases}
\begin{subnumcases}{\mathcal{W}_p(\mathcal{L}(\tilde{q}_{\Sigma}), \mathcal{L}(Z)) \geq}
        C_{OT,\lambda,\mu}' \cdot \sqrt{\gamma} \exp\left(-\frac{\lambda }{p\gamma}\right), & $p \in [1, D_{5,\lambda,\mu}/\gamma]$\label{eq: OT Wasserstein-$p$ lower bound p's function, JSQ, 2}\\
        D_{4,\lambda,\mu}' \cdot \frac{p\sqrt{\gamma}}{\log(1+\gamma p/\lambda)} & $p \in [D_{5,\lambda,\mu}/\gamma, \infty)$\label{eq: Triangle Wasserstein-$p$ lower bound p's function, JSQ, 2}
    \end{subnumcases}
with constants defined as 
\begin{align*}
    E_{k,\lambda,\mu,n} &:= A_{1,\lambda,\mu,n} + A_{1,\lambda,\mu,n}'(\frac{2n\lambda}{C^2})^A \Gamma(A+1) + A_{2,\lambda,\mu,n} + A_{2,\lambda,\mu,n}' \; \text{for} \; k=1,2,3
\end{align*} and $D'_{4,\lambda,\mu}$ defined in \eqref{eq: D_4, lambda, mu SSQ}.
Here is a list of constants used in the above bounds, collected from constants in \eqref{eq: constants for SSQ Wasserstein-$p$ bounds}.
\begin{align}
    A_{1,\lambda,\mu,n} &:= \sqrt{2}\bigg(((e+8e^2+\sqrt{2}e^2)\sqrt{2\pi}/\sqrt{\lambda}) (C_{\lambda,\mu,n}'' + C_{\lambda,\mu,n}'\frac{\lambda}{\sqrt{2}}) + \frac{4e\sqrt{2\pi}+2e^2}{\sqrt{\lambda}} \bigg) \notag\\ 
    A_{1,\lambda,\mu,n}' &:= \sqrt{2}(1+2e\sqrt{\pi}+ 8\sqrt{2}e^2 + 2e^2)\frac{\mu}{\sqrt{\lambda}} \notag\\
    A_{2,\lambda,\mu,n} &:= (C_{\lambda,\mu,n}''\frac{\lambda}{\sqrt{2}}  + \frac{4e\sqrt{2\pi}}{\sqrt{\lambda}}), \quad  A_{2,\lambda,\mu,n}' := C_{\lambda,\mu,n}' \notag\\
    A_{2,\lambda,\mu,n}''&:= \bigg(n(A_{\lambda,\mu,n} +\frac{1}{n})+\frac{1}{\sqrt{2\lambda}}\bigg) \notag\\
    C_{4,\lambda,\mu} &:= (\frac{29}{69}*2) \cdot \exp(-9(15+A_{\lambda,\mu})) \notag\\
    C_{6,\lambda,\mu}' &:= 4e\sqrt{2\pi}, 
    C_{\text{OT}, \lambda, \mu}' := \frac{\sqrt{2}[2C_{\lambda,\mu}' e^{2\sqrt{2}} \sqrt{\lambda}]^{-1}}{\sqrt{\lambda-\ln \lambda -1 }} \label{eq: constants for Wasserstein-$p$ JSQ}
\end{align}


\subsection{Proof for Theorem \ref{thm: JSQ Tail bound}: Tail Bound for JSQ} \label{sec: Proof for Theorem JSQ Tail Bound}

Recalling the upper bound on Wasserstein-$p$ distance as followed:
\begin{align*}
    \mathcal{W}_p\left (\mathcal{L} \big( \langle \tilde{\mathbf{q}}, \boldsymbol{\phi}\rangle\big),\mathcal{L}\big(\langle \boldsymbol{\phi}, Z\cdot\boldsymbol{1} \rangle\big)\right ) &\leq \mathbb{E}[\|\mathbf{q}_{\perp}\|^p]^{1/p} + W_p(\mathcal{L}(\tilde{q}_{\Sigma}),\mathcal{L}(Z)) 
\end{align*}
We can now combine Theorem \ref{thm: Wasserstein-$p$ distance JSQ} and Theorem \ref{thm: SSC JSQ} to get the final upper bound for the Wasserstein-$p$ distance given any $\epsilon \in (0,1/2-\alpha)$ fixed,
\begin{align}
   \mathcal{W}_p\left (\mathcal{L} \big( \langle \tilde{\mathbf{q}}, \boldsymbol{\phi}\rangle\big),\mathcal{L}\big(\langle \boldsymbol{\phi}, Z\cdot\boldsymbol{1} \rangle\big)\right ) &\leq \begin{cases}
        B_{1,\lambda,\mu,n}\cdot p^2\sqrt{\gamma}, & 1 \leq p \leq (\frac{1}{\gamma})^{\min\{1/3,1/2-\alpha-\epsilon\}} \\
        B_{2,\lambda,\mu,n}\cdot \max\{\sqrt{p}, p^2\sqrt{\gamma}\}, & (\frac{1}{\gamma})^{\min\{1/3,1/2-\alpha\}} \leq p  \\
    \end{cases} \label{eq: Wasserstein-$p$ upper bound JSQ, final}
\end{align}
We introduce the following notations for above constants $B_{1,\lambda,\mu,n}, B_{2,\lambda,\mu,n}$, we also denote $\iota$ for later use,
\begin{align}
    B_{1,\lambda,\mu,n} &:= A_{1,\lambda,\mu,n} + A_{1,\lambda,\mu,n}'(\frac{2n\lambda}{C^2})^A \Gamma(A+1) + E_{\lambda,\mu,n}/\sqrt{\lambda} \label{eq: B1, B2 for JSQ tail bound}\\
    B_{2,\lambda,\mu,n} &:= A_{2,\lambda,\mu,n} + E_{\lambda,\mu,n}/\sqrt{\lambda}, \quad \iota := \min\{1/3,1/2-\alpha-\epsilon\}. \notag
\end{align}
Here $A:=1+\frac{1+2\alpha}{2\epsilon}$. And we recall that $A_{1,\lambda,\mu,n}, A_{1,\lambda,\mu,n}', A_{2,\lambda,\mu,n}$ are defined in \eqref{eq: constants for Wasserstein-$p$ JSQ}, and $E_{\lambda,\mu,n}$ is defined in \eqref{eq: p-moment for q_perp, state space collapse}.
The above analysis on dominating term is obtained by similar argument of the fact polynomial function is upper bounded by exponential function in \eqref{eq: Exp Dominating Polynomial Stirling Bound}.
 




The proof for tail bound in Theorem \ref{thm: JSQ Tail bound} follows the same concentration argument in\eqref{eq: Tail bound from Wasserstein-$p$ distance} as in SSQ. We repeatedly choose different $(p,\rho)$ pairs , verify their legitimacy and check which regime of Wasserstein-$p$ they fall into according to \eqref{eq: Wasserstein-$p$ upper bound JSQ, final}. Then we obtain tail bound according to different $(p,\rho)$ pairs.  We first slightly modify the concentration argument for Inequality \eqref{eq: Tail bound from Wasserstein-$p$ distance}
to fit in the high dimensional setting. Specifically, we have an arbitrary unit vector $\boldsymbol{\phi} \in \mathbb{R}^n$, and we consider the projection $\langle \tilde{\mathbf{q}}, \boldsymbol{\phi}\rangle$ and $\langle \boldsymbol{\phi}, Z\cdot\boldsymbol{1}\rangle$. We first study the case when $\langle\boldsymbol{\phi}, \boldsymbol{1}\rangle \ne 0$. For any $\rho \in [0,1]$ and $a>0$, we have

\begin{align*}
    |\mathbb{P}(\langle \tilde{\mathbf{q}}, \boldsymbol{\phi}\rangle > a) - \mathbb{P}(\langle \boldsymbol{\phi}, Z\cdot\boldsymbol{1}\rangle > a) |\leq \frac{(1-\rho)a}{|\langle\boldsymbol{\phi}, \boldsymbol{1}\rangle|}\phi(\frac{\rho a}{|\langle\boldsymbol{\phi}, \boldsymbol{1}\rangle|}) + \frac{\mathcal{W}_p(\mathcal{L}(\langle \tilde{\mathbf{q}}, \boldsymbol{\phi}\rangle), \mathcal{L}(\langle \boldsymbol{\phi}, Z\cdot\boldsymbol{1}\rangle)^p}{(1-\rho)^p a^p}
\end{align*}
We will use this inequality which connects tail boudns with Wasserstein-$p$ distance
to show tail bounds for different regimes of deviation $a$. In the following, we will choose $\rho:= 1 - e\mathcal{W}_p/a$ and $p$ based on different regimes of deviation $a$, the choices of $p$ are summarized as below,
\begin{align*}
    p:= \begin{cases}
        a^2/(2\langle\boldsymbol{\phi}, \boldsymbol{1}\rangle^2) + \ln (1/\sqrt{\gamma}), & a\in[2, c_{\lambda,\mu,n}/\gamma^\beta], \beta\leq \min\{1/4-\alpha/2,1/6\} \\
        c a^{\min\{1/(4\beta)+1/2, 2\}}, & a = 1/\gamma^\beta, \delta\in[\min\{1/4-\alpha/2,1/6\}, \infty) 
    \end{cases}
\end{align*}

First we assume $\langle\boldsymbol{\phi}, \boldsymbol{1}\rangle \ne 0$, and we emphasize that when proving the following tail bounds, we will assume $\gamma$ is sufficiently small such that 
\begin{align}
    \gamma \leq \gamma_2 := \min\{ \gamma_1,  \min\{
        \langle\boldsymbol{\phi}, \boldsymbol{1}\rangle^2, \frac{1}{2} (\frac{\langle\boldsymbol{\phi}, \boldsymbol{1}\rangle^4}{eB_{1,\lambda,\mu,n}})^{1/3}\}^{2/\iota} 
        \} \label{eq: gamma assumption for JSQ tail bound}
\end{align}
with $\gamma_1$ defined in Theorem \ref{thm: Wasserstein-$p$ distance JSQ}. 

\subsubsection{
    Proof for Constant Deviation Regime in Theorem \ref{thm: JSQ Tail bound, first regime}
} \label{subsub: constant deviation JSQ}
In this regime, we consider any constant deviation $a$ and we assume $\gamma\leq \gamma_a$ defined as
\begin{align}
    \gamma\leq \gamma_a = \min\{\gamma_2,
        \frac{
        \langle\boldsymbol{\phi}, \boldsymbol{1}\rangle^8
    }{e^2 B_{1,\lambda,\mu,n}^2 a^6},  (\langle\boldsymbol{\phi}, \boldsymbol{1}\rangle^2/a^2)^{1/\iota},  \left(\frac{a}{64eB_{1,\lambda,\mu,n}}\right)^4, \frac{4\langle\boldsymbol{\phi}, \boldsymbol{1}\rangle^{12}}{e^2 B_{1,\lambda,\mu,n}^2 a^{10}}, \frac{\langle\boldsymbol{\phi}, \boldsymbol{1}\rangle^8}{2^{20} e^4 B_{1,\lambda,\mu,n}^4 a^4}
    \}
     \label{eq: gamma_a definition JSQ first regime}
\end{align}

We first choose the following $(p,\rho)$ pair as
\begin{align*}
    p:= \frac{a^2}{2\langle\boldsymbol{\phi}, \boldsymbol{1}\rangle^2} + \ln (1/\sqrt{\gamma}), \quad \rho := 1- e\mathcal{W}_p/a
\end{align*}
We verify that this $(p,\rho)$ pair is valid, i.e., $\rho \in [0,1]$ and $p> 1$,
 and falls into the first regime of Wasserstein-$p$ distance upper bound \eqref{eq: Wasserstein-$p$ upper bound JSQ, final}.
\begin{align*}
    p &= \frac{a^2}{2\langle\boldsymbol{\phi}, \boldsymbol{1}\rangle^2} + \ln (1/\sqrt{\gamma}) \geq 1 \\
    p &\overset{(a)}{\leq} a^2/(2\langle\boldsymbol{\phi}, \boldsymbol{1}\rangle^2) + \frac{1}{\iota} (\frac{1}{\gamma})^{\iota/2} \overset{(b)}{\leq} a^2/(2\langle\boldsymbol{\phi}, \boldsymbol{1}\rangle^2) + \frac{1}{2} (\frac{1}{\gamma})^{\iota} \overset{(c)}{\leq} (\frac{1}{\gamma})^{\iota}.
\end{align*}
Here inequality $(a)$ is from the fact logarithm function is upper bounded by any polynomial function.
Inequality $(b)$ is from $\gamma \leq (\iota/2)^{1/(\iota/2)}$, inequality $(c)$ is from $\gamma \leq (\langle\boldsymbol{\phi}, \boldsymbol{1}\rangle^2/a^2)^{1/\iota}$. Therefore we are in the first regime for Wasserstein-$p$ distance\eqref{eq: Wasserstein-$p$ upper bound JSQ, final}. So we plug in the upper bound on Wasserstein-$p$ distance in \eqref{eq: Wasserstein-$p$ upper bound JSQ, final} into the definition of $\rho$. We have
\begin{align*}
    1> \rho &:= 1 - eW_p/a \geq 1 - \frac{eB_{1,\lambda,\mu,n} p^2 \sqrt{\gamma}}{a} \\
    &\geq 1 - eB_{1,\lambda,\mu,n} (a^2/(2\langle\boldsymbol{\phi}, \boldsymbol{1}\rangle^2) + \ln(1/\sqrt{\gamma}))^2 \sqrt{\gamma}/a \\
    &\overset{(a)}{\geq} 1 - 2eB_{1,\lambda,\mu,n} (a^4/(4\langle\boldsymbol{\phi}, \boldsymbol{1}\rangle^4) + \ln^2(1/\sqrt{\gamma}) \sqrt{\gamma}/a \\
    &\overset{(b)}{\geq} 1 - (a/2 + a/2)/a = 0
\end{align*}
Inequality $(a)$ is AMGM.
Inequality $(b)$ is from our assumption that $\gamma$ is sufficiently small
\begin{align*}
    \frac{1}{4\langle\boldsymbol{\phi}, \boldsymbol{1}\rangle^4} a^3\sqrt{\gamma} &\leq \frac{1}{4eB_{1,\lambda,\mu,n}} \impliedby \gamma \leq \frac{
        \langle\boldsymbol{\phi}, \boldsymbol{1}\rangle^8
    }{e^2 B_{1,\lambda,\mu,n}^2 a^6}\\
    \sqrt{\gamma}\ln^2(1/\sqrt{\gamma}) &\leq \frac{a}{4eB_{1,\lambda,\mu,n}} \impliedby \gamma \leq \left(\frac{a}{64eB_{1,\lambda,\mu,n}}\right)^4
\end{align*}
Meanwhile, we also verify the following property for $(1-\rho)a^2/\langle \boldsymbol{\phi}, \boldsymbol{1}\rangle^2$. This property allows us to extract the exact $\frac{a^2}{\langle \boldsymbol{\phi}, \boldsymbol{1}\rangle^2}$ term in exponent.
\begin{align*}
    (1-\rho)a^2/\langle \boldsymbol{\phi}, \boldsymbol{1}\rangle^2 &\leq \frac{2eB_{1,\lambda,\mu,n}}{\langle \boldsymbol{\phi}, \boldsymbol{1}\rangle^2} (a^2/2 + \ln(1/\sqrt{\gamma}))^2 \sqrt{\gamma} a \\
    &= \frac{2eB_{1,\lambda,\mu,n}}{4\langle \boldsymbol{\phi}, \boldsymbol{1}\rangle^6}a^5 \sqrt{\gamma} + \frac{2eB_{1,\lambda,\mu,n}}{\langle \boldsymbol{\phi}, \boldsymbol{1}\rangle^2} a \sqrt{\gamma} \ln^2(1/\sqrt{\gamma}) \overset{(a)}{\leq} 1 + 1 = 2
\end{align*}
Here $(a)$ is from the same assumption as above: 
\begin{align*}
    \frac{2eB_{1,\lambda,\mu,n}}{4\langle \boldsymbol{\phi}, \boldsymbol{1}\rangle^6}a^5 \sqrt{\gamma} &\leq 1 \impliedby \gamma \leq \frac{4\langle\boldsymbol{\phi}, \boldsymbol{1}\rangle^{12}}{e^2 B_{1,\lambda,\mu,n}^2 a^{10}}\\
    \frac{2eB_{1,\lambda,\mu,n}}{\langle \boldsymbol{\phi}, \boldsymbol{1}\rangle^2} a \sqrt{\gamma} \ln^2(1/\sqrt{\gamma}) &\leq 1 \impliedby \gamma \leq \frac{\langle\boldsymbol{\phi}, \boldsymbol{1}\rangle^8}{2^{20} e^4 B_{1,\lambda,\mu,n}^4 a^4}
\end{align*}
The above conditions on $(p,\rho)$ enable us to use the concentration argument and derivation from \eqref{eq: Tail bound, first regime} to obtain: 
\begin{align*}
    |\mathbb{P}( \tilde{q} > a) - \mathbb{P}(Z > a)| &\leq \frac{eB_{1,\lambda,\mu,n}}{|\langle \boldsymbol{\phi}, \boldsymbol{1}\rangle|} \sqrt{\gamma} (a^2/(2\langle \boldsymbol{\phi},\boldsymbol{1}\rangle) + \ln(1/\sqrt{\gamma}))^2 \phi(\rho a) + \sqrt{\gamma} e^{-a^2/2} \\
    & \leq \frac{e^3B_{1,\lambda,\mu,n} }{\sqrt{2\pi}|\langle \boldsymbol{\phi},\boldsymbol{1}\rangle|} \cdot\sqrt{\gamma}\big((a^2/(2\langle \boldsymbol{\phi},\boldsymbol{1}\rangle^2) + \ln(1/\sqrt{\gamma}))^2 \\
            &\quad +\sqrt{2\pi}\big) \exp(-a^2/(2\langle \boldsymbol{\phi},\boldsymbol{1}\rangle^2)) 
\end{align*}

\subsubsection{
    Proof for Moderate Deviation Regime in Theorem \ref{thm: JSQ Tail bound, second regime}
} \label{subsub: MDP JSQ}
We study the regime where deviation $a$ satisfies the below condition
\begin{align}
    2 \leq a 
    \leq \underbrace{\min\{2\langle\boldsymbol{\phi}, \boldsymbol{1}\rangle^2, (\frac{\langle\boldsymbol{\phi}, \boldsymbol{1}\rangle^4}{eB_{1,\lambda,\mu,n}})^{1/3} \}}_{E_{Tail, 1, u}}\cdot (\frac{1}{\gamma})^{\iota/2}
     \label{eq: a's regime for MDP JSQ}
\end{align}
with $B_{1,\lambda,\mu,n}, \iota$ defined in \eqref{eq: B1, B2 for JSQ tail bound}. We note that the above range $\iota/2$ for the exponent of $\gamma$ is equivalent to $\min\{1/4-\alpha/2, 1/6\}$ which is the range of $\delta$ in the definition of moderate deviation regime in Theorem \ref{thm: JSQ Tail bound, second regime}. We first verify that the above interval for $a$ is non-empty using the assumption on $\gamma$ in \eqref{eq: gamma assumption for JSQ tail bound}.  
\begin{align*}
    \gamma &\leq \gamma_2 \leq \min\{
        \langle\boldsymbol{\phi}, \boldsymbol{1}\rangle^2, \frac{1}{2} (\frac{\langle\boldsymbol{\phi}, \boldsymbol{1}\rangle^4}{eB_{1,\lambda,\mu,n}})^{1/3}\}^{2/\iota} 
\end{align*}
 First, we choose the same $(p,\rho)$ pair as in the constant deviation regime: $p:= a^2/(2\langle\boldsymbol{\phi}, \boldsymbol{1}\rangle^2) + \ln(1/\sqrt{\gamma})$, $\rho := 1 - e\mathcal{W}_p/a$.
We verify that $\rho\in[0,1]$ and $p>1$, and that we are in the first regime of Wasserstein-$p$ distance upper bound \eqref{eq: Wasserstein-$p$ upper bound JSQ, final} as follows,
\begin{align*}
    1\leq p &:= a^2/(2\langle\boldsymbol{\phi}, \boldsymbol{1}\rangle^2) + \ln(1/\sqrt{\gamma}) \overset{(a)}{\leq} (\frac{1}{\gamma})^{\iota}\\
    &\impliedby \gamma \leq (\iota/2)^{2/\iota}\; \& \;a \leq \langle\boldsymbol{\phi}, \boldsymbol{1}\rangle^2 \cdot 2(\frac{1}{\gamma})^{\iota/2}\\
    1 > \rho &:= 1 - \frac{eW_p}{a} \leq 1 - \frac{eB_{1,\lambda,\mu,n}}{a} (a^2/(2\langle\boldsymbol{\phi}, \boldsymbol{1}\rangle^2) + \ln(1/\sqrt{\gamma})^2)^2 \sqrt{\gamma} \\
    &\geq 1 - eB_{1,\lambda,\mu,n}(\frac{1}{2\langle\boldsymbol{\phi}, \boldsymbol{1}\rangle^4}a^4\sqrt{\gamma} + \sqrt{\gamma}\ln^2(1/\sqrt{\gamma}))/a \\
    &\overset{(b)}{\geq} 1 - (\frac{1}{2\langle\boldsymbol{\phi}, \boldsymbol{1}\rangle^4}eB_{1,\lambda,\mu,n} a^4\sqrt{\gamma} + a/2)/a \overset{(c)}{\geq} 1 - 1 =0
\end{align*}
Here $(a)$ is from the assumption on deviation regime in \eqref{eq: a's regime for MDP JSQ} and assumption $\gamma \leq (\iota/2)^{2/\iota}$ with the fact logarithm function is upper bounded by any polynomial function.
Inequalities $(b)$, $(c)$ are from the assumption of this deviation regime and assumption on $\gamma\leq \gamma_2$ in \eqref{eq: gamma assumption for JSQ tail bound}.
\begin{align*}
    \frac{1}{2\langle\boldsymbol{\phi}, \boldsymbol{1}\rangle^4}eB_{1,\lambda,\mu,n} a^4\sqrt{\gamma} &\leq \frac{1}{2}a, \impliedby a\leq (\frac{\langle\boldsymbol{\phi}, \boldsymbol{1}\rangle^4}{eB_{1,\lambda,\mu,n}})^{1/3} \cdot (\frac{1}{\gamma})^{1/6}\\
    eB_{1,\lambda,\mu,n}\cdot\sqrt{\gamma}\ln^2(1/\sqrt{\gamma}) &\leq a/2, \impliedby a \geq 2, \;\&\; \gamma \leq \frac{1}{2^{16} e^4 B_{1,\lambda,\mu,n}^4}
\end{align*}
Witht the above validity of $(p,\rho)$, we derive tail bound using concentration argument:
\begin{align*}
            \left|\mathbb{P}(\langle \tilde{\mathbf{q}},\boldsymbol{\phi}\rangle > a) - \mathbb{P}(Z\cdot\langle\boldsymbol{\phi},\boldsymbol{1}\rangle > a)\right| &\leq \frac{eB_{1,\lambda,\mu,n}}{\sqrt{2\pi}|\langle\boldsymbol{\phi},\boldsymbol{1}\rangle|}\cdot \sqrt{\gamma} \big((a^2/(2\langle \boldsymbol{\phi},\boldsymbol{1}\rangle^2) + \ln(1/\sqrt{\gamma}))^2+\sqrt{2\pi}\big)\\
    &\exp(-\frac{a^2}{2(\langle \boldsymbol{\phi},\boldsymbol{1}\rangle^2)}(1 - eB_{1,\lambda,\mu,n}(\frac{a^4}{2\langle \boldsymbol{\phi},\boldsymbol{1}\rangle^4}\sqrt{\gamma} + 2\sqrt{\gamma}\ln^2(1/\sqrt{\gamma})/a)) 
\end{align*}

\paragraph{Large Deviation} In this setting, deviation $a$ is in the following regime:
\begin{align*}
    a = (\frac{1}{\gamma})^{\beta}, \beta \in [\min\{1/4-\alpha/2,1/6\}, \infty)
\end{align*}
We will utilize the pair $(p,\rho)$ defined as $p:= ca^{\min\{1/(4\beta)+1/2, 2 \}}$, $\rho := 1 - eW_p/a$. Here $c$ is some constant defined as
\begin{align}
    c &:= \min\{ (1/(2eB_{2,\lambda,\mu,n}))^{1/2}, (1/(2eB_{2,\lambda,\mu,n}))^{2} \} \label{eq: constant c in subweibull regime, JSQ}
\end{align}
Again, we first verify the conditions for $\rho \in[0,1]$ and $p>1$, and that we are in the second regime of Wasserstein-$p$ distance upper bound \eqref{eq: Wasserstein-$p$ upper bound JSQ, final} according to our choice of $p$. We now verify these conditions
\begin{align*}
    p &:= ca^{\min\{1/(4\beta)+1/2, 2 \}} \geq 1 \\
    p &\leq c a^{\min\{1/(4\beta)+1/2, 2 \}} \leq (1/\gamma)^{\iota} \\
    \rho &:= 1 - \frac{eW_p}{a} < 1 \\
    \rho &\geq 1 - \frac{eB_{2,\lambda,\mu,n}}{a} \max\{\sqrt{p}, p^2\sqrt{\gamma}\} \\
    &= \min \{ 1 - eB_{2,\lambda,\mu,n} c^{1/2} a^{\min\{1/(8\beta) + 1/4, 1\}}/a, 1 - eB_{2,\lambda,\mu,n} c^2 a^{\min\{1/(2\beta) + 1, 4\}}\sqrt{\gamma}/a \} \\
    &\overset{(a)}{\geq} \min\{ 1 - \frac{1}{2} a^{\min\{1/(8\beta) + 1/4, 1\}}/a, 1 - \frac{1}{2} a^{\min\{1/(2\beta) + 1, 4\}}\sqrt{\gamma}/a \} \overset{(b)}{\geq} 1/2
\end{align*}
Here $(a)$ and $(b)$ are from the assumption of this deviation regime, the choice of $c$ \eqref{eq: constant c in subweibull regime, JSQ} and $\gamma < 1$. With the above verification, we use the fact that $\rho \geq 1/2$ to derive:
\begin{align*}
    |\mathbb{P}( \tilde{q} > a) - \mathbb{P}(Z > a)| &\leq \frac{e}{2\sqrt{2\pi}} a\exp(-a^2/8) + \exp(-ca^{\min\{1/(4\beta)+1/2, 2\}})
\end{align*}

\paragraph{Large Deviation with Positive Direction} 


For the large deviation regime: $a \geq \frac{e^2n\sqrt{\lambda}}{\sqrt{\gamma}} $, when direction vector $\boldsymbol{\phi}$ is in positive orthant, i.e., $\boldsymbol{\phi} \geq 0$, we can use coupling argument that $\mathbf{q}_{JSQ} \leq_{st} \mathbf{q}_{M/M/\infty}$. Here we use $\mathbf{q}_{M/M/\infty}$ to denote the steady-state queue length vector, where each coordinate is a  steady-state queue length random variable for $M/M/\infty$ queue with arrival rate $\lambda/n$ and service rate $\gamma$. To make distinguishion, we use $\mathbf{q}_{JSQ}$ to denote the steady-state queue length vector under JSQ policy. 
The stochastic domination is coordinate-wise, thus for any non-negative direction vector $\boldsymbol{\phi}$, we have
\begin{align*}
    \langle \tilde{\mathbf{q}}_{JSQ}, \boldsymbol{\phi}\rangle &\leq_{st} \langle \tilde{\mathbf{q}}_{M/M/\infty}, \boldsymbol{\phi}\rangle
\end{align*}
Thus we have the following tail bound,
\begin{align*}
    \mathbb{P}(\langle \tilde{\mathbf{q}}_{JSQ}, \boldsymbol{\phi}\rangle> a)
    &= \mathbb{P}\left( \langle \mathbf{q}_{JSQ} - \frac{\lambda-\mu}{n\gamma}\boldsymbol{1}, \boldsymbol{\phi} \rangle > \frac{\sqrt{\lambda}}{n\sqrt{\gamma}} a \right) \\
    &= \mathbb{P}\left( \langle \mathbf{q}_{JSQ}, \frac{1}{\langle \boldsymbol{\phi}, \boldsymbol{1}\rangle} \boldsymbol{\phi} \rangle > \frac{1}{\langle \boldsymbol{\phi}, \boldsymbol{1}\rangle}\frac{\sqrt{\lambda}}{n\sqrt{\gamma}} a + \frac{\lambda-\mu}{n\gamma}\right)\\
    &\leq \mathbb{P}\left( \left\langle \mathbf{q}_{M/M/\infty}, \frac{1}{\langle \boldsymbol{\phi}, \boldsymbol{1}\rangle} \cdot \boldsymbol{\phi} \right\rangle \geq \frac{1}{\langle \boldsymbol{\phi}, \boldsymbol{1}\rangle}  \frac{\sqrt{\lambda}}{n\sqrt{\gamma}}a + \frac{\lambda-\mu}{n\gamma}  \right) \\
    &\overset{(a)}{\leq} \inf_{t\geq0} \exp\left( -tb + \sum_{i=1}^{n} \frac{\lambda}{\gamma} \left( e^{t\phi_i/\langle \boldsymbol{\phi}, \boldsymbol{1}\rangle} - 1 \right) \right) \\
    &\overset{(b)}{\leq} \inf_{t\geq0} \exp\left( -tb + \frac{\lambda}{\gamma} (e^{t} - 1) \right) \\
    &\overset{(c)}{=} \exp\left( -b\log\left( \frac{b\gamma}{\lambda} \right) + b - \frac{\lambda}{\gamma} \right) \\
    &\overset{(d)}{\leq} \exp\left( - \left(\frac{1}{\langle \boldsymbol{\phi}, \boldsymbol{1}\rangle} \frac{\sqrt{\lambda}}{2n\sqrt{\gamma}}a + \frac{\lambda-\mu}{n\gamma} \right) \log\left( \frac{1}{\langle \boldsymbol{\phi}, \boldsymbol{1}\rangle} \frac{\sqrt{\gamma}}{n\sqrt{\lambda}}a + \frac{\lambda-\mu}{n\sqrt{\lambda\gamma}}  \right) \right)
\end{align*}
$(a)$ is Markov Inequality and we change variable to $b:=\frac{1}{\langle \boldsymbol{\phi}, \boldsymbol{1}\rangle}  \frac{\sqrt{\lambda}}{n\sqrt{\gamma}}a + \frac{\lambda-\mu}{n\gamma}$ for simplicity. $(b)$ is from AMGM and definition of $\mathbb{\phi}$. $(c)$ is due to the fact $a=\Omega(1/\sqrt{\gamma})$, thus we can solve the infimum explicitly. $(d)$ is from the assumption of this deviation regime.



Now we study the case when $\langle\boldsymbol{\phi}, \boldsymbol{1}\rangle = 0$. In this case, the projection is completely in the perpendicular space of $\boldsymbol{1}$. The tail bound is now dominated by the sub-Weibull heavy tail of $\mathbf{q}_\perp$. 
\paragraph{Worst Case Direction}
For worst case direction $\boldsymbol{\phi}$, i.e., $\langle \boldsymbol{\phi}, \boldsymbol{1}\rangle = 0$, the projection is completely in the perpendicular space, i.e., $\langle\tilde{\mathbf{q}}, \boldsymbol{\phi}\rangle = \frac{n\sqrt{\gamma}}{\sqrt{\lambda}} \langle \mathbf{q}_\perp, \boldsymbol{\phi}\rangle=\pm\frac{n\sqrt{\gamma}}{\sqrt{\lambda}}\|\mathbf{q}_\perp\|$. If we assume deviation $a$ is in the following regime
\begin{align}
    a\geq \underbrace{\frac{e^2 n\sqrt{E_{\lambda,\mu,n}}}{\sqrt{\lambda}}}_{E_{Tail, 2, u}} \cdot\sqrt{\gamma}, \label{eq: a's regime for worst case direction JSQ}
\end{align} we can use the sub-Weibull norm bound on $\mathbf{q}_\perp$ to derive tail bound: 
\begin{align}
    \mathbb{P}(\langle\tilde{\mathbf{q}}, \boldsymbol{\phi}\rangle > a) &\leq \mathbb{P}(\|\mathbf{q}_\perp\| > \frac{\sqrt{\lambda}}{n\sqrt{\gamma}}a) \overset{(a)}{\leq} \sup_{p > 1} E_{\lambda,\mu,n}^p \frac{p^{2p}}{(a\sqrt{\lambda})^p/(n\sqrt{\gamma})^p} \notag \\
    & = \exp\left( -p\log(a\sqrt{\lambda}/(n\sqrt{\gamma})) + p\log E_{\lambda,\mu,n} + 2p\log p \right) \notag\\
    &\overset{(b)}{=} \exp\left( -\frac{2\lambda^{1/4}}{e\sqrt{n E_{\lambda,\mu,n}}} \cdot \frac{a^{1/2}}{\gamma^{1/4}} \right) := \exp(-G_{\lambda,\mu,n} \cdot a^{1/2}/\gamma^{1/4}) \label{eq: tail bound for worst case direction}
\end{align}
$(a)$ is Markov inequality and optimizing over $p$, we have $p^* = \frac{\sqrt{a\sqrt{\lambda}/(n\sqrt{\gamma})}}{e\sqrt{E_{\lambda,\mu,n}}}$ once $p^* \geq 1$, yielding $(b)$. $p^* \geq 1$ is equivalent to $a \geq \frac{e^2 n\sqrt{\gamma E_{\lambda,\mu,n}}}{\sqrt{\lambda}}$ in \eqref{eq: a's regime for worst case direction JSQ}.

\subsection{Coupling Argument} \label{sec: coupling argument}
In this section, we provide all the coupling argument for all above sections. 
\paragraph{JSQ vs $M/M/\infty$}

We path-wise couple the join-the-shortest system with the system where all server's service rate is 0 (degenerating into $M/M/\infty$). Arrivals for both systems are generated from the same Poisson process with rate $\lambda$. Each customer in both systems has an independent exponential patience time $P_k$ with rate $\gamma$, starting from its arrival time $A_k$. JSQ has additional service process, which is an independent Poisson process with rate $\mu$ and each event is labeled as service from queue $i$ with probability $\frac{\mu_i}{\mu}$. As convention, if the chosen queue is empty, the service event is wasted.

The following induction over event epochs shows the pathwise domination of total queue length in $M/M/\infty$ over that in JSQ. Let $L(t)$ and $L'(t)$ be the label sets of customers at time $t$ in JSQ and $M/M/\infty$ respectively. We want to show $L(t) \subseteq L'(t)$ for all $t$. At time 0, both systems are empty, so the claim holds. Assume the claim holds before event time $t$, we consider the following cases:
\begin{itemize}
    \item Arrival event: both systems insert the new customer $k$ into $L(t)$ and $L'(t)$, so the inclusion is preserved.
    \item Abandonment event: if $t=P_k$, i.e., customer $k$'s patience time is up. Then $k \in L(t)$ implies $k \in L'(t)$ by induction hypothesis, so either both systems remove $k$ from their label sets, or only $M/M/\infty$ removes $k$ if $k \notin L(t)$, and inclusion is preserved.
    \item Service event: if $t$ is a service event from queue $i$, then $L(t)$ can only becomes smaller, while $L'(t)$ remains unchanged, so inclusion is preserved.
\end{itemize}
Thus for all $t \geq 0$, $\sum_{i=1}^{n} q_{JSQ,i}(t) \leq \sum_{i=1}^{n} q_{M/M/\infty,i}(t)$ almost surely. From positive recurrence of both systems, taking limit of time $t \to \infty$, the stochastic domination is preserved for steady state distribution, as from Theorem 6.B.16(d) in \cite{shaked2007stochastic}. 

\paragraph{JSQ vs SSQ}

Similarly, we path-wise couple the JSQ system with a single pooled SSQ with arrival rate $\lambda$, service rate $\mu$ and abandonment rate $\gamma$. For both systems, the arrival, service, and abandonment events are generated from the same Poisson processes as above. Notice that in SSQ, there is no splitting of service events. We want to show that in steady state, the total queue length in JSQ stochastically dominates the total queue length in SSQ. We will use the following non-anticipated coupling and argue for all $t \geq 0$, the label set $L''(t)$ in SSQ is contained in the label set $L(t)$ in JSQ. We first discuss the above three types of events and the case for arrival and abandonment are in similar manner so we omit the details. For service event, if customer $k$ is served in JSQ, it must also be contained in $L''(t)$. We let $k$ be served in SSQ as well, which preserves the law of queue length in SSQ and the inclusion $L''(t) \subseteq L(t)$. This stochastic domination is preserved for steady state distribution as from Theorem 6.B.16(d) in \cite{shaked2007stochastic}.

\end{document}